\def\vfrac#1#2{(#1)/#2}
\def\afrac#1#2{#1/(#2)}
\def\vafrac#1#2{(#1)/(#2)}
\def\sklvfrac#1#2{((#1)/#2)}
\def\sklafrac#1#2{(#1/(#2))}
\newcommand{\rrvert}{\vert}
\newcommand{\llvert}{\vert}
\renewcommand{\mid}{|}
\newcommand{\eqref}[1]{(\ref{#1})}
\newtheorem{theorem}{Theorem}
\newtheorem{proposition}[theorem]{Proposition}
\newtheorem{lemma}[theorem]{Lemma}
\newtheorem{corollary}[theorem]{Corollary}
\def\tllbracket{[\![}
\def\trrbracket{]\!]}
\def\cc{{\mathcal C}}
\def\t{{\mathcal T}}
\def\n{{\mathcal N}}
\def\v{{\mathcal V}}
\def\ve{\varepsilon}
\def\bp{\mathbf{p}}
\def\bd{\mathbf{d}}
\def\D{\mathrm{d}}
\def\la{\longrightarrow}
\def\T{{\mathbb T}}
\def\R{{\mathbb R}}
\def\P{{\mathbb P}}
\def\E{{\mathbb E}}
\def\N{{\mathbb N}}
\def\Z{{\mathbb Z}}
\begin{document}
\begin{frontmatter}

\title{The harmonic measure of balls in random trees}
\runtitle{The harmonic measure of balls in random trees}

\begin{aug}
\author[A]{\fnms{Nicolas}~\snm{Curien}\ead[label=e1]{nicolas.curien@gmail.com}}
\and
\author[A]{\fnms{Jean-Fran\c cois}~\snm{Le Gall}\corref{}\ead
[label=e2]{jean-francois.legall@math.u-psud.fr}}
\runauthor{N. Curien and J.-F. Le Gall}
\affiliation{Universit\'e Paris-Sud}
\address[A]{Universit\'e Paris-Sud\\
Math\'ematiques, b\^at. 425\\
91405 ORSAY Cedex\\
France\\
\printead{e1}\\
\phantom{E-mail: }\printead*{e2}}
\end{aug}

%
\received{\smonth{3} \syear{2014}}
%
\revised{\smonth{7} \syear{2015}}

%
\begin{abstract}
We study properties of the harmonic measure
of balls in typical large discrete trees. For
a ball of radius $n$ centered at the root,
we prove that, although the size of the boundary is of
order $n$, most of the harmonic measure is supported on
a boundary set of size approximately equal to $n^\beta$,
where $\beta\approx0.78$ is a universal constant.
To derive such results, we interpret harmonic measure as
the exit distribution of the ball by simple random walk on the
tree, and we first deal with the case of critical Galton--Watson
trees conditioned to have height greater than $n$. An important
ingredient of our approach is the analogous
continuous model (related to Aldous' continuum random tree), where the
dimension of harmonic
measure of a level set of the tree is equal to
$\beta$, whereas the dimension of the level
set itself is equal to $1$. The constant $\beta$ is expressed in terms
of the asymptotic
distribution of the conductance of large critical Galton--Watson trees.
\end{abstract}

%
\begin{keyword}[class=AMS]
\kwd[Primary ]{05C81}
\kwd{31C05}
\kwd{60J45}
\kwd[; secondary ]{05C80}
\kwd{60J80}
\end{keyword}
\begin{keyword}
\kwd{Harmonic measure}
\kwd{Brownian motion}
\kwd{random walk}
\kwd{random tree}
\kwd{Galton--Watson tree}
\kwd{Hausdorff dimension}
\kwd{conductance}
\end{keyword}
\end{frontmatter}

\section{Introduction} \label{sec:introduction}

The main goal of this work is to study properties of the harmonic measure
of balls in large discrete trees.
From a probabilistic point of view, the harmonic measure of a set is
the exit distribution of that set by random walk,
in the discrete setting, or by Brownian motion, in the continuous
setting. Harmonic measure has been studied in depth
both in harmonic analysis and in probability theory, and it would be
hopeless to try to survey the literature on this subject. It
has been observed in different contexts that the harmonic measure of a
set with a
fractal-like boundary is often supported on a subset of the boundary of
strictly smaller dimension. For example, the famous Makarov theorem
\cite{Mak95} states that harmonic measure on the boundary of a simply
connected planar domain is always supported on a subset of Hausdorff
dimension equal to $1$, regardless of the dimension of the boundary
(see \cite{Law93} for similar results in a discrete setting and \cite
{Bou87} for higher-dimensional analogs). This ``dimension drop''
phenomenon also appears in the context of (infinite) discrete random
trees. In \cite{LPP95}, Lyons, Pemantle and Peres studied the harmonic
measure at infinity for simple random walk on an
infinite supercritical Galton--Watson tree and proved that the harmonic
measure is supported on a boundary set of dimension strictly less than
the dimension of the whole boundary. The same authors then extended
this result to biased random walk on a supercritical Galton--Watson
tree \cite{LPP96}.

In the present work, we study a similar phenomenon in the context of
finite discrete trees. Our results apply to several
combinatorial classes of discrete trees, such as plane trees, binary
trees or Cayley trees in particular. For a typical
tree with a (fixed) large size
chosen in any of these classes, we obtain that
the harmonic measure of a ball of radius $n$ is supported, up to a
small mass, on a subset of about $n^\beta$ vertices, despite the fact
that the
boundary of the ball has of order $n$ vertices. Here, $\beta\approx
0.78$ is a universal constant that does not depend on the
combinatorial class.

In order to obtain these results for ``combinatorial trees'', we
interpret them as conditioned Galton--Watson trees. Recall that a
Galton--Watson tree
describes the genealogy of a population starting with an ancestor or
root, where each individual has, independently of the others, a
number of children distributed according to a given offspring
distribution (see Section~\ref{sec:tree-discrete} for a precise definition).
We first study harmonic measure on generation $n$ of a \emph{critical}
Galton--Watson tree, whose offspring
distribution has mean $1$ and finite variance, and which is conditioned
to have height greater than $n$. In this setting, we obtain that most
of the harmonic measure on generation $n$ is concentrated on a set of
approximately $ n^\beta$ vertices, with high probability.
Again, this should be contrasted with the fact that the generation $n$
of the tree has about $n$ vertices.
The constant $\beta$ has an explicit expression in terms of the law of
a random variable $ \mathcal{C}$, which is the limit in distribution of
the (scaled) conductance
of the tree between the root and
generation $n$---again this limiting distribution does not depend
on the offspring distribution. In the related continuous model,
we show that the Hausdorff dimension of the harmonic measure is almost
surely equal to $\beta$, whereas the dimension
of the boundary is known to be equal to $1$. Let us describe our
results in a more precise way.

\renewcommand{\t}{\mathsf{T}}
\subsection*{Discrete setting}
Let\vspace*{1pt} $\theta$ be a probability measure on
$\Z_+$, and assume
that $\theta$ has mean one and finite variance $\sigma^2>0$.
Under the probability $ \mathbb{P}$, for every integer $n\geq0$, we
let $\t^{(n)}$ be a
Galton--Watson tree with offspring distribution $\theta$, conditioned
on nonextinction at generation $n$. Conditionally on the tree $ \t
^{(n)}$, we then consider
simple random walk on $\t^{(n)}$, starting from the root, and we let
$\Sigma_{n}$ be the first hitting point of generation $n$ by
random walk. The harmonic measure $\mu_n$
is the law of $\Sigma_n$. Notice that $\mu_n$ is a random probability
measure supported on
the set $\t^{(n)}_n$ of all vertices of $\t^{(n)}$ at generation $n$.
By a classical theorem of the theory of branching processes, $n^{-1}
\# \t^{(n)}_n$ converges in distribution to
an exponential distribution with parameter $2/\sigma^2$.

%
\begin{theorem}
\label{thm:maindiscrete}
There exists a constant $\beta\in(0,1)$, which does not depend on the
offspring distribution $\theta$, such that,
for every $\delta>0$, we have the convergence in $ \mathbb{P}$-probability
\[
\mu_n \bigl( \bigl\{v\in\t^{(n)}_n:
n^{-\beta-\delta}\leq\mu _n(v) \leq n^{-\beta+\delta} \bigr\} \bigr)
\mathop{\longrightarrow}\limits
_{n\to\infty}^{( \mathbb{P})} 1.
\]
Consequently, for every $\ve\in(0,1)$, there exists, with $\P
$-probability tending
to $1$ as $n\to\infty$, a subset $A_{n,\ve}$ of $\t^{(n)}_n$ such that
$\# A_{n,\ve}\leq n^{\beta+\delta}$ and
$\mu_n(A_{n,\ve})\geq1-\ve$. Conversely, the maximal $\mu_n$-measure
of a set of cardinality bounded by $n^{\beta-\delta}$ tends to $0$ as
$n\to\infty$,
in $\P$-probability.
\end{theorem}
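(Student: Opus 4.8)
The whole theorem will follow once we establish the law of large numbers
\begin{equation}
\frac{-\log\mu_n(\Sigma_n)}{\log n}\ \xrightarrow[n\to\infty]{(\mathbb P)}\ \beta
\tag{$\star$}
\end{equation}
for a deterministic constant $\beta\in(0,1)$. Indeed, writing $G_n^{\delta}=\{v\in\t^{(n)}_n:n^{-\beta-\delta}\le\mu_n(v)\le n^{-\beta+\delta}\}$, one has $\E[\mu_n(G_n^{\delta})]=\P(\Sigma_n\in G_n^{\delta})\to1$ by $(\star)$, and since $\mu_n(G_n^{\delta})\in[0,1]$ this forces $\mu_n(G_n^{\delta})\to1$ in $\P$-probability, which is the first assertion. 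Taking $A_{n,\ve}=\{v:\mu_n(v)\ge n^{-\beta-\delta}\}$ gives a set with $\#A_{n,\ve}\le n^{\beta+\delta}$ (because $\mu_n$ is a probability measure) that contains $G_n^{\delta}$, hence $\mu_n(A_{n,\ve})\ge1-\ve$ with probability tending to $1$. For the converse, any $B\subseteq\t^{(n)}_n$ with $\#B\le n^{\beta-\delta}$ satisfies $\mu_n(B)\le\#B\cdot\max_{v\in G_n^{\delta/2}}\mu_n(v)+\mu_n(\t^{(n)}_n\setminus G_n^{\delta/2})\le n^{-\delta/2}+\mu_n(\t^{(n)}_n\setminus G_n^{\delta/2})$, and the right-hand side tends to $0$ in $\P$-probability, uniformly in such $B$. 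So everything reduces to $(\star)$.

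To prove $(\star)$ I would exploit the recursive structure of harmonic measure along the ancestral line $\rho=u_0,u_1,\dots,u_n=\Sigma_n$ of the hitting point $\Sigma_n$. For the walk to exit through the subtree above $u_{j+1}$ it suffices to follow an $h$-transformed walk, harmonic inside that subtree and only modified at $u_j$ by a ``partially reflecting'' rule whose strength is an explicit function of the conductance between $u_{j+1}$ and generation $n$, compared with the conductances of the sibling subtrees branching off at $u_j$. Iterating yields a factorization $-\log\mu_n(\Sigma_n)=\sum_{j=0}^{n-1}X_j^{(n)}$ with $X_j^{(n)}\ge0$, where the sequence of local environments around the $u_j$ (the nearby subtrees together with the reflection parameter transported down the line) is a Markov chain. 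The spine decomposition of the conditioned tree—which converges locally to Kesten's size-biased tree, an infinite backbone decorated with independent unconditioned critical Galton--Watson trees—identifies the law of these environments, and the asymptotic conductance result ($n\,\mathcal C_n\Rightarrow\mathcal C$, with $\mathcal C$ solving a distributional fixed-point equation and having a law independent of $\theta$) describes the reflection parameter after the natural rescaling.

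The constant $\beta$ is produced by the continuous counterpart. Rescaling generations by $n$, the tree $\t^{(n)}$ converges to a continuous random tree built from the CRT, simple random walk converges to Brownian motion on it, conductances converge accordingly, and $\mu_n$ converges to a diffuse measure $\mu_\infty$ on the level-one set. Along a $\mu_\infty$-typical ray, the mass $M_t=\mu_\infty(\{\text{leaves above the branch point of the ray at height }t\})$ decreases from $1$ to $0$, and, writing $s=-\log(1-t)$, the process $s\mapsto\bigl(-\log M_{1-e^{-s}},\ \text{conductance of the subtree above height }1-e^{-s}\bigr)$ is time-homogeneous Markov; the ergodic theorem gives $-\log M_{1-e^{-s}}/s\to\beta$ a.s., where $\beta$ is the mean mass-loss per unit log-time under the (unique) stationary law, an explicit functional of the law of $\mathcal C$, numerically $\approx0.78$. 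This in particular gives $\dim_H\mu_\infty=\beta$ while the level set has dimension $1$. Transferring back, one couples the discrete sums $\sum_jX_j^{(n)}$ (whose $j$-th term lives at log-scale $\log\frac{n}{n-j}$) with the continuous functional $-\log M_t$ through the convergence of conductances and harmonic measures, and obtains $(\star)$ with the same $\beta$.

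The main obstacle is twofold. First, one must construct the continuous model carefully and prove the ergodicity statement: existence and uniqueness of a stationary distribution for the conductance-and-mass process, with integrability of the mass-loss per unit log-time, which is where the hypothesis $\sigma^2<\infty$ and the fixed-point analysis of $\mathcal C$ enter. Second, and more delicate, is the discrete-to-continuous transfer for $\mu_n$: since $\mu_n(\Sigma_n)$ depends on the entire tree $\t^{(n)}$, one must control uniformly the contribution of the ``endpoints'' of the spine—the first and last few generations, where the rescaled picture degenerates—and show that the fluctuations of the discrete conductances around their continuous limits do not accumulate over the $n$ terms of the sum. Proving the required quantitative conductance estimates (concentration of $n\,\mathcal C_n$, and stability of the conductance of a subtree under local perturbations) is the technical heart of this step.
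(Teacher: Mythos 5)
Your reduction of the theorem to the single statement $(\star)$ (a law of large numbers for $-\log\mu_n(\Sigma_n)$ under $\P\otimes P$), and your derivation of the last two assertions from the first, are correct and are exactly what the paper does: the paper proves the slightly stronger $L^1$ version $\E\otimes E\big[\,|\log\mu_n(\Sigma_n)+\beta\log n|\,\big]=o(\log n)$ and concludes as you do. Your global two-step plan -- extract $\beta$ from the continuous model (Yule tree, stationary environment, ergodic theorem, law of the conductance $\mathcal{C}$) and then transfer to the discrete trees via convergence of reduced trees, harmonic measures and conductances -- is also the paper's plan.

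The genuine gap is that the transfer itself, which you correctly call ``the technical heart'', is precisely what you do not supply, and the hints you give would not carry it. Your per-generation factorization $-\log\mu_n(\Sigma_n)=\sum_{j}X^{(n)}_j$ has no stationary structure to feed into an ergodic or coupling argument: the law of the environment seen from the $j$-th ancestor depends on the residual height $n-j$, and, more seriously, the subtree above $\langle\Sigma_n\rangle_{j}$ selected by harmonic measure is \emph{not} distributed as a conditioned reduced tree nor as Kesten's size-biased tree -- harmonic selection biases by a functional of the conductance (in the continuous setting the density is $\Phi_\infty(\mathcal{C}(\mathcal{T}))$, see \cref{prop:tree-selected-law} and \cref{limit-density}), and quantifying this bias in the discrete setting is the missing estimate, not a consequence of local convergence to Kesten's tree. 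The paper handles this with three concrete devices you would need substitutes for: (i) it telescopes $\log\mu_n(\Sigma_n)$ over only $\ell\approx\log n/\log N$ geometric scales $N^j$, so that the continuous \cref{thm:main-harmonic} is invoked at a single fixed scale ratio $\ve=1/N$ through the $L^2$ comparison of \cref{cor:comparaison}, giving an error $o(1)\cdot\log N$ per block which is summable over the $\ell$ blocks; (ii) the flow property (\cref{conditioning-subtree}) turns each block ratio, conditionally on the corresponding ancestor, into a harmonic ball mass of the selected subtree, and \cref{lem:tree-selected} bounds the law of that selected subtree by the law of an unconditioned reduced tree $\mathsf{T}^{*i}$ reweighted by $(i+1)\,\mathcal{C}_i(\mathsf{T}^{*i})$, a bias made harmless by $\E[\mathcal{C}_i(\mathsf{T}^{*i})^2]\le K/(i+1)^2$ (\cref{moment-conductance}) and Cauchy--Schwarz; (iii) entropy bounds combined with the level-size moments of \cref{lem:estimate-reduced-tree} give a priori $O(\log N)$ bounds per block, which both absorb the $O(\alpha\log n)$ smallest scales, where the coupling of \cref{conv-reduced-tree} and \cref{convergence-harmonic} is unavailable, and supply the uniform integrability needed to upgrade distributional convergence of the harmonic measures to the $L^2$ estimate. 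Without these ingredients (or equivalents), the sentence ``one couples the discrete sums with the continuous functional'' restates the problem rather than proving $(\star)$.
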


Although we have no exact numerical expression for $\beta$,
calculations using the formulas in
Proposition \ref{prop:valuebeta} below indicate that $\beta\approx
0.78$. See the discussion at
the end of Section~\ref{sec:proofoftheoremmain-harmonic}.
This approximate numerical value confirms simulations made in physics
\cite{Jon12}.

The last two assertions of the theorem are easy consequences of the
first one. Indeed,
$A_{n,\ve}:=\{v\in\t^{(n)}_n: \mu_n(v)\geq n^{-\beta-\delta}\}$ has
cardinality smaller than $n^{\beta+\delta}$, and the
first assertion of the theorem shows that the $\mu_n$-measure of the
latter set is
greater than $1-\ve$ with $\P$-probability tending
to $1$ as $n\to\infty$. On the other hand,
if $A$ is any subset of $\t^{(n)}_n$ with cardinality smaller than
$n^{\beta-\delta}$, we have
\[
\mu_n(A)\leq\mu_n \bigl( \bigl\{v\in
\t^{(n)}_n: \mu_n(v) > n^{-\beta+\delta/2} \bigr\}
\bigr) + n^{\beta-\delta}n^{-\beta+\delta/2}
\]
and the first term in the right-hand side tends to $0$ in $\P$-probability
by the first assertion of the theorem.

Theorem \ref{thm:maindiscrete} implies a similar result for
Galton--Watson trees conditioned
to have a fixed size. For every integer
$N\geq0$ such that this makes sense, let $\mathbf{T}(N)$
be distributed under the probability measure
$\P$ as a Galton--Watson tree with offspring distribution $\theta$
conditioned to have
$N$ edges.
For every integer $n\geq1$, let $\mathbf{T}_n{(N)}$ be the set
of all vertices of $\mathbf{T}(N)$ at generation
$n$.
The
harmonic measure $\mu_n^N$ is defined on the event $\{ \mathbf
{T}_{n}{(N)} \ne\varnothing\}$ as the hitting distribution
of $\mathbf{T}_n{(N)}$ by simple random walk on $\mathbf{T}(N)$
started from the root.

%
\begin{corollary} \label{cor:planetree} Let $\delta>0$ and $\ve>0$. Then,
\begin{eqnarray*}
&&\mathbb{P} \bigl( \bigl\{ \mu^{N}_n \bigl( \bigl\{v\in
\mathbf{T}_{n}{(N)}: \mu^{N}_n(v)\notin
\bigl[n^{-\beta
-\delta}, n^{-\beta+\delta} \bigr] \bigr\} \bigr) > \ve \bigr\}\cap
\bigl\{\mathbf {T}_{n}{(N)} \ne\varnothing \bigr\} \bigr)
\\
&&\qquad\mathop{\longrightarrow}\limits
_{n,N\to\infty} 0.
\end{eqnarray*}
\end{corollary}

As in Theorem \ref{thm:maindiscrete}, this implies that, with high
probability on the event
$\{\mathbf{T}_{n}{(N)} \ne\varnothing\}$, the harmonic measure $\mu^{N}_n$
is supported, up to a mass less than $\ve$, on a set of $n^{\beta
+\delta}$ vertices, and conversely
the maximal $\mu^N_n$-measure of a set of cardinality bounded above by
$n^{\beta-\delta}$ is small.

If $h(\mathbf{T}(N))$ denotes the height (maximal distance from the
root) of the tree $\mathbf{T}(N)$, it is well known
that $N^{-1/2}h(\mathbf{T}(N))$ converges in distribution to a
positive random variable; see
\eqref{asymp-maxi} below.
Therefore, if we let $n$ and $N$ tend to infinity in such a way that
$n=o( \sqrt{N})$, the probability $\mathbb{P}(\mathbf{T}_{n}{( N)} \ne
\varnothing)$ tends to $1$. It is worth pointing that Corollary \ref
{cor:planetree}
applies to balls of radius $n$ which is large but small in comparison
with the diameter of the tree---a similar extension would in fact hold
also for Theorem \ref{thm:maindiscrete}.

For specific choices of $\theta$, the tree $\mathbf{T}(N)$ is
uniformly distributed over certain classes
of combinatorial trees, and Corollary \ref{cor:planetree} yields the
results that were mentioned earlier
in this \hyperref[sec:introduction]{Introduction}. In particular, if $\theta$ is the geometric
distribution $\theta(k)=2^{-k-1}$,
$\mathbf{T}(N)$ is uniformly distributed over plane trees with $N$
edges. If $\theta$ is the
Poisson distribution with mean $1$, and if we assign
labels $1,\ldots,N+1$ to vertices in a random manner and then
``forget'' the ordering of $\mathbf{T}(N)$, we get a random tree
uniformly distributed over Cayley trees
on $N+1$ vertices. In a similar manner, for every integer $p\geq2$, we
can handle $p$-ary trees (where the number of children of every vertex
belongs to $\{0,1,\ldots,p\}$) or strictly $p$-ary trees (where each
vertex has $0$ or $p$ children).

\subsection*{Continuous setting}
A key ingredient of the proof of
Theorem~\ref{thm:maindiscrete} is a similar result in the continuous
setting. A critical Galton--Watson tree conditioned on having height
greater than $n$, viewed as a metric space for
the graph distance normalized by the factor $n^{-1}$, is close in the
Gromov--Hausdorff sense to a variant of
Aldous' Brownian continuum random tree \cite{Ald91}, also called the
CRT. So a continuous analog of the harmonic measure $\mu_n$
would be the hitting distribution of height $1$ by Brownian motion on
the CRT starting from the root.
Although the construction of Brownian motion on the CRT has been
carried out in \cite{Kre95} (see also \cite{Cro08} for a simpler
approach, and \cite{AEW} for a general construction of Brownian motion
on $\R$-trees), we will not follow this approach, because there is a
simpler way of looking at the
continuous setting.

%
\begin{figure}

\includegraphics{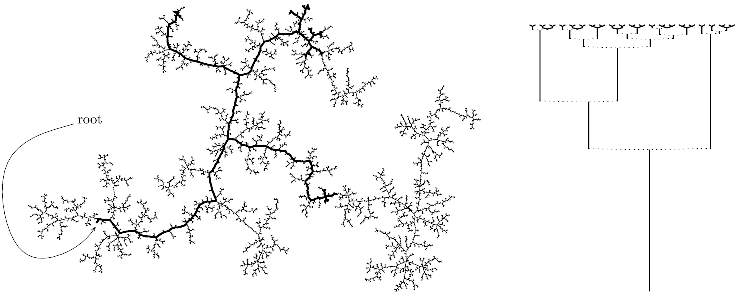}

\caption{A large (binary) Galton--Watson tree and the reduced tree at
a given level.}\label{fig1}
\end{figure}

The point is that properties of the harmonic measure $\mu_{n}$ on $ \t
^{(n)}_n$ can be read from the reduced tree $ \t^{*n}$
that consists only of vertices of $\t^{(n)}$ that have descendants at
generation $n$. In other words, we can chop off the branches of the
discrete tree that do not reach the level $n$. Indeed, a simple
argument shows that the hitting distribution
of generation $n$ is the same for simple random walk on $\t^{(n)}$ and
on the reduced tree $\t^{*n}$. See Figure~\ref{fig1} for a simulation of
a large Galton--Watson tree and the associated reduced tree.

The scaling limit of the discrete reduced trees $\t^{*n}$ (when
distances are scaled by the factor $n^{-1}$) is particularly simple.
We define a random compact $\R$-tree by the following device. We start
from an (oriented) line segment whose
length $U_\varnothing$ is uniformly distributed over $[0,1]$ and whose origin
will serve as the root of our tree. At the other end of this initial
line segment, we attach the initial point of two other line segments with
respective lengths
$U_1$ and $U_2$ such that, conditionally given $U_\varnothing$, $U_1$
and $U_2$ are independent and uniformly distributed over
$[0,1-U_\varnothing]$.
At the other end of the first of these segments, respectively, of the
second one, we attach two line segments whose lengths are
again independent and uniformly distributed over $[0,1-U_\varnothing
-U_1]$, respectively, over $[0,1-U_\varnothing-U_2]$,
conditionally on the triplet $(U_\varnothing,U_1,U_2)$. We continue the
construction by induction and after an infinite
number of steps we get a random (noncompact) rooted $\R$-tree, whose
completion is denoted by $\Delta$. This is the scaling limit of the
discrete reduced trees~$\t^{*n}$. See Section~\ref{sec:treedelta} for a
more precise construction.

The metric on $\Delta$ is denoted by $ \mathbf{d}$. By definition, the
boundary $ \partial\Delta$ consists of
all points of $\Delta$ at height $1$, that is, at distance $1$ from
the root: these are exactly the points that are added when taking
the completion in the preceding construction.

It is then easy to define Brownian motion on $\Delta$ starting from the
root and up to the first hitting
time of $\partial\Delta$
(it would be possible to extend the definition of Brownian motion
beyond the first hitting time
of $\partial\Delta$, but this is not relevant to our purposes). Roughly
speaking, this process behaves like linear Brownian motion as long as
it stays on an ``open interval'' of the tree. It is reflected at the
root of the tree and when it arrives at a branching point, it chooses
each of the three possible line segments incident to this point with
equal probabilities. The harmonic measure $\mu$ is then the (quenched)
distribution of the first hitting point of $ \partial\Delta$ by
Brownian motion (see Section~\ref{sec:treedelta} for details).

%
\begin{theorem}
\label{thm:main-harmonic}
With the same constant $\beta$ as in Theorem~\ref{thm:maindiscrete}, we
have $\P$ a.s.,
$\mu(\D x)$~a.e.,
\[
\lim_{r\downarrow0} \frac{\log\mu(\mathscr{B}_\bd(x,r))}{\log r} =\beta,
\]
where $\mathscr{B}_\bd(x,r)$ stands for the closed ball of radius $r$
centered at $x$
in the metric space $(\Delta,\bd)$.
Consequently,
the Hausdorff dimension
of $\mu$ is $\P$ a.s. equal to $\beta$.
\end{theorem}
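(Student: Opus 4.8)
The plan is to exploit the self-similar, branching structure of $\Delta$ to set up a multiplicative cascade describing how harmonic mass splits at successive branching points, and then to apply the ergodic theory of such cascades. Concretely, let $x \in \partial\Delta$ be sampled according to $\mu$; the geodesic from the root to $x$ passes through an infinite sequence of branching points $b_1, b_2, \ldots$, and at the $k$-th branching point the Brownian motion (conditioned to eventually exit near $x$) splits its exit mass between the subtree containing $x$ and the two competing directions. Writing $\mu(\mathscr{B}_\bd(x,r))$ as a product of such conditional splitting ratios down to the appropriate scale, I would show that $\log \mu(\mathscr{B}_\bd(x,r)) / \log r$ converges to the ratio of two expectations: the mean of $-\log(\text{mass fraction retained at one branching step})$ over the mean of $\log(\text{scale reduction at one branching step})$, both under the size-biased (``spinal'') measure associated with $\mu$. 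This is exactly the form in which $\beta$ is stated to be defined via the conductance random variable $\mathcal{C}$, so the key is to identify the spinal measure.

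The main technical engine is a change of measure: instead of working under $\P \otimes \mu$, I would introduce the measure obtained by biasing the law of $\Delta$ by $\mu$ and describing the tree ``seen from a $\mu$-typical boundary point''. Under this tilted measure the distinguished geodesic becomes a spine to which i.i.d.\ (under the appropriate scaling/self-similarity) decorations are attached, and the splitting ratios along the spine form a stationary ergodic sequence. The first step is therefore to make precise the recursive decomposition of $(\Delta,\mu)$ at its first branching point: conditionally on the three subtree ``capacities'', the harmonic measure on $\partial\Delta$ is a mixture of the harmonic measures of the three rescaled subtrees with weights given by the exit probabilities of Brownian motion among the three branches, and these exit probabilities are explicit functions of the conductances of the subtrees. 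The second step is to iterate this to express, for $x \sim \mu$, the mass $\mu$ of the ``tube'' around the geodesic down to the $k$-th branching point as $\prod_{j=1}^{k} W_j$ where the $W_j$ are, under the tilted law, a stationary ergodic sequence with a computable (and offspring-independent, by the universality of the limiting conductance law) distribution. The third step is to control the relation between ``number of branching points crossed'' and ``metric scale $r$'': one needs that the heights of successive branching points along the spine shrink geometrically in a controlled way (again governed by the uniform length variables $U$), so that $-\log r \approx c\, k$ with a deterministic constant, and to check that a ball $\mathscr{B}_\bd(x,r)$ is sandwiched between two such tubes up to bounded multiplicative error.

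With these pieces in place, the law of large numbers gives $\P$-a.s., $\mu$-a.e.\
\[
\lim_{r\downarrow 0}\frac{\log\mu(\mathscr{B}_\bd(x,r))}{\log r}=\frac{\E^\ast[-\log W_1]}{\E^\ast[-\log R_1]},
\]
where $\E^\ast$ denotes expectation under the tilted/spinal measure and $R_1$ is the scale factor at one step; by construction this quotient is precisely the constant $\beta$ expressed through the conductance law $\mathcal{C}$, and in particular it does not depend on $\theta$. One must separately verify that $\E^\ast[-\log W_1]$ is finite (so the limit is genuinely in $(0,\infty)$ and $\beta\in(0,1)$): this requires a tail estimate showing the retained mass fraction $W_1$ is not too small too often, which follows from bounds on how unbalanced the three conductances can be. Finally, the statement about Hausdorff dimension is a standard consequence: the pointwise exponent being $\mu$-a.e.\ equal to $\beta$, combined with a Frostman-type argument (using that $\mu$ gives full mass to the set where the local exponent equals $\beta$), yields $\dim_H \mu = \beta$; here one should be slightly careful that $\Delta$ is not a subset of Euclidean space, so I would invoke the general mass-distribution principle for the exponent of a measure on a metric space rather than a coordinate-based version.

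The hard part, I expect, will be making the change of measure and the spinal decomposition rigorous — in particular, identifying the tilted law of $\Delta$ seen from a $\mu$-typical point and proving the claimed stationarity and ergodicity of the splitting sequence $(W_j)$, together with the joint control of mass and scale. The self-similarity of $\Delta$ is only ``approximate'' in the sense that subtrees are rescaled copies conditioned on their capacity parameters, so the i.i.d.\ structure emerges only after the correct tilting, and verifying integrability of $-\log W_1$ under that tilted law (which is what forces $\beta<1$ rather than $\beta=1$) is the delicate quantitative step.
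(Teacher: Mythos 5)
Your overall strategy — shift along the distinguished geodesic, express $\log\mu(\mathscr{B}_\bd(x,r))$ as a Birkhoff sum of splitting ratios, and divide by a Birkhoff sum controlling the scale — is the right one, and in fact closely matches the paper's alternative argument sketched in \cref{sec:comple} (shift $S$ at the first branching point, with $\beta$ expressed as a ratio of two ergodic averages). The paper's main proof instead passes to the Yule tree $\Gamma$ via the logarithmic change of height $s\mapsto-\log(1-s)$ and works with the continuous-time semigroup of shifts $\sigma_r$ on pairs $(\t,\mathbf{v})$, using a Ray--Knight representation to compute explicitly the law of the selected subtree above level $r$; this is cleaner than a branching-point-by-branching-point bookkeeping but is the same idea at heart.

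There is, however, a genuine gap. You assert that once you ``bias the law of $\Delta$ by $\mu$'' and look along the $\mu$-typical spine, the splitting sequence $(W_j)$ is stationary and ergodic and you can apply a law of large numbers. That measure is exactly $\Theta^*(d\t\,d\mathbf{v})=\Theta(d\t)\nu_\t(d\mathbf{v})$, i.e.\ $\P\otimes P$ restricted to $(\Gamma,W_\infty)$, and it is \emph{not} invariant under the shift along the spine. This is the crux of the whole argument. The correct invariant measure has a nontrivial Radon--Nikodym derivative which is a function of the conductance of the tree at the shifted root: $\Lambda^*=\Phi_\infty(\cc(\t))\Theta^*$ for the continuous shift (\cref{invariant-meas}), or $\kappa(\cc(\t))\Theta^*$ for the shift at branching points (\cref{invariantbis}). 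Finding this density is the hard content of \cref{sec:subtreeabove,sec:asymptotics}: one must compute the law of the selected subtree above level $r$ (via the spine decomposition and the Ray--Knight theorem) and show it converges as $r\to\infty$, to extract $\Phi_\infty$. Once that density is in hand, one applies the ergodic theorem under $\Lambda^*$ and uses absolute continuity to transfer the $\Lambda^*$-a.s.\ statement to $\Theta^*$. Your proposal names the need to ``prove the claimed stationarity,'' but the claim as stated (stationarity under $\P\otimes\mu$) is false, not merely unproven, and no purely soft argument will repair it. A secondary issue: you suggest that finiteness of $\E^*[-\log W_1]$ yields $\beta\in(0,1)$; it only yields $\beta<\infty$. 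The strict inequality $\beta<1$ requires a separate concavity/entropy comparison between the flow $(\nu^*_\t(x))_{x\in\t_1}$ and the ``uniform'' flow built from the limiting branching-process martingale, which is \cref{prop:beta<1}.
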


The fact that the second assertion of the theorem follows from the
first one
is standard. See, for example, Lemma 4.1 in \cite{LPP95}.
The simulation in Figure~\ref{fig2} illustrates the fractal behavior of the measure~$\mu$.

\begin{rem*}
It is not hard to prove that the Hausdorff dimension of
$\partial\Delta$ (with respect to $\bd$) is a.s.  equal to $1$.
An exact Hausdorff measure function is given by
Theorem 1.3 in Duquesne and Le Gall \cite{DLG06}.
\end{rem*}

%
\begin{figure}[t]

\includegraphics{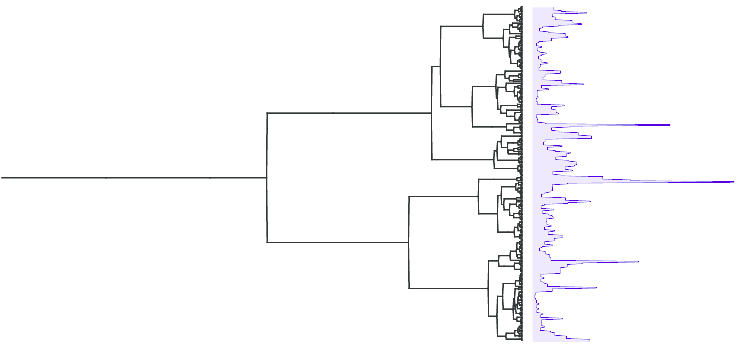}

\caption{A simulation of the reduced tree $\Delta$ and the harmonic
measure on its boundary. Clearly the measure is not uniformly spread
and exhibits a fractal behavior.}\label{fig2}
\end{figure}

Let us give some ideas of the proof of Theorem~\ref
{thm:main-harmonic}. It is well known that one can turn the tree
${\Delta}$,
or rather the subtree $\Delta\setminus\partial\Delta$, into a
``stationary'' object via a logarithmic transformation. Roughly
speaking, we introduce a new tree which has the
same binary branching structure as $\Delta$, such that each point of
$\Delta$ at height $s \in[0,1)$
corresponds to a point of the new tree at height $ -\log(1-s) \in
[0,\infty)$. The resulting noncompact tree is called the
Yule tree because it describes the genealogy of the classical Yule
process, where individuals have
(independent) exponential lifetimes with parameter $1$ and each
individual has exactly two offspring.
We define the boundary of the Yule tree as the collection of
all its geodesic rays, where a geodesic ray is just a semi-infinite
geodesic path starting from the root. This
boundary is easily identified with $\partial\Delta$.
An application of It\^o's formula shows that the
logarithmic transformation turns Brownian motion on $ {\Delta}$ into a
time-changed Brownian motion \emph{with {drift} $1/2$}
toward infinity on the Yule tree. Consequently, the probability
measure $\mu$ corresponds via
the preceding transformation to the distribution $\nu$ of the geodesic
ray that is ``selected'' by Brownian motion
with drift $1/2$ (i.e., the unique ray of the Yule tree that is
visited by Brownian motion at arbitrarily large times).
The first assertion of Theorem~\ref{thm:main-harmonic} is then
equivalent to proving that, $\P$ a.s., $\nu(\D y)$ a.e.,
\begin{equation}
\label{equivalent-asymp} \lim_{r\to\infty} \frac{1}{r} \log\nu \bigl({
\mathcal B}(y,r) \bigr) = -\beta,
\end{equation}
where ${\mathcal B}(y,r)$ denotes the set of all geodesic rays of the
Yule tree that coincide with $y$ up to height $r$.

The next step is then to identify a kind of ``stationary environment
seen from the particle''
for Brownian motion on the Yule tree. More precisely, we show in
Section~\ref{sec:subtreeabove} that the law of the subtree above level
$r \geq0$ that is selected by Brownian motion (with drift $1/2$)
converges as $r \to\infty$ to a
limiting probability measure that we explicitly describe. This allows
us to construct an ergodic invariant measure
for the natural shifts on the space of all pairs consisting of a
(deterministic) Yule-type tree and a distinguished
geodesic ray on this tree, and moreover this measure is absolutely
continuous with respect to the law of
the random pair formed by
the Yule tree and the ray selected by Brownian motion. The limiting
result \eqref{equivalent-asymp} then
follows from an application of Birkhoff's ergodic theorem to a
suitable functional. In this part of our work, we use several ideas
that have been
developed by Lyons, Pemantle and Peres \cite{LPP95} in a slightly
different setting.

\subsection*{The random conductance} The constant $\beta$ in Theorems~\ref{thm:maindiscrete} and~\ref{thm:main-harmonic} can be expressed in terms of the
(continuous) conductance of $ {\Delta}$. Roughly speaking, if one
considers $\Delta$ as a network of resistors with unit resistance per
unit length, then the effective resistance between height $0$ and
height $1$ is a random variable, which we denote by $ \mathcal{C}$.
With this interpretation, it is clear that $\mathcal{C}>1$ a.s.
Alternatively, $ \mathcal{C}$ is the mass under the Brownian excursion
measure from the root of those excursion paths that hit height $1$.
Note that $ \mathcal{C}$ is also the limit in distribution of the
(scaled) conductance between generations $0$ and $n$ in $ \t^{(n)}$.
The distribution of $ \mathcal{C}$ satisfies the following recursive equation:
\begin{eqnarray}
\label{eq:rde} \mathcal{C} & \stackrel{(\mathrm{d})} {=} & \biggl(U +
\frac
{1-U}{ \mathcal{C}_1+ \mathcal{C}_2} \biggr)^{-1},
\end{eqnarray}
where $ \mathcal{C}_1$ and $ \mathcal{C}_2$ are independent copies of $
\mathcal{C}$, and $U$ is uniformly distributed over $[0,1]$ and
independent of the pair $(\cc_1,\cc_2)$. Despite this rather simple
recursive equation, the law $\gamma(\D s)$ of $ \mathcal{C}$ is not
completely understood (in particular, its mean is unknown). We prove that,
although $\gamma$ has a continuous density $f$ over $[1,\infty)$, the
function $f$ is not twice continuously differentiable at the point $2$
(and we expect a similar singular behavior at all integer values). See
Figure~\ref{fig:density}.

%
\begin{figure}[t]

\includegraphics{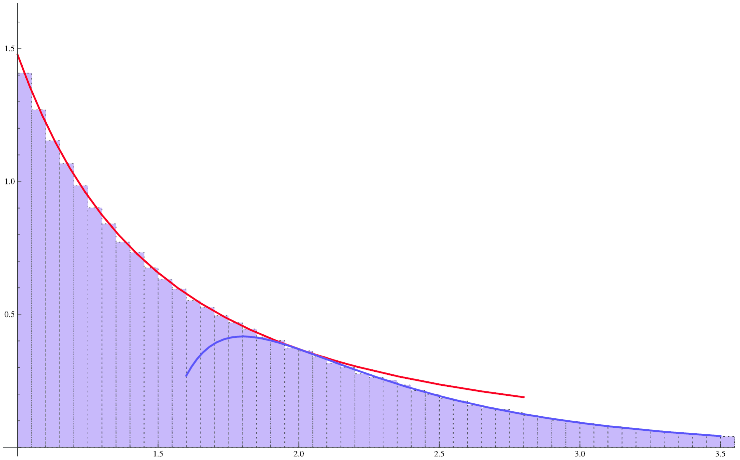}

\caption{A histogram of the distribution of $\gamma$ over $(1,\infty)$
from simulations based on
the recursive equation \protect\eqref{eq:rde}. There are explicit
formulas for the density of $\gamma$ over $[1,2]$
and over $[2,3]$, which however depend on the (unknown) density at $1$.
The red and the blue curves correspond to these
explicit formulas, with a numerical approximation of the density at $1$.}
\label{fig:density}
\end{figure}

In many respects, the distribution $\gamma$ governs the behavior of
harmonic measure.
In particular the constant $\beta$ has an explicit expression in terms
of $\gamma$.

%
\begin{proposition}
\label{prop:valuebeta}
The distribution $\gamma$ is characterized
in the class
of all probability measures on $[1,\infty)$ by the distributional
equation \eqref{eq:rde}. The constant $\beta$ appearing in Theorems~\ref
{thm:maindiscrete} and~\ref{thm:main-harmonic} is given by
\begin{eqnarray}
\label{value-beta} \beta &=& 2 \frac{\int\!\!\int\!\!\int\gamma(\D r)\gamma(\D
s)\gamma(\D t)\sklafrac{rs}{r+s+t-1}\log(\vfrac{s+t}{s})}{ \int\!\!\int\gamma
(\D s)\gamma(\D t) \sklafrac{st}{s+t-1}}
\nonumber
\\[-8pt]
\\[-8pt]
\nonumber
&=& \frac{1}{2} \biggl( \frac{(\int\gamma(\D s) s)^2}{ \int\!\!\int\gamma(\D
s) \gamma(\D t) \sklafrac{st}{s+t-1}}-1 \biggr).
\end{eqnarray}
\end{proposition}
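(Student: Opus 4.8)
The proposition splits into two essentially independent assertions, the uniqueness of the fixed point of \eqref{eq:rde} and the two closed forms for $\beta$, and the plan is to treat them in turn.

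\textbf{Uniqueness of $\gamma$.} Existence being clear (the conductance of $\Delta$ solves \eqref{eq:rde}), I would first note that every solution $\gamma$ has a finite first moment, by a self-bounding estimate read off from \eqref{eq:rde}: from $\mathcal{C}<1/U$ one gets $\mathbb{E}[\log\mathcal{C}]\le\mathbb{E}[-\log U]=1$, and then from $\mathcal{C}-1=\frac{(1-U)(\mathcal{C}_1+\mathcal{C}_2-1)}{U(\mathcal{C}_1+\mathcal{C}_2-1)+1}$ one computes $\mathbb{E}[\mathcal{C}-1\mid\mathcal{C}_1,\mathcal{C}_2]\le\log(\mathcal{C}_1+\mathcal{C}_2)$, so $\mathbb{E}[\mathcal{C}]<\infty$. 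Next I would show that the map $\Phi$ on $\mathcal{P}_1([1,\infty))$ induced by \eqref{eq:rde} is a strict contraction for the Wasserstein-$1$ distance. Writing $\mathcal{C}=f(\mathcal{C}_1+\mathcal{C}_2;U)$ with $f(a;U)=a/(Ua+1-U)$, one has $\partial_a f(a;U)=\frac{1-U}{(Ua+1-U)^2}\le\frac{1-U}{(1+U)^2}$ for $a\ge 2$, and $\int_0^1\frac{1-U}{(1+U)^2}\,dU=1-\log 2$; applying the mean value theorem under the natural coupling and using that $U$ is independent of the conductances gives $W_1(\Phi\gamma,\Phi\gamma')\le 2(1-\log 2)\,W_1(\gamma,\gamma')$ with $2(1-\log 2)<1$. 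Applied to two solutions (both in $\mathcal{P}_1$ by the previous step) this forces $\gamma=\gamma'$.

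\textbf{The formula for $\beta$.} The starting point is the ergodic-theoretic description of $\beta$ produced by the proof of \cref{thm:main-harmonic}: via \eqref{equivalent-asymp} and Birkhoff's theorem applied along the ray selected by Brownian motion with drift $1/2$ on the Yule tree, and indexing time by the successive branch points met along that ray, one has $\beta=\big(\int\Phi\,d\pi\big)/\big(\int\Psi\,d\pi\big)$ (the ratio form of the ergodic theorem), where $\pi$ is the ergodic invariant measure built in \cref{sec:subtreeabove} from the limiting law of the subtree above the selected level, $\Phi=\log\frac{s+t}{s}$ is the per-step increment of $-\log\mu(\cdot)$ at a branch point whose two outgoing subtrees have conductances $s,t$ and into whose $s$-subtree the ray continues (the selection probability being $\frac{s}{s+t}$), and $\Psi$ is the height increment of the corresponding segment. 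The crux is then to write $\pi$ explicitly in terms of $\gamma$: in the coordinates $(U,r,s,t)$ describing one segment and the conductances $r,s,t$ of the three subtrees meeting at its endpoint, $\pi$ should have a density with respect to $\mathrm{Unif}[0,1](dU)\,\gamma(dr)\,\gamma(ds)\,\gamma(dt)$ given by an explicit elementary function of $(U,r,s,t)$, coming from the conductance-biasing intrinsic to the ``environment seen from the particle'' for a reversible walk together with the directional weights. Substituting $\Phi,\Psi$ and performing the elementary integrations over the uniform variables (of the type $\int_0^1\frac{a}{1+(a-1)U}\,dU=\frac{a\log a}{a-1}$) would then reproduce the first expression in \eqref{value-beta}, the overall factor $2$ being an artefact of the change of variables $s\mapsto-\log(1-s)$ between heights in $\Delta$ and in the Yule tree (together with the symmetry between the two subtrees).

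\textbf{Equivalence of the two expressions, and main obstacle.} To conclude I would show the two ratios in \eqref{value-beta} agree, i.e., writing $m=\int s\,\gamma(ds)=\mathbb{E}[\mathcal{C}]$, the identity $4\iiint\frac{rs}{r+s+t-1}\log\frac{s+t}{s}\,\gamma(dr)\gamma(ds)\gamma(dt)=m^2-\iint\frac{st}{s+t-1}\,\gamma(ds)\gamma(dt)$; this I would prove purely algebraically, expressing $m$, $\iint\frac{st}{s+t-1}\gamma^{\otimes 2}$ and the triple integral as expectations over independent uniforms $U,U'$ and i.i.d.\ conductances $\mathcal{C}_1,\mathcal{C}_2,\mathcal{C}_3\sim\gamma$ via \eqref{eq:rde}, doing the elementary integrals over $U,U'$, and reducing to a routine identity among integrals against $\gamma^{\otimes 3}$. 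The delicate step is the explicit identification of $\pi$ and the attendant bookkeeping --- in particular justifying that the density collapses, after the $U$-integration, to the weight $\frac{rs}{r+s+t-1}$, and correctly tracking the factor $2$ --- so that the computation lands exactly on \eqref{value-beta}; by comparison, the uniqueness of $\gamma$ and the equivalence of the two displayed forms are comparatively routine once the right objects are in place.
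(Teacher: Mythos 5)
Your treatment of the uniqueness of $\gamma$ is essentially the paper's argument (the Wasserstein-$1$ contraction with constant $2(1-\log 2)$, carried out in \cref{prop:unicite}); the extra argument you give that every fixed point in $\mathscr{M}$ is automatically in $\mathscr{M}_1$ (via $\mathcal C<1/U$ and the conditional bound on $\mathcal C-1$) is a correct alternative to the paper's tail-function iteration and works fine. Your plan for proving the \emph{second} equality in \eqref{value-beta} (reduce the triple integral using the recursive identity, which is the content of \eqref{eq:f}) is also essentially what the paper does.

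The gap is in the derivation of the first expression for $\beta$. You are taking the discrete-time route (a ratio ergodic theorem indexed by the branch points met along the selected ray), which is the alternative approach the paper sketches only in \cref{sec:comple}. But for that route the relevant invariant measure is \emph{not} the one built from the limiting subtree law in \cref{sec:subtreeabove} (that is $\Lambda^*(d\t\,d\mathbf v)=\Phi_\infty(\mathcal C(\t))\Theta^*(d\t\,d\mathbf v)$, invariant for the continuous-time shifts $\sigma_r$). The discrete node-shift $S$ preserves a \emph{different} density, $\kappa(\mathcal C(\t))\,\Theta^*(d\t\,d\mathbf v)$ with $\kappa(r)=\iint\gamma(ds)\gamma(dt)\frac{rs}{r+s+t-1}$ (\cref{invariantbis}), and one has $\Phi_\infty\not\propto\kappa$. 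Conflating the two is exactly the kind of bookkeeping error that would derail the computation, and in fact the formula that the node-shift produces is the ratio at the very end of \cref{sec:comple}, which the paper explicitly declines to simplify (``we leave it as an exercise for the reader to check that this is consistent''). So the step you flag as ``delicate'' is not done, and the identity you assert --- that the density, after integrating out the uniform, collapses to the weight $\frac{rs}{r+s+t-1}$ --- is not established. You also misattribute the overall factor $2$: in the paper's derivation it arises purely from the symmetry between the two subtrees at a branch point (the passage from the two-term entropy to $-2\cdot\frac{\mathcal C(\t_{(1)})}{\mathcal C(\t_{(1)})+\mathcal C(\t_{(2)})}\log\frac{\mathcal C(\t_{(1)})}{\mathcal C(\t_{(1)})+\mathcal C(\t_{(2)})}$), not from the logarithmic change of variables $s\mapsto-\log(1-s)$, which was already absorbed in passing from $\Delta$ to the Yule tree. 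By contrast, the paper's actual proof stays with the continuous-time shift: it writes $\beta=\Lambda^*(G_1)$, discretizes $G_1$ by functions $H_\varepsilon$ that charge only segments with a branch point below height $\varepsilon$, and computes $\lim_{\varepsilon\to0}\varepsilon^{-1}\Lambda^*(H_\varepsilon)$ directly using the explicit form of $\Phi_\infty$ from \cref{limit-density}. If you want to carry your route through, you need to substitute the correct invariant density $\kappa(\mathcal C(\t))$, evaluate $\int\Phi\,d\pi$ and $\int\Psi\,d\pi$ explicitly, and then still perform the nontrivial algebraic reduction to \eqref{value-beta}; none of this is routine, and none of it is in your write-up.
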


We finally mention that some extensions of the results of the present
work have been obtained by Lin \cite{Lin,Lin2}.
A version of Theorem \ref{thm:maindiscrete} for Galton--Watson trees
whose offspring distribution belongs to the domain of attraction
of a stable law with index $\alpha\in(1,2)$ is derived in \cite{Lin}.
In the setting of the present work, the article \cite{Lin2}
gives an analog
of Theorem~\ref{thm:maindiscrete} for the harmonic measure of a vertex
chosen according to the {uniform} probability measure on generation
$n$ of the tree $\t^{(n)}$. This is another step toward a full
multifractal analysis of the harmonic measure $\mu_n$.

The paper is organized as follows. We start by studying the continuous
model. In Section~\ref{sec:continuoussetting}, we
introduce the basic set-up and we relate the random tree $ \Delta$ to
the Yule tree. The law of the random conductance $ \mathcal{C}$ is
studied in Section~\ref{sec:conductance}.
Section~\ref{sec:stationary+ergodic} gathers the
ingredients of the proof of Theorem~\ref{thm:main-harmonic}. In
particular, Section~\ref{sec:asymptotics} identifies
the limiting distribution of the subtree above level $r$ selected by
Brownian motion, and
Section~\ref{sec:ergodic} explains the application of the ergodic
theorem needed to derive \eqref{equivalent-asymp}. Section~\ref{sec:continuous->discrete} is devoted to the proof of Theorem~\ref
{thm:maindiscrete} and Corollary \ref{cor:planetree}. Let us emphasize
that Theorem \ref{thm:maindiscrete} is not a straightforward
consequence of
Theorem~\ref{thm:main-harmonic}, and that the proof of our results in
the discrete setting requires
a number of additional estimates, even though a key role is played by
Theorem~\ref{thm:main-harmonic}.
The last section is devoted to a few complements. In particular, we
comment on the connection between the present paper and the recent work
of A\"\i d\'ekon \cite{Aid11}.

\section{The continuous setting} \label{sec:continuoussetting}
\renewcommand{\t}{\mathcal{T}}
In this section, we give a formal definition of the (continuous)
reduced tree $\Delta$. We then explain the connection between the
reduced tree and the Yule tree. We finally introduce and study the
conductance of these trees, which plays a key role in the next sections.

\subsection{The reduced tree \texorpdfstring{${\Delta}$}{Delta}}
\label{sec:treedelta}

We set
\[
\mathcal{V} = \bigcup_{n=0}^\infty\{1,2
\}^n,
\]
where $\{1,2\}^0=\{\varnothing\}$. If $v=(v_1,\ldots,v_n)\in\mathcal
{V}$, we set $|v|=n$ (in particular, $|\varnothing|=0$), and if $n\geq1$,
we define the parent
of $v$ as $\widehat v=(v_1,\ldots,v_{n-1})$ (we then say that $v$ is a
child of $\widehat v$). If $v=(v_1,\ldots,v_n)$ and $v'=(v'_1,\ldots
,v'_m)$ belong to $\mathcal{V}$, the concatenation of
$v$ and $v'$ is $vv':=(v_1,\ldots,v_n,v'_1,\ldots,v'_m)$. The notions
of a descendant and an ancestor of an element of
$\v$ are defined in the obvious way, with the convention that a vertex
$v\in\v$ is both an ancestor and
a descendant of itself. If $v,w\in\v$, $v\wedge w$ is the unique
element of $\v$ that is an ancestor of both $v$ and $w$
and such that $|v\wedge w|$ is maximal.

We then consider a collection
\[
(U_v)_{v\in\mathcal{V}}
\]
of independent real random variables uniformly distributed over $[0,1]$
under the
probability measure $\P$. We set
\[
Y_\varnothing=U_\varnothing
\]
and then, by induction, for every $v\in\{1,2\}^n$, with $n\geq1$,
\[
Y_v = Y_{\widehat v} + U_v(1- Y_{\widehat v}).
\]
Note that $0\leq Y_v< 1$ for every
$v\in\mathcal{V}$, a.s. Consider then the set
\[
\Delta_0:= \bigl(\{\varnothing\}\times[0,Y_\varnothing] \bigr)
\cup \biggl(\bigcup_{v\in\mathcal{V}\setminus\{\varnothing\}} \{v\} \times(Y_{\widehat
v},
Y_v] \biggr).
\]
There is a straightforward way to define a metric $\bd$ on $\Delta_0$,
so that
$(\Delta_0,\bd)$ is a (noncompact) $\R$-tree and, for every
$x=(v,r)\in\Delta_0$, we have $\bd((\varnothing,0), x)=r$. To be
specific, let $x=(v,r)\in\Delta_0$ and $y=(w,r')\in\Delta_0$:
\begin{itemize}
\item If $v$ is a descendant of $w$ or $w$ is a descendant
of $v$, we set $\bd(x,y)= |r-r'|$.
\item Otherwise, $\bd(x,y)= \bd((v\wedge w,Y_{v\wedge
w}),x)+ \bd((v\wedge w,Y_{v\wedge w}),y)
=(r-Y_{v\wedge w})+(r'-Y_{v\wedge w})$.
\end{itemize}
See Figure~\ref{fig:Delta} for an illustration of the tree $\Delta_0$.

%
\begin{figure}[t]

\includegraphics{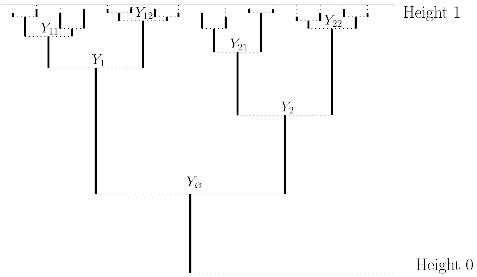}

\caption{The random tree $\Delta_0$.}\label{fig:Delta}
\end{figure}

We let $\Delta$ be the completion of
$\Delta_0$ with respect to the metric $\bd$. Then
\[
\Delta=\Delta_0 \cup\partial\Delta,
\]
where by definition $\partial\Delta=\{x\in\Delta:\bd((\varnothing,0),
x)=1\}$, which is canonically identified with $\{1,2\}^\N$
(here and below, $\N=\{1,2,\ldots\}$ is the set of all positive
integers). Note that
$(\Delta,\bd)$ is a compact $\R$-tree.

The point $(\varnothing,0)$ is called the root of $\Delta$. For every
$x\in\Delta$, we set $H(x)=\bd((\varnothing,0), x)$
and call $H(x)$ the height of $x$.
We can define a genealogical order on $\Delta$ by setting $x\prec y$ if
and only if
$x$ belongs to the geodesic path from the root to $y$.

For every $\ve\in(0,1)$, we set
\[
\Delta_\ve= \bigl\{x\in\Delta: H(x)\leq1-\ve \bigr\},
\]
which is also a compact $\R$-tree for the metric $\bd$.
The leaves of $\Delta_\ve$ are the points of the form $(v,1-\ve)$ for
all $v\in\mathcal{V}$ such that
$Y_{\widehat v}< 1-\ve\leq Y_v$. The branching points of $\Delta_\ve$
are the points of the form
$(v,Y_v)$ for all $v\in\mathcal{V}$ such that \mbox{$Y_v<1-\ve$}. We can then
define Brownian motion on $\Delta_\ve$ as a special case of a diffusion
on a graph (see in particular \cite{FS00,EK01} and the
references therein, and note that the definition of Brownian motion on
$\Delta_\ve$ can also be
viewed as a very special case of the construction of Brownian motion on
$\R$-trees given in \cite{AEW}).
Informally, this process behaves like linear Brownian motion as long as
it stays on an ``open interval'' of the
form $\{v\}\times(Y_{\widehat v},Y_v\wedge(1-\ve))$. It is reflected
at the root $(\varnothing,0)$ and at the leaves
of $\Delta_\ve$, and when it arrives at a branching point of the tree,
it chooses each of the three possible
line segments ending at this point with equal probabilities.

Write $B^\ve=(B^\ve_t)_{t\geq0}$ for Brownian motion on $\Delta_\ve$
starting from the root, which is defined
under the probability measure $P$ (for our purposes, it will be
important to carefully distinguish the
probability measure $ \mathbb{P}$ governing the random trees and the
one governing Brownian motions on
these trees). We
let
\[
T_\ve:= \inf \bigl\{t\geq0: H \bigl(B^\ve_t
\bigr)= 1-\ve \bigr\},
\]
be the hitting time of the set of all leaves of $\Delta_\ve$.

If we now set $\ve_n=2^{-n}$ for every $n\geq1$, we may define all
processes $B^{\ve_n}$ on the same
probability space, in such a way that $B^{\ve_n}_{t\wedge T_{\ve_m}}=
B^{\ve_m}_{t\wedge T_{\ve_m}}$
for every $t\geq0$ and every choice of $m\leq n$, $P$ a.s. Assuming
that the latter property holds, we set
\[
T=\lim_{n\uparrow\infty}\uparrow T_{\ve_n}
\]
and we define the process
$(B_t)_{t\geq0}$ by requiring that $B_t=\dagger$ if $t\geq T$ (where
$\dagger$ is a cemetery point)
and, for every $n\geq1$ and $t\geq0$, $B_{t\wedge T_{\ve_n}}=B^{\ve
_n}_{t\wedge T_{\ve_n}}$. It is easy to verify that the left limit
\[
B_{T-}= \lim_{t\uparrow T,t<T} B_t
\]
exists in $\Delta$ and belongs to $\partial\Delta$, $P$ a.s. The
harmonic measure
$\mu$ is the distribution of $B_{T-}$ under $P$, which is a (random)
probability measure on $\partial\Delta=\{1,2\}^\N$.

\subsection{The Yule tree}
\label{sec:yuletree}

For the proof of Theorem~\ref{thm:main-harmonic}, it will be more convenient
to reformulate the problem in terms of Brownian motion on the Yule tree.
To define the Yule tree, consider now a collection
\[
(V_v)_{v\in\mathcal{V}}
\]
of independent real random variables exponentially distributed with
mean $1$ under the
probability measure $\P$. We set
\[
\mathcal{Y}_\varnothing=V_\varnothing
\]
and then by induction, for every $v\in\{1,2\}^n$, with $n\geq1$,
\[
\mathcal{Y}_v = \mathcal{Y}_{\widehat v} + V_v.
\]
The Yule tree is the set
\[
\Gamma:= \bigl(\{\varnothing\}\times[0,\mathcal{Y}_\varnothing] \bigr) \cup
\biggl(\bigcup_{v\in\mathcal{V}\setminus\{\varnothing\}} \{v\} \times (\mathcal{Y}_{\widehat v},
\mathcal{Y}_v] \biggr),
\]
which is equipped with the metric $d$ defined in the same way as $\bd$
in the preceding
section. For this metric, $\Gamma$ is again a noncompact $\R$-tree.
For every $x=(v,r)\in\Gamma$, we keep the notation
$H(x)=r=d((\varnothing,0),x)$ for the height of the point $x$.

Now observe that if $U$ is uniformly distributed over $[0,1]$, the
random variable
$-\log(1-U)$ is exponentially distributed with mean $1$.
Hence, we may and will suppose that the collection $(V_v)_{v\in\mathcal
{V}}$ is constructed from the collection
$(U_v)_{v\in\mathcal{V}}$ in the previous section via
the formula $V_v=-\log(1-U_v)$, for every $v\in\mathcal{V}$. Then the mapping
$\Psi$ defined on $\Delta_0$ by $\Psi(v,r)=(v,-\log(1-r))$, for every
$(v,r)\in\Delta_0$,
is a homeomorphism from $\Delta_0$ onto $\Gamma$.

Stochastic calculus shows that we can write, for every $t\in[0,T)$,
\begin{equation}
\label{BM-Yule-reduced} \Psi(B_t) =W \biggl(\int_0^t
\bigl(1-H(B_s) \bigr)^{-2} \,\D s \biggr),
\end{equation}
where $(W(t))_{t\geq0}$ is Brownian motion with constant drift $1/2$ toward
infinity
on the Yule tree (this process is defined in a similar way as Brownian motion
on $\Delta_\ve$, except that it behaves like Brownian motion with drift $1/2$
on every ``open interval'' of the tree). Note that $W$ is again defined under
the probability measure $P$. From now on, when we speak about Brownian motion
on the Yule tree or on other similar trees, we will always mean Brownian
motion with drift $1/2$ toward infinity.

By definition, the boundary of $\Gamma$ is the set of all infinite
geodesics in $\Gamma$ starting from the root $(\varnothing,0)$ (these are
called geodesic rays). The boundary of $\Gamma$ is canonically
identified with $\{1,2\}^\N$. From the transience of
Brownian motion on $\Gamma$, there is an a.s. unique geodesic ray
denoted by
$W_\infty$ that is visited
by $(W(t),t\geq0)$ at arbitrarily large times. We sometimes say that
$W_\infty$ is the exit ray of Brownian motion on $\Gamma$. The
distribution of $W_\infty$ under $P$ yields
a probability measure $\nu$ on $\{1,2\}^\N$. Thanks to \eqref
{BM-Yule-reduced}, we have in
fact $\nu=\mu$, provided we view both $\mu$ and $\nu$
as (random) probability measures on $\{1,2\}^ \mathbb{N}$. The
statement of Theorem~\ref{thm:main-harmonic} is then reduced to
checking that \eqref{equivalent-asymp} holds $\nu(\D y)$ a.e., $\P$ a.s.

\subsection*{Yule-type trees}
Our proof of \eqref{equivalent-asymp}
makes a heavy
use of tools of ergodic theory applied to certain transformations on a space
of trees that we now describe.
We let $\T$ be the set of all collections $(z_v)_{v\in\v}$
of nonnegative real numbers such that the following properties hold:
\begin{longlist}[(ii)]
\item[(i)] $z_{\widehat v}< z_v$ for every $v\in\v\setminus\{
\varnothing\}$;
\item[(ii)] for every $\mathbf{v}=(v_1,v_2,\ldots)\in\{1,2\}^\N$,
\[
\lim_{n\to\infty} z_{(v_1,\ldots,v_n)} =+\infty.
\]
\end{longlist}
Notice that we allow the possibility that $z_\varnothing=0$.
We equip $\T$ with the $\sigma$-field generated by the coordinate mappings.
If $(z_v)_{v\in\v}\in\T$, we can consider the associated ``tree''
\[
\t:= \bigl(\{\varnothing\}\times[0,z_\varnothing] \bigr) \cup \biggl(\bigcup
_{v\in
\mathcal{V}\setminus\{\varnothing\}} \{v\} \times(z_{\widehat v},
z_v] \biggr),
\]
equipped with the distance defined as above. We will keep the notation
$H(x)=r$ if $x=(v,r)$ for the height of
a point $x\in\t$. The genealogical order on $\t$ is defined as
previously and will again be denoted by $\prec$.
If $\mathbf{u}=(u_1,u_2,\ldots,u_n,\ldots)\in\{1,2\}^\N$, and
$x=(v,r)\in\t$, we write $x\prec\mathbf{u}$
if $v=(u_1,u_2,\ldots,u_k)$ for some integer $k\geq0$.

We will often abuse notation and say that we consider a tree $\t\in\T
$: this really
means that we are given a collection $(z_v)_{v\in\v}$ satisfying the
above properties, and we consider the associated tree $\t$.
In particular, $\t$ has an order structure (in addition to the
genealogical partial order)
given by the lexicographical order on $\v$.
Elements of $\T$ will be called Yule-type trees.

Clearly, the Yule tree can be viewed as a random variable with values
in $\T$, and
we write $\Theta(\D\t)$ for its distribution.

Let us fix $ \t\in\mathbb{T}$. If $r>0$, the level set at height $r$ is
\[
\t_r= \bigl\{x\in\t: H(x)=r \bigr\}. %
\]
If $x\in\t_r$, we can consider the subtree
$\t[x]$ of descendants of $x$ in $\t$. Formally, we view $\t[x]$ as an
element of $\T$: we write $v_x$ for the unique element of $\v$ such
that $x=(v_x,r)$, and define $\t[x]$ as the
Yule-type tree corresponding to the collection $(z_{v_xv}-r)_{v\in\v}$.
Similarly, if $\tllbracket0, x\trrbracket$ denotes the
geodesic segment between the root and $x$, we can define the subtrees
of $\t$
branching off $\tllbracket0, x\trrbracket$. To this end, let $n_x=|v_x|$
and let
$v_{x,0}=\varnothing, v_{x,1},\ldots, v_{x,n_x}=v_x$ be the successive
ancestors of
$v_x$ from generation $0$ to generation $n_x$. Set $r_{x,i}=
z_{v_{x,i-1}}$ for every $1\leq i\leq n_x$. Then,
for every $1\leq i\leq n_x$, the $i$th subtree branching off
$\tllbracket0, x\trrbracket$, which is denoted by
$\t_{x,i}$, corresponds to the collection
\[
(z_{\tilde v_{x,i}v}- r_{x,i} )_{v\in\v},
\]
where $\tilde v_{x,i}$ is the child of $v_{x,i-1}$ that is not
$v_{x,i}$. To simplify notation, we introduce the
point measure
\[
\xi_{r,x}(\t)= \sum_{i=1}^{n_x}
\delta_{(r_{x,i},\t_{x,i})},
\]
which belongs to the set $\mathcal{M}_p(\R_+\times\T)$ of
all finite point measures on $\R_+\times\T$.

%
\begin{figure}[b]

\includegraphics{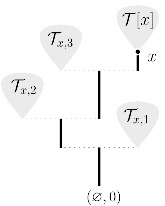}

\caption{The spine decomposition.}\label{fig5}
\end{figure}

We now state a ``spine'' decomposition of the Yule tree, which plays an
important role in our approach. See Figure~\ref{fig5} for an illustration of this decomposition.

%
\begin{proposition}[(Spine decomposition)]\label{spine-decomposition}
Let $F$ be a nonnegative measurable function on $\T$, and let $G$ be a
nonnegative
measurable function on $\mathcal{M}_p(\R_+\times\T)$. Let $r>0$. Then
\[
\E \biggl[ \sum_{x\in\Gamma_r} F \bigl(\Gamma[x] \bigr) G
\bigl(\xi_{r,x}(\Gamma) \bigr) \biggr] = e^r \E \bigl[F(
\Gamma) \bigr] \times\E \bigl[G(\mathcal{N}) \bigr],
\]
where ${\mathcal N}(\D s \,\D\t)$ is, under the probability measure $\P
$, a Poisson point measure on
$\mathcal{M}_p(\R_+\times\T)$ with intensity
$2 {\mathbf1}_{[0,r]}(s)\, \D s  \Theta(\D\t)$.
\end{proposition}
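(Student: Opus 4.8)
The plan is to exploit the fact that the Yule tree is the genealogical tree of the classical Yule process, so that its law $\Theta$ satisfies a clean recursive branching identity, and then to reduce the claimed formula to a "many-to-one" lemma along the lines of the classical spine (or size-biasing) decompositions for branching processes. First I would record the basic recursive description of $\Theta$: the root edge has an $\mathrm{Exp}(1)$ length $V_\varnothing=Z_\varnothing$, and conditionally on $Z_\varnothing$ the two subtrees $\Gamma[1],\Gamma[2]$ hanging from height $Z_\varnothing$ are independent copies of $\Gamma$ (shifted to start at height $0$), which is exactly the construction in \cref{sec:yuletree}. Equivalently, the point process of (height, subtree) pairs read along the spine up to a branch point, together with the structure above it, is governed by this recursion.

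Next I would prove the statement by induction on the structure, or more transparently by a direct decomposition of the sum over $x\in\Gamma_r$. Fix $r>0$. Either $Z_\varnothing\ge r$, in which case there is a single point of $\Gamma_r$, namely $x=(\varnothing,r)$, with $\Gamma[x]$ distributed as a Yule tree started from height $0$ conditioned appropriately and $\xi_{r,x}(\Gamma)=0$; or $Z_\varnothing=s<r$, in which case every $x\in\Gamma_r$ lies in one of the two subtrees $\Gamma[i]$, $i\in\{1,2\}$, the subtree $\Gamma[i]$ contributes $\delta_{(s,\Gamma[3-i])}$ to $\xi_{r,x}(\Gamma)$ (plus the contributions coming from inside $\Gamma[i]$ at levels in $(s,r)$), and the genealogical segment inside $\Gamma[i]$ has length $r-s$. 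Writing $\Phi(r):=\E[\sum_{x\in\Gamma_r}F(\Gamma[x])G(\xi_{r,x}(\Gamma))]$, the decomposition yields, using the independence of $Z_\varnothing$ and $(\Gamma[1],\Gamma[2])$ and the fact that $\xi$ is obtained from a sub-spine contribution by prepending one atom $\delta_{(s,\Gamma')}$,
\begin{equation*}
\Phi(r)=e^{-r}\,\E[F(\Gamma)]\,G(0)+2\int_0^r e^{-s}\,\E\!\big[\Phi_s(r-s)\big]\,ds,
\end{equation*}
where $\Phi_s(t)$ is the analogous quantity but with $G$ replaced by $G'(\zeta):=\E[G(\delta_{(s,\Gamma')}+\zeta)]$, the expectation over an independent $\Gamma'\sim\Theta$. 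I would then guess the solution $\Phi(r)=e^r\,\E[F(\Gamma)]\,\E[G(\mathcal N)]$ with $\mathcal N$ the Poisson measure of intensity $2\mathbf{1}_{[0,r]}(s)\,ds\,\Theta(d\t)$ and verify it satisfies this functional equation: the $G(0)$ term matches the event that $\mathcal N=0$ (probability $e^{-2r}$, and $e^{-r}=e^r\cdot e^{-2r}$ up to the right bookkeeping), and the superposition/restriction property of Poisson measures — namely that conditioning a Poisson measure on having a first atom near height $s$ peels off one independent atom $\delta_{(s,\Gamma')}$ and leaves a Poisson measure on $[0,r]\setminus\{s\}$ — exactly reproduces the integral term with the shifted function $G'$.

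Alternatively, and perhaps more cleanly, I would phrase this as an application of the standard spine/change-of-measure theorem for branching Markov processes (the Yule tree being a continuous-time binary branching process with constant rate $2$ of "producing a new subtree along the spine"): the factor $e^r$ is the expected population size $\E[\#\Gamma_r]$ for a rate-$1$ Yule process (indeed $\E[\#\Gamma_r]=e^r$), the selected subtree $\Gamma[x]$ under the size-biased measure is again an ordinary Yule tree, and the subtrees branching off the spine form, conditionally, a Poisson process in $(s,\t)$ with the stated intensity. The main obstacle I anticipate is purely bookkeeping: making the induction/functional-equation argument rigorous requires care with the measurability of $F$ and $G$, with the precise way $\xi_{r,x}$ records heights $r_{x,i}=z_{v_{x,i-1}}$ (so the atoms of $\mathcal N$ sit at the \emph{branch} heights, not at $r$), and with the limiting case $z_\varnothing=0$; a clean monotone-class or induction-on-generations argument handles all of this, and the Poisson superposition identity does the rest.
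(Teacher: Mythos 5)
The paper offers no proof of this proposition: it states ``This result is part of the folklore of the subject, and we leave the proof to the reader.'' So there is no argument in the text to compare against; what follows evaluates your proposal on its own terms.

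Both of your routes are legitimate, and the second one (the generic spine/size-biasing theorem for branching processes) is essentially the folklore argument being alluded to: the Yule tree is a rate-$1$ binary branching process, so the size-biased spine branches at rate $2$ and sheds independent $\Theta$-distributed subtrees, which is exactly the Poisson measure $\mathcal N$, while the factor $e^r=\E[\#\Gamma_r]$ is the normalization. Your first route (conditioning on $Z_\varnothing$ to derive a renewal-type identity and then matching against the Poisson thinning formula) is correct in spirit and the verification you sketch does work: the $G(0)$ term has weight $e^{-r}=e^{r}\cdot e^{-2r}$, and peeling off the lowest atom of a rate-$2$ Poisson process on $[0,r]$ reproduces the integral term. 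However, as written the argument only shows that $e^r\E[F(\Gamma)]\E[G(\mathcal N)]$ \emph{is a} solution of the functional equation, and the equation as stated does not obviously have a unique solution because the second argument $G$ changes at each step. The clean way to close this (which you gesture at with ``induction on generations'') is to decompose the sum $\sum_{x\in\Gamma_r}$ by the generation $n=|v_x|$: for each $v\in\{1,2\}^n$ the spine heights are the partial sums $S_1<\cdots<S_n$ of i.i.d.\ $\mathrm{Exp}(1)$ variables, the event $\{z_{\hat v}<r\le z_v\}$ is $\{S_n<r\le S_{n+1}\}$, and conditionally on these heights $\Gamma[x]$ and the side-trees are independent with laws $\Theta$. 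Summing the $2^n$ terms at generation $n$ and comparing densities ($e^{-s_{n+1}}$ for $\mathrm{Exp}(1)$ partial sums versus $2^{n+1}e^{-2t_{n+1}}$ for $\mathrm{Exp}(2)$ arrival times) shows term-by-term equality with the generation-$n$ contribution to $e^r\E[G(\mathcal N)]$; no fixed-point or uniqueness argument is then needed. You should also make the height shift explicit: the ``contributions coming from inside $\Gamma[i]$'' carry absolute heights $s+u$, not $u$, so $G'$ must be composed with the translation by $s$ as well as with the prepended atom. None of this is a serious obstacle; these are precisely the bookkeeping issues you flagged.
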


This result is part of the folklore of the subject (see Theorem 2 in
\cite{CRW} for essentially the same Palm decomposition in the
more general setting where branching is combined with spatial motion),
and is closely related to the spine decomposition
of size-biased Galton--Watson trees in the discrete setting (see, e.g.,
\cite{LP10}, Section~12.1). For the reader's convenience,
we sketch a proof of Proposition \ref{spine-decomposition} in the
\hyperref[appen]{Appendix} below. This proof is based on
a relation between the continuous reduced tree $\Delta$ and the
Brownian excursion conditioned
to hit level $1$, which is recalled in Section~\ref{sec:convergences}
below (see in particular Figure~\ref{redu-excursion}).

\subsection{The continuous conductance}
\label{sec:conductance}
Before we proceed to the proof of Theorem~\ref{thm:main-harmonic}, we
will define and study the continuous conductance
$\mathcal{C}$ of
the tree $ \Delta$, which plays a major role in this proof.
Informally, the random variable $\mathcal{C}$ is defined
by viewing the random tree $\Delta$ as a network of ideal resistors
with unit resistance per unit of length
and letting $\mathcal{C}$ be the conductance between the root and the
set $\partial\Delta$ in this network.
We will give a more formal definition using excursion measures of
Brownian motion. To this end, and
in view of further applications in the next section, we first define
the excursion measure
on a (deterministic) Yule-type tree.

So let $\t\in\T$, and consider the associated collection $(z_v)_{v\in
\mathcal{V}}$ as explained in
the preceding section. We suppose that $z_\varnothing>0$.
We write
$C(\R_+,\t)$ for the set of all
continuous functions from $\R_+$ into $\t$. We also let $\mathcal{E}_\t
$ be the subset of $C(\R_+,\t)$ consisting
of all ``excursions'' in $\t$: an element $\omega$ of $C(\R_+,\t)$
belongs to $\mathcal{E}_\t$ if and only if
$\omega(0)= (\varnothing,0)$ and there
exists a number $\zeta(\omega)\in(0,\infty]$ such that $\omega(t)\neq
(\varnothing,0)$ if and only
if $0<t<\zeta(\omega)$. For every $r\geq0$ and $\omega\in C(\R_+,\t)$, set
\[
T_r(\omega):=\inf \bigl\{t\geq0:H \bigl(\omega(t) \bigr)= r \bigr\},
\]
where we recall that $H(\omega(t))$ is the height (or distance from the
root) of $\omega(t)$, and we make the usual convention $\inf\varnothing
=\infty$.
For every
$\ve\in(0,z_\varnothing)$, there is a unique $y_\ve\in\t$ whose height is
equal to $\ve$. Let $n_{\t,\ve}$ be the law of Brownian motion on $\t$
with drift $1/2$
started from $y_\ve$ and stopped when it hits the root $(\varnothing
,0)$ (if this event occurs). Then
$n_{\t,\ve}$ is a probability measure on the space $C(\R_+,\t)$. If
$0<\ve'<\ve$ an application of the strong Markov property
shows that the distribution of $(\omega(T_\ve(\omega)+t))_{t\geq0}$
under $n_{\t,\ve'}(\cdot\mid T_\ve<\infty)$
is $n_{\t,\ve}$. Furthermore, $n_{\t,\ve'}(T_\ve<\infty)=(1-e^{-\ve
'})/(1-e^{-\ve})$, by the formula for the
scale function of linear Brownian motion with drift. From
these properties, it is an easy exercise to verify that the measures
$\ve^{-1}n_{\t,\ve}$
converge when $\ve\to0$ toward a $\sigma$-finite measure $n_\t$ on
the set $\mathcal{E}_\t$ of all excursions in $\t$. The convergence
holds in
the sense that
\[
\ve^{-1} n_{\t,\ve} \bigl(g_1 \bigl(
\omega(t_1) \bigr)\cdots g_p \bigl(
\omega(t_p) \bigr) \bigr)\mathop{\la}\limits_{\ve\to0}
n_\t \bigl(g_1 \bigl(\omega(t_1) \bigr)
\cdots g_p \bigl(\omega(t_p) \bigr) \bigr)
\]
for every choice of $0<t_1<\cdots<t_p$ and of the bounded continuous
functions $g_1,\ldots,g_p$ on $\t$ that vanish on
a neighborhood of $(\varnothing,0)$ in $\t$.
Alternatively, the measure $n_\t$ is the unique $\sigma$-finite measure
on $\mathcal{E}_\t$ such that, for every $\ve>0$, one has
$n_\t(T_\ve<\infty)= (1-e^{-\ve})^{-1}$ and the law of $(\omega(T_\ve
(\omega)+t))_{t\geq0}$
under $n_\t(\cdot\mid T_\ve<\infty)$ is $n_{\t,\ve}$.

The measure $n_\t$ is
called the excursion measure of Brownian motion (with drift $1/2$) in
the tree $\t$. The preceding construction is an analog
of a classical construction of the It\^o excursion measure of linear
Brownian motion; see, for example,
\cite{RY}, Chapter XII. Of course, it is also a special case of the
definition of the excursion measure of a general Markov process
from a regular point (see Blumenthal~\cite{Bl}).

The conductance $\mathcal{C}(\t)$ is then defined by
\[
\mathcal{C}(\t)=n_\t(\zeta=\infty) = \lim_{\ve\to0}
\ve^{-1} n_{\t,\ve
}(T_0=\infty).
\]
Note that we have $1\leq\cc(\t)\leq(1-e^{-z_\varnothing})^{-1}<\infty
$. The bound $\mathcal{C}(\t)\geq1$ is obtained by saying that
$\mathcal{C}(\t)$ is greater than the conductance of the trivial tree
that consists only of a half-line. The other
bound follows from
the form of the scale function
of Brownian motion with drift, which yields an explicit expression for
the probability under $n_{\t,\ve}$
that the process comes back to $0$ before hitting the first branching point.

To simplify notation, we set $\mathcal{C}=\mathcal{C}(\Gamma)$, which
is a random variable
with values in $[1,\infty)$.
Because of the relations between the Yule tree~$\Gamma$ and the reduced
tree~$\Delta$, the
random conductance $\mathcal{C}$
may also be defined
as the mass assigned by the excursion measure of Brownian motion on
$\Delta$ (away from the root), to the set of trajectories
that reach height $1$ before coming back to the root.

The distributional identity \eqref{eq:rde} is obvious from the electric
network interpretation: just view
$\Delta$ as a series of two conductors, the first one being a segment
of length $U$ and the
second one consisting of two independent copies of $\Delta$ (scaled by
the factor $1-U$) in parallel.
Alternatively, it is also easy to derive \eqref{eq:rde} from the
probabilistic definition in terms
of excursion measures, by applying the strong Markov property at the
hitting time of the
first node of the tree. We leave the details to the reader.

Let us now prove that \eqref{eq:rde} characterizes the law of $
\mathcal{C}$ and discuss some of the properties
of this law. For $u \in(0,1)$ and $x,y \geq1$, we define
\begin{equation}
\label{eq:G} G(u,x,y):= \biggl( u + \frac
{1-u}{x+y} \biggr)^{-1},
\end{equation}
so that \eqref{eq:rde}
can be rewritten as
\begin{equation}
\label{recursive-bis} \mathcal{C}\stackrel{\rm(\mathrm{d})}{=} G(U,
\mathcal{C}_{1}, \mathcal{C}_{2}),
\end{equation}
where $U,\mathcal{C}_1,\mathcal{C}_2$ are as in \eqref{eq:rde}. Let $
\mathscr{M}$ be the set of all probability measures on $[1, \infty]$
and let $ \Phi: \mathscr{M} \to \mathscr{M}$ map a distribution
$\lambda$ to
\[
\Phi( \lambda) = \mathsf{Law} \bigl(G(U,X_{1},X_{2})
\bigr),
\]
where $X_{1}$ and $X_{2}$ are independent and distributed according to
$\lambda$ and $U$ is uniformly distributed over $[0,1]$
and independent of the pair $(X_1,X_2)$.

%
\begin{proposition} \label{prop:unicite}The law $\gamma$ of $\mathcal
{C}$ is the unique
fixed point of the mapping $\Phi$ on $\mathscr{M} $, and we have $\Phi
^k(\lambda) \to\gamma$ weakly as $k \to\infty$, for every $\lambda
\in\mathscr{M}$. Furthermore
all moments of $\gamma$ are finite, and $\gamma$ has a continuous
density over $[1,\infty)$.
Finally, the Laplace transform
\[
\varphi(\ell) = \E \bigl[\exp(-\ell \mathcal{C}/2) \bigr] =\int
_1^\infty e^{-\ell
r/2} \gamma(\D r),\qquad\ell
\geq0
\]
solves the differential equation
\begin{equation}
2\ell \varphi''(\ell) + \ell\varphi'(
\ell) + \varphi ^2(\ell) - \varphi(\ell)=0. \label{edphi}
\end{equation}
\end{proposition}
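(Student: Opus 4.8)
The plan is to establish the four assertions in the following order: the contraction/uniqueness statement, finiteness of moments, continuity of the density, and finally the differential equation for the Laplace transform. For the \emph{uniqueness and convergence} of the fixed point, I would work with a Wasserstein-type distance on $\mathscr{M}$, but the natural variable here is not $\mathcal{C}$ itself (which is unbounded) but rather $1/\mathcal{C}\in(0,1]$; equivalently, one can use the metric $d(\lambda,\lambda')=\sup_{\ell\geq 0}|\int e^{-\ell r/2}\lambda(dr)-\int e^{-\ell r/2}\lambda'(dr)|$ or a coupling argument. The key computation is that the map $(u,x,y)\mapsto G(u,x,y)^{-1}=u+\frac{1-u}{x+y}$ is, for fixed $u$, a contraction in $(x^{-1},y^{-1})$ with a factor involving $u$, so that after integrating over $U$ uniform on $[0,1]$ one gains a strictly-less-than-one Lipschitz constant on the reciprocals. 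Combined with the fact that there is at least one fixed point (namely $\gamma$, by the electric-network construction), Banach's fixed point theorem gives uniqueness and $\Phi^k(\lambda)\to\gamma$ weakly for every starting measure. I expect the verification that $\Phi$ is a genuine contraction to be the main obstacle: one must choose the metric carefully so that the averaging over $U$ produces the gain, and check that $\Phi$ maps the (complete) metric space into itself.

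For \emph{finiteness of all moments}, I would argue inductively. From the bound $\mathcal{C}(\t)\leq(1-e^{-z_\varnothing})^{-1}$ noted before the proposition, on the event $\{U_\varnothing\geq t\}$ we have $Y_\varnothing\geq t$ hence $\mathcal{C}\leq(1-e^{-t})^{-1}\leq C/t$ for small $t$; more directly, from \eqref{recursive-bis}, $\mathcal{C}=G(U,\mathcal{C}_1,\mathcal{C}_2)\leq U^{-1}$, so $\mathcal{C}$ has the same tail decay as $U^{-1}$ only up to the correction coming from $\frac{1-U}{\mathcal{C}_1+\mathcal{C}_2}$. The clean way: write $\mathcal{C}^{-1}=U+\frac{1-U}{\mathcal{C}_1+\mathcal{C}_2}\geq U$, so $\mathcal{C}\leq 1/U$, giving $\P(\mathcal{C}>t)\leq 1/t$; this already shows $\mathcal{C}<\infty$ a.s.\ but not moments. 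To get all moments, use that $\mathcal{C}^{-1}\geq U+\frac{1-U}{\mathcal{C}_1+\mathcal{C}_2}$ together with independence: for $p\geq 1$, $\E[\mathcal{C}^p]=\E[(U+\tfrac{1-U}{\mathcal{C}_1+\mathcal{C}_2})^{-p}]$, and on $\{U\leq \varepsilon\}$ one bounds $\frac{1-U}{\mathcal{C}_1+\mathcal{C}_2}\geq \frac{1-\varepsilon}{\mathcal{C}_1+\mathcal{C}_2}$; split according to whether $\mathcal{C}_1+\mathcal{C}_2\leq M$ or not, use the induction hypothesis $\E[\mathcal{C}^{p-1}]<\infty$ and a bootstrap. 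Concretely, I would prove $\E[\mathcal{C}^p]<\infty$ by showing $\E[\mathcal{C}^p]\leq a_p+b_p\,\E[\mathcal{C}^{p-1}]$ for finite constants, the point being that the singularity of $G(u,x,y)$ at $u=0$ is exactly of order $u^{-1}$ and is tamed because, when $u$ is small, $G(u,x,y)\approx \frac{x+y}{1-u}$ is controlled by lower moments of $\mathcal{C}$. The fact that $\varphi(\ell)=\E[e^{-\ell\mathcal{C}/2}]$ is finite and smooth on $(0,\infty)$ — indeed analytic, with $\varphi^{(k)}(\ell)=\E[(-\mathcal{C}/2)^k e^{-\ell\mathcal{C}/2}]$ — then follows from finiteness of moments by dominated convergence.

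For the \emph{continuous density}, I would use the smoothing in \eqref{recursive-bis}: conditionally on $(\mathcal{C}_1,\mathcal{C}_2)$, the map $u\mapsto G(u,\mathcal{C}_1,\mathcal{C}_2)$ is a $C^1$ decreasing bijection from $(0,1)$ onto $(1,(\mathcal{C}_1+\mathcal{C}_2)/1\text{-ish})$... more precisely onto the interval with endpoints $1$ (as $u\to1$) and $\mathcal{C}_1+\mathcal{C}_2$ (as $u\to0$); pushing forward the uniform law of $U$ through this diffeomorphism gives an explicit conditional density for $\mathcal{C}$, namely $g(r\mid \mathcal{C}_1,\mathcal{C}_2)=\frac{\partial}{\partial r}$ of the inverse, supported on $[1,\mathcal{C}_1+\mathcal{C}_2]$ and bounded by a locally integrable function of $r$ and $\mathcal{C}_1+\mathcal{C}_2$. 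Taking expectation over $(\mathcal{C}_1,\mathcal{C}_2)$ yields a density $f$ for $\gamma$ on $[1,\infty)$, and continuity follows by dominated convergence once one checks the relevant dominating function is integrable against $\gamma\otimes\gamma$, which uses the moment bounds just established. Finally, for the \emph{ODE} \eqref{edphi}, I would plug the distributional identity into $\varphi$: $\varphi(\ell)=\E[\exp(-\tfrac{\ell}{2}G(U,\mathcal{C}_1,\mathcal{C}_2))]$. Rather than manipulate this directly, the cleaner route is to use the probabilistic meaning of $\mathcal{C}$ and the branching recursion of $\Delta$ at its first node: conditioning on $U=u$ and using that the two subtrees are independent scaled copies, one gets $\varphi(\ell)=\int_0^1 \psi_u(\ell)\,du$ where $\psi_u$ solves a one-dimensional diffusion equation (the generator of Brownian motion with drift on the initial segment of length $u$ with the appropriate boundary condition encoding the parallel pair of subtrees). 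Translating the boundary value problem for the initial segment — $\tfrac12 \partial_{ss}h+\tfrac12\partial_s h$ balanced against the nonlinear term from the two parallel copies — into an equation for $\varphi$ produces \eqref{edphi}; the factor $2$ and the quadratic term $\varphi^2$ come, respectively, from the length-$2s$ scaling of time under $s\mapsto$ height and from the two independent copies contributing $\varphi\cdot\varphi$. I expect the bookkeeping in this last step — getting the constants in $2\ell\varphi''+\ell\varphi'+\varphi^2-\varphi=0$ exactly right from the scaling relation $\mathcal{C}=G(U,\mathcal{C}_1,\mathcal{C}_2)$ and a change of variables $r\mapsto \ell r$ — to require care, but it is a routine, if delicate, computation once the diffusion-equation formulation is set up.
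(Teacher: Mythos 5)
Your proposal diverges from the paper's proof in several places, and while some of your alternative routes are viable, two steps have real gaps.

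\textbf{Contraction.} The paper works directly with the $1$-Wasserstein metric $\mathrm{d}_1$ on the space $\mathscr{M}_1$ of probability measures on $[1,\infty]$ with finite first moment, and uses the hypothesis $X_1,Y_1,X_2,Y_2\geq 1$ to get the pointwise bound
$$|G(U,X_1,X_2)-G(U,Y_1,Y_2)|\;\leq\;(|X_1-Y_1|+|X_2-Y_2|)\,\frac{1-U}{(1+U)^2},$$
so the Lipschitz constant is $2\,\E\big[\tfrac{1-U}{(1+U)^2}\big]=2(1-\log 2)<1$. Your suggestion to switch to $1/\mathcal{C}$ or a Laplace-transform metric is not carried out, and as you yourself concede, the contraction there is not obvious (the map $(a,b)\mapsto ab/(a+b)$ on $(0,1]^2$ is $1$-Lipschitz, not a strict contraction). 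More importantly, you invoke Banach's fixed point theorem to conclude uniqueness and convergence of $\Phi^k(\lambda)$ for \emph{all} $\lambda\in\mathscr{M}$, but $\mathscr{M}$ includes measures with infinite first moment, which are outside the complete metric space where Banach applies. The paper handles this by showing, via the tail identity $F_{\Phi(\lambda)}(t)\leq 2F_\lambda(t/2)/t$, that $\Phi^2$ maps all of $\mathscr{M}$ into $\mathscr{M}_1$; this step is missing from your argument, and without it the conclusion does not follow.

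\textbf{Moments and density.} Your moment bootstrap (from $\mathcal{C}\leq\min(U^{-1},\mathcal{C}_1+\mathcal{C}_2)$, deducing $\E[\mathcal{C}^p]\leq c_p\,\E[\mathcal{C}^{p-1}]$) is a legitimate alternative; the paper instead iterates $F(t)\leq 2F(t/2)/t$ to get the stronger super-polynomial bound $F(t)\leq c_1\exp(-c_2(\log t)^2)$. Your density argument (push the uniform law of $U$ forward through $u\mapsto G(u,\mathcal{C}_1,\mathcal{C}_2)$, yielding a conditional density $\tfrac{s}{r^2(s-1)}$ on $[1,s]$ with $s=\mathcal{C}_1+\mathcal{C}_2$, and average) is also a valid and arguably more transparent route than the paper's, which instead bootstraps regularity from the functional identity $F(t)=\frac{t-1}{t}\int_t^\infty (x-1)^{-2}F^{(2)}(x)\,dx$.

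\textbf{The ODE.} This is the other weak spot. The paper differentiates the functional identity to obtain the linear ODE $t(t-1)F'(t)-F(t)=-F^{(2)}(t)$, multiplies by $g'(t)$, integrates by Fubini to get the purely probabilistic identity
$$\E[\mathcal{C}_1(\mathcal{C}_1-1)g'(\mathcal{C}_1)]+\E[g(\mathcal{C}_1)]=\E[g(\mathcal{C}_1+\mathcal{C}_2)],$$
and then takes $g(x)=e^{-\ell x/2}$. Your proposed route through a boundary-value problem for a one-dimensional diffusion on the initial segment is described only in outline, the key bookkeeping (where the $2\ell$, the $\ell$, the $\varphi^2$, and the $-\varphi$ each come from) is left unverified, and you acknowledge you have not done the computation. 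As written, this does not establish \eqref{edphi}.

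In summary: the moments and density steps are correct (by a different method), but the fixed-point step is missing the extension from $\mathscr{M}_1$ to $\mathscr{M}$, and the ODE step is not carried out.
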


\begin{rem*}
In \cite{LPP95}, the authors discuss the
conductance of an infinite supercritical Galton--Watson tree with
offspring distribution $\theta$. This conductance also satisfies a
recursive distributional equation, which depends on $\theta$. In that
setting, it is conjectured that the distribution of the conductance is
absolutely continuous with respect to Lebesgue measure at least if
$\theta(k)=0$ for all sufficiently large $k$; see \cite{L00,LP10}.
\end{rem*}

\begin{pf*}{Proof of Proposition~\ref{prop:unicite}}
We start with a few preliminary observations. If $\lambda
,\lambda' \in\mathscr{M,}$ we say that a random pair $(X,Y)$ is a
coupling of $\lambda$ and $\lambda'$ if $X$ is distributed according to
$\lambda$ and $Y$ is distributed according to $ \lambda'$. The
stochastic partial order $\preceq$ on $ \mathscr{M}$ is defined by
saying that $ \lambda\preceq\lambda'$ if
and only if there exists a coupling $(X,Y)$ of $\lambda$ and $\lambda
'$ such that $X \leq Y$ a.s. It is then clear that the mapping $\Phi$
is increasing for the stochastic partial order.

We endow the set $\mathscr{M}_{1}$ of all probability measures on $[1,
\infty]$ that have a finite first moment with the
$1$-Wasserstein metric
\[
\mathrm{d}_{1} \bigl( \lambda,\lambda' \bigr):= \inf
\bigl\{ E \bigl[\llvert X-Y\rrvert \bigr]: (X,Y) \mbox{ coupling of }
\bigl(\lambda, \lambda' \bigr) \bigr\}. %
\]
The metric space $( \mathscr{M}_{1}, \mathrm{d}_{1})$ is Polish and its
topology is finer than the weak topology on $ \mathscr{M}_{1}$. From
the easy bound $G(u,x,y)\leq x+y$, we immediately see
that $\Phi$ maps $ \mathscr{M}_{1}$ into $ \mathscr{M}_{1}$.
We then observe that the mapping $\Phi$ is strictly contractant on~$
\mathscr{M}_{1}$. To see this, let $(X_{1},Y_{1})$ and $(X_{2},Y_{2})$
be two
independent copies of a coupling between $\lambda,\lambda' \in\mathscr
{M}_{1}$ and let $U$ be uniformly distributed over $[0,1]$ and
independent of $(X_1,Y_1,X_2,Y_2)$. Then the two variables
$G(U,X_{1},X_{2})$ and $G(U,Y_{1},Y_{2})$ give a coupling of $ \Phi(
\lambda)$ and $ \Phi(\lambda')$. Using the fact that $X_{1},Y_{1},
X_{2},Y_{2} \geq1$, we have
\begin{eqnarray*}
&& \bigl\llvert G(U,X_{1},X_{2}) - G(U,Y_{1},Y_{2})
\bigr\rrvert
\\
&&\qquad= \biggl\llvert \biggl( U+ \frac{1-U}{X_1+X_2} \biggr)^{-1} -
\biggl( U+ \frac
{1-U}{Y_1+Y_2} \biggr)^{-1} \biggr\rrvert
\\
&&\qquad= \biggl\llvert \frac{(X_1+X_2
-Y_1-Y_2)(1-U)}{(U(X_1+X_2)+1-U)(U(Y_1+Y_2)+1-U)} \biggr\rrvert
\\
&&\qquad\leq \bigl(\llvert X_{1}-Y_{1}\rrvert +\llvert
X_{2}-Y_{2}\rrvert \bigr) \frac{1-U}{(1+U)^2}.
\end{eqnarray*}
Taking expected values and minimizing over the choice of the coupling
between $\lambda$ and $\lambda'$, we get $ \mathrm{d}_{1}( \Phi( \lambda
), \Phi(\lambda')) \leq2(1- \log(2))\,\mathrm{d}_{1}( \lambda,\lambda
')$. Since $ 2(1- \log(2)) <1$, the mapping $\Phi$ is contractant on $
\mathscr{M}_{1}$ and by completeness it has a unique fixed point $\gamma_0$
in $ \mathscr{M}_{1}$. Furthermore, for every $ \lambda\in\mathscr
{M}_{1}$, we have $ \Phi^k(\lambda) \to\gamma_0$ for the metric $
\mathrm{d}_{1}$, hence also weakly, as $k \to\infty$.

Since we know from \eqref{recursive-bis} that $\gamma$ is also a fixed point
of $\Phi$, the equality $\gamma=\gamma_0$ will follow if we can verify
that $\gamma_0$
is the unique fixed point of $\Phi$ in $\mathscr{M}$. To this end, it
will be
enough to verify that we have $ \Phi^k(\lambda) \to\gamma_0$ as $k\to
\infty$, for every $\lambda\in\mathscr{M}$.
Let $\lambda\in\mathscr{M}$
and for every $t\in\R$ set $F_\lambda(t)=\lambda([t,\infty])$. Also
set $F^{(2)}_{\lambda}(t) = P( X_{1}+X_{2} \geq t)$
where $X_1$ and $X_2$ are independent and distributed according to
$\lambda$. Then we have, for every
$t>1$,
\begin{eqnarray}
\label{eq:repart} F_{\Phi(\lambda)}(t) &=& P \biggl( U + \frac{1-U}{
{X}_{1} + {X}_{2}} \leq
t^{-1} \biggr)
\nonumber
\\
&=& P \biggl( U < t^{-1} \mbox{ and } \frac{t-Ut}{1-Ut}
\leq{X}_{1} + {X}_{2} \biggr)
\nonumber\\[-8pt]\\[-8pt]\nonumber
&=& \int_{0}^{1/t} \D u F^{(2)}_{\lambda}
\biggl( \frac
{t-ut}{1-ut} \biggr)
\nonumber
\\
&=& \frac{t-1}{t} \int_{t}^\infty
\frac{\D x}{(x-1)^2} F_{\lambda
}^{(2)}(x).\nonumber
\end{eqnarray}
It follows that, for every $t\geq1$,
\begin{equation}
\label{eq:major} F_{ \Phi(\lambda)} (t)\leq\frac
{F_{\lambda}^{(2)}(t)}{t} \leq
\frac{2F_{\lambda}(t/2)}{t}.
\end{equation}
We apply this to $\lambda=\Phi(\delta_\infty)$, where $\delta_\infty$
is the Dirac measure at $\infty$. We have $F_{\Phi(\delta_\infty)}
(t)=t^{-1}$, and it follows that, for every $t\geq1$,
\[
F_{ \Phi^2( \delta_{\infty})} (t) \leq \frac{4}{t^2}. %
\]
This implies that $\Phi^2( \delta_{\infty})\in\mathscr{M}_1$. By
monotonicity, we have also
$\Phi^2( \lambda)\in\mathscr{M}_1$ for every $\lambda\in\mathscr{M}$,
and from the preceding results
we get $ \Phi^k(\lambda) \to\gamma_0$ for every $\lambda\in\mathscr
{M}$. As explained above this
implies that $\gamma=\gamma_0$ is the unique fixed point of $\Phi$ in
$\mathscr{M}$.

Let us now check that all moments of $\gamma$ are finite.
To simplify notation, we write $F= F_{\gamma}$ and $F^{(2)}=
F^{(2)}_{\gamma}$. By \eqref{eq:repart}, we have for every $t > 1$,
\begin{equation}
\label{eq:repart0} F(t) = \frac{t-1}{t} \int_{t}^{\infty}
\frac{ \D x}{(x-1)^2} F^{(2)}(x)
\end{equation}
which implies that $F(t) \leq2 F(t/2)/t$ for every $t \geq1$, by the
same argument as above. Iterating this inequality, we get
that $F(t) \leq c_{1} \exp( - c_{2} (\log t)^2)$, with certain
constants $c_{1}, c_{2}>0$. It follows that
all moments of $\gamma$ are finite.

By construction, we have $F^{(2)}(t)=1$ for every $t\in[1,2]$. It then
immediately follows from
\eqref{eq:repart0} that we have
\begin{equation}
\label{eq:valueon[1,2]} F(t) = \frac{K_{0}}{t} + 1-K_0\qquad\forall t
\in[1,2],
\end{equation}
where
\[
K_0= 2-\int_2^{\infty}
\frac{ \D x}{(x-1)^2} F^{(2)}(x) \in[1,2].
\]
Then we observe that the right-hand side of \eqref{eq:repart0} is
a continuous function of $t\in(1,\infty)$, so that $F$ is continuous on
$[1,\infty)$
[the right-continuity at $1$ is obvious from~\eqref{eq:valueon[1,2]}].
Thus, $\gamma$ has no atoms
and it follows that the function $F^{(2)}$ is
also continuous on $[1,\infty)$. Using \eqref{eq:repart0} again, we
obtain that $F$ is
continuously differentiable on $[1,\infty)$, and consequently $\gamma$
has a continuous density
$f=-F'$ with respect to Lebesgue measure on $[1,\infty)$. By \eqref
{eq:valueon[1,2]}, $f(t)=K_0t^{-2}$
for $t\in[1,2]$ and in particular $f(1)=K_0$.

Let us finally derive the differential equation \eqref{edphi}. To this
end, we
first differentiate \eqref{eq:repart0} with respect to $t$ to get that
the linear differential equation
\begin{eqnarray}
\label{edo} t(t-1) F'(t) - F(t) &=& -F^{(2)}(t),
\end{eqnarray}
holds for $t\in[1,\infty)$. Then
let $ g: [1,\infty) \to\mathbb{R}_{+}$ be a continuously
differentiable function such that $g(x)$ and $g'(x)$ are
both $o(x^\alpha)$ when $x \to\infty$, for some $\alpha\in(0,\infty)$.
From the definition of $F$ and Fubini's theorem, we have
\[
\int_{1}^\infty\D t g'(t) F(t) =
\mathbb{E} \bigl[ g( \mathcal{C}) \bigr] - g(1)
\]
and similarly
\[
\int_{1}^\infty\D t g'(t)
F^{(2)}(t) = \mathbb{E} \bigl[ g( \mathcal {C}_{1}+
\mathcal{C}_{2}) \bigr] - g(1),
\]
where $ \mathcal{C}_{1}$ and $ \mathcal{C}_{2}$ are independent copies
of $ \mathcal{C}$ under the probability $ \mathbb{P}$. We then multiply
both sides of \eqref{edo} by $g'(t)$ and integrate for $t$ running from
$1$ to $\infty$ to get
\begin{equation}
\label{eq:f} \E \bigl[ \mathcal{C}_{1}( \mathcal{C}_{1}-1)
g'( \mathcal{C}_{1}) \bigr] + \E \bigl[ g(
\mathcal{C}_{1}) \bigr] = \E \bigl[g( \mathcal {C}_{1}+
\mathcal{C}_{2}) \bigr].
\end{equation}
When $g(x)=\exp(- x\ell/2 )$ for $\ell>0$, this readily gives \eqref
{edphi}.
\end{pf*}

\begin{rem*}
We may also take $ g(x) =x^m$ for $m \in\{1,2,3,4,\ldots\}$ in \eqref
{eq:f}. This leads to recursive formulas for the moments of $ \mathcal
{C}$ in terms of the first moment $ \mathbb{E}[ \mathcal{C}]$
(simulations give $ \mathbb{E}[ \mathcal{C}] \approx1.72$).
\end{rem*}

\subsection*{Singular behavior of the density of \texorpdfstring{$\gamma$}{gamma}}
By \eqref{eq:valueon[1,2]}, the values of $F$ and $f=-F'$ on the
interval $[1,2]$ are determined by
the constant $K_0=f(1)$. We have not been able to obtain an exact
numerical value for $K_0$, but simulations indicate that $ K_{0}\approx
1.47$ (see Figure~\ref{fig:density}). We may now observe that the
values of $F$ over $[1,2]$ determine the values of $F^{(2)}$ over
$[2,3]$, via the formula
\[
1-F^{(2)}(t) = \int_1^{t-1} \D s f(s)
\bigl(1-F(t-s) \bigr)\qquad\forall t\in[2,3].
\]
We can then use either
\eqref{edo} or \eqref{eq:repart0} to get a complicated explicit
expression for $F$ over $[2,3]$, again
in terms of $K_0$.
By iterating the argument, we can in principle determine $F$ by
solving linear differential equations on
the successive intervals $[n,n+1]$, $n=1,2,\ldots.$ Unfortunately, the
calculations become tedious and we have not been able to find a closed
expression for $F(t)$. However, from the expressions found for the
first two intervals $[1,2]$ and $[2,3]$,
one can verify that, although the function $f$ is continuously
differentiable on $(1,3)$, one has
\[
f''(2{-})= \frac{3K_{0}}{8}\qquad\mbox{whereas }
f''(2{+}) = \frac{3K_{0}-4K_{0}^2}{8},
\]
so that $f$ is not twice differentiable at the point $2$. See the
inflection point
at $2$ on Figure~\ref{fig:density}.

\subsection{The flow property of harmonic measure}

In this section, we establish a property of harmonic measure that plays
an important role in
the proof of Theorem~\ref{thm:main-harmonic}. This property is well
known in the
discrete setting, but perhaps less standard in the continuous setting,
and we sketch
a short proof.

We fix a Yule-type tree $\t\in\T$. In this section only, we slightly
abuse notation
by writing $W=(W_t)_{t\geq0}$ for Brownian motion with drift $1/2$ on
$\t$ started from
the root. As previously, $W_\infty$ is the exit ray of $W$, and the
distribution of $W_\infty$
is the harmonic measure of $\t$. For every $r>0$, if $x$ is the unique point
of $\t_r$ such that $x\prec W_\infty$, we write $W^{(r)}_\infty$
for the ray of $\t[x]$ that is obtained by shifting $W_\infty$ at time $r$.

%
\begin{lemma}
\label{flow-property}
Let $r>0$ and $x\in\t_r$. Conditionally on $\{x\prec W_\infty\}$, the
law of
$W_\infty^{(r)}$ is the harmonic measure of $\t[x]$.
\end{lemma}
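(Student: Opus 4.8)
The plan is to exploit the strong Markov property of Brownian motion on the Yule-type tree $\t$ at the hitting time of the level $r$, combined with the characterization of the exit ray as the unique ray visited at arbitrarily large times. First I would introduce, for the fixed $x \in \t_r$, the hitting time $\tau_x = \inf\{t \geq 0 : W_t = x\}$, which is a stopping time (finite on the event $\{x \prec W_\infty\}$, but possibly infinite otherwise). On the event $\{\tau_x < \infty\}$, I would apply the strong Markov property at $\tau_x$: the shifted process $(W_{\tau_x + t})_{t \geq 0}$ is a Brownian motion with drift $1/2$ on $\t$ started from $x$, independent of $\mathcal{F}_{\tau_x}$.

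Next I would use the tree structure to decompose the behavior of the post-$\tau_x$ process. Since $x \in \t_r$, the tree $\t$ splits at $x$ into the subtree $\t[x]$ of descendants of $x$ and the complementary part (the subtrees branching off $\llbracket 0, x \rrbracket$ together with the root segment). The key observation is: $x \prec W_\infty$ if and only if, after time $\tau_x$, Brownian motion eventually stays forever in $\t[x]$, i.e. there is a last visit to $x$ and after it the process never returns to $x$ but wanders off to infinity inside $\t[x]$. This is because the exit ray passes through $x$ exactly when the process is ``ultimately trapped'' in the part of the tree above $x$. More precisely, I would argue that conditionally on $\{x \prec W_\infty\}$, the relevant data determining $W_\infty^{(r)}$ is the trajectory of $W$ inside $\t[x]$ after its last visit to $x$; and by the strong Markov property at that last-visit time (or, more carefully, by a standard excursion-theory / conditioning argument, since the ``last visit to $x$'' is not a stopping time), the behavior inside $\t[x]$ after the last visit to $x$ is that of a Brownian motion with drift $1/2$ on $\t[x]$ started from the root of $\t[x]$ and conditioned never to return to that root. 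That conditioned process is transient and selects a ray of $\t[x]$ whose law is, by definition, the harmonic measure of $\t[x]$.

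To make the ``last visit'' argument rigorous without fussing over non-stopping times, I would instead phrase it via the Markov property at $\tau_x$ and a renewal-type identity: let $p = p(x)$ be the probability that Brownian motion on $\t$ started from $x$ never returns to $x$ (equivalently, a ratio of conductances, which is positive). Starting from $x$, the process makes a geometric number of excursions away from $x$ that return to $x$, followed by one final excursion that escapes; escape happens into $\t[x]$ with some probability, and conditionally on escaping into $\t[x]$ the escaping excursion is precisely a transient drift-$1/2$ Brownian motion on $\t[x]$ from the root conditioned not to return — whose exit ray has the harmonic-measure law of $\t[x]$ by definition. Since each returning excursion leaves $W_\infty^{(r)}$ unaffected (the ray is determined only by the ultimate behavior), and since the law of the final escaping excursion into $\t[x]$ does not depend on how many returning excursions preceded it, conditioning on $\{x \prec W_\infty\}$ yields exactly the harmonic measure of $\t[x]$.

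The main obstacle I anticipate is the careful justification that conditioning on the event $\{x \prec W_\infty\}$ — which depends on the entire future of the path — does not distort the law of the final escaping excursion into $\t[x]$; this requires either a clean excursion-theoretic setup at the point $x$ (decomposing the path into i.i.d.\ excursions away from $x$ until the escaping one) or an appeal to the strong Markov property combined with an explicit computation of the relevant hitting/escape probabilities showing the desired conditional independence. Everything else (transience, the definition of harmonic measure as the law of the exit ray, the identification of $\t[x]$ as an element of $\T$) is already available from the preceding sections, so this conditional-independence step is where the real work lies; I would handle it by the geometric-number-of-excursions argument sketched above, which is the standard way such ``flow'' properties are proved in the discrete setting and adapts directly here.
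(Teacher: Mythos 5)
Your proposal is correct in substance and follows essentially the same route as the paper's (deliberately brief) argument: everything hinges on the facts that $W_\infty^{(r)}$ is determined by the final escape of $W$ into the subtree above $x$, and that conditioning on $\{x\prec W_\infty\}$ amounts to conditioning this escape to take place in $\t[x]$. There are, however, two places where your plan as written needs the device the paper actually uses. First, the literal ``geometric number of excursions away from $x$'' renewal picture does not adapt directly from the discrete setting: $x$ is a regular point for the diffusion, so the excursions of $W$ away from $x$ are indexed by local time rather than by $\N$, and there is no ``first'' returning excursion to start the renewal count. The fix (which you half-anticipate when you mention a clean excursion-theoretic setup) is the paper's: take the point $x_\ve$ at distance $\ve$ above $x$ inside $\t[x]$ and consider the successive passage times of $W$ at $x_\ve$; between consecutive passages the path either returns to $x$ or never does, and this produces a genuine i.i.d./geometric structure to which your conditional-independence argument applies verbatim (alternatively one can invoke It\^o excursion theory at $x$, conditioning the unique infinite excursion on entering $\t[x]$).

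Second, your last step claims that the exit ray of the conditioned escape excursion into $\t[x]$ has the law of the harmonic measure of $\t[x]$ ``by definition''. It does not: the harmonic measure of $\t[x]$ is defined through the exit ray of \emph{unconditioned} (reflected) Brownian motion on $\t[x]$ started from its root, so this identification requires the same last-exit/conditioning argument run inside $\t[x]$. The paper sidesteps the asymmetry by identifying \emph{both} laws with a single common object: the law of the exit ray of Brownian motion started from $x_\ve$ and conditioned never to hit $x$ equals the harmonic measure of $\t[x]$ on one hand, and equals the conditional law of $W_\infty^{(r)}$ given $\{x\prec W_\infty\}$ on the other hand (the latter via the passage times at $x_\ve$). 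With these two adjustments your argument is complete and coincides with the intended proof.
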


\begin{pf}
For simplicity, we suppose that $x$ is not a branching point of $\t$,
and then
we can choose $\ve>0$
sufficiently small so that there is a unique descendant $x_\ve$ of $x$ in
$\t$ at distance $\ve$ from $x$. Clearly, the harmonic measure of $\t
[x]$ can be
obtained by considering the distribution of the exit ray of Brownian
motion started
from $x_\ve$ and conditioned never to hit $x$. On the other hand, by considering
the successive passage times at $x_\ve$, we can also verify that the
conditional law of $W_\infty^{(r)}$ knowing that $x\prec W_\infty$
corresponds to the same distribution. We leave the details to the reader.
\end{pf}

\section{Proof of Theorem~\texorpdfstring{\protect\ref{thm:main-harmonic}}{3}}
\label{sec:stationary+ergodic}

Let us outline the main steps of the proof of Theorem~\ref{thm:main-harmonic}.
Proposition~\ref{prop:tree-selected-law} below uses the spine decomposition
(Proposition \ref{spine-decomposition}) and the Ray-Knight theorem for
local times of
Brownian motion with drift to determine the exact distribution of the subtree
of the Yule tree above level $r$
that is selected by harmonic measure. In Section~\ref{sec:asymptotics},
we use stochastic calculus
to prove that this law converges as $r \to\infty$ to an explicit
distribution, which is absolutely continuous with respect to $\Theta$
(Corollary~\ref{limit-density}).
In the last two subsections, we rely on arguments of ergodic theory,
mainly inspired by
\cite{LPP95}, to
complete the proof of Theorem~\ref{thm:main-harmonic}.

We recall that $ \mathbb{P}$ stands for the probability measure
under which the Yule tree is defined, whereas Brownian motion (with
drift $1/2$) on the Yule tree is defined under the
probability measure $ P$.

\subsection{The subtree above level $r$ selected by harmonic measure}
\label{sec:subtreeabove}

In this subsection, we fix $r>0$. We will implicitly use
the fact that $\Gamma$ has a.s. no branching point at height $r$.

There is a unique
point $x\in\Gamma_r$ such that $x\prec W_\infty$, and we set $\Gamma
^{(r)}= \Gamma[x]$, which is the subtree above level $r$ selected by harmonic
measure.
We are interested in the distribution
of $\Gamma^{(r)}$.
Let $F$ be a nonnegative measurable function on $\T$, and consider the quantity
\begin{equation}
\label{law-selected} I_r:=\E\otimes E \bigl[F \bigl(\Gamma^{(r)}
\bigr) \bigr]=\E\otimes E \biggl[ \sum_{x\in\Gamma
_r} F \bigl(
\Gamma[x] \bigr) {\mathbf1}_{\{x\prec W_\infty\}} \biggr],
\end{equation}
where the notation $\E\otimes E$ means that we consider the expectation
first under the probability
measure $P$ (under which the Brownian motion $W$ is defined) and then
under $\P$.
We will use Proposition \ref{spine-decomposition} to evaluate $I_r$. In
the first part of the argument, until the derivation
of formula \eqref{selected-tech111} below, we argue
under the probability measure~$P$, that is,
conditionally given the tree $\Gamma$.

Let us fix $x\in\Gamma_r$ and $R>r$. We will use the notation
$\widetilde\Gamma[x]:=\{y\in\Gamma: x\prec y\}$. This is just the set
of all descendants of $x$ in $\Gamma$, now viewed as a subset of $\Gamma
$ and not as a Yule-type tree
as in the definition of $\Gamma[x]$. Define
\[
\Gamma^{x,R}= \bigl\{y\in\Gamma\setminus\widetilde\Gamma[x]:H(y)\leq R
\bigr\} \cup\widetilde\Gamma[x].
\]
Let $W^{x,R}$ be Brownian motion (with drift $1/2$) on $\Gamma^{x,R}$
(we assume that
$W^{x,R}$ is reflected both at the root and at the leaves of $\Gamma
^{x,R}$, which are the points $y$
of $\Gamma\setminus\widetilde\Gamma[x]$ such that $H(y)=R$). We look
for an expression of the
probability that $W^{x,R}$ never hits the leaves of $\Gamma^{x,R}$, or
equivalently that
$W^{x,R}$ escapes to infinity in $\widetilde\Gamma[x]$ before hitting
any leaf of $\Gamma^{x,R}$.
Write $(\ell^{x,R}_t)_{t\geq0}$ for the local time process of
$W^{x,R}$ at $x$. Note that we use here
the standard normalization of local time as an occupation time density.
With this normalization,
$\ell^{x,R}_t$ is the a.s. limit as $\ve\to0$ of the quantities $2\ve
N^{x,\ve}_t$, where
$N^{x,\ve}_t$ is the number of ``upcrossings'' of $W^{x,R}$ from $x$ to
the point $x_\ve\in\Gamma$
such that $x\prec x_\ve$ and $d(x,x_\ve)=\ve$ (this point is unique for
$\ve$ small) before time $t$.
We claim that $\ell^{x,R}_\infty$ has an exponential distribution with
parameter $\cc(\Gamma[x])/2$.
This is easy from excursion theory, but an elementary argument can
be given as follows. Each time $W^{x,R}$ does an upcrossing from $x$ to
$x_\ve$, there is a probability of order $\ve  \cc(\Gamma[x])$
that it escapes to infinity before coming back to $x$ (by the very
definition of $\cc(\Gamma[x])$). Hence, the
total number of upcrossings from $x$ to $x_\ve$ before escaping to
infinity is geometric with parameter of order $\ve  \cc(\Gamma[x])$,
and our claim follows from the approximation of local time by
upcrossing numbers.

We then consider, for every $a\in[0,r]$, the local time
process $(L^{a,R}_t)_{t\geq0}$ of $W^{x,R}$ at the unique point of
$\tllbracket0, x\trrbracket$
at distance $a$ from the root. Note in particular that $L^{r,R}_t= \ell
^{x,R}_t$. The distribution of the\vspace*{2pt}
process $(L^{a,R}_\infty)_{0\leq a\leq r}$ can be derived via a time
change argument, which consists in
looking at $W^{x,R}$ only when it visits $\tllbracket0, x\trrbracket$.
More precisely, we set, for every
$s\geq0$,
\[
\tau_s:=\inf \biggl\{ t\geq0: \int_0^t
\mathbf{1}_{\tllbracket0,
x\trrbracket} \bigl(W^{x,R}_r \bigr)\, \D r > s
\biggr\}
\]
with $\inf\varnothing=\infty$ as usual. Setting $Z_s=W^{x,r}_{\tau_s}$
if $\tau_s<\infty$
and $Z_s=x$ otherwise, we obtain that the process $(Z_s)_{s\geq0}$ is
under $P$ a
Brownian motion (with drift $1/2$) on $\tllbracket0, x\trrbracket$
started from the root, reflected at both ends of
the segment $\tllbracket0, x\trrbracket$, and stopped when its local
time at $x$ hits an independent
exponential variable with parameter $\cc(\Gamma[x])/2$. The latter
exponential random variable is of course
the local time $\ell^{x,R}_\infty=L^{r,R}_\infty$, and the independence
property in the last sentence
corresponds to the independence of excursions
of $W^{x,R}$ ``below'' and
``above'' $x$. The preceding assertions can be obtained either by
arguments of excursion theory,
or, via scaling limits, from the (easy) corresponding properties for
random walk on discrete trees.

Observe that, for every $y\in\tllbracket0, x\trrbracket$, the total
local time of $W^{x,R}$ at $y$
coincides with the total local time of $Z$ at $y$. Using the
Ray--Knight theorem for Brownian motion with drift (see, e.g., \cite
{BS02}, page~93)
we get that, conditionally on $\ell^{x,R}_\infty=\ell$, the process
$(L^{r-a,R}_\infty)_{0\leq a\leq r}$ is distributed
as the process $(X_a)_{0\leq a\leq r}$ which solves the stochastic
differential equation
\begin{equation}
\label{EDSRK} \cases{ dX_a = 2\sqrt{X_a} \,\D
\eta_a + (2- X_a)\,\D a,
\cr
X_0=\ell,}
\end{equation}
where $(\eta_a)_{a\geq0}$ is a standard linear Brownian motion. In
what follows, we will write $P_\ell$
for the probability measure under which the process $X$ starts from
$\ell$, and $P_{(c)}$
for the probability measure under which the process $X$ starts with an
exponential distribution
with parameter $c/2$.

Now write $x_j$, $1\leq j\leq k$ for the branching points of $\Gamma
^{x,R}$ (or equivalently
of~$\Gamma$) that belong to
$\tllbracket0,x\trrbracket$, and set $a_j=H(x_j)$ for $1\leq j\leq k$.
Also let
$\Gamma_{x,j,R}$ be the subtree of $\Gamma^{x,R}$ that branches off
$\tllbracket0,x\trrbracket$ at
$x_j$. We consider the event $A_{x,R}$ where $W^{x,R}$ never hits the
leaves of $\Gamma^{x,R}$.
We can compute the conditional probability of $A_{x,R}$ knowing
the local times $(L^{a,R}_\infty)_{0\leq a\leq r}$, using arguments of
excursion theory. For $1\leq j\leq k$,
write $n_{(j)}$ for the excursion measure of Brownian motion with drift
$1/2$ in the tree $\Gamma_{x,j,R}$
(defined as in the beginning of Section~\ref{sec:conductance}) and let
$\cc(\Gamma_{x,j,R})$ be the conductance of $\Gamma_{x,j,R}$ between
its root
$x_j$ and the set of its leaves. This conductance may be defined as the
measure under $n_{(j)}$ of the event
$E_j$ where the excursion hits the leaves before returning to the root.
Conditionally given
$(L^{a,R}_\infty)_{0\leq a\leq r}$, the excursions of $W^{x,R}$ inside
the tree $\Gamma_{x,j,R}$
form a Poisson point process with intensity $\frac{1}{2}
L^{a_j,R}_\infty n_{(j)}(\cdot)$
and these point processes are independent when $j$ varies (once again,
the reader who
is unfamiliar with excursion theory may find it easier to deduce these
statements from
their discrete versions, which are elementary). Consequently,\vspace*{1pt} the
conditional probability for
a fixed $j$ that no excursion in $\Gamma_{x,j,R}$ hits the leaves is
$\exp(-\frac{1}{2} L^{a_j,R}_\infty n_{(j)}(E_j))= \exp(-\frac{1}{2}
\cc(\Gamma_{x,j,R}) L^{a_j,R}_\infty)$.
Finally, thanks to the conditional independence of the point processes
of excursions
in the different trees $\Gamma_{x,j,R}$, we get that the conditional
probability of $A_{x,R}$ knowing $(L^{a,R}_\infty)_{0\leq a\leq r}$ is
\[
\exp \Biggl(- \frac{1}{2}\sum_{j=1}^k
\cc(\Gamma_{x,j,R}) L^{a_j,R}_\infty \Biggr).
\]

Using the distribution of the process $(L^{r-a,R}_\infty)_{0\leq a\leq
r}$, we have thus
\begin{eqnarray}
\label{selected-tech1} P(A_{x,R})&=& E \Biggl[\exp \Biggl(-\frac{1}{2}
\sum_{j=1}^k \cc(\Gamma _{x,j,R})
L^{a_j,R}_\infty \Biggr) \Biggr]
\nonumber\\[-8pt]\\[-8pt]\nonumber
&=& E_{(\cc(\Gamma[x]))} \Biggl[\exp \Biggl(-\frac{1}{2}\sum
_{j=1}^k \cc(\Gamma _{x,j,R})
X_{r-a_j} \Biggr) \Biggr].
\end{eqnarray}
At this point, we let $R$ tend to infinity. It is easy to verify that
$P(A_{x,R})$ increases to $P(A_x)$,
where
$A_x=\{x\prec W_\infty\}$. Furthermore, for every $j\in\{1,\ldots,k\}$,
$\cc(\Gamma_{x,j,R})$ decreases
to $\cc(\Gamma_{x,j})$, where $\Gamma_{x,j}$ is the subtree of $\Gamma$
branching off
$\tllbracket0,x\trrbracket$ at
$x_j$. Consequently, we obtain that
\begin{equation}
\label{selected-tech111} P(x\prec W_\infty) = E_{(\cc(\Gamma[x]))} \Biggl[\exp
\Biggl(-\frac{1}{2}\sum_{j=1}^k \cc(
\Gamma_{x,j}) X_{r-a_j} \Biggr) \Biggr].
\end{equation}

We can now return to the computation of the quantity $I_r$ defined in
\eqref{law-selected}. By integrating
\eqref{selected-tech111} with respect to $\P$, we get
\begin{eqnarray*}
I_r&=&\E \biggl[ \sum_{x\in\Gamma_r} F \bigl(
\Gamma[x] \bigr) P(x\prec W_\infty) \biggr]
\\
& =& \E \Biggl[\sum_{x\in\Gamma_r} F \bigl(\Gamma[x] \bigr)
E_{(\cc(\Gamma[x]))} \Biggl[\exp \Biggl(-\frac{1}{2}\sum
_{j=1}^k \cc(\Gamma _{x,j})
X_{r-a_j} \Biggr) \Biggr] \Biggr].
\end{eqnarray*}
Note that the quantity inside the sum over $x\in\Gamma_r$ is a
function of
$\Gamma[x]$ and of the subtrees of $\Gamma$ branching off the segment
$\tllbracket0,x\trrbracket$. We can
thus apply Proposition \ref{spine-decomposition} and we get
\[
I_r=e^r \int\Theta(\D\t) F(\t) \mathbf{E} \biggl[
E_{(\cc(\t))} \biggl[\exp \biggl(-\frac{1}{2} \int\n_r
\bigl(\D a \,\D\t' \bigr) \cc \bigl(\t' \bigr)
X_{r-a} \biggr) \biggr] \biggr],
\]
where under the probability measure $\mathbf{P}$, $ \n_r(\D a \,\D\t')$
is a Poisson point measure
on $[0,r]\times\T$ with intensity $2 \D a \Theta(\D\t')$. We can
interchange the expectation under
$\mathbf{P}$ and the one under $P_{(\cc(\t))}$, and using the
exponential formula for
Poisson measures, we arrive at
\begin{equation}
\label{firststepeq} I_r=e^r \int\Theta(\D\t) F(\t)
E_{(\cc(\t))} \biggl[\exp-2\int_0^r \D a
\bigl(1-\varphi(X_a) \bigr) \biggr],
\end{equation}
where we recall that for every $s\geq0$,
\[
\varphi(s)=\E \bigl[\exp(-s\mathcal{C}/2) \bigr]= \Theta \bigl(\exp \bigl(-s\cc(
\t)/2 \bigr) \bigr)
\]
is the Laplace transform (evaluated at $s/2$) of the distribution of
the conductance of the Yule tree.
We have thus proved the following proposition.

%
\begin{proposition}
\label{prop:tree-selected-law}
The distribution under $\P\otimes P$ of the subtree $\Gamma^{(r)}$ has a
density with respect to the law $\Theta(\D\t)$ of the Yule tree, which
is given by
$\Phi_r(\cc(\t))$, where, for every $c>0$,
\[
\Phi_r(c)= E_{(c)} \biggl[\exp-\int_0^r
\D a \bigl(1-2\varphi(X_a) \bigr) \biggr].
\]
\end{proposition}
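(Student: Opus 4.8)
The plan is to evaluate, for an arbitrary nonnegative measurable $F$ on $\T$, the quantity $I_r=\E\otimes E[F(\Gamma^{(r)})]$ and to show that it equals $\int\Theta(d\t)\,F(\t)\,\Phi_r(\cc(\t))$; since this holds for every such $F$, it identifies the claimed density. Starting from $I_r=\E\otimes E\big[\sum_{x\in\Gamma_r}F(\Gamma[x])\,\mathbf{1}_{\{x\prec W_\infty\}}\big]$, the first step is to compute, conditionally on the tree, the probability $P(x\prec W_\infty)$ that the exit ray passes through a given $x\in\Gamma_r$ (which is a.s. not a branch point). I would obtain this through a finite approximation: truncate $\Gamma$ at height $R>r$ everywhere except on the subtree $\wt\Gamma[x]$ of descendants of $x$, call the result $\Gamma^{x,R}$, run reflected Brownian motion with drift $1/2$ on it, and compute the probability $P(A_{x,R})$ that this motion escapes to infinity in $\wt\Gamma[x]$ before hitting a leaf at height $R$. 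Letting $R\uparrow\infty$ gives $P(A_{x,R})\uparrow P(x\prec W_\infty)$, while the conductances $\cc(\Gamma_{x,j,R})$ of the bushes branching off $\llbracket 0,x\rrbracket$ decrease to $\cc(\Gamma_{x,j})$.

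The computation of $P(A_{x,R})$ rests on three ingredients, all conditional on the tree. First, the total local time $\ell^{x,R}_\infty$ accumulated by $W^{x,R}$ at $x$ is exponential with parameter $\cc(\Gamma[x])/2$; this follows from excursion theory, or concretely by approximating local time through upcrossing numbers and using that each upcrossing from $x$ toward its descendants escapes with probability $\sim\ve\,\cc(\Gamma[x])$. Second, conditionally on $\ell^{x,R}_\infty=\ell$, the local-time field along the spine, read from $x$ back to the root, is the diffusion $(X_a)_{0\le a\le r}$ solving \eqref{EDSRK} started at $\ell$: a Ray--Knight theorem for Brownian motion with drift, deduced from the classical one by scale and time change and by using independence of the excursions of $W^{x,R}$ below and above $x$. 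Third, conditionally on the whole spine local-time field, the excursions into each bush $\Gamma_{x,j,R}$ form a Poisson process, so the probability of never reaching a leaf through the bushes is $\exp\big(-\tfrac12\sum_{j=1}^k\cc(\Gamma_{x,j,R})\,L^{a_j,R}_\infty\big)$, where $x_1,\dots,x_k$ are the branch points on the spine, at heights $a_1,\dots,a_k$. Combining the three, taking expectations, and passing to the limit $R\to\infty$ yields
\[
P(x\prec W_\infty)=E_{(\cc(\Gamma[x]))}\Big[\exp\Big(-\tfrac12\sum_{j=1}^k\cc(\Gamma_{x,j})\,X_{r-a_j}\Big)\Big].
\]

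Finally I would substitute this into $I_r$. The summand over $x\in\Gamma_r$ is a measurable function of the subtree $\Gamma[x]$ together with the bushes branching off $\llbracket 0,x\rrbracket$, i.e. of $\Gamma[x]$ and $\xi_{r,x}(\Gamma)$, so \cref{spine-decomposition} applies and turns $\sum_{x\in\Gamma_r}$ into $e^r$ times an expectation in which $\Gamma[x]$ is replaced by an independent Yule tree and $\xi_{r,x}(\Gamma)$ by a Poisson point measure $\n_r$ on $[0,r]\times\T$ with intensity $2\,da\,\Theta(d\t')$. Interchanging the expectation over $\n_r$ with the $P_{(\cc(\t))}$-expectation over $X$ and applying the exponential formula for Poisson measures replaces $\E[\exp(-\tfrac12\int\n_r(da\,d\t')\,\cc(\t')\,X_{r-a})]$ by $\exp(-2\int_0^r da\,(1-\varphi(X_a)))$ after the change of variables $a\mapsto r-a$, giving \eqref{firststepeq}. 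Absorbing the prefactor via $e^r\exp(-2\int_0^r(1-\varphi(X_a))\,da)=\exp(-\int_0^r(1-2\varphi(X_a))\,da)$ leaves exactly $\int\Theta(d\t)\,F(\t)\,\Phi_r(\cc(\t))$, which is the assertion. The step I expect to be most delicate is the conditional Ray--Knight description of the spine local times together with the Poisson count of excursions into the bushes, and the justification that these may be combined so that the conductances $\cc(\Gamma_{x,j})$ enter linearly in the exponent; the monotone limit $R\to\infty$ and the bookkeeping in the spine decomposition are then routine.
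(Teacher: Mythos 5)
Your proposal is correct and follows essentially the same route as the paper: the same truncated-tree approximation $\Gamma^{x,R}$, the same three ingredients (exponential local time at $x$ via the conductance $\cc(\Gamma[x])$, the Ray--Knight description of the spine local-time field as the diffusion in \eqref{EDSRK}, and the Poisson count of excursions into the bushes giving the exponential factor $\exp(-\tfrac12\sum_j\cc(\Gamma_{x,j,R})L^{a_j,R}_\infty)$), the monotone limit $R\to\infty$, and then the spine decomposition followed by the exponential formula for the Poisson measure $\mathcal{N}_r$ and the absorption of the prefactor $e^r$. No gap; this is the paper's argument.
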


\subsection{Asymptotics}
\label{sec:asymptotics}
In this section, we study the asymptotic behavior of $\Phi_r(c)$
when $r$ tends to $\infty$. We first observe that, in terms of the law
$\gamma(\D s)$ of $\mathcal{C}(\Gamma)$, we have
\[
\varphi(\ell) = \int_{[1,\infty)} e^{-\ell s/2} \gamma(\D s),
\qquad \varphi'(\ell) = - \frac{1}{2} \int
_{[1,\infty)} s e^{-\ell s/2} \gamma(\D s).
\]
It follows that $\varphi(\ell)\leq e^{-\ell/2}$ and $\llvert  \varphi'(\ell
)\rrvert  \leq\frac{1}{2}(\int s\gamma(\D s))e^{-\ell/2}$.
By differentiating~\eqref{edphi}, we have also
\begin{equation}
\label{edphi2} 2\ell \varphi'''(\ell)
+ (2+\ell) \varphi''(\ell) + 2\varphi(\ell )
\varphi'(\ell)=0.
\end{equation}

Our main tool is the next proposition.

%
\begin{proposition}
\label{asympto-density}
For every $\ell\geq0$,
\[
\lim_{r\to\infty} E_\ell \biggl[ \exp-\int
_0^r \D a \bigl(1-2\varphi(X_a)
\bigr) \biggr] = - \frac{\varphi'(\ell) e^{\ell/2}}{\int_0^\infty\D s \varphi
'(s)^2 e^{s/2}}.
\]
Additionally, there exists a constant $K<\infty$ such that, for every
$\ell\geq0$ and $r>0$,
\[
E_\ell \biggl[ \exp-\int_0^r \D a
\bigl(1-2\varphi(X_a) \bigr) \biggr] \leq K.
\]
\end{proposition}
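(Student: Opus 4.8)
The plan is to exploit the diffusion $X$ solving \eqref{EDSRK} and the linear ODE \eqref{edphi} to identify an explicit space-time harmonic function, and then to recognize the limit as the result of a Doob $h$-transform. Concretely, I first observe that $dX_a = 2\sqrt{X_a}\,d\eta_a + (2-X_a)\,da$ is the diffusion whose generator is $\mathcal{L} = 2x\,\partial_x^2 + (2-x)\,\partial_x$, and that the multiplicative functional $\exp(-\int_0^r(1-2\varphi(X_a))\,da)$ is exactly the Feynman-Kac weight with killing rate $1-2\varphi(x)$. So the quantity $u_r(\ell) := E_\ell[\exp-\int_0^r(1-2\varphi(X_a))\,da]$ is governed by the parabolic equation $\partial_r u = \mathcal{L}u - (1-2\varphi)u$ with $u_0 \equiv 1$. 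The natural candidate for a stationary eigenfunction is $h(\ell) := -\varphi'(\ell)$: I would check, using the differentiated ODE \eqref{edphi2}, that $\mathcal{L}h - (1-2\varphi)h = 0$, i.e.\ that $2\ell h'' + (2-\ell)h' - (1-2\varphi)h$ vanishes identically — this is a direct rewriting of \eqref{edphi2} after substituting $h = -\varphi'$. Thus $h$ is a genuine (bounded, positive since $\varphi$ is strictly decreasing) eigenfunction at eigenvalue $0$ for the killed generator, and $M_a := h(X_a)\exp(-\int_0^a(1-2\varphi(X_s))\,ds)$ is a martingale under $P_\ell$.

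With $h$ in hand, the convergence is a standard quasi-stationarity / ratio-limit statement. The second term of the claimed limit, $c_\infty := \int_0^\infty \varphi'(s)^2 e^{s/2}\,ds = \int_0^\infty h(s)^2 e^{s/2}\,ds$, should be (a normalizing constant times) the mass of the quasi-stationary distribution: the measure $m(ds) := \tfrac12 s^{-1}e^{-s/2}\,ds$ is the speed measure of $X$ (so that $\mathcal L$ is self-adjoint in $L^2(m)$), and one has $\int_0^\infty h(s)\,\varpi(ds)$ type expressions, where $\varpi(ds) = h(s)\,m(ds)\cdot(\text{const})$ is the Yaglom limit. I would make this precise by performing the $h$-transform: under the new law $\widehat P_\ell$ with martingale $M$, the process $\widehat X$ has generator $\mathcal{L} + 2(\log h)' \cdot \partial_x$ acting after the drift term, is positive recurrent, and has invariant probability density proportional to $h(s)^2 m(ds) \propto \varphi'(s)^2 e^{s/2}\,ds$ — so $c_\infty$ is exactly the normalizing integral for this invariant law. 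Then
$$u_r(\ell) = E_\ell\Big[\exp-\int_0^r(1-2\varphi(X_a))\,da\Big] = h(\ell)\,\widehat E_\ell\Big[\frac{1}{h(X_r)}\Big],$$
and ergodicity of $\widehat X$ gives $\widehat E_\ell[1/h(X_r)] \to \int_0^\infty h(s)^{-1}\cdot \frac{h(s)^2 m(ds)}{c_\infty'}$ for the appropriate constant $c_\infty'$, which after unwinding the speed-measure normalization collapses to $1/c_\infty$. This yields $u_r(\ell) \to h(\ell)/c_\infty = -\varphi'(\ell)e^{\ell/2}/\int_0^\infty \varphi'(s)^2 e^{s/2}\,ds$, once one checks the bookkeeping of constants (the factor $e^{\ell/2}$ is precisely what converts the speed measure $m$ into the stated integrating density).

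The uniform bound $u_r(\ell) \leq K$ for all $\ell, r$ is the most delicate point and I would treat it before the convergence. Two routes: (a) since $h$ is bounded and bounded below away from $0$ only on compacts — in fact $h(\ell) = -\varphi'(\ell) \in (0, \tfrac12\int s\,\gamma(ds)]$ is bounded above, and $h(\ell)/h(X_r)$ need not be bounded because $h(X_r)$ can be small when $X_r$ is large — one must control the tail of $X_r$ under $\widehat P_\ell$ uniformly in $\ell$; the strong downward drift $-X_a$ in \eqref{EDSRK} makes large values exponentially unlikely and keeps $\widehat E_\ell[1/h(X_r)]$ bounded uniformly once $r$ is bounded away from $0$, while for small $r$ one uses $u_r(\ell) \le e^r$ crudely since $1-2\varphi \ge -1$. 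Alternatively, (b) avoid the $h$-transform for the bound and argue directly: $1 - 2\varphi(x) \ge 0$ for $x$ large (as $\varphi(x)\le e^{-x/2}\to 0$), and $1-2\varphi(x) \ge 1-2\varphi(0) = -1$ always, so $\exp-\int_0^r(1-2\varphi(X_a))\,da \le \exp(2\,|\{a\le r: \varphi(X_a) > 1/2\}|) \le \exp(2\int_0^\infty \mathbf{1}_{\{X_a < x_0\}}\,da)$ for the threshold $x_0$ with $\varphi(x_0)=1/2$; the total occupation time of a compact set by the transient-at-$0$-but-recurrent diffusion $X$ has exponential moments uniformly in the starting point, which gives the bound. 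I expect route (b) to be the cleanest, with the main obstacle being the uniform-in-$\ell$ exponential moment estimate for $\int_0^\infty \mathbf{1}_{\{X_a < x_0\}}\,da$ — this follows from a Khas'minskii-type lemma once one notes $\sup_{\ell\ge 0} E_\ell[\int_0^\infty \mathbf{1}_{\{X_a < x_0\}}\,da] < \infty$, which in turn uses that starting from large $\ell$ the drift $-X_a$ drives the process down in bounded expected time, and starting from $0$ the explicit speed/scale functions of $X$ give a finite Green's function on $[0,x_0]$.
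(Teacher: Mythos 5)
Your overall strategy (read the Feynman--Kac weight off \eqref{EDSRK}, find a space-time harmonic function from the ODE for $\varphi$, and pass to the $h$-transform, whose invariant density is $\varphi'(s)^2e^{s/2}\,ds$) is exactly the route the paper takes, but the key computation as you state it is wrong. You claim that $h=-\varphi'$ satisfies $2\ell h''+(2-\ell)h'-(1-2\varphi)h=0$ and that this is ``a direct rewriting of \eqref{edphi2}''; it is not. Substituting $h=-\varphi'$ gives $2\ell\varphi'''+(2-\ell)\varphi''-(1-2\varphi)\varphi'=0$, whereas \eqref{edphi2} reads $2\ell\varphi'''+(2+\ell)\varphi''+2\varphi\varphi'=0$, and the two differ. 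The correct eigenfunction at eigenvalue $0$ for $\mathcal{L}_0-(1-2\varphi)$, with $\mathcal{L}_0=2x\partial_x^2+(2-x)\partial_x$, is $\ell\mapsto -\varphi'(\ell)e^{\ell/2}$: the martingale is $M_a=-\varphi'(X_a)\exp\bigl(\tfrac{X_a}{2}-\int_0^a(1-2\varphi(X_s))\,ds\bigr)$ (this is the paper's Lemma \ref{martingalelemma}), not $-\varphi'(X_a)\exp\bigl(-\int_0^a(1-2\varphi(X_s))\,ds\bigr)$. Your subsequent bookkeeping compounds the slip: the speed measure of $\mathcal{L}_0$ has density proportional to $e^{-s/2}$, not $\tfrac12 s^{-1}e^{-s/2}$, and with your $h$ and your $m$ the product $h^2m$ is \emph{not} proportional to $\varphi'(s)^2e^{s/2}$ as you assert; the identity only comes out right with the corrected harmonic function. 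With $h(\ell)=-\varphi'(\ell)e^{\ell/2}$ the rest of your plan does match the paper (the paper's change of measure $Q^t_\ell=\tfrac{M_t}{M_0}\cdot P_\ell$ is precisely this $h$-transform, and it verifies recurrence and invariance of $\rho(d\ell)=\varphi'(\ell)^2e^{\ell/2}d\ell$), though you still gloss over the fact that $1/h$ is unbounded: the convergence $\widehat E_\ell[1/h(X_r)]\to\int h^{-1}\,d\widehat\rho$ needs an argument (the paper uses a truncation plus a monotone coupling in the starting point).

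The uniform bound is also not salvageable by your preferred route (b). The diffusion \eqref{EDSRK} has mean-reverting drift $2-x$ and finite speed measure $e^{-x/2}dx$, so it is positive recurrent on $(0,\infty)$; consequently $\int_0^\infty \mathbf{1}_{\{X_a<x_0\}}\,da=+\infty$ almost surely, and the Khas'minskii-type exponential moment you invoke does not exist (there is not even a finite first moment). Indeed the exponent $-\int_0^r(1-2\varphi(X_a))\,da$ tends to $-\infty$ a.s.; the boundedness of the expectation is a genuine eigenvalue phenomenon, not a pathwise one. The paper's argument is much simpler and worth knowing: once the first assertion is proved, take $\ell=0$ to get $\sup_{r}E_0[\cdots]\leq K$, and then use the comparison theorem for \eqref{EDSRK} together with the monotonicity of $\varphi$ (a larger starting point yields a pathwise larger $X$, hence a larger $1-2\varphi(X_a)$ and a smaller functional) to conclude $E_\ell[\cdots]\leq E_0[\cdots]\leq K$ for all $\ell\geq 0$.
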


\begin{pf}
Under the probability measure $P_{(\cc(\t))}$ the process $X$ starts
with an initial distribution which is exponential
with parameter $\cc(\t)/2$. Consequently, under $\int\Theta(\D\t)
P_{(\cc(\t))}$, the initial density of $X$ is
\begin{equation}
\label{asymptech1} q(\ell)=\int_{[1,\infty)} \gamma(\D s)
\frac{s}{2} e^{-s\ell/2}= -\varphi'(\ell).
\end{equation}
However, from (\ref{firststepeq}) with $F=1$, we have
\begin{eqnarray*}
1&=& \int\Theta(\D\t) E_{(\cc(\t))} \biggl[\exp-\int_0^r
\D a \bigl(1-2\varphi (X_a) \bigr) \biggr]
\\
&=&-\int\D\ell \varphi'(\ell) E_\ell \biggl[\exp-\int
_0^r \D a \bigl(1-2\varphi(X_a)
\bigr) \biggr].
\end{eqnarray*}

We can generalize the last identity via a minor extension of the calculations
of the preceding section.
We let $L^0_\infty$ be the total local time accumulated by the process $W$
at the root of $\Gamma$.
Let $r>0$ and let $h$ be a bounded nonnegative continuous
function on $(0,\infty)$, and instead of the quantity $I_r$
of the preceding section, set
\[
I_r^h:=\E\otimes E \biggl[ h \bigl(L^0_\infty
\bigr) \sum_{x\in\Gamma_r} F \bigl(\Gamma [x] \bigr) {
\mathbf1}_{\{x\prec W_\infty\}} \biggr],
\]
where $F$ is a given nonnegative measurable function on $\T$. The same
calculations that led
to (\ref{selected-tech1}) give, for every $x\in\Gamma_r$ and $R>r$,
\begin{eqnarray}
\label{selected-tech2} E \bigl[ h \bigl(L^{0,R}_\infty \bigr) {
\mathbf1}_{A_{x,R}} \bigr]&=& E \Biggl[h \bigl(L^{0,R}_\infty
\bigr) \exp \Biggl(-\frac{1}{2}\sum_{j=1}^k
\cc(\Gamma_{x,j,R}) L^{a_j,R}_\infty \Biggr) \Biggr]
\nonumber\\[-8pt]\\[-8pt]\nonumber
&=& E_{(\cc(\Gamma[x]))} \Biggl[h(X_r) \exp \Biggl(-\frac{1}{2}
\sum_{j=1}^k \cc (\Gamma_{x,j,R})
X_{r-a_j} \Biggr) \Biggr].
\end{eqnarray}
When $R\to\infty$, $L^{0,R}_\infty$ converges to $L^0_\infty$, and so
we get
\[
E \bigl[ h \bigl(L^{0}_\infty \bigr) {\mathbf1}_{\{x\prec W_\infty\}}
\bigr]=E_{(\cc(\Gamma
[x]))} \Biggl[h(X_r) \exp \Biggl(-\frac{1}{2}
\sum_{j=1}^k \cc(\Gamma_{x,j})
X_{r-a_j} \Biggr) \Biggr].
\]
We then sum over $x\in\Gamma_r$ and integrate with respect to $\P$. By
the same manipulations as
in the preceding section, we arrive at
\begin{equation}
\label{firststepeq2} I^h_r=e^r \int\Theta(\D\t)
F(\t) E_{(\cc(\t))} \biggl[h(X_r) \exp-2\int_0^r
\D a \bigl(1-\varphi(X_a) \bigr) \biggr].
\end{equation}
Note that if $F=1$,
\[
I^h_r= \E\otimes E \bigl[h \bigl(L^0_\infty
\bigr) \bigr] = - \int_0^\infty
\varphi'( \ell) h(\ell)\, \D\ell
\]
since given $\Gamma=\t$ the local time $L^0_\infty$ follows an
exponential distribution with parameter $\cc(\t)/2$,
and we use the same calculation as in (\ref{asymptech1}). Hence, the
case\vadjust{\goodbreak} $F=1$ of (\ref{firststepeq2}) gives
\begin{equation}
\label{asymptech2} \quad\int_0^\infty\D\ell
\varphi'(\ell) E_\ell \biggl[h(X_r) \exp-\int
_0^r \D a \bigl(1-2\varphi(X_a)
\bigr) \biggr] = \int_0^\infty\D\ell
\varphi'(\ell) h(\ell).
\end{equation}
By an obvious truncation argument, this identity also holds if $h$ is unbounded.

At this point, we need a lemma.\noqed
\end{pf}

%
\begin{lemma}\label{martingalelemma}
The process
\[
M_a:= -\varphi'(X_a) \exp \biggl(
\frac{X_a}{2}-\int_0^a \D s \bigl(1-2
\varphi (X_s) \bigr) \biggr)
\]
is a martingale under $P_\ell$, for every $\ell\geq0$.
\end{lemma}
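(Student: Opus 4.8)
The plan is to apply Itô's formula to $M_a$, isolate the continuous finite-variation part of the resulting semimartingale, and check that it vanishes; the identity needed turns out to be exactly the differentiated equation \eqref{edphi2}. Genuine (not merely local) martingality will then follow from a crude deterministic bound on $|M_a|$. Throughout I will use that $\varphi$ is smooth on $[0,\infty)$ — since $\gamma$ has finite moments of all orders one may differentiate $\varphi(\ell)=\int e^{-\ell s/2}\,\gamma(ds)$ under the integral sign any number of times — so that all functions below are well defined, and that $\varphi$ solves \eqref{edphi}, hence also \eqref{edphi2}.

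It is convenient to write $M_a=g(X_a)\,e^{-A_a}$ with
$$g(x):=-\varphi'(x)\,e^{x/2},\qquad A_a:=\int_0^a\big(1-2\varphi(X_s)\big)\,ds.$$
Since $A$ is a continuous finite-variation process, the product rule $d(UV)=U\,dV+V\,dU$ (no bracket term) together with Itô's formula for $g(X_a)$, and the facts $dX_a=2\sqrt{X_a}\,d\eta_a+(2-X_a)\,da$ and $d\langle X\rangle_a=4X_a\,da$, gives
$$dM_a=2\sqrt{X_a}\,g'(X_a)\,e^{-A_a}\,d\eta_a+e^{-A_a}\Big(2X_a\,g''(X_a)+(2-X_a)\,g'(X_a)-\big(1-2\varphi(X_a)\big)\,g(X_a)\Big)\,da.$$
So $M$ is a continuous local martingale under each $P_\ell$ precisely when
$$2x\,g''(x)+(2-x)\,g'(x)-\big(1-2\varphi(x)\big)\,g(x)=0\qquad\text{for all }x\ge0,$$
and this is the only thing to verify.

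To check it I would simply substitute $g=-\varphi'e^{x/2}$: differentiating, $g'(x)=-e^{x/2}\big(\varphi''(x)+\tfrac12\varphi'(x)\big)$ and $g''(x)=-e^{x/2}\big(\varphi'''(x)+\varphi''(x)+\tfrac14\varphi'(x)\big)$. Plugging these into the displayed ODE and dividing through by $-e^{x/2}$, the terms proportional to $\varphi'$ cancel and what is left is exactly
$$2x\,\varphi'''(x)+(x+2)\,\varphi''(x)+2\varphi(x)\varphi'(x)=0,$$
which is \eqref{edphi2}. Hence the drift part of $dM_a$ vanishes and $M$ is a continuous local martingale.

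Finally I would upgrade this to a true martingale via a deterministic bound. Since $0\le\varphi\le1$ we have $1-2\varphi(X_s)\ge-1$, whence $e^{-A_a}\le e^{a}$; and the bound $|\varphi'(x)|\le\tfrac12\big(\int s\,\gamma(ds)\big)e^{-x/2}$ recalled earlier in \cref{sec:asymptotics} gives $|g(x)|\le\tfrac12\int s\,\gamma(ds)=:C<\infty$. Therefore $\sup_{a\le t}|M_a|\le Ce^{t}$ for every $t\ge0$, a deterministic bound, so applying dominated convergence along any localizing sequence shows $M$ is a genuine martingale under $P_\ell$. The only point needing a word of care is $\ell=0$, where $X_0=0$: near $0$ the drift of $X$ equals $2$, so $X$ behaves like a squared Bessel process of dimension $2$ and a.s. does not return to $0$; and in any event $g\in C^2([0,\infty))$, so Itô's formula applies with no boundary correction. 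I expect the algebraic reduction to \eqref{edphi2} to be the substance of the argument, with this last integrability/justification step the only (minor) obstacle.
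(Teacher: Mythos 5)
Your proof is correct and follows essentially the same route as the paper: apply Itô's formula to identify the drift of $M$, check that it vanishes by reduction to the differentiated ODE \eqref{edphi2}, and then use the deterministic bound $|\varphi'(x)|\le Ce^{-x/2}$ (together with $1-2\varphi\ge -1$) to bound $|M|$ by $Ce^t$ on $[0,t]$ and upgrade from local to true martingale. Your write-up merely makes explicit the algebra ($g$, $g'$, $g''$) that the paper leaves to the reader; the substance is identical.
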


\begin{pf} 
From the stochastic differential
equation (\ref{EDSRK}),
an application of It\^o's formula shows that the finite variation part
of the
semimartingale~$-M_a$ is
\begin{eqnarray*}
&& \int_0^a \bigl(2X_s
\varphi'''(X_s) +
(2+X_s)\varphi''(X_s) + 2
\varphi (X_s)\varphi'(X_s) \bigr)
\\
&&\qquad {}\times\exp \biggl(\frac{X_s}{2}-\int_0^s
\D u \bigl(1-2\varphi (X_u) \bigr) \biggr)\, \D s
\end{eqnarray*}
and this vanishes thanks to (\ref{edphi2}). Hence, $M$ is a local
martingale. Furthermore, we already noticed
that, for every
$\ell\geq0$, $\llvert  \varphi'(\ell)\rrvert  \leq Ce^{-\ell/2}$, where
$C:=\frac{1}{2}\int s \gamma(\D s)$. It follows that $\llvert  M\rrvert  $ is bounded
by $C e^a$ over the
time interval $[0,a]$, and thus $M$ is a (true) martingale.
\end{pf}

We return to the proof of Proposition~\ref{asympto-density}.
Let $\ell\geq0$ and $t>0$. On the probability space where $X$ is
defined, we introduce a new probability
measure $Q^t_\ell$ by setting
\[
Q^t_\ell= \frac{M_t}{M_0}\cdot P_\ell.
\]
Note that the fact that $Q^t_\ell$ is a probability measure follows
from the martingale property derived
in Lemma~\ref{martingalelemma}. Furthermore, we have $P_\ell$ a.s.
\[
\frac{M_t}{M_0} = \frac{\varphi'(X_t)}{\varphi'(\ell)} \exp \biggl(\frac{X_t-\ell}{2} - \int
_0^t \D s \bigl(1-2\varphi(X_s)
\bigr) \biggr),
\]
so that the martingale part of $\log\frac{M_t}{M_0}$ is
\[
\int_0^t \sqrt{X_s} \,\D
\eta_s + 2\int_0^t
\frac{\varphi''(X_s)}{\varphi
'(X_s)} \sqrt{X_s} \,\D\eta_s,
\]
where $\eta$ is the linear Brownian motion in (\ref{EDSRK}). An
application of Girsanov's theorem
shows that the process
\[
\widetilde\eta_s:= \eta_s -\int_0^s
\sqrt{X_u} \biggl(1+ \frac{2\varphi
''(X_u)}{\varphi'(X_u)} \biggr)\, \D u,\qquad0\leq
s \leq t,
\]
is a linear Brownian motion over the time interval $[0,t]$, under
$Q^t_\ell$. Furthermore, still on the time
interval $[0,t]$, the process $X$ satisfies the stochastic differential equation
\[
dX_s = 2\sqrt{X_s} \,\D\widetilde\eta_s +
2X_s \biggl(1+ \frac{2\varphi
''(X_s)}{\varphi'(X_s)} \biggr)\,\D s +
(2-X_s)\,\D s,
\]
or equivalently, using (\ref{edphi}),
\begin{equation}
\label{EDSbis} dX_s = 2\sqrt{X_s} \,\D\widetilde
\eta_s + \biggl(2-X_s + 2\frac{\varphi
-\varphi^2}{\varphi'}(X_s)
\biggr)\,\D s.
\end{equation}
Notice that the function
\[
\ell\mapsto\frac{\varphi-\varphi^2}{\varphi'}(\ell)
\]
is continuously differentiable over $[0,\infty)$, takes negative values
on $(0,\infty)$ and vanishes at $0$.
Pathwise uniqueness, and therefore also weak uniqueness, holds for~(\ref
{EDSbis}) by an
application of the classical Yamada--Watanabe criterion (see, e.g.,~\cite{RY}, Theorem IX.3.5).
The
preceding considerations show that, under the probability measure
$Q^t_\ell$ and on the time
interval $[0,t]$, the process $X$ is distributed as the diffusion
process on $[0,\infty)$
with generator
\[
\mathcal{L} = 2r \frac{\D^2}{\D r^2} + \biggl(2-r + 2\frac{\varphi
-\varphi^2}{\varphi'}(r)
\biggr) \frac{\D}{\D r}
\]
started from $\ell$. Write $\widetilde X$ for this diffusion process,
and assume that $\widetilde X$ starts from $\ell$
under the probability measure $P_\ell$. Note\vspace*{1pt} that $0$ is an entrance
point for $\widetilde X$, but,
independently of its starting point, $\widetilde X$ does not visit $0$
at a positive time (indeed this follows
from the fact that $X$ does not visit $0$ at a positive time).

Standard comparison theorems for stochastic differential equations
(see, e.g., \cite{RY}, Theorem IX.3.7) can be used to
compare the solutions of (\ref{EDSRK}) and (\ref{EDSbis}), and it
follows that
$\widetilde X$ is recurrent on $(0,\infty)$.

We next observe that the finite measure $\rho$
on $(0,\infty)$ defined by
\[
\rho(\D\ell):= \varphi'(\ell)^2 e^{\ell/2} \,\D
\ell
\]
is invariant for $\widetilde X$. Indeed, we have, for any bounded
continuous function $h$ on~$(0,\infty)$,
\begin{eqnarray*}
&&\int_{(0,\infty)} \D\ell \varphi'(
\ell)^2 e^{\ell/2} E_\ell \bigl[h(\widetilde
X_t) \bigr]
\\
&&\qquad=\int_{(0,\infty)}\D\ell \varphi'(
\ell)^2 e^{\ell/2} Q^t_\ell
\bigl[h(X_t) \bigr]
\\
&&\qquad=\int_{(0,\infty)} \D\ell \varphi'(\ell)
E_\ell \biggl[h(X_t) \varphi'(X_t)
\exp \biggl(\frac{X_t}{2} - \int_0^t \D s
\bigl(1-2\varphi (X_s) \bigr) \biggr) \biggr]
\\
&&\qquad=\int_{(0,\infty)} \D\ell \varphi'(
\ell)^2 e^{\ell/2} h(\ell),
\end{eqnarray*}
where the last equality follows from (\ref{asymptech2}). We normalize
$\rho$ by setting
\[
\widehat\rho= \frac{\rho}{\rho((0,\infty))}.
\]

From the known results about the convergence of positive recurrent
diffusion processes
toward their stationary distribution (see Chapter~23 in Kallenberg \cite{Kal}),
the distribution of $\widetilde X_t$ under $P_\ell$ converges to
$\widehat\rho$ in variation norm
as $t\to\infty$, for any $\ell\geq0$. Consequently, for any bounded
Borel function $g$
on $[0,\infty)$, and every $\ell\geq0$,
\begin{equation}
\label{conv-mesu-invar} E_\ell \bigl[g(\widetilde X_t) \bigr]
\mathop{\la}\limits_{t\to\infty} \int g \,\D\widehat \rho.
\end{equation}
We claim that (\ref{conv-mesu-invar}) still holds if $g$ may be
unbounded but is assumed to be nonnegative,
monotone increasing and such that $\int g \,\D\widehat\rho<\infty$. To
see this, fix $\ell\geq0$ and write $\Pi_t(\ell, \D\ell')$ for the
distribution of $\widetilde X_t$ under $P_\ell$. Using
comparison theorems for stochastic differential equations (see,\vspace*{1pt} e.g.,
\cite{RY}, Theorem IX.3.7), we can, for every choice of $\ell'\geq\ell$,
couple a solution $\widetilde X^1$ of~\eqref{EDSbis} starting from $\ell
$ and a solution $\widetilde X^2$ starting from $\ell'$ so that
$\widetilde X^2_t \geq\widetilde X^1_t$ for all $t \geq0$. It follows that
\begin{eqnarray*}
\int\Pi_{t} \bigl(\ell, \D\ell' \bigr) g \bigl(
\ell' \bigr) &\leq& \frac
{1}{\widehat{\rho}( \ell, \infty)} \int_{\ell}^{\infty}
\widehat\rho (\D u) \int\Pi_{t} \bigl(u, \D\ell' \bigr)
g \bigl(\ell' \bigr)
\\
&\leq& \frac{1}{\widehat
{\rho}( \ell, \infty)} \int_{0}^{\infty} \widehat
\rho(\D u) \int\Pi _{t} \bigl(u, \D\ell' \bigr) g \bigl(
\ell' \bigr)
\\
&=& \frac{1}{\widehat{\rho}( \ell, \infty
)} \int\widehat{\rho} \bigl(\D\ell' \bigr) g
\bigl(\ell' \bigr).
\end{eqnarray*}
Applying the above display to the function $ g \mathbf{1}_{(A,\infty
)}$, where $A>0$, we get that
\[
0 \leq\int\Pi_{t} \bigl( \ell, \D\ell' \bigr) g \bigl(
\ell' \bigr) -\int_{[0,A]} \Pi_{t}
\bigl( \ell, \D\ell' \bigr) g \bigl(\ell' \bigr) \leq
\frac{1}{\widehat{\rho}( \ell, \infty)}\cdot\int_{(A,\infty)} \widehat{\rho} \bigl(\D
\ell' \bigr) g \bigl(\ell' \bigr).
\]
Since $g$ is integrable with respect to $\widehat{\rho}$, the
right-hand side can be made arbitrarily small, by choosing $ A$ large
enough. Our claim now follows by letting $t \to\infty$, using the fact
that $g \mathbf{1}_{[0,A]}$ is a bounded Borel function.

We can thus apply (\ref{conv-mesu-invar}) to the nonnegative
increasing function
\[
g(\ell)= -\frac{1}{\varphi'(\ell)} e^{-\ell/2}
\]
which is such that
$\int g \,\D\rho=-\int\varphi'(\ell)\,\D\ell= 1$. Note that for this
particular function~$g$,
\[
E_\ell \bigl[g(\widetilde X_t) \bigr] =
Q^t_\ell \bigl[g(X_t) \bigr]= -
\frac{e^{-\ell
/2}}{\varphi'(\ell)} E_\ell \biggl[ \exp \biggl(-\int
_0^t \D s \bigl(1-2\varphi(X_s)
\bigr) \biggr) \biggr].
\]
It follows from (\ref{conv-mesu-invar})
that, for every $\ell\geq0$,
\begin{eqnarray*}
\lim_{t\to\infty} -\frac{e^{-\ell/2}}{\varphi'(\ell)} E_\ell \biggl[ \exp
\biggl(-\int_0^t \D s \bigl(1-2
\varphi(X_s) \bigr) \biggr) \biggr] &=&\int g \,\D \widehat\rho=
\frac{1}{\rho((0,\infty))}
\\
&=& \frac{1}{\int_{(0,\infty)}
\D s  \varphi'(s)^2e^{s/2}}.
\end{eqnarray*}
This gives the first assertion of the proposition.

The second assertion is now easy. By the first assertion, there exists
a constant $K$ such that, for every $r\geq0$,
\[
E_0 \biggl[ \exp \biggl(-\int_0^r
\D s \bigl(1-2\varphi(X_s) \bigr) \biggr) \biggr] \leq K.
\]
Since the function $\varphi$ is monotone decreasing, a comparison
argument gives for every
$\ell\geq0$ and $r\geq0$,
\[
E_\ell \biggl[ \exp \biggl(-\int_0^r
\D s \bigl(1-2\varphi(X_s) \bigr) \biggr) \biggr]\leq
E_0 \biggl[ \exp \biggl(-\int_0^r
\D s \bigl(1-2 \varphi(X_s) \bigr) \biggr) \biggr]\leq K.
\]
This completes the proof of the proposition. \hfill$\square$

To simplify notation, we set
\[
C_0:= \int_0^\infty \D s
\varphi'(s)^2 e^{s/2} = \int\!\!\!\int\gamma (\D
\ell)\gamma \bigl(\D\ell' \bigr) \frac{\ell\ell'}{2(\ell+\ell'-1)}.
\]

%
\begin{corollary}
\label{limit-density}
For every $c>0$,
\[
\lim_{r\to\infty} \Phi_r(c)= \Phi_\infty(c),
\]
where
\[
\Phi_\infty(c) = \frac{1}{C_0} \int\gamma(\D s)\frac{cs}{2(c+s-1)}.
\]
\end{corollary}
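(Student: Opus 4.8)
The plan is to feed the two preceding results into one another: the exact formula for $\Phi_r(c)$ from \cref{prop:tree-selected-law} writes $\Phi_r(c)$ as an average, against the exponential initial law, of the quantities $E_\ell[\exp-\int_0^r da\,(1-2\varphi(X_a))]$, and \cref{asympto-density} supplies both the pointwise limit of these quantities as $r\to\infty$ and a uniform bound on them. So the whole corollary reduces to a dominated-convergence step followed by a routine Laplace-transform computation.

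Concretely, I would first use that under $P_{(c)}$ the process $X$ starts from a variable with density $\frac{c}{2}e^{-c\ell/2}$ on $(0,\infty)$ to write
$$\Phi_r(c) = \int_0^\infty \frac{c}{2}\,e^{-c\ell/2}\,E_\ell\Big[\exp-\int_0^r da\,(1-2\varphi(X_a))\Big]\,d\ell.$$
Since $c>0$, the function $\ell\mapsto \frac{c}{2}e^{-c\ell/2}$ is integrable on $(0,\infty)$, so the uniform bound $E_\ell[\,\cdot\,]\le K$ from the second assertion of \cref{asympto-density} allows me to pass to the limit $r\to\infty$ under the integral sign by dominated convergence. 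Inserting the first assertion of \cref{asympto-density} then gives
$$\lim_{r\to\infty}\Phi_r(c) = -\frac{1}{C_0}\int_0^\infty \frac{c}{2}\,e^{-c\ell/2}\,\varphi'(\ell)\,e^{\ell/2}\,d\ell.$$

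To finish, I would substitute $\varphi'(\ell) = -\frac{1}{2}\int_{[1,\infty)} s\,e^{-s\ell/2}\,\gamma(ds)$ and apply Fubini (all terms are nonnegative after the sign change), obtaining
$$-\int_0^\infty \frac{c}{2}\,e^{-c\ell/2}\,\varphi'(\ell)\,e^{\ell/2}\,d\ell = \frac{c}{4}\int_{[1,\infty)}\gamma(ds)\,s\int_0^\infty e^{-(c+s-1)\ell/2}\,d\ell = \int\gamma(ds)\,\frac{cs}{2(c+s-1)},$$
where the inner integral converges because $c+s-1\ge c>0$. This is exactly $C_0\,\Phi_\infty(c)$, which is the claim; the very same Fubini computation applied to $\int_0^\infty \varphi'(s)^2 e^{s/2}\,ds$ also produces the double-integral expression for $C_0$ displayed just before the corollary statement. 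There is no genuine difficulty in this argument: the only point requiring care is the interchange of limit and integral, and that is precisely why the uniform bound in \cref{asympto-density} was established together with the limit.
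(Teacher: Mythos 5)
Your proof is correct and follows exactly the same route as the paper: express $\Phi_r(c)$ as an exponential mixture of $E_\ell[\exp-\int_0^r da\,(1-2\varphi(X_a))]$, pass to the limit under the integral via dominated convergence using the uniform bound from \cref{asympto-density}, and then identify the resulting integral with $\Phi_\infty(c)$. You merely spell out the final Laplace-transform/Fubini computation that the paper dismisses as "a straightforward calculation."
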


\begin{pf} By definition, we have
\[
\Phi_r(c)= \frac{c}{2}\int_0^\infty
\D\ell e^{-c\ell/2} E_\ell \biggl[ \exp \biggl(-\int
_0^r \D s \bigl(1-2\varphi(X_s)
\bigr) \biggr) \biggr].
\]
From Proposition~\ref{asympto-density}
and an application of the dominated convergence theorem, we get
\[
\lim_{r\to\infty}\Phi_r(c)= \frac{c}{2}\int
_0^\infty \D\ell e^{-c\ell/2} \times \biggl(-
\frac{\varphi'(\ell) e^{\ell/2}}{C_0} \biggr).
\]
The limit is identified with $\Phi_\infty(c)$ by a straightforward
calculation.
\end{pf}

\subsection{The invariant measure}
\label{sec:ergodic}

For the purposes of this section, it will be useful to introduce the
set of all pairs consisting of
a tree $\t\in\T$ and a distinguished geodesic ray $\mathbf{v}$, which
we can represent by an element
of $\{1,2\}^\N$. We formally set
\[
\T^*= \T\times \{1,2\}^\N.
\]
We can define shifts $(\sigma_r)_{r\geq0}$ on $\T^*$
in the following way. For $r=0$, $\sigma_r$ is just the identity mapping
of $\T^*$. Then let $r>0$ and $(\t,\mathbf{v})\in\T^*$.
Write $\mathbf{v}=(v_1,v_2,\ldots)$ and $\mathbf{v}_n=(v_1,\ldots,v_n)$
for every $n\geq0$. Also let $x_{r,\mathbf{v}}$ be the unique
element of $\t_r$ such that $x_{r,\mathbf{v}}\prec\mathbf{v}$. Then,
if $k=\min\{n\geq0: z_{\mathbf{v}_n}\geq r\}$, we set
\[
\sigma_r(\t,\mathbf{v})= \bigl(\t[x_{r,\mathbf{v}}],
(v_{k+1},v_{k+2},\ldots) \bigr).
\]
Informally, $\sigma_r(\t,\mathbf{v})$ is obtained by taking the subtree
of $\t$ consisting of descendants
of the vertex at height $r$ on the distinguished geodesic ray, and
keeping in this subtree
the ``same'' geodesic ray.
It is straightforward to verify that $\sigma_r\circ\sigma_s=\sigma
_{r+s}$ for every $r,s\geq0$.

Under the probability measure $\P\otimes P$, we can view $(\Gamma
,W_\infty)$ as
a random variable with values in $\T^*$. Write $\Theta^*$
for the distribution of $(\Gamma,W_\infty)$. Then $\Theta^*$
is not invariant under the shifts $\sigma_r$, but Corollary~\ref{limit-density}
will give an invariant measure absolutely continuous with respect
to $\Theta^*$.

%
\begin{proposition}
\label{invariant-meas}
The probability measure
\[
\Lambda^*(\D\t \,\D\mathbf{v}):=\Phi_\infty \bigl(\cc(\t) \bigr)
\Theta^*(\D\t \,\D \mathbf{v})
\]
is invariant under the shifts $\sigma_r$, $r\geq0$.
\end{proposition}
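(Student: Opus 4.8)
The plan is to realize $\Lambda^*$ as the $r\to\infty$ limit of the explicit family of probability measures
$$\rho_r(d\t\,d\mathbf{v}):=\Phi_r(\cc(\t))\,\Theta^*(d\t\,d\mathbf{v}),$$
and then to conclude using the semigroup property of the shifts. The first step is to identify $\rho_r$ with the pushforward $(\sigma_r)_*\Theta^*$, i.e.\ with the law under $\P\otimes P$ of the pair $(\Gamma^{(r)},W_\infty^{(r)})$, where $W_\infty^{(r)}$ is the ray of $\Gamma^{(r)}=\Gamma[x_{r,W_\infty}]$ obtained by shifting $W_\infty$ at level $r$. The first marginal is given by \cref{prop:tree-selected-law}: the law of $\Gamma^{(r)}$ has density $\Phi_r(\cc(\t))$ with respect to $\Theta$. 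For the conditional law of $W_\infty^{(r)}$ given $\Gamma^{(r)}$, I would invoke the flow property (\cref{flow-property}): conditionally on the selected vertex $x\in\Gamma_r$, hence a fortiori conditionally on the pair $(\Gamma,x)$, the law of $W_\infty^{(r)}$ is the harmonic measure of $\Gamma[x]$, which depends on $(\Gamma,x)$ only through the subtree $\Gamma[x]=\Gamma^{(r)}$; since $\sigma(\Gamma^{(r)})$ is coarser, the tower property leaves this conditional law unchanged. As $\Theta^*$ itself disintegrates as $\Theta(d\t)\,\mu_\t(d\mathbf{v})$ with $\mu_\t$ the harmonic measure of $\t$, this gives $(\sigma_r)_*\Theta^*=\Phi_r(\cc(\cdot))\,\Theta^*=\rho_r$. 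In particular each $\rho_r$ is a probability measure.

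Next I would pass to the limit $r\to\infty$. By \cref{limit-density}, $\Phi_r(\cc(\t))\to\Phi_\infty(\cc(\t))$ for every $\t$ (recall $\cc(\t)\ge 1$), while the second assertion of \cref{asympto-density} yields a uniform bound $\Phi_r(\cc(\t))\le K$. Dominated convergence then gives $\int G\,d\rho_r\to\int G\,d\Lambda^*$ for every bounded measurable $G$ on $\T^*$; applied to $G=1$ this also shows that $\Lambda^*$ is a probability measure. On the other hand, from $\sigma_s\circ\sigma_r=\sigma_{r+s}$ and the functoriality of pushforwards, $(\sigma_s)_*\rho_r=(\sigma_s)_*(\sigma_r)_*\Theta^*=(\sigma_{r+s})_*\Theta^*=\rho_{r+s}$ for all $r,s\ge 0$; equivalently, applying $\sigma_s$ to a $\rho_r$-distributed pair yields a $\rho_{r+s}$-distributed pair.

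Combining these two facts, fix $s\ge 0$ and a bounded measurable $G$ on $\T^*$, and apply the convergence above to $G$ and to $G\circ\sigma_s$ (again bounded measurable):
$$\int G\,d\big((\sigma_s)_*\Lambda^*\big)=\int (G\circ\sigma_s)\,d\Lambda^*=\lim_{r\to\infty}\int(G\circ\sigma_s)\,d\rho_r=\lim_{r\to\infty}\int G\,d\rho_{r+s}=\int G\,d\Lambda^*,$$
so $(\sigma_s)_*\Lambda^*=\Lambda^*$, which is the assertion. The step I expect to be the main (though mild) obstacle is the very first one: \cref{prop:tree-selected-law} only delivers the marginal of $\Gamma^{(r)}$, so identifying the \emph{joint} law of $(\Gamma^{(r)},W_\infty^{(r)})$ requires splicing it with the flow property through a conditioning argument, and the presence of the two probability layers $\P$ (the tree) and $P$ (Brownian motion on it) is what makes this point need some care. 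Everything after that is a formal manipulation of pushforward measures together with one routine dominated-convergence passage to the limit.
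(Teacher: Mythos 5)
Your proposal is correct and follows essentially the same route as the paper: it combines \cref{prop:tree-selected-law} with the flow property (\cref{flow-property}) to show that the law of $(\Gamma^{(r)},W_\infty^{(r)})$ is $\Phi_r(\cc(\t))\,\Theta^*(d\t\,d\mathbf{v})$, then passes to the limit $r\to\infty$ via \cref{limit-density} and the uniform bound from \cref{asympto-density}, and finally applies the identity with $F\circ\sigma_s$ together with the semigroup property $\sigma_s\circ\sigma_r=\sigma_{r+s}$. Your slightly more explicit conditioning/tower-property justification of the joint-law step corresponds precisely to the computation \eqref{invar-tech1} in the paper.
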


\begin{pf} Let $r>0$. We have
\[
\sigma_r(\Gamma,W_\infty)= \bigl(\Gamma^{(r)},W_\infty^{(r)}
\bigr),
\]
where $\Gamma^{(r)}$ and $W_\infty^{(r)}$ are as in the previous sections.

By Proposition~\ref{prop:tree-selected-law}, we have, for any bounded
measurable function $F$ on $\T$,
\[
\E\otimes E \bigl[F \bigl(\Gamma^{(r)} \bigr) \bigr] = \int\Theta(\D\t)
\Phi_r \bigl(\cc(\t) \bigr) F(\t).
\]
Write $\nu_\t$ for the harmonic measure of a Yule-type tree $\t$.
At this point, we use the flow property of harmonic measure.
By Lemma \ref{flow-property} and the preceding identity, we
have also, for any bounded
measurable function $F$ on $\T^*$,
\begin{eqnarray}
\label{invar-tech1} \E\otimes E \bigl[F \bigl(\Gamma^{(r)},W_\infty^{(r)}
\bigr) \bigr]& =&\E\otimes E \biggl[\int\nu_{\Gamma^{(r)}}(\D\mathbf{v}) F
\bigl( \Gamma ^{(r)},\mathbf{v} \bigr) \biggr]
\nonumber
\\
&=&\int\Theta(\D\t) \Phi_r \bigl(\cc(\t) \bigr)\int
\nu_\t(\D\mathbf{v}) F(\t,\mathbf{v})
\\
&=& \int\Theta^*(\D\t \,\D\mathbf{v}) \Phi_r \bigl(\cc(\t) \bigr) F(
\t, \mathbf{v}),\nonumber
\end{eqnarray}
since $\Theta^*(\D\t \,\D\mathbf{v})=\Theta(\D\t)\nu_\t(\D\mathbf{v})$
by construction.

If we now let $r\to\infty$, Corollary~\ref{limit-density} gives
\[
\lim_{r\to\infty} \E\otimes E \bigl[F \bigl(\Gamma^{(r)},W_\infty^{(r)}
\bigr) \bigr] = \int \Theta^*(\D\t \,\D\mathbf{v}) \Phi_\infty \bigl(
\cc(\t) \bigr) F(\t,\mathbf{v})
\]
noting that the functions $\Phi_r$ are uniformly bounded thanks to the last
assertion of Proposition~\ref{asympto-density}. Let $s>0$. If we replace $F$ by
$F\circ\sigma_s$ in the last
convergence, observing that $F\circ\sigma_s(\Gamma^{(r)},W_\infty
^{(r)})=F\circ\sigma_s\circ\sigma_r(\Gamma,W_\infty)
= F(\Gamma^{(s+r)},W_\infty^{(s+r)})$, we get
\[
\int\Theta^*(\D\t \,\D\mathbf{v}) \Phi_\infty \bigl(\cc(\t) \bigr) F(
\t, \mathbf{v})= \int\Theta^*(\D\t \,\D\mathbf{v}) \Phi_\infty \bigl(\cc(
\t) \bigr) F\circ\sigma _s(\t,\mathbf{v}),
\]
which was the desired result.
\end{pf}

%
\begin{proposition}
\label{ergodicity}
For every $r>0$, the shift $\sigma_r$ acting on the probability space
$(\T^*, \Lambda^*)$ is ergodic.
\end{proposition}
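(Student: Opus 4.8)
The plan is to prove the stronger statement that every $\sigma_r$-invariant event $A\subseteq\T^*$ has $\Lambda^*(A)\in\{0,1\}$. First I would reduce to the measure $\Theta^*$: by \cref{limit-density} and the elementary inequality $\frac{cs}{c+s-1}\geq1$ for $c,s\geq1$, the density $\Phi_\infty(\cc(\t))$ is bounded below by $1/(2C_0)>0$ and is $\Theta^*$-integrable, so $\Lambda^*$ and $\Theta^*$ are mutually absolutely continuous. Hence $A$ is $\sigma_r$-invariant up to $\Theta^*$-null sets, and it suffices to prove $\Theta^*(A)\in\{0,1\}$. Writing ${\bf 1}_A={\bf 1}_A\circ\sigma_r$ $\Theta^*$-a.s.\ and iterating, one gets ${\bf 1}_A(\Gamma,W_\infty)={\bf 1}_A(\Gamma^{(nr)},W_\infty^{(nr)})$ $\P\otimes P$-a.s.\ for every $n\geq1$, so that ${\bf 1}_A(\Gamma,W_\infty)$ is measurable with respect to the tail $\sigma$-field $\mathcal T:=\bigcap_{n\geq1}\sigma\big(\Gamma^{(nr)},W_\infty^{(nr)}\big)$. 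The goal becomes showing that $\mathcal T$ is $\P\otimes P$-trivial.

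Next I would let $\mathcal S_n$ be the $\sigma$-field generated by everything in $(\Gamma,W_\infty)$ \emph{except} the subtree $\Gamma[x_{nr}]$ above the height-$nr$ point $x_{nr}$ of $W_\infty$ and the tail of $W_\infty$ beyond it --- that is, generated by the tree $\Gamma\backslash\wt\Gamma[x_{nr}]$ together with the marked spine $\llbracket0,x_{nr}\rrbracket$. Then $\mathcal S_n\uparrow\sigma(\Gamma,W_\infty)$, so L\'evy's $0$--$1$ law gives $\E^{\Theta^*}[{\bf 1}_A\mid\mathcal S_n]\to{\bf 1}_A$ a.s. Writing ${\bf 1}_A=h_n\circ\sigma_r^n$ with $h_n$ an indicator on $\T^*$, one has $\E^{\Theta^*}[{\bf 1}_A\mid\mathcal S_n]=\E[h_n(\Gamma^{(nr)},W_\infty^{(nr)})\mid\mathcal S_n]$. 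The point is that the conditional law of $(\Gamma^{(nr)},W_\infty^{(nr)})=(\Gamma[x_{nr}],W_\infty^{(nr)})$ given $\mathcal S_n$ can be identified: a priori (by memorylessness of the Yule tree at height $nr$) $\Gamma[x_{nr}]$ is a fresh copy of $\Gamma$ independent of $\mathcal S_n$, and conditioning on $x_{nr}\prec W_\infty$ tilts it by the probability that Brownian motion escapes to infinity through it, which by the computation of $P(x\prec W_\infty)$ in \cref{sec:subtreeabove} (see \eqref{selected-tech1}) equals $\Psi_n(\cc(\Gamma[x_{nr}]))$, where $\Psi_n(c):=E_{(c)}\big[\exp(-\tfrac12\sum_j\cc(\Gamma_{x_{nr},j})\,X_{nr-a_j})\big]$ is an $\mathcal S_n$-measurable function of $c$ built from the heights $a_j$ and conductances $\cc(\Gamma_{x_{nr},j})$ of the subtrees branching off $\llbracket0,x_{nr}\rrbracket$; by the flow property (\cref{flow-property}), $W_\infty^{(nr)}$ is then, given $\Gamma[x_{nr}]$, the harmonic measure of $\Gamma[x_{nr}]$. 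Hence the conditional law is $Z_n^{-1}\Psi_n(\cc(\t'))\,\Theta(d\t')\,\nu_{\t'}(d\mathbf v)$ with $Z_n=\int\gamma(dc)\,\Psi_n(c)$, while the \emph{unconditional} law of $(\Gamma^{(nr)},W_\infty^{(nr)})$ is $\Phi_{nr}(\cc(\t'))\,\Theta(d\t')\,\nu_{\t'}(d\mathbf v)$ by \cref{prop:tree-selected-law}. Since $0\le h_n\le1$ and $\E[h_n(\Gamma^{(nr)},W_\infty^{(nr)})]=\Theta^*(A)$, this yields
\[
\big|\E^{\Theta^*}[{\bf 1}_A\mid\mathcal S_n]-\Theta^*(A)\big|\ \le\ \int\gamma(dc)\,\big|Z_n^{-1}\Psi_n(c)-\Phi_{nr}(c)\big|,
\]
so that everything reduces to showing that the right-hand side tends to $0$ in probability as $n\to\infty$.

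This last point is the main obstacle. By \cref{limit-density} one has $\Phi_{nr}(c)\to\Phi_\infty(c)$, and moreover $\E[Z_n^{-1}\Psi_n(c)]=\Phi_{nr}(c)$ (the unconditional law is the $\mathcal S_n$-average of the conditional one), so it suffices to prove that $Z_n^{-1}\Psi_n(c)$ concentrates around its mean. The mechanism is that $\log\Psi_n(c)$ splits into a dominant term of order $n$ coming from the ``ancient'' branch points of the spine (those with $nr-a_j$ large, for which the Ray--Knight diffusion $X$ of \eqref{EDSRK} has relaxed to its invariant law $\wh\rho$): this term fluctuates, but only through $X$ at large times, hence it is insensitive --- up to $O(1)$ errors --- to the starting law (parametrised by $c$), so it cancels in the normalised ratio $Z_n^{-1}\Psi_n(c)$; what remains is a $c$-dependent contribution from the ``recent'' part of the spine, which stabilises, again by the exponential mixing of $X$ towards $\wh\rho$ --- the very mechanism already used in \cref{asympto-density}. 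Turning this heuristic into a proof requires a quantitative estimate, e.g.\ controlling $\mathrm{Var}(Z_n^{-1}\Psi_n(c))$ by a coupling or $L^2$ argument and checking that the deterministic limit is exactly $\Phi_\infty(c)$; this is where the bulk of the work lies, and the argument closely parallels the ergodicity proof of Lyons, Pemantle and Peres \cite{LPP95} in the Galton--Watson setting. Once the right-hand side of the displayed bound is shown to vanish, $\E^{\Theta^*}[{\bf 1}_A\mid\mathcal S_n]\to\Theta^*(A)$; comparing with the L\'evy limit gives ${\bf 1}_A=\Theta^*(A)$ a.s., hence $\Theta^*(A)\in\{0,1\}$ and finally $\Lambda^*(A)\in\{0,1\}$, which is the asserted ergodicity. (A complementary route would be to establish mixing of $\sigma_r$ on $(\T^*,\Lambda^*)$ directly, but it runs into the same asymptotic-independence estimate.)
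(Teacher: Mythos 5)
Your proposal takes a genuinely different route from the paper, but it has a real gap. The framework you set up (reduce to $\Theta^*$-triviality of $\sigma_r$-invariant events via the strictly positive bounded density $\Phi_\infty$, then try to show tail-triviality by Lévy's $0$--$1$ law applied along the filtration $\mathcal S_n$ that reveals everything except the selected subtree above height $nr$) is sensible, and your claim $\mathcal S_n\uparrow\sigma(\Gamma,W_\infty)$ is correct. The identification of the conditional law of $(\Gamma^{(nr)},W_\infty^{(nr)})$ given $\mathcal S_n$ as a $\Psi_n$-tilt of $\Theta(d\t')\nu_{\t'}(d\mathbf v)$, by combining the spine decomposition with the flow property, is also plausible. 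However, the entire proof hinges on the final estimate $\int\gamma(dc)\big|Z_n^{-1}\Psi_n(c)-\Phi_{nr}(c)\big|\to0$ in probability, which you yourself call ``the main obstacle'' and ``where the bulk of the work lies,'' and for which you offer only a heuristic about exponential mixing of the diffusion $X$ of \eqref{EDSRK}. This is not a peripheral technicality: the random prefactor $\Psi_n(c)$ involves all the branch-point heights and conductances along a growing spine, and proving that the $c$-dependence of the \emph{normalized} tilt $Z_n^{-1}\Psi_n(c)$ stabilizes to the deterministic $\Phi_\infty(c)$ would require a quantitative coupling/$L^2$ argument comparable in difficulty to the whole of \cref{sec:asymptotics}. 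As written, the proposal is incomplete.

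For comparison, the paper's proof is structurally quite different and avoids this obstacle entirely. It reduces ergodicity of $\sigma_1$ to tail-triviality of the discrete-time Markov chain $Z_n(\t,\mathbf v)=\pi_1(\sigma_n(\t,\mathbf v))$ on $\T$ with kernel $\bp(\t,d\t')$, whose stationary law is $\Lambda$. Invoking \cite[Proposition 16.2]{LP10}, any shift-invariant event for this chain is, a.s., of the form $\{Z_0\in A\}$ for some $A\subseteq\T$ satisfying $\bp(\t,A)=\mathbf 1_A(\t)$, $\Lambda(d\t)$-a.s. That last relation says that, $\Theta$-a.s., a tree belongs to $A$ if and only if each of its subtrees above level $1$ does; then the branching (self-similar) property of $\Theta$ gives the fixed-point equation $\Theta(A)=\sum_k p_k\,\Theta(A)^k$, forcing $\Theta(A)\in\{0,1\}$. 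This elementary branching argument is the key step for which your approach has no analogue, and it is what makes the paper's proof close cleanly without any asymptotic-independence estimate. If you want to salvage your route, you would need to actually carry out the variance/coupling bound you allude to; otherwise I would recommend switching to the Markov-chain reduction.
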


\begin{pf} We take $r=1$ in this proof, and we write $\sigma=\sigma_1$
for simplicity. We essentially rely on ideas of
\cite{LPP95} (see also \cite{LP10}, Chapter~16); however, our setting
is different, because our trees are not discrete,
and also because we consider ordered trees rather than unordered trees
in \cite{LPP95}. For this
reason, we will provide some details. We write
$\pi_1$ for the canonical projection from $\T^*$ onto $\T$, and let
$\Lambda$ be the image of
$\Lambda^*$ under this projection, so that
\[
\Lambda(\D\t)=\Phi_\infty \bigl(\mathcal{C}(\t) \bigr) \Theta(\D\t).
\]
We define a transition kernel $\bp(\t, \D\t')$ on $\T$ by setting
\[
\bp \bigl(\t, \D\t' \bigr) = \sum_{x\in\t_1}
\nu_\t \bigl( \bigl\{{\mathbf v}\in\{1,2\}^\N: x\prec {
\mathbf v} \bigr\} \bigr) \delta_{\t[x]} \bigl(\D\t' \bigr).
\]
Informally, under the probability measure
$\bp(\t, \D\t')$, we choose one of the subtrees of $\t$ above level $1$
with probability equal to
its harmonic measure.
Then it follows from Proposition~\ref{invariant-meas} that $\Lambda$ is a
stationary probability
measure for the Markov chain with transition kernel $\bp$. Indeed, Lemma~\ref{flow-property} shows that we may obtain
this Markov chain under its stationary measure $\Lambda$ by considering
the process
\[
\mathcal{Z}_n(\t,\mathbf{v}):= \pi_1 \bigl(
\sigma_n(\t,\mathbf{v}) \bigr),\qquad n=0,1,2,\ldots
\]
on the probability space $(\T^*,\Lambda^*)$. Note that $\mathcal{Z}_0(\t
,\mathbf{v})=\t$.

Write $\T^\infty$ for the set of all sequences $(\t^0,\t^1,\ldots)$ of
elements of $\T$. By
\cite{LP10}, Proposition 16.2, if a measurable subset $F$ of $\T^\infty
$ is shift-invariant
for the Markov chain $\mathcal{Z}$, in the sense that $\mathbf
{1}_F(\mathcal{Z}_0,\mathcal{Z}_1,\ldots)= \mathbf{1}_F(\mathcal
{Z}_1,\mathcal{Z}_2,\ldots)$
a.s., then there exists a measurable subset $A$ of $\T$ such that
\[
\mathbf{1}_F(\mathcal{Z}_0,\mathcal{Z}_1,
\ldots) = \mathbf{1}_A(\mathcal {Z}_0),\qquad
\mbox{a.s.}
\]
and moreover,
\[
\bp(\t,A)=\mathbf{1}_A(\t),\qquad\Lambda(\D\t)\mbox{ a.s.}
\]

We let $\widehat\T^\infty$ be the set of all sequences $(\t^0,\t
^1,\ldots)$ in $\T^\infty$,
such that, for every integers $0\leq i<j$, $\t^j$ is a subtree of $\t
^i$ above generation $j-i$
(i.e., there exists a point $x\in\t^i_{j-i}$ such that $\t^j=\t^i[x]$).
Note that $\widehat\T^\infty$ is a measurable subset of $\T^\infty$ and that
$(\mathcal{Z}_0(\t,\mathbf{v)},\mathcal{Z}_1(\t,\mathbf{v}),\ldots)\in
\widehat\T^\infty$
for every $(\t,\mathbf{v})\in\T^*$. If
$(\t^0,\t^1,\ldots)\in\widehat\T^\infty$, there exists $\mathbf
{v}\in\{1,2\}^\N$ such that
$\t^j=\mathcal{Z}_j(\t^0,\mathbf{v})$ for every $j\geq0$, and we set
$\Psi(\t^0,\t^1,\ldots)=(\t^0,\mathbf{v})$.
Note that $\mathbf{v}$ is a priori not unique, but for the previous
definition to make sense we
take the smallest possible $\mathbf{v}$ in lexicographical ordering (of
course for the
random trees that we consider later this uniqueness problem does not arise).
In this way, we define a measurable mapping
$\Psi$ from $\widehat\T^\infty$ into $\T^*$, and we have
$\Psi(\mathcal{Z}_0(\t,\mathbf{v)},\mathcal{Z}_1(\t,\mathbf{v}),\ldots
)=(\t,\mathbf{v})$, $\Lambda^*$ a.s.

Let us now prove the statement of the proposition. We let $B$
be a measurable subset of $\T^*$ such that $\sigma^{-1}(B)= B$, and we aim
at proving that $\Lambda^*(B)=0$\vspace*{2pt} or~$1$. To this end, we set
$F=\Psi^{-1}(B)$, which is a measurable subset of $\widehat\T^\infty
\subset\T^\infty$. Furthermore, we
claim that
$F$ is shift-invariant. To see this, we have to verify that
\[
\bigl\{(\mathcal{Z}_0,\mathcal{Z}_1,\ldots)\in F \bigr
\}= \bigl\{(\mathcal{Z}_1,\mathcal {Z}_2,\ldots)\in F
\bigr\},\qquad\mbox{a.s.}
\]
or equivalently
\[
\bigl\{\Psi(\mathcal{Z}_0,\mathcal{Z}_1,\ldots)\in B
\bigr\}= \bigl\{\Psi(\mathcal {Z}_1,\mathcal{Z}_2,\ldots)
\in B \bigr\},\qquad\mbox{a.s.}
\]
But this is immediate since by construction
$\Psi(\mathcal{Z}_1,\mathcal{Z}_2,\ldots)=\sigma\circ\Psi(\mathcal
{Z}_0,\mathcal{Z}_1,\ldots)$ a.s. and $\sigma^{-1}(B)= B$ by
assumption.

From preceding considerations, we then obtain that there exists
a measurable subset $A$ of $\t$, such that $(\mathcal{Z}_0,\mathcal
{Z}_1,\ldots)\in F$ if and only if $\mathcal{Z}_0\in A$, a.s., and
moreover $\bp(\t,A)=\mathbf{1}_A(\t)$, $\Lambda(\D\t)$ a.s. Since
$\Psi(\mathcal{Z}_0(\t,\mathbf{v)},\mathcal{Z}_1(\t,\mathbf{v}),\ldots
)=(\t,\mathbf{v})$, $\Lambda^*$ a.s., it also follows that we have
$(\t,\mathbf{v})\in B$ if and only if $\t\in A$, $\Lambda^*$ a.s.

However, from the property $\bp(\t,A)=\mathbf{1}_A(\t)$, $\Lambda(\D\t
)$ a.s., one can verify
that $\Lambda(A)=0$ or $1$. First, note that this property also implies
that $\bp(\t,A)=\mathbf{1}_A(\t)$, $\Theta(\D\t)$ a.s.
Hence, $\Theta(\D\t)$ a.s., the tree $\t$ belongs to $A$ if and only if each
of its subtrees above level $1$ belong to $A$ [it is clear that that
the measure
$\bp(\t,\cdot)$ assigns a positive mass to each of these subtrees].
Then, if $p_k=\P(\#\Gamma_1=k)$, for every $k\geq1$, the
branching property of the Yule tree shows that
\[
\Theta(A)= \sum_{k=1}^\infty
p_k \Theta(A)^k
\]
which is only possible if $\Theta(A)=0$ or $1$, or equivalently $\Lambda
(A)=0$ or $1$.
Finally, we also get that $\Lambda^*(B)=0$ or $1$, which completes the
proof.
\end{pf}

\subsection{End of the proof}
\label{sec:proofoftheoremmain-harmonic}

Recall that $\nu_\t$ stands for the harmonic measure of a tree $\t\in\T
$. With this notation, we have $\nu=\nu_\Gamma$. For every
$r>0$, we then consider the nonnegative measurable function $F_r$
defined on $\T^*$
by the formula
\[
F_r(\t,\mathbf{v})= \nu_\t \bigl(\mathcal{B}_\t(
\mathbf{v},r) \bigr),
\]
where $\mathcal{B}_\t(\mathbf{v},r)$ denotes the set of all geodesic
rays of $\t$ that
coincide with the ray $\mathbf{v}$ over the interval $[0,r]$. We claim
that, for every
$r,s>0$, we have
\[
F_{r+s} = F_r \times F_s\circ
\sigma_r.
\]
Indeed, if we write $\sigma_r(\t,\mathbf{v})=(\t^{(r)},\mathbf
{v}^{(r)})$, this is equivalent to saying that
\[
\frac{\nu_\t(\mathcal{B}_\t(\mathbf{v},s+r))}{\nu_\t(\mathcal{B}_\t
(\mathbf{v},r))} = \nu_{\t^{(r)}} \bigl(\mathcal{B}_{\t^{(r)}}
\bigl( \mathbf{v}^{(r)},s \bigr) \bigr),
\]
and the latter equality is an immediate consequence of Lemma \ref
{flow-property}.

If we set $G_r=-\log F_r\geq0$, we have for every $r,s>0$,
\[
G_{s+r}= G_r + G_s\circ\sigma_r
\]
and the ergodic theorem (with Proposition~\ref{ergodicity}) implies that
\[
\frac{1}{s} G_s \mathop{\la}\limits_{s\to\infty}^{\Lambda^*\,\mathrm{a.s.}}
\Lambda^*(G_1).
\]
Since $\Lambda^*$ has a strictly positive density with respect to
$\Theta^*$, the latter convergence
also holds $\Theta^*$ a.s. Recalling that $\Theta^*$ is
the distribution of $(\Gamma,W_\infty)$, this exactly gives the
convergence~\eqref{equivalent-asymp},
with $\beta= \Lambda^*(G_1)$. This completes the proof of Theorem~\ref
{thm:main-harmonic}, except
that we have not checked that $\beta<1$. We will do this in the next
proposition, and
then we will complete the proof of Proposition~\ref{prop:valuebeta} by
deriving the explicit formulas~\eqref{value-beta} for $\beta$
in terms of the law $\gamma$ of the conductance $\cc(\Gamma)$.

%
\begin{proposition}
\label{prop:beta<1}
We have $\beta<1$.
\end{proposition}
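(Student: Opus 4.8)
The plan is to compare the harmonic measure on $\partial\Gamma$ with the branching measure, exploiting the $\sigma_1$-invariance of $\Lambda^*$ from \cref{invariant-meas}. First I would rewrite $\beta$ in a more concrete form. Fix a Yule-type tree $\t$, write $\t_1=\{x_1,\dots,x_m\}$ with $m=\#\t_1$, and let $q_i=\nu_\t(\{\mathbf v:x_i\prec\mathbf v\})$ be the harmonic measure of the $i$-th subtree above level $1$. Since the conditional law of the distinguished ray $\mathbf v$ given $\t$ is $\nu_\t$ and $\Lambda^*(d\t\,d\mathbf v)=\Lambda(d\t)\,\nu_\t(d\mathbf v)$ with $\Lambda(d\t)=\Phi_\infty(\cc(\t))\,\Theta(d\t)$ the first marginal of $\Lambda^*$, integrating out $\mathbf v$ gives
$$\beta=\Lambda^*(G_1)=\E_\Lambda\Big[\sum_{i=1}^m q_i\log\tfrac1{q_i}\Big],$$
i.e. $\beta$ is the $\Lambda$-average of the entropy of the harmonic weights $(q_i)_i$. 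From $\tfrac{cs}{2(c+s-1)}\le\tfrac12\min(c,s)$ one gets $\Phi_\infty(c)\le \E[\cc]/(2C_0)$ for all $c\ge 1$, and since $\#\Gamma_1$ has a finite logarithmic moment under $\Theta$, this bound already yields $\beta<\infty$.

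Next I would bring in a competing family of weights coming from the branching measure. For a Yule-type tree $\s$ set $M(\s):=\lim_{t\to\infty}e^{-t}\#\s_t$; this limit exists and is strictly positive $\Theta$-a.s. (the Yule martingale limit, with $M(\Gamma)$ exponentially distributed under $\Theta$), hence also $\Lambda$-a.s. From $\sum_i\#(\t[x_i])_t=\#\t_{1+t}$ one obtains the pathwise identity $\sum_{i=1}^m M(\t[x_i])=e\,M(\t)$; set $p_i:=M(\t[x_i])/(e\,M(\t))$, so that $\sum_i p_i=1$. The key point is the identity
$$\E_{\Lambda^*}\!\big[-\log p_I\big]=1,$$
where $I$ is the index of the level-$1$ subtree visited by $\mathbf v$, so that $\t[x_I]=\pi_1(\sigma_1(\t,\mathbf v))$. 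Indeed $-\log p_I=1+\log M(\t)-\log M(\t[x_I])$, and the last two terms have equal $\Lambda^*$-expectation: by \cref{invariant-meas} the maps $\pi_1\circ\sigma_1$ and $\pi_1$ push $\Lambda^*$ forward to the same law $\Lambda$ on $\T$. Integrability is immediate from the uniform bound above, since $\E_\Lambda|\log M|\le (\E[\cc]/(2C_0))\,\E_\Theta|\log M|<\infty$ ($M$ being exponential under $\Theta$).

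Then I would run a Gibbs-type comparison. For every $\t$, concavity of the logarithm gives $\sum_i q_i\log(p_i/q_i)\le\log\sum_i p_i=0$, hence $\sum_i q_i\log(1/q_i)\le\sum_i q_i\log(1/p_i)$, with equality precisely when $(p_i)=(q_i)$. Averaging under $\Lambda^*$ and using the identity just established yields $\beta\le\E_{\Lambda^*}[-\log p_I]=1$. The crux, and the main obstacle, is to upgrade this to a strict inequality, i.e. to exhibit a set of positive $\Lambda$-measure on which $(p_i)\ne(q_i)$. I would use the event $\{\#\t_1=2\}$, which has positive $\Theta$- and hence positive $\Lambda$-probability. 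On this event the two subtrees $\t[x_1],\t[x_2]$ above level $1$ are independent copies of the Yule tree, the harmonic weight $q_1$ is a deterministic function of $\big(\cc(\t[x_1]),\cc(\t[x_2]),z_\varnothing\big)$ only (by the excursion-measure/electric-network description of $\nu_\t$ and of $\cc$, applying the strong Markov property at the first node one gets $q_1=\cc_1'/(\cc_1'+\cc_2')$ with $\cc_i'=((1-z_\varnothing)+\cc(\t[x_i])^{-1})^{-1}$), whereas $p_1=M(\t[x_1])/(M(\t[x_1])+M(\t[x_2]))$. If $q=p$ held a.s. on this event, then $M(\t[x_1])/\cc_1'=M(\t[x_2])/\cc_2'$ a.s.; conditioning on $z_\varnothing$ and on $(\cc(\t[x_1]),\cc(\t[x_2]))$ makes $\cc_1',\cc_2'$ deterministic while $M(\t[x_1]),M(\t[x_2])$ remain independent, so both sides would be conditionally a.s.\ constant, forcing $M(\t[x_1])$ to be an a.s.\ deterministic function of $\cc(\t[x_1])$ — which is false (e.g.\ conditionally on the sum of the conductances of the two first-level subtrees of $\t[x_1]$ its conductance $\cc(\t[x_1])$ is determined, but $M(\t[x_1])$ is not). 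Hence the Jensen comparison is strict on a set of positive $\Lambda^*$-measure, and since the majorant $\sum_i q_i\log(1/p_i)$ has $\Lambda^*$-integral equal to $1$, we conclude $\beta<1$.
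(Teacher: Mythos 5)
Your argument is, in substance, the paper's own proof. The paper also writes $\beta=\Lambda^*(G_1)$ as the $\Lambda$-average of the entropy of the harmonic weights $(\nu^*_{\mathbf{t}}(x))_{x\in\mathbf{t}_1}$, compares it via the same concavity (Gibbs) inequality with the normalized martingale-limit weights $u_{\mathbf{t}}(x)=U(\mathbf{t}[x])/(e\,U(\mathbf{t}))$ (your $p_i$, with $U=M$), and uses the $\sigma_1$-invariance of $\Lambda^*$ from \cref{invariant-meas} plus integrability of $\log U$ under $\Lambda$ to show that the cross-entropy term equals $1$; strictness on a set of positive $\Lambda$-measure then gives $\beta<1$. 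Your preliminary checks (the bound $\Phi_\infty(c)\le \E[\mathcal{C}]/(2C_0)$, the identity $\sum_i M(\mathbf{t}[x_i])=e\,M(\mathbf{t})$, the exponential law of $M$ under $\Theta$) are correct and match what the paper uses.

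The one place where you go beyond the paper is the verification that $(p_i)\ne(q_i)$ with positive probability, which the paper asserts and ``leaves to the reader''; your attempt is the right idea but is not watertight as written. Two points. First, a minor slip: on $\{\#\mathbf{t}_1=2\}$ your formula $\mathcal{C}'_i=\big((1-z_\varnothing)+\mathcal{C}(\mathbf{t}[x_i])^{-1}\big)^{-1}$ is the unit-resistance formula appropriate to $\Delta$; for drift-$1/2$ Brownian motion on a Yule-type tree the scale function is $1-e^{-x}$, so the segment from the node to height $1$ has resistance $e^{-z_\varnothing}-e^{-1}$ and the subtree above height $1$ contributes resistance $e^{-1}\mathcal{C}(\mathbf{t}[x_i])^{-1}$ (compare the series formula used in the proof of \cref{prop:valuebeta}); this is harmless because all you use is that $q_1$ is a measurable function of $\big(z_\varnothing,\mathcal{C}(\mathbf{t}[x_1]),\mathcal{C}(\mathbf{t}[x_2])\big)$. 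Second, and more substantively, the parenthetical that closes your contradiction is incomplete: conditioning on the sum of the conductances of the two first-level subtrees of $\mathbf{t}[x_1]$ determines $\mathcal{C}(\mathbf{t}[x_1])$ only if you also condition on the height of its first branch point; and, more to the point, under the hypothesis you are contradicting, namely $M=g(\mathcal{C})$ a.s., the claim that $M(\mathbf{t}[x_1])$ is \emph{not} determined by this conditioning amounts to saying that $g(\mathcal{C}_1)+g(\mathcal{C}_2)$ is non-constant on $\{\mathcal{C}_1+\mathcal{C}_2=s\}$, i.e.\ that $g$ is not (a.e.) affine -- which is exactly what still has to be ruled out, so as stated the step is circular. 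It can be finished with one more line: if $M=a+b\,\mathcal{C}$ a.s., then since $\mathcal{C}\ge 1$ and $M$ is exponential under $\Theta$ (support $(0,\infty)$, unbounded) one is forced to $b>0$ and $a+b=0$, and then the density of $b(\mathcal{C}-1)$ near $0$ equals $K_0 b^{-1}(1+m/b)^{-2}$ by \eqref{eq:valueon[1,2]}, which is not of exponential form -- a contradiction. Since the paper itself omits this detail, the verdict is that your proof follows the paper's argument; just be aware that this sub-step needs the extra argument above to be complete.
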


\begin{pf} Here again, we strongly rely on ideas from \cite{LPP95} (see also
\cite{LP10}, Chapter~16). We start with some notation. If $\t\in\T$
and $x\in\t_1$, we set
\[
\nu^*_\t(x)= \nu_{\t} \bigl( \bigl\{\mathbf{v}\in\{1,2
\}^\N:x\prec\mathbf{v} \bigr\} \bigr).
\]
Clearly, $(\nu^*_\t(x))_{x\in\t_1}$ is a probability distribution on $\t
_1$. We also
set, for every $\t\in\T$,
\[
U(\t)=\liminf_{r\to\infty} e^{-r} \#\t_r
\in[0,\infty].
\]
It is well known that the preceding liminf is a limit, $\Theta(\D\t)$
a.s., and that the
distribution of $U(\t)$ under $\Theta(\D\t)$ is exponential.
It follows that, for $\Theta$-almost every $\t\in\T$, we can also
define, for every $x\in\t_1$,
\[
U_\t(x)=\lim_{r\to\infty} e^{-r} \#
\t_{r-1}[x]= \frac{1}{e} U \bigl(\t[x] \bigr),
\]
and, if we set
\[
u_\t(x) = \frac{U_\t(x)}{U(\t)}
\]
the collection $(u_\t(x))_{x\in\t_1}$ is a probability distribution on
$\t_1$.

By a concavity argument, we have
\begin{equation}
\label{eq:Shannon} \sum_{x\in\t_1} \nu^*_\t(x)
\log \biggl(\frac{u_\t(x)}{\nu^*_\t(x)} \biggr) \leq0
\end{equation}
and the inequality is even strict if
$(\nu^*_\t(x))_{x\in\t_1}\neq(u_\t(x))_{x\in\t_1}$. It is easy to
verify that the latter property
holds with positive
probability under $\Theta$. To give a precise argument, recall the
notation used in Section~\ref{sec:yuletree}
to define the Yule tree $\Gamma$, and consider the event
\[
E:= \bigl\{\mathcal{Y}_\varnothing<\tfrac{1}{4},
\mathcal{Y}_1>1, \mathcal {Y}_2 >\tfrac{3}{4}
\bigr\} \cap\{\# \Gamma_1 = 9\},
\]
which clearly has positive probability. On this event, write $x_1,\ldots,x_9$
for the elements of $\Gamma_1$ listed in the lexicographical order of
$\mathcal{V}$. On the event $E$, we have
$\nu^*_\Gamma(x_1)>1/8$, because Brownian motion (with drift $1/2$) has
probability
$1/2$ to hit the ancestor of $x_1$ at height $3/4$ before\vspace*{1pt} the other
point of $\Gamma_{3/4}$, and then
probability at least $1-e^{-1/2}>1/4$ to escape to infinity before
returning to the first branching point of $\Gamma$.
On the other hand, the branching property of the Yule tree shows that,
conditionally on the event $E$ (which only
involves the part of the tree below height~$1$), the random variables
$u_\Gamma(x_1),\ldots,u_\Gamma(x_9)$ have the same distribution. It
follows that
we have $u_\Gamma(x_1)<\nu^*_\Gamma(x_1)$ with positive probability on $E$.

Next, we have
\begin{eqnarray*}
\beta&=& \Lambda^*(G_1)=\int\log \biggl(\frac{1}{\nu_\t(\mathcal{B}_\t
(\mathbf{v},1))} \biggr)
\Lambda^*(\D\t \,\D\mathbf{v})
\\
&=&\int\sum_{x\in\t_1} \nu^*_\t(x) \log
\biggl(\frac{1}{\nu^*_\t(x)} \biggr) \Lambda(\D\t)
< \int\sum_{x\in\t_1} \nu^*_\t(x) \log
\biggl(\frac{1}{u_\t(x)} \biggr) \Lambda(\D\t),
\end{eqnarray*}
where the strict inequality follows from~\eqref{eq:Shannon} and the
fact that $(\nu^*_\t(x))_{x\in\t_1}\neq(u_\t(x))_{x\in\t_1}$ with positive
probability under $\Theta$, hence also under $\Lambda$. Next, recalling the
Markov chain $(\mathcal{Z}_n)$ introduced in the proof of Proposition~\ref{ergodicity}, we have
\begin{eqnarray*}
\int\sum_{x\in\t_1} \nu^*_\t(x) \log
\biggl(\frac{1}{u_\t(x)} \biggr) \Lambda(\D\t)&=& \int\sum
_{x\in\t_1} \nu^*_\t(x) \log \biggl(\frac{e U(\t)}{U(\t[x])}
\biggr) \Lambda(\D\t)
\\
&=&1 + \int\log \biggl(\frac{U(\mathcal{Z}_0)}{U(\mathcal{Z}_1)} \biggr) \Lambda^*(\D\t \,\D\mathbf{v})
\\
&=&1
\end{eqnarray*}
because $\mathcal{Z}_0$ and $\mathcal{Z}_1$ have the same distribution
under $\Lambda^*$,
and we also use the fact that $\log U(\t)$ is integrable under $\Theta
(\D\t)$
hence under $\Lambda(\D\t)$. Together with the preceding display, this
completes the proof.
\end{pf}

\begin{pf*}{Proof of Proposition \ref{prop:valuebeta}}
The first
assertion of Proposition \ref{prop:valuebeta} follows from
Proposition~\ref{prop:unicite}. To complete the proof of
Proposition~\ref{prop:valuebeta}, we start by establishing the first
half of formula~\eqref{value-beta}, that is,
\begin{equation}
\label{formubeta} \beta= \frac{2\int\!\!\int\!\!\int\gamma(\D r)\gamma(\D s)\gamma(\D
t)\sklafrac{rs}{r+s+t-1}\log\sklvfrac{r+t}{r}}{
\int\!\!\int\gamma(\D r)\gamma(\D s) \sklafrac{rs}{r+s-1}}.
\end{equation}

We use the notation of the beginning of this section, and we first fix
$\ve>0$ and define a function $H_\ve$ on $\T^*$ by setting
\[
H_\ve(\t,\mathbf{v})= \cases{ 0, &\quad if $z_\varnothing\geq
\ve$,
\cr
-\log\nu_\t \bigl( \bigl\{\mathbf{v}'\in\{1,2
\}^\N: \mathbf{v}_1\prec\mathbf {v}'
\bigr\} \bigr), &\quad if $z_\varnothing< \ve$,} %
\]
where we write $\t=(z_v)_{v\in\v}$ as previously, and we recall the
notation $\mathbf{v}_n$
from the beginning of Section~\ref{sec:ergodic}. Clearly, $H_\ve(\t
,\mathbf{v})\leq G_\ve(\t,\mathbf{v})$, and $H_\ve(\t,\mathbf{v})=G_\ve
(\t,\mathbf{v})$
if $z_{\mathbf{v}_1}\geq\ve$. More generally, $H_\ve\circ\sigma_r(\t
,\mathbf{v})=G_\ve\circ\sigma_r(\t,\mathbf{v})$
if there is at most one index $i\geq0$ such that $r\leq z_{\mathbf
{v}_i} < r+\ve$. It follows from these remarks
that, for every integer $n\geq1$,
\begin{equation}
\label{ergodic-tech1} G_1\geq\sum_{k=0}^{n-1}
H_{1/n} \circ\sigma_{k/n}
\end{equation}
and, for every $(\t,\mathbf{v})\in\T^*$,
\begin{equation}
\label{ergodic-tech2} G_1(\t,\mathbf{v})=\lim_{n\to\infty} \sum
_{k=0}^{n-1} H_{1/n} \circ
\sigma_{k/n}(\t,\mathbf{v}).
\end{equation}

Let us then investigate the behavior of $\Lambda^*(H_\ve)$ when $\ve\to
0$. It will be convenient to write
$\t_{(1)}$ and $\t_{(2)}$ for the two ``subtrees'' of $\t$ obtained at
the first branching point [formally
$\t_{(i)}$ corresponds to the collection $(z_{iv}-z_\varnothing)_{v\in\v
}$, for $i=1$ or~$2$]. We observe that,
if $i=1$ or $i=2$, the exit ray of Brownian motion
on $\t$ will belong to $\{(i,v_2,v_3,\ldots): (v_2,v_3,\ldots)\in\{1,2\}
^\N\}$ with probability
\[
\frac{\cc(\t_{(i)})}{\cc(\t_{(1)})+\cc(\t_{(2)})}. %
\]
Thanks to this observation, we can write
\begin{eqnarray*}
&&\Lambda^*(H_\ve)
\\
&&\qquad = -\int\Theta(\D\t) \Phi_\infty \bigl(\cc(\t) \bigr) {
\mathbf1}_{\{z_\varnothing<\ve\}
} \biggl(\frac{\cc(\t_{(1)})}{\cc(\t_{(1)})+\cc(\t_{(2)})} \log\frac{\cc(\t_{(1)})}{\cc(\t_{(1)})+\cc(\t_{(2)})}
\\
&&\quad\qquad{} + \frac{\cc(\t_{(2)})}{\cc
(\t_{(1)})+\cc(\t_{(2)})} \log \frac{\cc(\t_{(2)})}{\cc(\t_{(1)})+\cc(\t_{(2)})} \biggr)
\\
&&\qquad =-2\int\Theta(\D\t) \Phi_\infty \bigl(\cc(\t) \bigr) {
\mathbf1}_{\{z_\varnothing
<\ve\}}\frac{\cc(\t_{(1)})}{\cc(\t_{(1)})+\cc(\t_{(2)})} \log\frac{\cc(\t_{(1)})}{\cc(\t_{(1)})+\cc(\t_{(2)})},
\end{eqnarray*}
by a symmetry argument. An easy calculation gives
\[
\cc(\t)= \frac{\cc(\t_{(1)})+\cc(\t_{(2)})}{e^{-z_\varnothing} +
(1-e^{-z_\varnothing})(\cc(\t_{(1)})+\cc(\t_{(2)}))}.
\]
Since, under $\Theta(\D\t)$, $\t_{(1)}$ and $\t_{(2)}$ are independent
and distributed according to
$\Theta$, and are also independent of $z_\varnothing$, we get
\begin{eqnarray*}
\Lambda^*(H_\ve) &=&-2\int\!\!\!\int\Theta(\D\t)\Theta \bigl(\D
\t' \bigr) \frac{\cc(\t)}{\cc(\t)+\cc
(\t')} \log\frac{\cc(\t)}{\cc(\t)+\cc(\t')}
\\
&&{} \times\int_0^\ve \D z e^{-z}
\Phi_\infty \biggl(\frac{\cc(\t)+\cc(\t')}{e^{-z} + (1-e^{-z})(\cc(\t)+\cc
(\t'))} \biggr).
\end{eqnarray*}
Note that the function $(\t,\t')\mapsto \frac{\cc(\t)}{\cc(\t)+\cc(\t')}
\log\frac{\cc(\t)}{\cc(\t)+\cc(\t')}$ is integrable with respect to
the measure $\Theta(\D\t)\Theta(\D\t')$, and that
$\Phi_\infty$ is bounded and continuous. We can thus let $\ve\to0$ in
the preceding expression and get
\begin{eqnarray}
\label{ergodic-tech3} \lim_{\ve\to0}\frac{1}{\ve}
\Lambda^*(H_\ve) &=& -2\int\!\!\!\int\Theta(\D\t)\Theta \bigl(\D
\t' \bigr) \Phi_\infty \bigl(\cc(\t)+\cc \bigl(\t
' \bigr) \bigr)
\nonumber\\[-8pt]\\[-8pt]\nonumber
&&{}\times \frac{\cc(\t)}{\cc(\t)+\cc(\t')} \log\frac{\cc(\t)}{\cc(\t)+\cc(\t')}.
\nonumber
\end{eqnarray}
Since the limit in the preceding display is finite, we can use (\ref
{ergodic-tech2}) and Fatou's lemma to get that
$\Lambda^*(G_1)<\infty$, and then (\ref{ergodic-tech1}) (to justify
dominated convergence) and (\ref{ergodic-tech2}) again
to obtain that
\[
\Lambda^*(G_1) =\lim_{n\to\infty} n
\Lambda^*(H_{1/n})
\]
coincides with the right-hand side of (\ref{ergodic-tech3}).
Finally, we use the expression of $\Phi_\infty$ to obtain
formula~\eqref{formubeta}.

We will now establish the second half of formula~\eqref{value-beta},
which will
complete the proof of Proposition~\ref{prop:valuebeta}. We let $\cc
_0,\cc_1,\cc_2$
be independent and distributed according to $\gamma$
under the probability measure $\P$. Then the denominator of
the right-hand side of~\eqref{formubeta} can be written as
\[
\E \biggl[ \frac{ \mathcal{C}_{0} \mathcal{C}_{1}}{ \mathcal{C}_{0}+
\mathcal{C}_{1}-1} \biggr].
\]
On the other hand, the numerator is equal to
\begin{eqnarray*}
&& 2 \E \biggl[\frac{ { \mathcal{C}_{0} \mathcal{C}_{1}}}{ \mathcal
{C}_{0}+ \mathcal{C}_{1}+ \mathcal{C}_{2}-1} \log \biggl(\frac{ \mathcal
{C}_{1}+ \mathcal{C}_{2}}{ \mathcal{C}_{1}} \biggr) \biggr]
\\
&& \qquad= \E \biggl[\frac{ { \mathcal{C}_{0}( \mathcal{C}_{1}+ \mathcal
{C}_{2})}\log( \mathcal{C}_{1}+ \mathcal{C}_{2})}{ \mathcal{C}_{0}+
\mathcal{C}_{1}+ \mathcal{C}_{2}-1} \biggr] - \E \biggl[\frac{ {( \mathcal
{C}_{0}+ \mathcal{C}_{2}) \mathcal{C}_{1}}\log( \mathcal{C}_{1})}{
\mathcal{C}_{0}+ \mathcal{C}_{1}+ \mathcal{C}_{2}-1}
\biggr]
\\
&& \qquad = \E \bigl[ f( \mathcal{C}_{1} + \mathcal{C}_{2})
\bigr] - \E \bigl[g( \mathcal{C}_{1}+ \mathcal{C}_{2})
\bigr],
\end{eqnarray*}
where we have set, for every $x\geq1$,
\[
f(x) = \E \biggl[ \frac{ \mathcal{C}_0x}{ \mathcal{C}_0+x-1}\log x \biggr]\quad\mbox{and}\quad g(x) = \E
\biggl[ \frac{ \mathcal{C}_0x}{ \mathcal
{C}_0+x-1}\log\mathcal{C}_0 \biggr]. %
\]
Using~\eqref{eq:f}, we replace $\E[f( \mathcal{C}_{1}+ \mathcal
{C}_{2})]$ by $\E[f( \mathcal{C}_{1})] + \E[ \mathcal{C}_{1}( \mathcal
{C}_{1}-1)f'( \mathcal{C}_{1})] $, and similarly for $g$, to obtain
\begin{eqnarray*}
&& \E \bigl[ f( \mathcal{C}_{1} + \mathcal{C}_{2}) \bigr] -
\E \bigl[g( \mathcal{C}_{1}+ \mathcal{C}_{2}) \bigr]
\\
&&\qquad =
\E \biggl[ \frac{ \mathcal{C}_{0}
\mathcal{C}_{1}}{ \mathcal{C}_{0}+ \mathcal{C}_{1}-1} \log\mathcal {C}_{1} \biggr]
\\
&&\quad\qquad{}+ \E \biggl[ \frac{ \mathcal{C}_{0}( \mathcal
{C}_{0}-1)\mathcal{C}_{1}( \mathcal{C}_{1}-1)}{( \mathcal{C}_{0}+
\mathcal{C}_{1}-1)^2} \log\mathcal{C}_{1} \biggr] +\E
\biggl[ \frac{
\mathcal{C}_{0} \mathcal{C}_{1}(\mathcal{C}_{1}-1)}{ \mathcal{C}_{0}+
\mathcal{C}_{1}-1} \biggr]
\\
&&\quad\qquad{} - \E \biggl[ \frac{ \mathcal{C}_{0} \mathcal{C}_{1}}{
\mathcal{C}_{0}+ \mathcal{C}_{1}-1}\log\mathcal{C}_{0} \biggr] - \E
\biggl[ \frac{ \mathcal{C}_{0}( \mathcal{C}_{0}-1)\mathcal{C}_{1}(
\mathcal{C}_{1}-1)}{( \mathcal{C}_{0}+ \mathcal{C}_{1}-1)^2}\log \mathcal{C}_{0} \biggr]
\\
&&\qquad = \E \biggl[ \frac{ \mathcal{C}_{0} \mathcal{C}_{1}( \mathcal
{C}_{0}-1)}{ \mathcal{C}_{0}+ \mathcal{C}_{1}-1} \biggr]
\\
&&\qquad =\frac{1}{2} \E \biggl[ \frac{ \mathcal{C}_{0} \mathcal{C}_{1}(
\mathcal{C}_{0}+ \mathcal{C}_{1}-1) - \mathcal{C}_{0} \mathcal{C}_{1}}{
\mathcal{C}_{0}+ \mathcal{C}_{1}-1} \biggr]
\\
&&\qquad =\frac{1}{2} \biggl( \E[ \mathcal{C}_{0}]^2 - \E
\biggl[ \frac{
\mathcal{C}_{0} \mathcal{C}_{1}}{ \mathcal{C}_{0}+ \mathcal{C}_{1}-1} \biggr] \biggr).
\end{eqnarray*}
If we substitute this in~\eqref{formubeta}, we arrive at
\[
2 \beta= \frac{\E[ \mathcal{C}_{0}]^2}{\E [ \afrac{ \mathcal
{C}_{0} \mathcal{C}_{1}}{ \mathcal{C}_{0}+ \mathcal{C}_{1}-1}  ]} -1,
\]
which gives the second half of~\eqref{value-beta} and completes the
proof of Proposition~\ref{prop:valuebeta}.
\end{pf*}

\begin{rem*}
Despite all that is known about the distribution $\gamma$ (see
Section~\ref{sec:conductance}), it requires
some work to derive the fact that $\beta<1$ (Proposition~\ref
{prop:beta<1}) from the explicit formulas of Proposition~\ref{prop:valuebeta}.
The approximate numerical value $\beta=0.78\ldots$ is obtained by first
estimating $\gamma$ using
Proposition \ref{prop:unicite} (or more precisely the convergence of
$\Phi^k(\lambda)$ to $\gamma$, for any
probability measure $\lambda$ on $[1,\infty)$), and then applying a
Monte--Carlo method to evaluate the
integrals in the right-hand side of~(\ref{value-beta}).
\end{rem*}

\section{Discrete random trees}
\renewcommand{\t}{\mathsf{T}}

\label{sec:continuous->discrete}
In this section, we prove Theorem~\ref{thm:maindiscrete} and
Corollary~\ref{cor:planetree}. We first explain why discrete reduced trees
converge modulo a suitable rescaling toward the continuous reduced tree
$\Delta$. This leads to a first connection between the discrete
harmonic measures and the
continuous one (Proposition~\ref{convergence-harmonic}). Combining this result with
Theorem~\ref{thm:main-harmonic}, one gets a first estimate in the
direction of Theorem~\ref{thm:maindiscrete} (Corollary~\ref{cor:comparaison}).
The recursive
properties of Galton--Watson trees are then used to complete the proof
of Theorem~\ref{thm:maindiscrete}. Corollary~\ref{cor:planetree} is proved at
the end of the section.

\subsection{Notation for trees}
\label{sec:tree-discrete}

We consider discrete rooted ordered trees, which are
also called plane trees in combinatorics. A plane tree $ \tau$ is a
finite subset of
\[
\mathcal{U}=\bigcup_{n=0}^\infty
\N^n,
\]
where $\N^0=\{\varnothing\}$, such that the following holds:
\begin{longlist}[(iii)]
\item[(i)] $\varnothing\in\tau$.

\item[(ii)] If $u=(u_1,\ldots,u_n)\in\tau\setminus\{\varnothing\}$ then
$\widehat u:=(u_1,\ldots,u_{n-1})\in\tau$.

\item[(iii)] For every $u=(u_1,\ldots,u_n)\in\tau$, there exists an
integer $k_u(\tau)\geq0$
such that, for every $j\in\N$, $(u_1,\ldots,u_n,j)\in\tau$ if and only
if $1\leq j\leq
k_u(\tau)$.
\end{longlist}

In this section, we say tree instead of plane tree. We often view a
tree $\tau$ as a graph whose vertices are the elements of $\tau$ and
whose edges are
the pairs $\{\widehat u,u\}$ for all $u\in \tau\setminus\{\varnothing
\}$.

We will use the notation and terminology introduced at the beginning of
Section~\ref{sec:treedelta}
in a slightly different setting. In particular,
$\llvert  u\rrvert  $ is the generation of $u$, $uv$ denotes the concatenation of $u$
and $v$, $\prec$
stands for the genealogical order and $u\wedge v$ is the maximal
element of
$\{w\in\mathcal{U}: w\prec u$ and $w\prec v\}$.

The height of
a tree $\tau$ is
\[
h(\tau)=\max \bigl\{\llvert v\rrvert:v\in\tau \bigr\}.
\]
We write $\mathscr{T}$
for the set of all trees, and $\mathscr{T}_n$ for the set of all trees
with height $n$.

Let $\tau$ be a tree.
The set $\tau$ is equipped with the distance
\[
d(v,w)=\tfrac{1}{2} \bigl(\llvert v\rrvert +\llvert w\rrvert -2\llvert v
\wedge w\rrvert \bigr).
\]
Notice that this is half the usual graph distance. We will write $B_\tau(v,r)$,
or simply $B(v,r)$ if there is no ambiguity,
for the closed ball of radius $r$ centered at $v$, with respect to the
distance $d$,
in the tree $\tau$.

The set of all vertices of $\tau$ at generation $n$ is denoted by
\[
\tau_n:= \bigl\{v\in\tau: \llvert v\rrvert =n \bigr\}.
\]
If $v\in\tau$, the subtree of descendants of $v$ is
\[
\widetilde\tau[v]:= \bigl\{v'\in\tau: v\prec v'
\bigr\}.
\]
Note that $\widetilde\tau[v]$ is not a tree with our definitions, but
we turn it into
a tree by relabelling its vertices, setting
\[
\tau[v]:= \{w\in\mathcal{U}: vw\in\tau\}.
\]

If $v\in\tau$, then for every $i\in\{0,1,\ldots,\llvert  v\rrvert  \}$ we write
$\langle v\rangle_i$ for the ancestor of $v$
at generation $i$. Suppose that $\llvert  v\rrvert  =n$.
Then $B_\tau(v,i)\cap\tau_n= \widetilde\tau[\langle v\rangle
_{n-i}]\cap\tau_n$, for every
$i\in\{0,1,\ldots,n\}$. This simple observation will be used several
times below.

\subsubsection*{Galton--Watson trees} Let $\theta$ be a probability measure
on $\Z_+$, and assume
that $\theta$ has mean one and finite variance $\sigma^2>0$. There
exists a unique probability measure
${\rm GW}_\theta(\D\tau)$ on $\mathscr{T}$ such that the following two
properties hold:
\begin{longlist}[(ii)]
\item[(i)] The law of $k_\varnothing(\tau)$ under ${\rm GW}_\theta(\D
\tau)$ is $\theta$.
\item[(ii)] Let $k\geq1$ such that $\theta(k)>0$. Then under ${\rm
GW}_\theta(\D\tau \mid  k_\varnothing(\tau)=k)$, the subtrees
$\tau[1],\ldots,\tau[k]$ are independent and distributed according to
${\rm GW}_\theta$.
\end{longlist}
A random tree distributed according to ${\rm GW}_\theta$ will be called
a
Galton--Watson tree with offspring distribution $\theta$ (see, e.g.,
\cite{probasur} for a discussion of Galton--Watson trees).

For\vspace*{1pt} every integer $n\geq0$, we let $\t^{(n)}$ be a
Galton--Watson tree with offspring distribution $\theta$, conditioned
on nonextinction at generation $n$.
In particular, $\t^{(0)}$ is just a
Galton--Watson tree with offspring distribution $\theta$. We suppose
that the random trees
$\t^{(n)}$ are defined under the probability measure $\P$.

We let $\t^{*n}$ be the reduced tree associated with $\t^{(n)}$, which consists
of all vertices of $\t^{(n)}$ that have (at least) one descendant at
generation $n$.
A priori $\t^{*n}$ is not a tree in the sense of the preceding
definition. However
we can relabel the vertices of~$\t^{*n}$, preserving both the
lexicographical order
and the genealogical order, so that $\t^{*n}$ becomes a tree
in the sense of our definitions. We will always assume that this
relabelling has
been done.

Note that $\llvert  u\rrvert  \leq n$ for every $u\in\t^{*n}$.
It will be convenient to introduce truncations of $\t^{*n}$. For every
$s\in[0,n]$, we set
\[
R_s \bigl(\t^{*n} \bigr)= \bigl\{v\in\t^{*n}:
\llvert v\rrvert \leq n-\lfloor s\rfloor \bigr\}.
\]

We then consider
simple random walk on $\t^{*n}$, starting from the root $\varnothing$,
which we denote by
$Z^n=(Z^n_k)_{k\geq0}$. This random walk is defined under the
probability measure $P$ (as previously,
it is important to distinguish the probability measures governing the
trees on one hand, the random walks on the
other hand).

We let
\[
H_n=\inf \bigl\{k\geq0: \bigl\llvert Z^n_k
\bigr\rrvert =n \bigr\}
\]
be the first hitting time of generation $n$ by $Z^n$, and we set
\[
\Sigma_n= Z^n_{H_n}.
\]
The discrete harmonic measure $\mu_n$,
is the law of $\Sigma_n$ under $P$.
Notice that $\mu_n$ is a probability measure
on the set $\t^{*n}_n$ of all vertices of $\t^{*n}$
at generation $n$.

We start with a lemma that gives bounds on the size of level sets in $
\t^{*n}$.

%
\begin{lemma}
\label{lem:estimate-reduced-tree}
There exists a constant $C$ depending only on
$\theta$ such that, for every integer $n\geq2$ and every integer $p$ such
that $1\leq p\leq n/2$,
\[
\E \bigl[ \bigl(\log\#\t^{*n}_{n-p} \bigr)^4
\bigr]^{1/4} \leq C \log\frac{n}{p} \quad\mbox{and}\quad \E \bigl[
\bigl(\log\#\t^{*n}_{n} \bigr)^4
\bigr]^{1/4} \leq C \log n.
\]
\end{lemma}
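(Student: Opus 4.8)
The plan is to control the size of the level set $\#\t^{*n}_{n-p}$ by relating the reduced tree $\t^{*n}$ of the conditioned Galton--Watson tree to a well-understood ``spine decomposition''. Recall the classical fact (Kesten's tree, or the size-biased / spinal description of a critical GW tree conditioned on survival to level $n$): the reduced tree $\t^{*n}$ has a distinguished ancestral line from the root to one of its leaves at level $n$, and off this spine hang independent subtrees whose conditional laws are GW trees conditioned on their height. More precisely, at each level $k < n$ the number of spine-children plus ``ordinary'' children is governed by the size-biased offspring law, and each ordinary child at level $k$ starts an independent copy of $\t^{*(n-k)}$-type subtree (a GW tree conditioned to reach level $n-k$ from that point, or not). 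The key quantitative input is that, under this description, $\#\t^{*n}_{n-p}$ is stochastically comparable to $1$ plus a sum of contributions from the branch points of the spine located in the top $p$ generations, and the expected number of descendants at level $n-p$ of a subtree conditioned to survive $j$ more generations is $O(j/p)$ when the subtree is rooted at level $n-j$ — this is exactly the source of the $\log(n/p)$ behaviour.

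First I would record the spine decomposition of $\t^{*n}$ explicitly and identify $\#\t^{*n}_{n-p}$ as a functional of it. Second, I would obtain a first-moment (and then higher-moment) bound: writing $N_{n,p} = \#\t^{*n}_{n-p}$, one shows $\E[N_{n,p}] \le C\, n/p$ using the branching-process estimate that a critical GW tree conditioned to have height $\ge m$ has expected width at any fixed level bounded by a constant, combined with the fact that only the $O(\log(n/p))$ — rather, only those spine branch points in levels $\le n-p$ contribute, and each contributes a subtree of ``age'' at most $n$ but observed $p$ generations before extinction-threshold, giving a harmonic-type sum $\sum_{j=p}^{n} \frac{1}{j}\cdot(\text{const}) \asymp \log(n/p)$ for the \emph{expected logarithm}, or $n/p$ for the expectation itself. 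The precise route for the moment of the \emph{logarithm} is: bound $\P(N_{n,p} \ge t)$ by a stretched-exponential-in-$\log t$ tail, i.e. show $\P(\log N_{n,p} \ge s) \le c_1 \exp(-c_2 s / \log(n/p))$ or an analogous bound that integrates against $s^3\,ds$ to give $\E[(\log N_{n,p})^4] \le C (\log(n/p))^4$. The tail bound itself comes from iterating the recursive distributional structure: $\t^{*n}$ splits at its first branch point at some level $L$ (with $L$ having a known law, roughly uniform-ish on $\{1,\dots,n\}$ in the scaling limit, consistent with $\Delta$), after which $N_{n,p}$ is a sum of at most (size-biased offspring count) independent copies of $N_{n-L, p}$ (for those branches still alive at level $n$), and one runs an induction on $n/p$ — each halving of $n/p$ costs a bounded multiplicative factor in the moment, which is precisely what produces the $\log(n/p)$.

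The second inequality, $\E[(\log \#\t^{*n}_n)^4]^{1/4} \le C\log n$, is the boundary case $p \to 0$ of the same argument: $\#\t^{*n}_n$ is the number of leaves of $\t^{*n}$ at the top level, which by the spine/Kesten decomposition is $1 + \sum$ (over spine branch points at all levels $0 \le k < n$) of the number of level-$n$ survivors of an independent GW subtree started at level $k$ and conditioned to reach level $n$; the expected size of each such contribution is $O(1)$ and there are $O(n)$ spine branch points in expectation, but to get control of the \emph{fourth moment of the logarithm} (not the fourth moment itself, which may fail to be polynomial) one again uses the recursive splitting and an induction in $\log n$, each step contributing a bounded factor. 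The main obstacle I anticipate is handling the higher moments cleanly: naive first-moment bounds on $\#\t^{*n}_{n-p}$ only give $O(n/p)$, which exponentiates badly, so the real work is establishing the right-tail bound of the form $\P(\#\t^{*n}_{n-p} > \lambda\, \tfrac{n}{p}) \le C\lambda^{-\kappa}$ (polynomial decay) uniformly in $n,p$ — this needs the recursive structure and a careful choice of the induction hypothesis — and then converting a polynomial tail for $N_{n,p}$ into a $(\log N_{n,p})^4$-moment bound of order $(\log(n/p))^4$, which is where the exact exponent $4$ (as opposed to any fixed power) is used only to make later applications work and costs nothing extra in the proof. I would also need the elementary fact, used implicitly, that $\E[(\log U(\t))^k] < \infty$ for the limiting martingale limit $U$ — the discrete analogue being that $\log \#\t^{*n}_n$ has all moments bounded by $C(\log n)$ — which follows from the same tail estimate.
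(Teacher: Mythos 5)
Your plan is substantially more elaborate than the paper's proof, and the extra machinery is not needed. The place where you go astray is your diagnosis of the ``main obstacle'': you write that ``naive first-moment bounds on $\#\t^{*n}_{n-p}$ only give $O(n/p)$, which exponentiates badly, so the real work is establishing the right-tail bound \dots this needs the recursive structure and a careful choice of the induction hypothesis.'' This is a misreading of what the lemma asks for. You need to bound $\E[(\log N)^4]$, not $\E[N^4]$, and the logarithm converts the potential exponential blow-up into a polynomial one, so first-moment control of $N=\#\t^{*n}_{n-p}$ is in fact \emph{sufficient}. Concretely, once you know $\E[N]\le c\,n/p$ (which follows by a one-line computation: $\t^{(n)}$ is $\t^{(0)}$ conditioned on $\{h(\t^{(0)})\ge n\}$, and binomial thinning gives $\E[\#\t^{*n}_{n-p}]=q_p/q_n$ where $q_m=\P(h(\t^{(0)})\ge m)\sim 2/(\sigma^2 m)$), the cleanest route is to pick $a$ large enough that $x\mapsto(\log(a+x))^4$ is concave on $[1,\infty)$ (any $a\ge e^3-1$ works, since the second derivative has the sign of $3-\log(a+x)$) and apply Jensen:
$$\E\big[(\log \#\t^{*n}_{n-p})^4\big]^{1/4}\le \E\big[(\log(a+\#\t^{*n}_{n-p}))^4\big]^{1/4}\le \log\big(a+\E[\#\t^{*n}_{n-p}]\big)=\log\big(a+q_p/q_n\big),$$
which is $\le C\log(n/p)$ for $1\le p\le n/2$ (and gives the second inequality with $p=0$). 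This is the whole proof. Even if you prefer a tail-integration argument, the polynomial tail $\P(N>\lambda n/p)\le C/\lambda$ that you single out as requiring a recursive induction is just Markov's inequality applied to the first moment, and integrating $s^3\,\P(\log N>s)\,ds$ against this tail already produces the $(\log(n/p))^4$ bound; no spine decomposition, size-biasing, or induction on $n/p$ is used or needed. Your overall structure (spine decomposition $\to$ tail bound $\to$ moment of logarithm) could in principle be pushed through, but it introduces genuinely hard steps to establish facts that follow from Markov or Jensen, and as written the crucial tail estimate is only sketched, so the proposal as it stands has a gap precisely at the point where the paper's argument is trivial.
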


\begin{pf}
Set $q_n=\P(h(\t^{(0)})\geq n)$. By a standard result (Theorem 9.1 of
\cite{AN}, Chapter~1), we have
\begin{equation}
\label{survivalpro} q_n \sim\frac{2}{n\sigma^2}\qquad\mbox{as } n \to
\infty.
\end{equation}
Then, for every $p\in\{0,1,\ldots,n\}$,
\begin{eqnarray*}
\E \bigl[\#\t^{*n}_{n-p} \bigr]&=&\E \bigl[\# \bigl\{v\in
\t^{(n)}_{n-p}: h \bigl(\t^{(n)}[v] \bigr)\geq p \bigr
\} \bigr]
\\
&=&(q_n)^{-1} \E \bigl[\# \bigl\{v\in\t^{(0)}_{n-p}
: h \bigl(\t^{(0)}[v] \bigr)\geq p \bigr\} \bigr].
\end{eqnarray*}
By the branching property of Galton--Watson trees, the conditional distribution
of $\#\{v\in\t^{(0)}_{n-p}: h(\t^{(0)}[v])\geq p\}$ knowing that
$\#\t^{(0)}_{n-p}=k$ is the binomial distribution $\mathcal{B}(k,q_p)$. Hence,
\[
\E \bigl[\#\t^{*n}_{n-p} \bigr] = \frac{q_p \E[\#\t^{(0)}_{n-p}]}{q_n}=
\frac{q_p}{q_n}.
\]
We can find $a>0$ such that the function $x\la(\log(a+x))^4$
is concave over $[1,\infty)$. Then
\begin{eqnarray*}
\E \bigl[ \bigl(\log\# \t^{*n}_{n-p} \bigr)^4
\bigr]^{1/4} &\leq& \E \bigl[ \bigl(\log \bigl(a+ \#\t ^{*n}_{n-p}
\bigr) \bigr)^4 \bigr]^{1/4} \leq \log \bigl(a+ \E \bigl[
\# \t^{*n}_{n-p} \bigr] \bigr)
\\
&=& \log \biggl(a+\frac{q_p}{q_n} \biggr),
\end{eqnarray*}
and the bounds of the lemma easily follow from (\ref{survivalpro}).
\end{pf}

\subsection{Discrete and continuous reduced trees}
\label{sec:convergences}
\subsubsection{Convergence of discrete reduced trees}
Recall from Section~\ref{sec:treedelta} the definition of the
continuous reduced tree ${\Delta}$. For every $\ve\in(0,1)$, we have
set $\Delta_\ve=\{x\in\Delta: H(x)\leq1-\ve\}$. We will implicitly use
the fact that, for every fixed $\ve$, there is a.s. no branching point
of $\Delta$ at height $1-\ve$. The skeleton of $\Delta_\ve$ is defined as
\begin{eqnarray*}
\operatorname{Sk}(\Delta_\ve) &=& \{\varnothing\}\cup \bigl\{v\in
\mathcal{V} \setminus\{\varnothing\}: Y_{\widehat v}\leq1-\ve \bigr\}
\\
&=& \{\varnothing\}\cup \bigl\{v\in\mathcal{V}\setminus\{\varnothing\}: (
\widehat v,Y_{\widehat v})\in\Delta_\ve \bigr\}.
\end{eqnarray*}

Consider then a tree $\tau\in\mathscr{T}$ such that
every vertex of $\tau$ has either $0,1$ or $2$ children.
It will be convenient to write $\mathscr{T}_{\rm bin}$ for the
collection of all such trees. With $\tau$ we associate
another tree denoted by $[\tau]$, which is obtained by ``removing'' all
vertices that have exactly one
child. More precisely, write $\mathcal{S}(\tau)$ for the set of all vertices
$v$ of $\tau$ having $0$ or $2$ children. Then we can find a unique
tree $[\tau]$ such that there exists a bijection $u\la w_u$ from $[\tau
]$ onto $\mathcal{S}(\tau)$ that preserves
both the genealogical
order and the lexicographical order of vertices. We call this bijection
the canonical bijection
from $[\tau]$ onto $\mathcal{S}(\tau)$.

%
\begin{proposition}
\label{conv-reduced-tree}
We can construct the reduced trees $\t^{*n}$ and the (continuous) tree
$\Delta$
on the same probability space $(\Omega,\mathcal{F},\P)$ so that the
following properties hold
for every fixed $\ve\in(0,1)$ with $\P$-probability one.
\begin{longlist}[(ii)]
\item[(i)] For every
sufficiently large integer $n$, we have $R_{\ve n}(\t^{*n})\in\mathscr
{T}_{\rm bin}$ and $[R_{\ve n}(\t^{*n})] =\operatorname{Sk}(\Delta_\ve)$.
\item[(ii)] For every sufficiently large $n$, such that the
properties stated in {\rm(i)} hold, and for every $u\in\operatorname{Sk}(\Delta_\ve)$, let $w^{n,\ve}_u$ denote the vertex
of $\mathcal{S}(R_{\ve n}(\t^{*n}))$ corresponding to $u$ via the
canonical bijection from $[R_{\ve n}(\t^{*n})]$
onto $\mathcal{S}(R_{\ve n}(\t^{*n}))$. Then we have
\[
\lim_{n\to\infty} \frac{1}{n} \bigl\llvert
w^{n,\ve}_u \bigr\rrvert = Y_{u} \wedge(1-\ve).
\]
\end{longlist}
\end{proposition}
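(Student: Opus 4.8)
The statement is the concrete form of the (classical) fact that the reduced trees $\t^{*n}$, with graph distances rescaled by $n^{-1}$, converge in distribution to $\Delta$. The plan is to upgrade this to a statement about the \emph{decorated truncated} trees: for each fixed $\ve\in(0,1)$, the pair consisting of the combinatorial tree $[R_{\ve n}(\t^{*n})]$ together with the rescaled heights $(n^{-1}|w^{n,\ve}_u|)$ of its distinguished vertices converges in distribution, for the natural topology on finite trees decorated by real numbers, to $\big(\mathrm{Sk}(\Delta_\ve),(Y_u\wedge(1-\ve))_u\big)$. Once this is established, a single application of Skorokhod's representation theorem produces the common probability space on which the convergence holds $\P$-almost surely, and assertions (i)--(ii) follow by reading off the limit: if $u$ is a branch vertex of $\mathrm{Sk}(\Delta_\ve)$ then $Y_u<1-\ve$ and $n^{-1}|w^{n,\ve}_u|\to Y_u=Y_u\wedge(1-\ve)$, while if $u$ is a leaf then $w^{n,\ve}_u$ sits at generation $n-\lfloor\ve n\rfloor$ and $n^{-1}|w^{n,\ve}_u|=(n-\lfloor\ve n\rfloor)/n\to 1-\ve=Y_u\wedge(1-\ve)$.

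To prove the convergence I would use the recursive structure of reduced Galton--Watson trees. Let $D_n$ be the generation of the first vertex of $\t^{*n}$ having at least two children. The branching property together with the asymptotics \eqref{survivalpro} shows that $D_n/n$ converges in distribution to a uniform variable on $[0,1]$ (matching $Y_\varnothing=U_\varnothing$), and that, conditionally on $D_n$ and on the event, of probability tending to $1$, that exactly two children of this first branch point have a descendant at generation $n$, the two subtrees of $\t^{*n}$ rooted at these children are independent reduced trees of the same type at level $n-D_n$. Iterating this decomposition along the binary index set $\v$, and using that for each fixed $\ve$ the tree $\Delta_\ve$ is $\P$-a.s.\ a finite binary tree whose branch points lie at pairwise distinct heights, all strictly below $1-\ve$, yields the desired convergence of the decorated truncated trees, provided one controls the shape finely enough. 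Two points need that care. First, one must exclude vertices with three or more children in $R_{\ve n}(\t^{*n})$; since $\Delta_\ve$ is binary this can be read from $J_1$-convergence on $[0,1-\ve]$ of the lineage-counting step function $s\mapsto\#\t^{*n}_{\lfloor sn\rfloor}$ towards that of $\Delta_\ve$, whose jumps all have size one. Second, one must rule out branch points of $\t^{*n}$ at generations just below the truncation level $n-\lfloor\ve n\rfloor$; here a first-moment estimate suffices: by the many-to-one formula for $\t^{*n}$ and \eqref{survivalpro}, the expected number of branch points of $\t^{*n}$ at generations in $[\,n-\lfloor\ve n\rfloor-\lfloor\sqrt n\rfloor,\ n-\lfloor\ve n\rfloor\,]$ is $O(\ve^{-2}n^{-1/2})$, hence with probability tending to $1$ all genuine branching happens at heights bounded away from $1-\ve$, where the recursive analysis applies, and the leaves of $R_{\ve n}(\t^{*n})$ are in bijection with the already separated lineages at generation $n-\lfloor\ve n\rfloor-\lfloor\sqrt n\rfloor$.

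To obtain the conclusion simultaneously for all $\ve$ on one probability space, one should run the Skorokhod coupling once and for all for the convergence of the full rescaled trees $\t^{*n}$ to $\Delta$ (Gromov--Hausdorff topology, which is Polish), carrying along the lineage-counting processes and the branch heights, so that the almost sure convergence at every truncation level follows at once; note that for each fixed $\ve$ one has $\P$-a.s.\ no branch point of $\Delta$ exactly at height $1-\ve$ (the heights $Y_v$ being absolutely continuous), which is why the statement is asserted for each fixed $\ve$. The main obstacle is exactly the interplay between the metric convergence, which is the classical and robust input, and the finer combinatorial data --- the shape of $[R_{\ve n}(\t^{*n})]$ and the identification of the vertices $w^{n,\ve}_u$ --- which Gromov--Hausdorff convergence alone does not detect, being insensitive to short branches near the top, and which therefore has to be extracted by hand via the recursive decomposition and the two controls near the truncation level described above.
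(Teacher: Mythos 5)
Your approach is correct but genuinely different from the paper's. The paper leans on the classical convergence in distribution of the rescaled contour functions of $\t^{(n)}$ to a Brownian excursion of height $>1$ (Corollary 1.13 of \cite{probasur}), applies Skorokhod representation to that \emph{single} functional convergence, and reads off both (i) and (ii) via the known representation of $\Delta$ in terms of such an excursion (\cite[Section 5]{LG89}); the contour function is a convenient encoding that simultaneously carries the metric and the combinatorial data. You instead rebuild the convergence from scratch from the recursive (binary fragmentation) structure of reduced Galton--Watson trees: identify $D_n/n$ with $U_\varnothing$ via \eqref{survivalpro}, iterate along $\mathcal{V}$, exclude $\geq 3$-fold branchings via $J_1$-convergence of the lineage-counting process, and rule out spurious branch points near the truncation level by a first-moment bound (your $O(\ve^{-2}n^{-1/2})$ estimate is of the right order). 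This is more elementary and self-contained — in particular it never uses the excursion representation of $\Delta$ — but it is also longer, and the last paragraph of your sketch is slightly muddled: you invoke Gromov--Hausdorff convergence as the ``classical input'' to which Skorokhod is applied, and then immediately (and correctly) note that GH convergence does not carry the combinatorial data. In fact, to make your plan work the object to which Skorokhod is applied must be the \emph{richer} one — the lineage-counting process together with the marked jump heights and the combinatorial tree structure — whose convergence in distribution is what the recursive decomposition actually yields; GH convergence itself plays no role. Once that is stated carefully, your route goes through and is a legitimate alternative to the paper's proof.
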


%
\begin{figure}[b]

\includegraphics{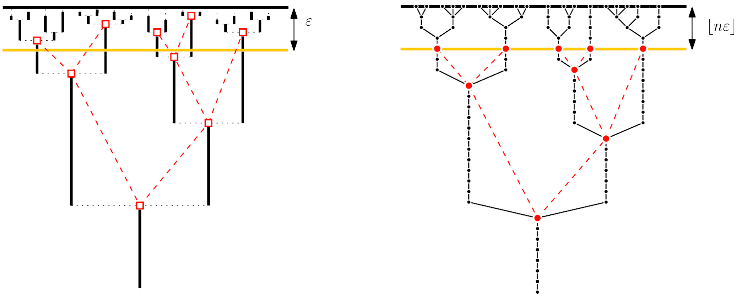}

\caption{Setting of Proposition~\protect\ref{conv-reduced-tree}. On the left, the
tree $\Delta$, its truncation $\Delta_{ \varepsilon}$ and the skeleton
$ \operatorname{Sk}( \Delta_{ \varepsilon})$. On the right, a large reduced
tree $\t^{*n}$ of height $n$, its truncation $R_{ \varepsilon n}( \t
^{*n})$ and the associated binary tree $[R_{ \varepsilon n}( \t^{*n})]$.}\label{fig6}
\end{figure}

See Figure~\ref{fig6} for an illustration of
Proposition~\ref{conv-reduced-tree}. This proposition is essentially a consequence
of classical results on the convergence in distribution of
reduced critical Galton--Watson trees, see in particular
\cite{Zu75} and \cite{FSS}. A simple way of proving Proposition~\ref{conv-reduced-tree} is to use\vspace*{2pt}
the convergence
in distribution of the rescaled contour functions associated with the
trees $\t^{(n)}$
toward a Brownian excursion with height greater than $1$ (see
\cite{probasur}, Corollary 1.13). By using the Skorokhod
representation theorem, one may assume that
the trees $\t^{(n)}$ and the Brownian excursion are constructed so
that the latter convergence holds
almost surely. We then use the relation between the
Brownian excursion with height greater than $1$ and the continuous
reduced tree $\Delta$, which can be found in
\cite{LG89}, Section~5 (this is a particular case of a more general
result connecting reduced L\'evy trees
with the so-called height process, see \cite{DLG}, Section~2.7). Let
us briefly explain this relation.

%
\begin{figure}[t]

\includegraphics{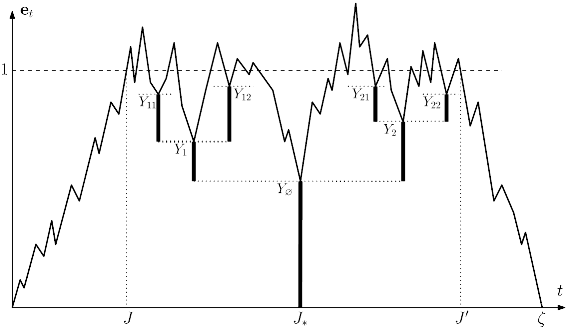}

\caption{The relation between the Brownian excursion with height
greater than $1$ and the continuous reduced tree $\Delta$. For each
$0<a<1$, the number
of vertices of $\Delta$ at height $a$ corresponds to the number of
``subexcursions'' above height $a$ that hit level $1$.}
\label{redu-excursion}
\end{figure}

We write $(\mathbf{e}_t)_{0\leq t\leq\zeta}$
for a Brownian excursion
conditioned to hit level $1$. We associate with this process a
collection $(Y_v)_{v\in\mathcal{V}}$ of nonnegative random variables
defined as follows. If $J=\inf\{t\in[0,\zeta]:\mathbf{e}_t=1\}$,
$J'=\sup\{t\in[0,\zeta]:\mathbf{e}_t=1\}$, we set $Y_\varnothing:=\min\{
\mathbf{e}_t:J\leq t\leq J'\}$ and we also
let $J_*$ be the a.s. unique time in $[J,J']$ such that $\mathbf
{e}_{J_*}=Y_\varnothing$. Then we let $Y_1$ be the minimum of $\mathbf
{e}$ between $J$ and $\sup\{t\leq J_*: \mathbf{e}_t=1\}$,
and $Y_2$ be the minimum of $\mathbf{e}$ between $\inf\{t\geq
J_*:\mathbf{e}_t=1\}$ and $J'$. The construction is continued by
induction (compare Figures~\ref{redu-excursion} and~\ref{fig:Delta}). According to \cite{LG89}, Section~5, the
collection $(Y_v)_{v\in\mathcal{V}}$ has the distribution described in
Section~\ref{sec:treedelta}: this shows that the
tree $\Delta$ can be embedded in the graph of $\mathbf{e}$ in the way
suggested by Figure~\ref{redu-excursion}. Moreover, for every $a\in
[0,1)$, the number of vertices of the tree $\Delta$ at height $a$
corresponds to the number of excursions of $\mathbf{e}$ above level $a$ that
hit height $1$ (to be precise, a vertex which is a branching point of
the tree should be counted twice).

Once we know that the rescaled contour functions associated with the
trees $\t^{(n)}$ converge a.s. to a Brownian excursion with height
greater than $1$, the various assertions of Proposition \ref
{conv-reduced-tree} follow, with the
continuous reduced tree $\Delta$ constructed as explained above from
the limiting Brownian excursion.
We leave the details to the reader.

Let us comment on the properties stated in Proposition~\ref{conv-reduced-tree}. In
property (ii), we have $Y_u>1-\ve$ if and only if $u$ is a leaf (i.e.,
a vertex with no child) of $\operatorname{Sk}(\Delta_\ve)$. Furthermore,
if $u$ is a vertex of $\operatorname{Sk}(\Delta_\ve)$ which is not a leaf, the
vertex $w^{n,\ve}_u$, which
is well defined for $n$ large enough,
does not depend on $\ve$.
More precisely, suppose that $0<\delta<\ve$, and suppose that
$n$ is sufficiently large so that the properties stated in (i) hold
as well as the same properties with $\ve$ replaced by $\delta$. Then,
if $u\in\operatorname{Sk}(\Delta_\ve)$
is not a leaf of $\operatorname{Sk}(\Delta_\ve)$, we must have $w^{n,\ve
}_u=w^{n,\delta}_u$.
On the other hand, if $u$ is a leaf of $\operatorname{Sk}(\Delta_\ve)$, then we
must have
$\llvert  w^{n,\ve}_u\rrvert  =n-\lfloor\ve n\rfloor$, and $w^{n,\ve}_u$
is an ancestor of $w^{n,\delta}_u$. We leave the verification of these
properties
to the reader.

\subsubsection{Convergence of conductances}

Let $i$ be a positive integer and let $\tau\in\mathscr{T}$ be a tree
such that $h(\tau)\geq i$. Consider the new graph $\tau'$
obtained by adding to the graph $\tau$ an edge between the root
$\varnothing$ and
an extra vertex $\partial$. We let $\mathcal{C}_i(\tau)$ be the
probability that simple
random walk on $\tau'$ starting from $\varnothing$ hits generation $i$
of $\tau$ before
hitting the vertex $\partial$.
The notation is justified by the fact that
${\mathcal C}_i(\tau)$ can be interpreted as the effective conductance
between $\partial$ and generation $i$ of
$\tau$ in the graph $\tau'$; see \cite{LP10}, Chapter~2.

%
\begin{proposition} Suppose that
the reduced trees $\t^{*n}$ and the (continuous) tree $\Delta$ are
constructed so that the properties stated in Proposition~\ref{conv-reduced-tree}
hold, and that the Yule tree $\Gamma$ is obtained from $\Delta$
as explained in Section~\ref{sec:yuletree}. Then
\begin{eqnarray*}
n \mathcal{C}_{n} \bigl( \t^{*n} \bigr) &
\mathop {\longrightarrow}\limits_{n\to\infty}^{\mathrm{a.s.}} & \mathcal
{C}(\Gamma).
\end{eqnarray*}
\end{proposition}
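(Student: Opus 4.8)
\proof
The plan is to run the whole argument on the coupling space of \cref{conv-reduced-tree} and to sandwich $n\,\mathcal{C}_n(\t^{*n})$ between the suitably rescaled conductances of two \emph{finite-branching} networks, each of which will converge almost surely because it is a continuous function of the finitely many edge lengths whose convergence is provided by \cref{conv-reduced-tree}. First I would reformulate things electrically: for a tree $\tau$ with $h(\tau)\geq i$, write $C^{(i)}(\tau)$ for the effective conductance, with unit conductance on each edge, between the root and the set of all vertices at generation $i$ (shorted together). A standard electric-network identity gives $\mathcal{C}_i(\tau)=C^{(i)}(\tau)/\bigl(1+C^{(i)}(\tau)\bigr)$, and since $n\,C^{(n)}(\t^{*n})$ will turn out to be bounded (so that $C^{(n)}(\t^{*n})\to0$), it suffices to prove $n\,C^{(n)}(\t^{*n})\to\mathcal{C}(\Gamma)$ a.s. I would use throughout the standard monotonicity facts: effective conductance decreases when edges are removed (Rayleigh) and when the target set is replaced by a subset, and $C^{(j)}(\tau)\leq C^{(i)}(\tau)$ for $i\leq j$ because generation $i$ separates the root from generation $j$; I also recall that the conductance of a finite tree between its root and its shorted leaves is, by the series and parallel laws, a continuous function of the edge resistances, homogeneous of degree $-1$, and that $\mathcal{C}(\Gamma)$ is exactly the conductance of $\Delta$, as a network of resistors with unit resistance per unit length, between its root and $\partial\Delta$.

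Fix $\ve\in(0,1)$, put $m=\lfloor\ve n\rfloor$, and work on the full-probability event where \cref{conv-reduced-tree} holds for this $\ve$. For the upper bound, the separating-cutset monotonicity gives $C^{(n)}(\t^{*n})\leq C^{(n-m)}(\t^{*n})=C^{(n-m)}\!\bigl(R_{\ve n}(\t^{*n})\bigr)$, the last equality because vertices beyond generation $n-m$ do not affect the conductance to generation $n-m$. By \cref{conv-reduced-tree}, for large $n$ the network $R_{\ve n}(\t^{*n})$ is, after replacing each maximal chain of degree-two vertices by a single resistor, the finite tree $\mathrm{Sk}(\Delta_\ve)$, the resistor attached to a skeleton edge $\{\hat u,u\}$ having resistance $|w^{n,\ve}_u|-|w^{n,\ve}_{\hat u}|$ and the leaves sitting at generation $n-m$. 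Hence $n\,C^{(n-m)}\!\bigl(R_{\ve n}(\t^{*n})\bigr)$ equals the (homogeneous degree $-1$) series–parallel functional evaluated at the rescaled resistances $\frac1n\bigl(|w^{n,\ve}_u|-|w^{n,\ve}_{\hat u}|\bigr)$, which converge a.s. to $\bigl(Y_u\wedge(1-\ve)\bigr)-Y_{\hat u}>0$, and its limit is by definition the conductance $\mathcal{C}(\Delta_\ve)$ of the finite $\R$-tree $\Delta_\ve$ between its root and its leaves (which all lie at height $1-\ve$). Thus $\limsup_n n\,C^{(n)}(\t^{*n})\leq\mathcal{C}(\Delta_\ve)$ a.s.

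For the lower bound, I would, inside $\t^{*n}$, replace every subtree $\t^{*n}[w^{n,\ve}_u]$ hanging below level $n-m$ by a single geodesic path of length $m$ from $w^{n,\ve}_u$ to one of its descendants at generation $n$, and call the result $H$. Then $H$ is a subgraph of $\t^{*n}$ whose generation-$n$ vertices form a subset of those of $\t^{*n}$, so Rayleigh monotonicity together with target monotonicity gives $C^{(n)}(\t^{*n})\geq C^{(n)}(H)$. But $H$ is again finite-branching: it is $\mathrm{Sk}(\Delta_\ve)$ with each leaf edge lengthened by $m$, so that $\frac1n\times(\text{length of the edge feeding the leaf }u)\to1-Y_{\hat u}>0$, and the continuity argument of the previous paragraph yields $n\,C^{(n)}(H)\to\mathcal{C}(\Delta'_\ve)$, where $\Delta'_\ve$ denotes the $\R$-tree obtained from $\Delta_\ve$ by attaching a segment of length $\ve$ at each leaf. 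On our event this gives
$$\mathcal{C}(\Delta'_\ve)\;\leq\;\liminf_n n\,C^{(n)}(\t^{*n})\;\leq\;\limsup_n n\,C^{(n)}(\t^{*n})\;\leq\;\mathcal{C}(\Delta_\ve).$$

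Finally I would let $\ve\to0$ along $\ve=1/k$, $k\geq1$ (a countable family, so the corresponding full-probability events still intersect in a full-probability event). Rayleigh monotonicity, applied to the observation that a subtree of height $\ve$ has resistance at most $\ve$, gives the comparisons $\mathcal{C}(\Delta'_\ve)\leq\mathcal{C}(\Gamma)\leq\mathcal{C}(\Delta_\ve)$, so it remains only to check that $\mathcal{C}(\Delta_\ve)\to\mathcal{C}(\Gamma)$ and $\mathcal{C}(\Delta'_\ve)\to\mathcal{C}(\Gamma)$ as $\ve\to0$. The first is immediate from the excursion-measure description of $\mathcal{C}$: $\mathcal{C}(\Delta_\ve)$ is the mass of excursions reaching height $1-\ve$, which decreases to the mass of excursions reaching height $1$, namely $\mathcal{C}(\Gamma)$. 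For the second, a unit current fed into $\Delta_\ve$ and then pushed along the appended segments shows $R(\Delta'_\ve)\leq R(\Delta_\ve)+\ve$, hence $\mathcal{C}(\Delta'_\ve)\geq\mathcal{C}(\Delta_\ve)/\bigl(1+\ve\,\mathcal{C}(\Delta_\ve)\bigr)$, and since $\mathcal{C}(\Delta_\ve)$ stays bounded as $\ve\to0$ the right-hand side also tends to $\mathcal{C}(\Gamma)$. Combined with the last display this gives $n\,C^{(n)}(\t^{*n})\to\mathcal{C}(\Gamma)$ a.s. — in particular $n\,C^{(n)}(\t^{*n})$ is bounded, which retroactively justifies the reduction made at the start — and therefore $n\,\mathcal{C}_n(\t^{*n})\to\mathcal{C}(\Gamma)$ a.s. I expect the lower bound to be the delicate point: one must check that replacing the hanging subtrees by geodesic paths really decreases the conductance, that the resulting finite-branching network converges after rescaling to the correct object $\mathcal{C}(\Delta'_\ve)$, and, above all, that the discrepancy $\mathcal{C}(\Delta_\ve)-\mathcal{C}(\Delta'_\ve)$ is controlled uniformly in $\ve$ — which is the content of the energy estimate $R(\Delta'_\ve)\leq R(\Delta_\ve)+\ve$. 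The upper bound, the series–parallel continuity, and the passage from $C^{(n)}$ to $\mathcal{C}_n$ should be routine. \endproof
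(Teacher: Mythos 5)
Your proof is correct, and there is nothing in the paper to compare it with: the authors explicitly omit the argument (``we omit the easy proof, as this result is not needed for the proof of Theorem \ref{thm:maindiscrete}''), so your write-up actually supplies the missing proof. The route you take — reduce to $n\,C^{(n)}(\mathsf{T}^{*n})\to\mathcal{C}(\Gamma)$ via $\mathcal{C}_n=C^{(n)}/(1+C^{(n)})$, then sandwich $n\,C^{(n)}$ between the conductances of the truncated network $R_{\ve n}(\mathsf{T}^{*n})$ (cutset/target monotonicity) and of the pruned tree $H$ (Rayleigh plus target monotonicity), identify both, after series reduction and rescaling by $1/n$, as continuous homogeneous functionals of the finitely many edge lengths supplied by \cref{conv-reduced-tree}, and finally let $\ve\to0$ along $\ve=1/k$ using $\mathcal{C}(\Delta_\ve)\downarrow\mathcal{C}(\Gamma)$ and $R(\Delta'_\ve)\le R(\Delta_\ve)+\ve$ — is sound, and the two delicate points you flagged (that pruning decreases the conductance, and that the gap $\mathcal{C}(\Delta_\ve)-\mathcal{C}(\Delta'_\ve)$ is controlled uniformly) are handled correctly. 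Two details are worth tightening in a final version. First, the canonical bijection of \cref{conv-reduced-tree} identifies $[R_{\ve n}(\mathsf{T}^{*n})]$ with $\mathrm{Sk}(\Delta_\ve)$ through branch points and leaves only, so the series-reduced network also contains the stem from the root of $\mathsf{T}^{*n}$ to $w^{n,\ve}_\varnothing$; this causes no harm since part (ii) with $u=\varnothing$ gives $\frac1n|w^{n,\ve}_\varnothing|\to Y_\varnothing\wedge(1-\ve)$, but it should be said, as your phrase ``resistor attached to a skeleton edge $\{\hat u,u\}$'' does not literally cover it. Second, in the energy estimate $R(\Delta'_\ve)\le R(\Delta_\ve)+\ve$ you should note that the leaf currents of the harmonic unit flow in $\Delta_\ve$ are nonnegative and sum to $1$, so the extra energy is at most $\ve\sum_x i_x^2\le\ve$; and the step $\mathcal{C}(\Delta'_\ve)\le\mathcal{C}(\Gamma)$ (replacing each subtree above height $1-\ve$ by an equivalent conductor of conductance at least $1/\ve$) uses continuum network reductions at the same informal level as the paper's own electric derivation of \eqref{eq:rde}; this is acceptable here, or can be made rigorous via the excursion-measure definition of $\mathcal{C}$ and the strong Markov property.
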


We omit the easy proof, as this result is not needed for the proof of
Theorem \ref{thm:maindiscrete}.

\label{sec:convergenceconductance}

\subsubsection{Convergence of harmonic measures}
\label{sec:conv-harmonic}
Our goal is now to verify that the discrete harmonic measures $\mu_n$
converge in some sense to the continuous harmonic measure $\mu$ defined
in Section~\ref{sec:treedelta}.

For every
$x\in\partial\Delta_\ve=\{z\in\Delta: H(z)=1-\ve\}$, we set
\[
\mu^\ve(x)=\mu \bigl(\{y\in\partial\Delta: x\prec y\} \bigr).
\]
Similarly, we define a probability measure $\mu^\ve_n$ on $\t
^{*n}_{n-\lfloor\ve n\rfloor}$
by setting
\[
\mu^\ve_n(u)= \mu_n \bigl(\{ v \in
\t_{n}: u \prec v\} \bigr),
\]
%
for every $u\in\t^{*n}_{n-\lfloor\ve n\rfloor}$. Clearly, $\mu_n^\ve$
is also the distribution
of $\langle\Sigma_n \rangle_{n-\lfloor\ve n\rfloor}$.

%
\begin{proposition}
\label{convergence-harmonic}
Suppose that
the reduced trees $\t^{*n}$ and the (continuous) tree $\Delta$ have been
constructed so that the properties of Proposition~\ref{conv-reduced-tree}
hold, and recall the notation $(w^{n,\ve}_u)_{u\in\operatorname{Sk}(\Delta_\ve
)}$ introduced in this proposition.
Then $\P$ a.s. for every $x=(v,1-\ve)\in\partial\Delta_\ve$,
\[
\lim_{n\to\infty} \mu^\ve_n
\bigl(w^{n,\ve}_v \bigr) = \mu^\ve(x).
\]
\end{proposition}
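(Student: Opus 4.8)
The plan is to reduce the statement to a convergence of hitting probabilities for simple random walk on the discrete reduced trees, and then to match this with the analogous statement for Brownian motion on $\Delta$. Fix $\ve\in(0,1)$ and a point $x=(v,1-\ve)\in\partial\Delta_\ve$, and recall that $\mu^\ve_n(w^{n,\ve}_v)$ is the probability that simple random walk on $\t^{*n}$, started from the root, first reaches generation $n$ through a descendant of $w^{n,\ve}_v$; equivalently, by the remark after \cref{sec:tree-discrete} that $B_\tau(v,i)\cap\tau_n=\wt\tau[\langle v\rangle_{n-i}]\cap\tau_n$, it is the probability that the walk on the truncated tree $R_{\ve n}(\t^{*n})$ exits through the leaf $w^{n,\ve}_v$ when the leaves of $R_{\ve n}(\t^{*n})$ are treated as absorbing. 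So the first step is to observe that $\mu^\ve_n(w^{n,\ve}_v)$ equals the exit distribution at $w^{n,\ve}_v$ of simple random walk on the finite tree $R_{\ve n}(\t^{*n})$ run until it hits the set of leaves at generation $n-\lfloor\ve n\rfloor$, and similarly $\mu^\ve(x)$ is the exit distribution at $x$ of Brownian motion on $\Delta_\ve$.

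The second step is to express both quantities through conductances. On a finite tree, the exit distribution of random walk from the root through a given leaf is a ratio of effective conductances: writing $\mathrm{Sk}(\Delta_\ve)$ (which by \cref{conv-reduced-tree}(i) is $[R_{\ve n}(\t^{*n})]$ for $n$ large) as the common combinatorial skeleton, the probability $\mu^\ve_n(w^{n,\ve}_v)$ is a fixed rational function of the conductances of the edges of $R_{\ve n}(\t^{*n})$ along and hanging off the path from the root to $w^{n,\ve}_v$, and of the conductances of the sub-reduced-trees branching off that path. Each edge of the skeleton $\mathrm{Sk}(\Delta_\ve)$ corresponds to a ``long'' path in $R_{\ve n}(\t^{*n})$ whose length, divided by $n$, converges by \cref{conv-reduced-tree}(ii) to $Y_u\wedge(1-\ve)$ minus the height of its parent; a path of combinatorial length $\ell$ in the discrete tree has effective conductance $1/\ell$ (with the factor-$2$ normalization of the distance $d$ this is $\tfrac1{2\ell}\cdot\tfrac{n}{n}$ after rescaling), so $n\times$(edge conductance) converges to the reciprocal of the corresponding length increment in $\Delta_\ve$. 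The sub-reduced-trees $\t^{*n}[w]$ branching off the path that do \emph{not} reach generation $n-\lfloor\ve n\rfloor$ contribute a conductance that is $O(1)$ (not growing with $n$) and hence negligible after multiplying the whole network by $n$; more precisely one shows their total contribution, at the relevant node, is lower order. Since $\mu^\ve_n(w^{n,\ve}_v)$ is scale-invariant (a ratio of conductances), multiplying all conductances by $n$ does not change it, and one obtains that $\mu^\ve_n(w^{n,\ve}_v)$ converges to the same rational function evaluated at the limiting conductances, which is exactly the conductance expression for $\mu^\ve(x)$ on $\Delta_\ve$.

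For the Brownian side of this identification, the third step is to recall that Brownian motion on $\Delta_\ve$, reflected at the root and at the leaves at height $1-\ve$ and absorbed when hitting generation $n$... rather, when hitting $\partial\Delta_\ve$, has exit distribution at $x$ given by the classical electric-network formula: $\Delta_\ve$ is a finite metric graph with unit resistance per unit length, and $\mu^\ve(x)$ is the ratio of the conductance ``pulling toward $x$'' to the total conductance to $\partial\Delta_\ve$, with a segment of length $\ell$ having conductance $1/\ell$. This is precisely the limiting rational function from the previous paragraph, so the two computations agree term by term. One then needs the auxiliary fact, which follows from the construction in \cref{sec:treedelta} together with the definition of $\mu$ as the law of $B_{T-}$, that $\mu^\ve(x)$ defined via $\mu(\{y:x\prec y\})$ coincides with this Brownian exit distribution on $\Delta_\ve$; this is immediate from the consistency property $B^{\ve_n}_{t\wedge T_{\ve_m}}=B^{\ve_m}_{t\wedge T_{\ve_m}}$ used to define $B$.

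The main obstacle I expect is the control of the ``hanging'' subtrees: one must show that the finitely many sub-reduced-trees branching off the spine from the root to $w^{n,\ve}_v$ that fail to reach generation $n-\lfloor\ve n\rfloor$ have combined effective conductance that, after the factor $n$, tends to zero at each node, while those that do reach that generation converge (by an induction on the skeleton, using \cref{conv-reduced-tree} again and the convergence $n\,\cc_n(\t^{*n})\to\cc(\Gamma)$ from \cref{sec:convergenceconductance}) to the corresponding continuous conductances. Here one uses that $\mathrm{Sk}(\Delta_\ve)$ is a \emph{fixed} finite tree once $\ve$ and the realization are frozen, so only finitely many branches are involved and the argument is a finite induction with no uniformity issue in $n$; the estimate that the ``short'' hanging trees contribute $o(n^{-1})$ conductance can be read off from the fact that a subtree not reaching the level has bounded height contribution in the rescaled picture, hence conductance $O(1)$ and not $\Theta(n)$. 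Once these convergences are in hand, the continuity of the rational exit-probability function on the (strictly positive) conductances finishes the proof.
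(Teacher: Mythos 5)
There is a genuine gap that breaks the argument at its first step, on both the discrete and the continuous side.

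You identify $\mu^\ve_n(w^{n,\ve}_v)$ with the exit distribution of simple random walk on the truncated tree $R_{\ve n}(\t^{*n})$ (leaves at generation $n-\lfloor\ve n\rfloor$ made absorbing), and, in parallel, you identify $\mu^\ve(x)$ with the exit distribution of Brownian motion on $\Delta_\ve$. Both identifications are false. On the discrete side, $\mu^\ve_n$ is the distribution of $\langle\Sigma_n\rangle_{n-\lfloor\ve n\rfloor}$, i.e.\ the ancestor at level $n-\lfloor\ve n\rfloor$ of the \emph{ultimate} exit point at level $n$; this is not the same as the \emph{first} hitting point of level $n-\lfloor\ve n\rfloor$, because the walk on $\t^{*n}$ may cross level $n-\lfloor\ve n\rfloor$, descend again, and subsequently climb to level $n$ through a different subtree. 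The observation $B_\tau(v,i)\cap\tau_n=\wt\tau[\langle v\rangle_{n-i}]\cap\tau_n$ that you cite is a statement about the geometry of balls, not about first passage times; it does not give the claimed reduction. On the continuous side, the ``auxiliary fact'' you invoke — that $\mu^\ve(x)=\mu(\{y:x\prec y\})$ coincides with the exit distribution of Brownian motion on $\Delta_\ve$ — is likewise false, and the consistency $B^{\ve_n}_{t\wedge T_{\ve_m}}=B^{\ve_m}_{t\wedge T_{\ve_m}}$ does not imply it; the motion $B$ on $\Delta$ can return below height $1-\ve$ after $T_\ve$ and exit through a different subtree. (You also worry about ``hanging'' subtrees in $R_{\ve n}(\t^{*n})$ that fail to reach the truncation level, but these do not exist: every vertex of $\t^{*n}$ has descendants at generation $n$, and by \cref{conv-reduced-tree}(i) the truncated tree is binary with skeleton ${\rm Sk}(\Delta_\ve)$ for large $n$.)

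The paper's proof resolves exactly this difficulty by introducing a second, much finer truncation level $1-\delta$ with $\delta\ll\ve$: one first shows that the approximations $\mu^{\ve,(\delta)}$ and $\mu^{\ve,(\delta)}_n$ obtained by stopping at height $1-\delta$ (which genuinely are exit distributions of $\Delta_\delta$, resp. $R_{\delta n}(\t^{*n})$, aggregated to level $1-\ve$) differ from $\mu^\ve$, resp. $\mu^\ve_n$, by at most $\delta/\ve$, uniformly; then, with $\delta$ fixed, one proves convergence of the exit distributions as $n\to\infty$. For that intermediate step the paper uses the embedded finite Markov chain on the nodes of ${\rm Sk}(\Delta_\delta)$ and convergence of its transition kernels from \cref{conv-reduced-tree}(ii); your conductance computation could serve as an alternative way to establish that same intermediate convergence (the two are equivalent for a finite network). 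But without the $\delta$-truncation and the uniform $O(\delta/\ve)$ comparison, the argument proves the wrong statement.
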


\begin{pf}
Let $\delta\in(0,\ve)$ and set $T_\delta=\inf\{t\geq0: H(B_t)=1-\delta
\} <T$. Define a
probability measure $\mu^{\ve,(\delta)}$ on $\partial\Delta_\ve$
by setting for every $x\in\partial\Delta_\ve$,
\[
\mu^{\ve,(\delta)}(x)= P(x\prec B_{T_\delta}).
\]
Similarly, we write $\mu^{(\delta)}_n$ for the distribution of the
hitting point of
generation $n-\lfloor\delta n\rfloor$ by random walk on $\t^{*n}$
started from $\varnothing$, and we define a probability measure $\mu
^{\ve,(\delta)}_n$
on $\t^{*n}_{n-\lfloor\ve n\rfloor}$ by setting
\[
\mu^{\ve,(\delta)}_n (v)=\mu_n^{(\delta)} \bigl(
\bigl\{w\in\t^{*n}_{n-\lfloor
\delta n\rfloor}:v\prec w \bigr\} \bigr),
\]
for every $v\in\t^{*n}_{n-\lfloor\ve n\rfloor}$.\vadjust{\goodbreak}

It is easy to verify that
\[
\lim_{\delta\to0} \mu^{\ve,(\delta)}(x)= \mu^\ve(x)
\]
for every $x\in\partial\Delta_\ve$, $\P$-a.s. Indeed we have the bound
$\llvert  \mu^{\ve,(\delta)}(x)- \mu^\ve(x)\rrvert  \leq\delta/\ve$, which follows
from the fact that
there is probability at least $1-\delta/\ve$ that after time $T_\delta$
Brownian motion will hit the
boundary $\partial\Delta$ before returning to height $1-\ve$ (and if
this event occurs then
for $x\in\partial\Delta_\ve$, we have $x\prec B_T$ if and only if
$x\prec B_{T_\delta}$). By similar arguments, one has $\P$-a.s.
\[
\lim_{\delta\to0} \Bigl(\limsup_{n\to\infty} \Bigl( \sup
_{v\in\t^{*n}_{n-\lfloor\ve n\rfloor}} \bigl\llvert \mu^{\ve,(\delta)}_n(v)-
\mu^\ve_n(v) \bigr\rrvert \Bigr) \Bigr)=0.
\]

In view of the preceding remarks, the convergence of the proposition
will follow if
we can verify that for every fixed $\delta\in(0,\ve)$, we have a.s. for every
$x=(u,1-\ve)\in\partial\Delta_\ve$,
\begin{equation}
\label{convharmo1} \lim_{n\to\infty} \mu^{\ve,(\delta)}_n
\bigl(w^{n,\ve}_u \bigr) = \mu^{\ve,(\delta)}(x).
\end{equation}

By considering the successive
passage times of Brownian motion stopped at time $T_\delta$ in the set
$\{(v,Y_v\wedge(1-\delta)): v\in\operatorname{Sk}(\Delta_\delta)\}$,
we get a Markov chain $X^{(\delta)}$, which is absorbed in the set $\{
(v,1-\delta):v$ is a leaf of $\operatorname{Sk}(\Delta_\delta)\}$, and
whose transition kernels are explicitly described in terms of
the quantities $Y_v,  v\in\operatorname{Sk}(\Delta_\delta)$.

Let $n$ be sufficiently large so that assertions (i) and (ii) of Proposition~\ref{conv-reduced-tree} hold with $\ve$ replaced by $\delta$, and
consider random walk on $\t^{*n}$ started from $\varnothing$ and
stopped at the first hitting time
of generation $n-\lfloor\delta n\rfloor$. By considering the
successive passage times of this random walk in the set
$\{ w^{n,\delta}_v: v\in \operatorname{Sk}(\Delta_\delta)\}$, we again get a
Markov chain $X^{(\delta),n}$,
which is absorbed in the set $\{w^{n,\delta}_v:v$ is a leaf of
$\operatorname{Sk}(\Delta_\delta)\}$
and whose transition kernels are explicit in terms
of the quantities $\llvert  w^n_v\rrvert  $, $v\in\operatorname{Sk}(\Delta_\delta)$.

Identifying both sets $\{(v,Y_v\wedge(1-\delta)): v\in\operatorname{Sk}(\Delta
_\delta)\}$
and $\{ w^{n,\delta}_v: v\in \operatorname{Sk}(\Delta_\delta)\}$ with
$\operatorname{Sk}(\Delta_\delta)$, we can view $X^{(\delta)}$ and
$X^{(\delta),n}$ as Markov chains with values in the set $\operatorname{Sk}(\Delta_\delta)$,
and then assertion (ii) of Proposition~\ref{conv-reduced-tree} implies that the
transition kernels of $X^{(\delta),n}$ converge to those of $X^{(\delta)}$.
Write $X^{(\delta)}_\infty$ for the absorption point of $X^{(\delta)}$,
and similarly write
$X^{(\delta),n}_\infty$ for the absorption\vspace*{1pt} point of $X^{(\delta),n}$.
We thus obtain that
the distribution of $X^{(\delta),n}_\infty$ converges to that of
$X^{(\delta)}_\infty$.
Consequently, for every $u\in\mathcal{V}$ such that $x=(u,1-\ve)\in
\partial\Delta_\ve$, we have
\[
\lim_{n\to\infty} P \bigl(u\prec X^{(\delta),n}_\infty
\bigr) = P \bigl(u\prec X^{(\delta
)}_\infty \bigr).
\]
However, from our definitions, we have
\[
P \bigl(u\prec X^{(\delta)}_\infty \bigr) = \mu^{\ve,(\delta)}(x),
\]
and, for $n$ sufficiently
large, noting that $w^{n,\ve}_u$ coincides
with the ancestor of $w^{n,\delta}_u$ at generation $n-\lfloor\ve
n\rfloor$
(see the remarks after Proposition~\ref{conv-reduced-tree}),
\[
P \bigl(u\prec X^{(\delta),n}_\infty \bigr)= \mu_n^{\ve,(\delta)}
\bigl(w^{n,\ve}_u \bigr).
\]
This completes the proof of (\ref{convharmo1}) and of the proposition.\vadjust{\goodbreak}
\end{pf}

Recall that, if $v\in\mathcal{U}$, $\langle v\rangle_{i}$ is the
ancestor of $v$ at generation $i \leq\llvert  v\rrvert  $.

%
\begin{corollary} \label{cor:comparaison} Let $ \xi\in(0,1)$. We can
find $ \varepsilon_{0}\in(0,1/2)$ such that
the following holds. For every $ \varepsilon\in(0, \varepsilon_{0})$,
there exists $n_{0} \geq0$ such that for every $n \geq n_{0}$ we have
\begin{eqnarray*}
\mathbb{E} \otimes E \bigl[ \bigl\llvert \log \mu_{n}^
\varepsilon
\bigl( \langle\Sigma_{n}\rangle_{n- \lfloor\varepsilon n
\rfloor} \bigr) - \beta\log
\varepsilon \bigr\rrvert ^2 \bigr] & \leq& \xi \llvert \log\varepsilon
\rrvert ^2.
\end{eqnarray*}
\end{corollary}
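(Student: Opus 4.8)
The plan is to prove the estimate first in the continuous setting, for the quantity $\log\mu^{\ve}(\Sigma^{\ve})$ where $\Sigma:=B_{T-}$ is the continuous exit point and $\Sigma^{\ve}\in\partial\Delta_{\ve}$ is its ancestor at height $1-\ve$, and then to transfer it to the discrete quantity $\log\mu_n^{\ve}(\langle\Sigma_n\rangle_{n-\lfloor\ve n\rfloor})$ by means of the coupling of \cref{conv-reduced-tree} together with \cref{convergence-harmonic}; the fourth moment bounds of \cref{lem:estimate-reduced-tree} will provide the uniform integrability required to turn convergences in distribution into convergences of second moments. Recall that, conditionally on $\Delta$, the law of $\Sigma$ is $\mu$, so \cref{thm:main-harmonic} gives $\P\otimes P$ a.s.\ that $\log\mu(\mathscr{B}_{\bd}(\Sigma,r))/\log r\to\beta$ as $r\downarrow0$.

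\emph{Continuous estimate.} The key observation is that $\{y\in\partial\Delta:\Sigma^{\ve}\prec y\}$ is squeezed between the boundary traces of two balls centered at $\Sigma$: if $\Sigma^{\ve}\prec y$ then $\Sigma^{\ve}$ is a common ancestor of $\Sigma$ and $y$ at height $1-\ve$, so $\bd(\Sigma,y)\le2\ve$; conversely if $\bd(\Sigma,y)\le\ve$ then $\Sigma\wedge y$ lies on the geodesic from the root to $\Sigma$ at height $\ge1-\ve/2$, hence is a descendant of $\Sigma^{\ve}$, so $\Sigma^{\ve}\prec y$. This yields $\mu(\mathscr{B}_{\bd}(\Sigma,\ve))\le\mu^{\ve}(\Sigma^{\ve})\le\mu(\mathscr{B}_{\bd}(\Sigma,2\ve))$, and dividing by $\log\ve<0$ and using $\log(2\ve)/\log\ve\to1$, \cref{thm:main-harmonic} gives $\log\mu^{\ve}(\Sigma^{\ve})/\log\ve\to\beta$, $\P\otimes P$ a.s., as $\ve\downarrow0$. (Alternatively, via the $\Psi$-correspondence of \cref{sec:yuletree} one has $-\log\mu^{\ve}(\Sigma^{\ve})=G_{-\log\ve}(\Gamma,W_\infty)$ and this is precisely the ergodic-theorem convergence of \cref{sec:proofoftheoremmain-harmonic}.) To upgrade this to $\E\otimes P[(\log\mu^{\ve}(\Sigma^{\ve})-\beta\log\ve)^2]=o(|\log\ve|^2)$ it is enough to check that $\{(\log\mu^{\ve}(\Sigma^{\ve})/\log\ve)^2\}_{\ve}$ is uniformly integrable, which will follow from a uniform $L^4$ bound $\E\otimes P[(\log\mu^{\ve}(\Sigma^{\ve}))^4]\le K|\log\ve|^4$; I will obtain this bound as a byproduct of the discrete estimates, by passing to the limit $n\to\infty$ via Fatou's lemma.

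\emph{Transfer to the discrete setting.} Fix $\ve\in(0,1/2)$. Under the coupling of \cref{conv-reduced-tree}, $\partial\Delta_{\ve}$ is a.s.\ finite, and for $n$ large every vertex of $\t^{*n}$ at generation $n-\lfloor\ve n\rfloor$ is a leaf of $R_{\ve n}(\t^{*n})$, hence corresponds via the canonical bijection to a leaf of $\mathrm{Sk}(\Delta_{\ve})$, i.e.\ to a point of $\partial\Delta_{\ve}$; so $u\mapsto w^{n,\ve}_u$ identifies $\t^{*n}_{n-\lfloor\ve n\rfloor}$ with $\partial\Delta_{\ve}$. For bounded continuous $\psi$, writing $\E\otimes E[\psi(\log\mu_n^{\ve}(\langle\Sigma_n\rangle_{n-\lfloor\ve n\rfloor}))]=\E[\sum_{u\in\t^{*n}_{n-\lfloor\ve n\rfloor}}\mu_n^{\ve}(u)\psi(\log\mu_n^{\ve}(u))]$ and using \cref{convergence-harmonic} (the finitely many masses $\mu_n^{\ve}(w^{n,\ve}_u)$ converge a.s.\ to the positive numbers $\mu^{\ve}((u,1-\ve))$), dominated convergence shows that $\log\mu_n^{\ve}(\langle\Sigma_n\rangle_{n-\lfloor\ve n\rfloor})$ converges in distribution, as $n\to\infty$, to $\log\mu^{\ve}(\Sigma^{\ve})$. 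For the uniform $L^4$ control I would use the elementary inequality $\sum_i p_i|\log p_i|^4\le C_0((\log N)^4+1)$, valid for any probability vector $(p_i)_{i=1}^N$ (group the indices with $|\log p_i|\in[\log(2^k N),\log(2^{k+1}N))$, of which there are at most $N$ with total mass at most $2^{-k}$). Applied conditionally on $\t^{*n}$ to $\mu_n^{\ve}$, it gives $\E\otimes E[(\log\mu_n^{\ve}(\langle\Sigma_n\rangle_{n-\lfloor\ve n\rfloor}))^4]\le C_0\,\E[(\log\#\t^{*n}_{n-\lfloor\ve n\rfloor})^4]+C_0$, and \cref{lem:estimate-reduced-tree} (with $p=\lfloor\ve n\rfloor$, legitimate since $1\le\lfloor\ve n\rfloor\le n/2$ for $\ve<1/2$ and $n$ large, and $\log(n/\lfloor\ve n\rfloor)\le\log(2/\ve)\le2|\log\ve|$) bounds this by $K|\log\ve|^4$ uniformly in such $n$. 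Hence $\{(\log\mu_n^{\ve}(\langle\Sigma_n\rangle_{n-\lfloor\ve n\rfloor}))^2\}_n$ is bounded in $L^2$, thus uniformly integrable, and combined with the convergence in distribution it yields $\E\otimes E[(\log\mu_n^{\ve}(\langle\Sigma_n\rangle_{n-\lfloor\ve n\rfloor})-\beta\log\ve)^2]\to\E\otimes P[(\log\mu^{\ve}(\Sigma^{\ve})-\beta\log\ve)^2]$ as $n\to\infty$; a Fatou argument along the same weak convergence (applied to $t\mapsto t^4$) also supplies the continuous $L^4$ bound invoked above. The corollary then follows: given $\xi$, choose $\ve_0<1/2$ so that the continuous estimate gives $\E\otimes P[(\log\mu^{\ve}(\Sigma^{\ve})-\beta\log\ve)^2]\le\tfrac\xi2|\log\ve|^2$ for $\ve<\ve_0$, and for each such $\ve$ the displayed convergence of second moments produces $n_0$ beyond which the discrete second moment is at most $\xi|\log\ve|^2$.

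\emph{Main obstacle.} The delicate point is the \emph{uniformity in $n$} of the moment control: one must combine the entropy-type inequality $\sum_i p_i|\log p_i|^4\lesssim(\log N)^4$ with the fact, from \cref{lem:estimate-reduced-tree}, that $\log\#\t^{*n}_{n-\lfloor\ve n\rfloor}$ has $L^4$-norm of order $\log(n/\lfloor\ve n\rfloor)\asymp|\log\ve|$ uniformly in $n$ — and then feed this fourth moment bound back to the continuous limit to close the loop on uniform integrability there. The remaining ingredients are routine: that $\partial\Delta_{\ve}$ is a.s.\ finite and stably identified with $\t^{*n}_{n-\lfloor\ve n\rfloor}$, that the vector of discrete harmonic masses converges (\cref{convergence-harmonic}), and that convergence in distribution together with $L^4$-boundedness upgrades to convergence of the relevant second moments.
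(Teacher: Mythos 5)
Your proof is correct, and it rests on the same three ingredients as the paper's argument: (a) the a.s.\ dimension statement of \cref{thm:main-harmonic} applied along $\Sigma = B_{T-}$, (b) the coupling of \cref{conv-reduced-tree} together with \cref{convergence-harmonic} to obtain the convergence in distribution of $\mu_n^\ve(\langle\Sigma_n\rangle_{n-\lfloor \ve n\rfloor})$ to $\mu^\ve(\Sigma^\ve)$ for fixed $\ve$, and (c) an entropy-type $L^4$ bound for $\log\mu_n^\ve$ in terms of $\log\#\t^{*n}_{n-\lfloor\ve n\rfloor}$, fed into \cref{lem:estimate-reduced-tree}. Where the two proofs differ is in how these ingredients are assembled. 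The paper never writes down a continuous $L^2$ estimate: from \cref{thm:main-harmonic} it extracts a \emph{probability} bound, $\P\otimes P(|\log\mu(\mathscr{B}_\bd(B_T,2\ve))-\beta\log\ve|>\tfrac{\eta}{2}|\log\ve|)<\tfrac{\eta}{2}$, transfers it to the discrete quantity via the weak convergence, and then controls the second moment by splitting on the good/bad event and applying Cauchy--Schwarz against the fourth moment, the latter bounded via a single concave majorant $g(r)=(r\wedge e^{-4})|\log(r\wedge e^{-4})|^4$. Your version instead establishes a genuine continuous $L^2$ estimate $\E\otimes P[(\log\mu^\ve(\Sigma^\ve)-\beta\log\ve)^2]=o(|\log\ve|^2)$ by upgrading the a.s.\ convergence via uniform integrability, with the required continuous $L^4$ bound obtained by Fatou from the discrete $L^4$ bound; you then pass to the discrete $L^2$ by weak convergence plus the same $L^4$-driven uniform integrability. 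Two small remarks: your dyadic-grouping proof of $\sum p_i|\log p_i|^4\le C_0((\log N)^4+1)$ is correct but slightly heavier than the paper's concavity argument; and the squeeze $\mu(\mathscr{B}_\bd(\Sigma,\ve))\le\mu^\ve(\Sigma^\ve)\le\mu(\mathscr{B}_\bd(\Sigma,2\ve))$ is sound, though in fact the right-hand inequality is an equality, since for $y\in\partial\Delta$ one has $\bd(\Sigma,y)\le 2\ve$ if and only if $\Sigma^\ve\prec y$ (which is exactly the identification the paper uses tacitly in \eqref{convharmocont}). Both routes are valid; the paper's is a bit more economical because it avoids the detour of re-deriving a continuous $L^4$ bound from the discrete one, but nothing in your argument is incorrect.
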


\begin{pf} Recall our notation $\mathscr{B}_\mathbf{d}(x,r)$ for the
closed ball
of radius $r$ centered at $x\in\Delta$.
Fix $\eta\in(0,1)$. Since $B_T$ is distributed according to $\mu$, it
follows from Theorem~\ref{thm:main-harmonic} that
there exists $ \varepsilon_{0}\in(0,1/2)$ such that for every $
\varepsilon\in(0, \varepsilon_{0})$ we have
\begin{equation}
\label{compatech1} \mathbb{P} \otimes P \bigl( \bigl\llvert \log\mu \bigl(
\mathscr{B}_\mathbf {d}(B_T, 2\varepsilon) \bigr) - \beta
\log \varepsilon \bigr\rrvert > (\eta/2)\llvert \log\varepsilon\rrvert \bigr) <
\eta/2.
\end{equation}

Let us fix $\ve\in(0,\ve_0)$. We now claim that, under $\P\otimes P$,
\begin{equation}
\label{convharmocont} \mu_{n}^ \varepsilon \bigl( \langle
\Sigma_{n}\rangle_{n- \lfloor
\varepsilon n \rfloor} \bigr)\mathop{\longrightarrow}_{n\to\infty}^{(\mathrm{d})}
\mu \bigl(\mathscr{B}_\bd(B_T, 2\varepsilon) \bigr).
\end{equation}
To see this, let $f$ be a continuous function on $[0,1]$. Since the distribution
of $\langle\Sigma_n \rangle_{n-\lfloor\ve n\rfloor}$ under $P$ is $\mu
_n^\ve$, we have
\[
\E\otimes E \bigl[ f \bigl(\mu_{n}^ \varepsilon \bigl( \langle
\Sigma_{n}\rangle_{n-
\lfloor\varepsilon n \rfloor} \bigr) \bigr) \bigr] = \E \biggl[
\sum_{u\in\t^{*n}_{n-\lfloor\ve n\rfloor}} \mu_n^\ve(u) f
\bigl( \mu_n^\ve(u) \bigr) \biggr].
\]
By Proposition~\ref{conv-reduced-tree}, we know that $\P$ a.s. for $n$
sufficiently large,
\[
\sum_{u\in\t^{*n}_{n-\lfloor\ve n\rfloor}} \mu_n^\ve(u) f
\bigl(\mu_n^\ve(u) \bigr) =\sum
_{x=(v,1-\ve)\in\partial\Delta_\ve} \mu^\ve_n \bigl(w^{n,\ve}_v
\bigr) f \bigl(\mu ^\ve_n \bigl(w^{n,\ve}_v
\bigr) \bigr)
\]
and, by Proposition~\ref{convergence-harmonic}, the latter quantities converge as
$n\to\infty$ toward
\[
\sum_{x\in\partial\Delta_\ve} \mu^\ve(x) f \bigl(
\mu^\ve(x) \bigr) = E \bigl[f \bigl( \mu \bigl(\mathscr{B}_\bd(B_T,
2\varepsilon) \bigr) \bigr) \bigr].
\]
Our claim~\eqref{convharmocont} now follows.

By~\eqref{compatech1} and~\eqref{convharmocont}, we can
find $ n_{0}=n_{0}( \varepsilon) \geq\varepsilon^{-1}$ such that for
$n\geq n_{0}$ we have
\[
\mathbb{P} \otimes P \bigl( \bigl\llvert \log \mu_{n}^
\varepsilon
\bigl( \langle\Sigma_{n}\rangle_{n - \lfloor \varepsilon n
\rfloor} \bigr) - \beta\log
\varepsilon \bigr\rrvert > \eta \llvert {\log \varepsilon}\rrvert \bigr) < \eta.
\]
It follows that
\begin{eqnarray}
\label{compatech2} &&\mathbb{E} \otimes E \bigl[ \bigl\llvert \log
\mu_{n}^ \varepsilon \bigl( \langle\Sigma_{n}
\rangle_{n - \lfloor \varepsilon n \rfloor} \bigr) - \beta\log\varepsilon \bigr\rrvert ^2
\bigr]
\nonumber
\\
&&\qquad\leq \eta^2 \llvert \log\varepsilon\rrvert ^2 +
\eta^{1/2} \mathbb{E} \otimes E \bigl[ \bigl\llvert \log
\mu_{n}^ \varepsilon \bigl( \langle \Sigma_{n}
\rangle_{n - \lfloor \varepsilon n \rfloor} \bigr) - \beta \log\varepsilon \bigr\rrvert
^4 \bigr]^{1/2}
\\
&&\qquad\leq \bigl(\eta^2+ 2\eta^{1/2}\beta^2
\bigr)\llvert \log\ve\rrvert ^2 + 2\eta^{1/2} \mathbb{E}
\otimes E \bigl[ \bigl\llvert \log \mu_{n}^ \varepsilon \bigl(
\langle\Sigma_{n}\rangle_{n - \lfloor \varepsilon n \rfloor} \bigr) \bigr\rrvert
^4 \bigr]^{1/2}.\nonumber
\end{eqnarray}
Let us bound the last term in the right-hand side. It is elementary to
verify that the function
$g(r)=(r \wedge e^{-4}) \llvert  \log(r \wedge e^{-4})\rrvert  ^4$ is nondecreasing
and concave over $[0,1]$. It follows that
\begin{eqnarray*}
E \bigl[ \bigl\llvert \log \mu_{n}^ \varepsilon \bigl( \langle
\Sigma_{n}\rangle _{n - \lfloor \varepsilon n \rfloor} \bigr) \bigr\rrvert ^4
\bigr]& =& \sum_{u\in\t^{*n}_{n-\lfloor\ve n\rfloor}} \mu_n^\ve(u)
\bigl\llvert \log\mu _n^\ve(u) \bigr\rrvert
^4
\\
&\leq&\sum_{u\in\t^{*n}_{n-\lfloor\ve n\rfloor}} \bigl(\mu_n^\ve(u)
\wedge e^{-4} \bigr) \bigl\llvert \log \bigl( \mu_n^\ve(u)
\wedge e^{-4} \bigr) \bigr\rrvert ^4 + 4^4
\\
&=& \sum_{u\in\t^{*n}_{n-\lfloor\ve n\rfloor}} g \bigl(\mu_n^\ve(u)
\bigr)+ 4^4
\\
&\leq&\#\t^{*n}_{n-\lfloor\ve n\rfloor}\times g \bigl( \bigl( \#\t
^{*n}_{n-\lfloor\ve n\rfloor} \bigr)^{-1} \bigr) + 4^4
\\
&\leq& \bigl\llvert \log \#\t^{*n}_{n-\lfloor\ve n\rfloor} \bigr\rrvert
^4 + 2\times4^4.
\end{eqnarray*}
We now use Lemma~\ref{lem:estimate-reduced-tree} to get
\begin{eqnarray*}
\E\otimes E \bigl[ \bigl\llvert \log \mu_{n}^ \varepsilon \bigl(
\langle\Sigma _{n}\rangle_{n - \lfloor \varepsilon n \rfloor} \bigr) \bigr\rrvert
^4 \bigr] &\leq&2\times4^4 + \E \bigl[ \bigl\llvert
\log \#\t^{*n}_{n-\lfloor\ve n\rfloor
} \bigr\rrvert ^4 \bigr]
\\
&\leq& 2\times4^4 + C^4 \biggl(\log\frac{n}{\lfloor\ve n\rfloor}
\biggr)^4.
\end{eqnarray*}
By combining the last estimate with (\ref{compatech2}), we get that,
for every $n\geq n_0(\ve)$,
\begin{eqnarray*}
&&\mathbb{E} \otimes E \bigl[ \bigl\llvert \log \mu_{n}^
\varepsilon
\bigl( \langle\Sigma_{n}\rangle_{n - \lfloor \varepsilon n
\rfloor} \bigr) - \beta\log
\varepsilon \bigr\rrvert ^2 \bigr]
\\
&&\qquad\leq \bigl(\eta^2+ 2\eta^{1/2}\beta^2
\bigr)\llvert \log\ve\rrvert ^2 + 2\eta^{1/2} \bigl(
2^{9/2} + C^2 \llvert \log\ve\rrvert ^2 \bigr).
\end{eqnarray*}
The statement of the corollary follows since $\eta$ was arbitrary.
\end{pf}

\subsection{Proof of the main result}
\label{sec:proofthmaindiscrete}
We need a few preliminary lemmas before we can proceed to the proof of
Theorem~\ref{thm:maindiscrete}.
\subsubsection{Preliminary lemmas}
\label{prelilemma}
Our first lemma is a discrete version of Lem\-ma \ref{flow-property}.
This result is well known and corresponds
to the ``flow rule'' for harmonic measure in \cite{LPP95}. We provide a
detailed statement and
a brief proof because this result plays a key role in what follows.

We consider a plane tree $\tau\in\mathscr{T}_n$, and we write $Z^{(\tau
)}=(Z^{(\tau)}_k)_{k\geq0}$ for simple
random walk on $\tau$ starting from $\varnothing$ (we may assume that
this process is defined under the probability measure $P$). We
set
\[
H_n^{(\tau)}=\inf \bigl\{k\geq0: \bigl\llvert
Z^{(\tau)}_k \bigr\rrvert =n \bigr\},
\]
and $\Sigma_{n}^{(\tau)}= Z^{(\tau)}_{H^{(\tau)}_n}$. We let $\mu^{(\tau
)}_n$ be the distribution of $\Sigma_{n}^{(\tau)}$. We view $\mu^{(\tau)}_n$
as a measure on $\tau$, which is supported on $\tau_n$.

For $0\leq p\leq n$,
we set
\[
L^{(\tau)}_p=\sup \bigl\{k\leq H_n^{(\tau)}:
\bigl\llvert Z^{(\tau)}_k \bigr\rrvert =p \bigr\}.
\]
Clearly, $\Sigma_{n}^{(\tau)} \in\widetilde\tau[Z^{(\tau)}_{L^{(\tau
)}_p}]$ and,
therefore, $Z^{(\tau)}_{L^{(\tau)}_p}=\langle\Sigma_{n}^{(\tau)}\rangle_p$.

%
\begin{lemma}
\label{conditioning-subtree}
Let $p\in\{0,1,\ldots,n-1\}$ and $z\in\tau_p$. Then, conditionally
on $\langle\Sigma_{n}^{(\tau)}\rangle_p=z$, the process
\[
\bigl( Z^{(\tau)}_{(L^{(\tau)}_p+k)\wedge H^{(\tau)}_n} \bigr)_{k\geq0}
\]
is distributed as simple random walk on $\widetilde\tau[z]$
starting from $z$ and conditioned to hit $\widetilde\tau[z]\cap\tau_n$
before returning to $z$, and stopped at this hitting time.
Consequently, for every integer $q\in\{0,1,\ldots,n-p\}$, the
conditional distribution of
\[
\frac{\mu_n^{(\tau)}(B_\tau(\Sigma_{n}^{(\tau)},q))}{\mu_n^{(\tau
)}(B_\tau(\Sigma_{n}^{(\tau)},n-p))}
\]
knowing that $\langle\Sigma_{n}^{(\tau)}\rangle_p=z$ is equal to the
distribution of
\[
\mu^{(\tau[z])}_{n-p} \bigl(B_{\tau[z]} \bigl(
\Sigma_{n-p}^{(\tau[z])},q \bigr) \bigr).
\]
\end{lemma}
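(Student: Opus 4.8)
The plan is to treat the first assertion as a direct last-exit decomposition of the walk path (the discrete ``flow rule'' of \cite{LPP95}), and then deduce the second assertion from it together with the ball identity recalled in \cref{sec:tree-discrete}. First I would observe that, on the event $\{\langle\Sigma^{(\tau)}_n\rangle_p=z\}$ (equivalently $\{Z^{(\tau)}_{L^{(\tau)}_p}=z\}$), the time $L^{(\tau)}_p$ is in fact the last visit of $Z^{(\tau)}$ to the single vertex $z$ strictly before $H^{(\tau)}_n$, because any visit to generation $p$ following a visit to $z$ must be followed by a still later visit to generation $p$ on the way up to generation $n$. Moreover, after time $L^{(\tau)}_p$ the walk never returns to $z$ and stays inside $\wt\tau[z]$ (leaving $\wt\tau[z]$ would force passing through $\widehat z$ and hence later again through $z$). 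So the task is to identify the conditional law of the walk run from its last visit to $z$.

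Since $L^{(\tau)}_p$ is a last-exit time, not a stopping time, I would not use the strong Markov property but compute directly. Fix a finite trajectory $\gamma=(y_0=z,y_1,\dots,y_m)$ in $\wt\tau[z]$ with $y_i\neq z$ for $1\le i\le m$, $y_m\in\tau_n$, and $y_i\notin\tau_n$ for $i<m$. Summing over all admissible initial segments (paths of $Z^{(\tau)}$ from $\varnothing$ staying in generations $\le n-1$ and ending at $z$) and using the Markov property gives
\[
P\big(\{\langle\Sigma^{(\tau)}_n\rangle_p=z\}\cap\{(Z^{(\tau)}_{L^{(\tau)}_p+k})_{0\le k\le m}=\gamma\}\big)=A(\tau,z)\,\prod_{i=0}^{m-1}\frac{1}{\deg_\tau(y_i)},
\]
where $A(\tau,z)>0$ does not depend on $\gamma$. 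The only vertex of $\gamma$ whose degree differs between $\tau$ and the subtree $\wt\tau[z]$ is $z$ itself, which loses its edge to $\widehat z$; hence $\prod_{i=0}^{m-1}\deg_\tau(y_i)^{-1}$ equals $\frac{k_z(\tau)}{k_z(\tau)+1}$ times the probability that simple random walk on $\wt\tau[z]$ started from $z$ follows $\gamma$ for its first $m$ steps. Summing over $\gamma$ and normalising, the conditional law of $(Z^{(\tau)}_{(L^{(\tau)}_p+k)\wedge H^{(\tau)}_n})_{k\ge0}$ given $\langle\Sigma^{(\tau)}_n\rangle_p=z$ is precisely that of simple random walk on $\wt\tau[z]$ from $z$ conditioned to hit $\wt\tau[z]\cap\tau_n$ before returning to $z$ and stopped at that hitting time, which is the first assertion. (The case $z=\varnothing$, $p=0$ is trivial: then $L^{(\tau)}_0=0$ and $\wt\tau[\varnothing]=\tau$.) I expect the main obstacle to be exactly this bookkeeping of degree and normalisation factors, carried out in the absence of the strong Markov property, even though it is ultimately elementary.

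For the ``consequently'' part I would identify $\wt\tau[z]$ with the tree $\tau[z]\in\mathscr{T}$ through the natural relabelling $zw\mapsto w$, under which $\wt\tau[z]\cap\tau_n$ corresponds to $(\tau[z])_{n-p}$. The first assertion then says that, conditionally on $\langle\Sigma^{(\tau)}_n\rangle_p=z$, the relabelled post-$L^{(\tau)}_p$ path is simple random walk on $\tau[z]$ from the root conditioned to reach generation $n-p$ before returning to the root, stopped there; in particular its endpoint has the law of $\Sigma^{(\tau[z])}_{n-p}$. Indeed, decomposing an unconditioned walk on $\tau[z]$ into excursions away from the root, the first passage to generation $n-p$ takes place inside the first excursion reaching that level, and, conditionally on an excursion reaching generation $n-p$, the first vertex of $(\tau[z])_{n-p}$ that it visits has exactly the above conditioned law. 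Hence, conditionally on $\langle\Sigma^{(\tau)}_n\rangle_p=z$, the relabelling of $\Sigma^{(\tau)}_n$ is distributed according to $\mu^{(\tau[z])}_{n-p}$.

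Finally, using $B_\tau(v,i)\cap\tau_n=\wt\tau[\langle v\rangle_{n-i}]\cap\tau_n$: on $\{\langle\Sigma^{(\tau)}_n\rangle_p=z\}$ the denominator $\mu^{(\tau)}_n(B_\tau(\Sigma^{(\tau)}_n,n-p))$ equals $\mu^{(\tau)}_n(\wt\tau[z]\cap\tau_n)=P(z\prec\Sigma^{(\tau)}_n)$, a deterministic constant, whereas the numerator equals $\mu^{(\tau)}_n(\wt\tau[w]\cap\tau_n)$ with $w=\langle\Sigma^{(\tau)}_n\rangle_{n-q}$. For any fixed $w$ with $z\prec w$ and $|z|=p$, the ratio $\mu^{(\tau)}_n(\wt\tau[w]\cap\tau_n)/P(z\prec\Sigma^{(\tau)}_n)$ equals $P(w\prec\Sigma^{(\tau)}_n\mid\langle\Sigma^{(\tau)}_n\rangle_p=z)$, which by the first assertion and the excursion argument above is $\mu^{(\tau[z])}_{n-p}(\{v\in(\tau[z])_{n-p}:w'\prec v\})$, where $w'$ is the relabelling of $w$. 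Specialising to $w=\langle\Sigma^{(\tau)}_n\rangle_{n-q}$, using once more that the relabelling of $\Sigma^{(\tau)}_n$ has the law of $\Sigma^{(\tau[z])}_{n-p}$ under this conditioning, and applying the ball identity inside $\tau[z]$, one concludes that the conditional law of the ratio given $\langle\Sigma^{(\tau)}_n\rangle_p=z$ is that of $\mu^{(\tau[z])}_{n-p}(B_{\tau[z]}(\Sigma^{(\tau[z])}_{n-p},q))$, as required.
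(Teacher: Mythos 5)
Your proof is correct and follows the same route as the paper's: both rest on a last-exit decomposition at the final visit to $z$ (the paper phrases it via the i.i.d.\ excursions of $Z^{(\tau)}$ in $\wt\tau[z]$ away from $z$, while you carry out the equivalent explicit path-by-path computation, sidestepping the strong Markov property by summing over initial segments), and both then deduce the ``consequently'' part from the ball identity $B_\tau(v,i)\cap\tau_n=\wt\tau[\langle v\rangle_{n-i}]\cap\tau_n$ together with the relabelling $\wt\tau[z]\leftrightarrow\tau[z]$ and the observation that the conditioned-excursion hitting law equals the unconditioned hitting law. One small slip: the parenthetical claim that $L^{(\tau)}_0=0$ when $z=\varnothing$ is false, since the walk may revisit the root many times before $H^{(\tau)}_n$; fortunately your general computation already covers $p=0$ unchanged (the degree correction factor at $z=\varnothing$ is simply $1$ because $\wt\tau[\varnothing]=\tau$), so no separate treatment is needed.
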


\begin{pf}
The first assertion is easy from the fact that the successive
(nontrivial) excursions of $Z^{(\tau)}$
in the subtree $\widetilde\tau[z]$ are independent (and independent of
the behavior of
$Z^{(\tau)}$ outside $\widetilde\tau[z]$) and have the same
distribution as the excursion
of random walk in $\widetilde\tau[z]$ away from $z$. We leave the
details to the reader.

Let us explain why the second assertion of the lemma follows from the
first one.
Clearly, the distribution of the
hitting point of $\widetilde\tau[z]\cap\tau_n$ by simple random walk on
$\widetilde\tau[z]$
starting from $z$ and conditioned to hit $\widetilde\tau[z]\cap\tau_n$
before returning to $z$ is the same as the distribution of the
hitting point of $\widetilde\tau[z]\cap\tau_n$ by simple random walk on
$\widetilde\tau[z]$
starting from $z$.
Let $\mu^{(\tau),z}_n$ be the conditional distribution of
$\Sigma_{n}^{(\tau)}$ knowing that $\langle\Sigma^{(\tau)}_n\rangle_p=z$.
We get from the first assertion of the lemma that
$\mu^{(\tau),z}_n$ is equal to the hitting distribution of $\widetilde
\tau[z] \cap\tau_{n}$ for simple random walk on $\widetilde\tau[z]$
started from $z$ (note that we are here interested in the subgraph
$\widetilde\tau[z]$ of $\tau$ and not in the ``relabelled'' tree $ \tau[z]$).
It also follows that, for every integer $q\in\{0,1,\ldots,n-p\}$,
the conditional distribution of
\[
\mu^{(\tau),z}_n \bigl(B_\tau \bigl(
\Sigma^{(\tau)}_n,q \bigr) \bigr)
\]
knowing that $\langle\Sigma^{(\tau)}_n\rangle_p=z$ coincides
with the distribution of
\[
\mu^{(\tau[z])}_{n-p} \bigl(B_{\tau[z]} \bigl(
\Sigma_{n-p}^{(\tau[z])},q \bigr) \bigr).
\]
Now notice that, on the event $\{\langle\Sigma^{(\tau)}_n\rangle_p=z\}$,
$\mu^{(\tau),z}_n(B_\tau(\Sigma^{(\tau)}_n,q))$ is equal to
\[
\frac{\mu_n^{(\tau)}(B_\tau(\Sigma^{(\tau)}_n,q))}{\mu_n^{(\tau)}(B_\tau
(\Sigma^{(\tau)}_n,n-p))}.
\]
This gives the second assertion of the lemma.
\end{pf}

Let us come back to the (random) reduced tree $\t^{*n}$.
If $1\leq i\leq n$,\break $\widetilde\t^{*n}[\langle\Sigma_n\rangle_{n-i}]$
is the subtree of $\t^{*n}$ above generation $n-i$ that is ``selected'' by
harmonic measure, and
$\t^{*n}[\langle\Sigma_n\rangle_{n-i}]$ is the tree obtained by
relabelling the vertices of
$\widetilde\t^{*n}[\langle\Sigma_n\rangle_{n-i}]$ as explained above.
It is not true that the
distribution of $\t^{*n}[\langle\Sigma_n\rangle_{n-i}]$ under $\P
\otimes P$ coincides with the distribution of
$\t^{*i}$ under $\P$, because harmonic
measure induces a distributional bias. Still the next lemma gives a
useful bound for the
distribution of $\t^{*n}[\langle\Sigma_n\rangle_{n-i}]$ in terms of
that of $\t^{*i}$.
We recall the notation $ \mathcal{C}_{i}( \tau)$ from Section~\ref{sec:convergenceconductance}.

%
\begin{lemma}
\label{lem:tree-selected}
For every $i\in\{1,\ldots,n-1\}$ and every nonnegative function $F$ on
$\mathscr{T}$,
\[
\E\otimes E \bigl[ F \bigl(\t^{*n} \bigl[\langle\Sigma_n
\rangle_{n-i} \bigr] \bigr) \bigr] \leq(i+1) \E \bigl[
\mathcal{C}_i \bigl(\t^{*i} \bigr) F \bigl(
\t^{*i} \bigr) \bigr].
\]
\end{lemma}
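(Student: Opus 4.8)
The plan is to reduce the statement to an estimate on the \emph{selection probabilities} for a fixed tree and then feed in the recursive structure of the reduced trees. Conditioning first on $\t^{*n}$, write
$$\mathbb{E}\otimes E\big[F(\t^{*n}[\langle\Sigma_n\rangle_{n-i}])\big]=\mathbb{E}\Big[\sum_{z\in\t^{*n}_{n-i}}P^{(\t^{*n})}\!\big(\langle\Sigma_n\rangle_{n-i}=z\big)\,F(\t^{*n}[z])\Big].$$
For a fixed $\tau\in\mathscr{T}_n$ and $z\in\tau_{n-i}$, I would compute $P^{(\tau)}(\langle\Sigma_n\rangle_{n-i}=z)$ by the discrete flow rule (Lemma \ref{conditioning-subtree}) together with the excursion/electric-network description of hitting probabilities: decomposing the walk into excursions from the successive ancestors $\langle z\rangle_0,\dots,\langle z\rangle_{n-i}=z$ and using the definition of $\mathcal{C}_i$, the branching probability at $\langle z\rangle_{j-1}$ is $\mathcal{C}_{n-j}(\tau[\langle z\rangle_j])/\sum_{w}\mathcal{C}_{n-j}(\tau[w])$ (sum over children $w$ of $\langle z\rangle_{j-1}$), and multiplying these along the spine and telescoping yields the exact identity
$$P^{(\tau)}\!\big(\langle\Sigma_n\rangle_{n-i}=z\big)=\frac{\mathcal{C}_i(\tau[z])}{\mathcal{C}_n(\tau)}\prod_{j=0}^{n-i-1}\Big(1-\mathcal{C}_{n-j}\big(\tau[\langle z\rangle_j]\big)\Big).$$

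The next ingredient is the deterministic lower bound $\mathcal{C}_m(\tau)\ge 1/(m+1)$ for every $\tau\in\mathscr{T}_m$: by Rayleigh monotonicity the path of length $m$ minimizes the conductance, and for it $\mathcal{C}_m=1/(m+1)$ by gambler's ruin. Since $\tau[\langle z\rangle_j]$ has height $n-j$, this gives the telescoping estimate $\prod_{j=0}^{n-i-1}(1-\mathcal{C}_{n-j}(\tau[\langle z\rangle_j]))\le\prod_{j=0}^{n-i-1}\frac{n-j}{n-j+1}=\frac{i+1}{n+1}$, and with $\mathcal{C}_n(\tau)\ge 1/(n+1)$ one obtains the pointwise bound $P^{(\tau)}(\langle\Sigma_n\rangle_{n-i}=z)\le(i+1)\,\mathcal{C}_i(\tau[z])$. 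This has the right order, but inserting it and using that, given the $\sigma$-field describing $\t^{*n}$ up to generation $n-i$ (and which vertices there reach generation $n$), the subtrees $\t^{*n}[z]$, $z\in\t^{*n}_{n-i}$, are i.i.d. copies of $\t^{*i}$, one only gets $\mathbb{E}\otimes E[F(\cdot)]\le(i+1)\,\mathbb{E}[\#\t^{*n}_{n-i}]\,\mathbb{E}[\mathcal{C}_i(\t^{*i})F(\t^{*i})]=(i+1)\tfrac{q_i}{q_n}\mathbb{E}[\mathcal{C}_i(\t^{*i})F(\t^{*i})]$ (using $\mathbb{E}[\#\t^{*n}_{n-p}]=q_p/q_n$ from the proof of Lemma \ref{lem:estimate-reduced-tree}), and $q_i/q_n$ is not bounded as $n\to\infty$. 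The gain is lost precisely because the selection probabilities sum to $1$, so the spine product must be kept under the summation rather than bounded pointwise.

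Hence the core of the proof is a one-generation recursion. Using Lemma \ref{conditioning-subtree} with $p=1$: conditionally on $\langle\Sigma_n\rangle_1=w$, the selected subtree $\t^{*n}[\langle\Sigma_n\rangle_{n-i}]$ has the law of the harmonic-measure-selected height-$i$ subtree of $\t^{*n}[w]$, and $w$ is chosen with probability $\mathcal{C}_{n-1}(\t^{*n}[w])/\sum_{w'}\mathcal{C}_{n-1}(\t^{*n}[w'])$; moreover, conditionally on the number of children of the root, the $\t^{*n}[w]$ are i.i.d. copies of $\t^{*(n-1)}$, with $\mathbb{E}[\#\t^{*n}_1]=q_{n-1}/q_n\to1$. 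Iterating this identity down to generation $n-i$, keeping the conductance weight at each stage and using both the Rayleigh bound $\mathcal{C}_m(\cdot)\ge1/(m+1)$ and the asymptotics $q_m\sim2/(m\sigma^2)$ to absorb the (rare) multiple-branching contributions, the plan is to recover the telescoping gain $\prod_{m=i+1}^{n}\frac{m}{m+1}=\frac{i+1}{n+1}$, after kicking off the recursion with $\mathbb{E}[F(\cdot)]\le(n+1)\,\mathbb{E}[\mathcal{C}_n(\t^{*n})F(\cdot)]$ (again from $\mathcal{C}_n\ge1/(n+1)$). The delicate point — which I expect to be the bulk of the argument — is showing that the conductance-size-biasing introduced at each branch point does not accumulate: this is where one must exploit that $\mathbb{E}[\#\t^{*n}_1]=q_{n-1}/q_n$ is close to $1$ and that the selection weights at each branch point sum to one.
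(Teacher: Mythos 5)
Your preparatory observations are sound: the exact product formula you state for $P^{(\tau)}(\langle\Sigma_n\rangle_{n-i}=z)$ is correct (it follows from the flow rule together with the identity $\sum_{w}\mathcal{C}_{m-1}(\tau[w])=\mathcal{C}_m(\tau)/(1-\mathcal{C}_m(\tau))$ at each spine vertex), and you rightly conclude that bounding it pointwise by $(i+1)\,\mathcal{C}_i(\tau[z])$ and summing over $z$ loses a factor $\E[\#\mathsf{T}^{*n}_{n-i}]=q_i/q_n$, which is unbounded. But the second half of your text is a plan, not a proof: the step you yourself single out as ``the bulk of the argument'' --- showing that the conductance size-biasing introduced at each branch point of the top-down recursion does not accumulate --- is left entirely open, and it is precisely the content of the lemma. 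As written, iterating Lemma \ref{conditioning-subtree} generation by generation biases, at every level from the root down to $n-i$, the selected subtree by its conductance relative to its siblings, and nothing in your sketch (neither the bound $\mathcal{C}_m\geq 1/(m+1)$ nor the asymptotics $q_m\sim 2/(m\sigma^2)$, which is how the first, failed route already lost its factor) controls the product of these biases. So there is a genuine gap.

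The paper closes this gap by decomposing in \emph{time} at the single level $n-i$ rather than recursing spatially from the root. Let $W^n_j\in\mathsf{T}^{*n}_{n-i}$ be the starting point of the $j$-th excursion of the walk above level $n-i$ (defined via the alternating hitting times of levels $n-i$ and $n-i-1$). The initial law and transition kernel of the chain $(W^n_j)_{j\geq 0}$ depend only on the truncated tree $R_i(\mathsf{T}^{*n})$; hence, conditionally on $R_i(\mathsf{T}^{*n})$, the chain is independent of the subtrees $\mathsf{T}^{*n}[u]$, $u\in\mathsf{T}^{*n}_{n-i}$, which are i.i.d.\ copies of $\mathsf{T}^{*i}$, and the $j$-th excursion reaches level $n$ with conditional probability $\mathcal{C}_i(\mathsf{T}^{*n}[W^n_j])$. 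Therefore
$$E\big[F(\mathsf{T}^{*n}[\langle\Sigma_n\rangle_{n-i}])\big]=\sum_{j\geq 0}E\Big[F(\mathsf{T}^{*n}[W^n_j])\,\mathcal{C}_i(\mathsf{T}^{*n}[W^n_j])\prod_{\ell=0}^{j-1}\big(1-\mathcal{C}_i(\mathsf{T}^{*n}[W^n_\ell])\big)\Big],$$
and after bounding each failure factor by $1-\frac{1}{i+1}$, the $j$-th term has $\E\otimes E$-expectation at most $(1-\frac{1}{i+1})^j\,\E[\mathcal{C}_i(\mathsf{T}^{*i})F(\mathsf{T}^{*i})]$, because $P(W^n_j=u)$ is $R_i(\mathsf{T}^{*n})$-measurable and $\sum_u P(W^n_j=u)=1$. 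Summing the geometric series yields the factor $i+1$. The sum-to-one of the attempt distribution at each excursion is exactly the mechanism that prevents the biasing from accumulating --- the issue you identified but did not resolve --- and it is also why the crude bound $\mathcal{C}_i\geq 1/(i+1)$ is all that is needed.
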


\begin{pf} Fix $i\in\{1,\ldots,n-1\}$ in this proof. Recall our
notation $R_{i}(\t^{*n})$
for the
tree $\t^{*n}$ truncated at level $n-i$.
From the branching property of Galton--Watson
trees, one easily verifies the following fact: under $\P$,
conditionally on $R_{i}(\t^{*n})$,
the (relabelled) subtrees $\t^{*n}[v]$, $v\in\t^{*n}_{n-i}$ are
independent and
distributed as $\t^{*i}$ (to make this statement precise we can order
the subtrees
according to the lexicographical order on $\t^{*n}_{n-i}$).

Consider the stopping times of the random walk $Z^n$ which are defined
inductively
as follows,
\begin{eqnarray*}
U^n_0&=&\inf \bigl\{k\geq0: \bigl\llvert
Z^n_k \bigr\rrvert =n-i \bigr\},
\\
V^n_0&=&\inf \bigl\{k\geq U^n_0:
\bigl\llvert Z^n_k \bigr\rrvert =n-i-1 \bigr\},
\end{eqnarray*}
and, for every $j\geq0$,
\begin{eqnarray*}
U^n_{j+1}&=&\inf \bigl\{k\geq V^n_j:
\bigl\llvert Z^n_k \bigr\rrvert =n-i \bigr\},
\\
V^n_{j+1}&=&\inf \bigl\{k\geq U^n_{j+1}:
\bigl\llvert Z^n_k \bigr\rrvert =n-i-1 \bigr\}.
\end{eqnarray*}
Set $W^n_j= Z^n_{U^n_j}$ for every $j\geq0$. Then, under the
probability measure $P$,
$(W^n_j)_{j\geq0}$ is a Markov chain on $\t^{*n}_{n-i}$, whose initial
distribution and transition kernel only
depend on $R_{i}(\t^{*n})$.

Now observe that
\[
\langle\Sigma_n\rangle_{n-i}= W^n_{j_0},
\]
where $j_0$ is the first index $j$ such that
\begin{equation}
\label{max-excursion} \sup_{U^n_j\leq k\leq V^n_j} \bigl\llvert Z^n_k
\bigr\rrvert = n.
\end{equation}
If $j\geq0$ is fixed, then, conditionally on the Markov chain
$W^n$, the probability that~(\ref{max-excursion}) holds
is $\mathcal{C}_i(\t^{*n}[W^n_j])$.

Thanks to these observations, we have
\begin{eqnarray*}
&&E \bigl[ F \bigl(\t^{*n} \bigl[\langle\Sigma_n
\rangle_{n-i} \bigr] \bigr) \bigr]
\\
&&\qquad=\sum_{j=0}^\infty E \Biggl[ F
\bigl( \t^{*n} \bigl[W^n_j \bigr] \bigr)
\mathcal{C}_i \bigl(\t ^{*n} \bigl[W^n_j
\bigr] \bigr) \prod_{\ell=0}^{j-1} \bigl(1-
\mathcal{C}_i \bigl(\t^{*n} \bigl[W^n_\ell
\bigr] \bigr) \bigr) \Biggr].
\end{eqnarray*}
We then use the simple bound $\mathcal{C}_i(\t)\geq\frac{1}{i+1}$, which
holds for any tree $\t$ with height greater than or equal to $i$. It
follows that
\[
E \bigl[ F \bigl(\t^{*n} \bigl[\langle\Sigma_n
\rangle_{n-i} \bigr] \bigr) \bigr] \leq\sum
_{j=0}^\infty \biggl(1-\frac{1}{i+1}
\biggr)^j E \bigl[ F \bigl(\t ^{*n} \bigl[W^n_j
\bigr] \bigr) \mathcal{C}_i \bigl(\t^{*n}
\bigl[W^n_j \bigr] \bigr) \bigr].
\]
For\vspace*{2pt} every $u\in\mathcal{U}$ with $\llvert  u\rrvert  =n-i$, let $\pi^n_j(u)=
P(W^n_j=u)$, and recall that
$\pi^n_j(u)$ only depends on the truncated tree $R_{i}(\t^{*n})$. Then,
for every $j\geq0$,
\begin{eqnarray*}
\E\otimes E \bigl[ F \bigl(\t^{*n} \bigl[W^n_j
\bigr] \bigr) \mathcal{C}_i \bigl(\t^{*n}
\bigl[W^n_j \bigr] \bigr) \bigr] &=&\E \biggl[\sum
_{u\in\t^{*n}_{n-i}} \pi^n_j(u) F \bigl(
\t^{*n}[u] \bigr) \mathcal {C}_i \bigl(\t^{*n}[u]
\bigr) \biggr]
\\
&=& \E \bigl[F \bigl(\t^{*i} \bigr) \mathcal{C}_i \bigl(
\t^{*i} \bigr) \bigr],
\end{eqnarray*}
by the observation of the beginning of the proof. We conclude that
\begin{eqnarray*}
E \bigl[ F \bigl(\t^{*n} \bigl[\langle\Sigma_n
\rangle_{n-i} \bigr] \bigr) \bigr] &\leq& \sum
_{j=0}^\infty \biggl(1-\frac{1}{i+1}
\biggr)^j \E \bigl[F \bigl(\t^{*i} \bigr)
\mathcal{C}_i \bigl(\t^{*i} \bigr) \bigr]
\\
&=& (i+1) \E \bigl[F \bigl(\t^{*i} \bigr) \mathcal{C}_i
\bigl( \t^{*i} \bigr) \bigr],
\end{eqnarray*}
as desired.
\end{pf}

Our last lemma gives an estimate for the conductance $\mathcal{C}_i(\t^{*i})$.

%
\begin{lemma}
\label{moment-conductance}
There exists a constant $K\geq1$ such that, for every integer $n\geq1$,
\[
\E \bigl[\mathcal{C}_n \bigl(\t^{*n} \bigr)^2
\bigr] \leq\frac{K}{(n+1)^2}.
\]
\end{lemma}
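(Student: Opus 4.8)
The plan is to pass to the unconditioned Galton--Watson tree and then run an induction on $n$ built on the electrical-network recursion for the conductance.

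Since the extra edge to $\partial$, together with the removal of branches not reaching generation $n$, does not affect the escape probability to generation $n$, we have $\mathcal{C}_n(\t^{*n})=\mathcal{C}_n(\t^{(n)})$; and because $\t^{(n)}$ is an unconditioned tree $\t^{(0)}$ conditioned on $\{h(\t^{(0)})\ge n\}$, while $\mathcal{C}_n$ vanishes below height $n$,
$$\E\big[\mathcal{C}_n(\t^{*n})^2\big]=q_n^{-1}\,\E\big[\mathcal{C}_n(\t^{(0)})^2\big],\qquad q_n=\P\big(h(\t^{(0)})\ge n\big).$$
Since $q_n\asymp n^{-1}$ by \eqref{survivalpro}, it is enough to prove $\E[\mathcal{C}_n(\t^{(0)})^2]\le C\,q_n(n+1)^{-2}\asymp n^{-3}$. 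Viewing the tree as a network (a unit-resistance edge from $\partial$, the root in series, the root-subtrees in parallel) and using that prepending a unit-resistance edge to a tree turns its conductance $\mathcal{C}'_{i}(\cdot)$ to level $i$ into $\mathcal{C}'_i/(1+\mathcal{C}'_i)$, one gets the recursion: $\mathcal{C}_n(\t^{(0)})$ has the law of $S/(1+S)$ with $S=\sum_{j=1}^N\mathcal{C}_{n-1}(T^{(j)})$, where $N\sim\theta$ and, independently, $T^{(1)},T^{(2)},\dots$ are i.i.d.\ copies of $\t^{(0)}$.

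Write $\phi_n^{(p)}=\E[\mathcal{C}_n(\t^{(0)})^p]$ and $c_j=\mathcal{C}_{n-1}(T^{(j)})\in[0,1]$. I would first establish the first-moment bound $\phi_n^{(1)}\le C_1q_n(n+1)^{-1}$ (equivalently $\E[\mathcal{C}_n(\t^{*n})]=O(n^{-1})$): from $\phi_n^{(1)}=\phi_{n-1}^{(1)}-\E[S^2/(1+S)]$ one bounds the loss $\E[S^2/(1+S)]$ below by a constant multiple of $\phi_{n-1}^{(2)}$ — on the event (of probability $\asymp q_{n-1}$) that exactly one subtree reaches generation $n-1$ one has $S\le1$ and $S^2=\mathcal{C}_{n-1}(\t^{(n-1)})^2$ — and runs this jointly with the second-moment estimate. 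For the second moment, expand
$$\mathcal{C}_n(\t^{(0)})^2=\Big(\sum_{j=1}^N\frac{c_j}{1+S}\Big)^2=\sum_{j=1}^N\frac{c_j^2}{(1+S)^2}+\sum_{j\ne k}\frac{c_jc_k}{(1+S)^2}.$$
In the diagonal term bound $(1+S)^{-2}\le(1+c_j)^{-2}$ and use the elementary inequality $x^2(1+x)^{-2}\le x^2-\tfrac12x^3$, valid for $x\in[0,1]$; taking expectations with $\E[N]=1$ and $c_j$ independent of $N$ gives $\le\phi_{n-1}^{(2)}-\tfrac12\phi_{n-1}^{(3)}$. In the off-diagonal term bound $(1+S)^{-2}\le1$; since for $j\ne k$ the variables $c_j,c_k$ are independent of each other and of $N$, its expectation equals exactly $\E[N(N-1)]\,(\phi_{n-1}^{(1)})^2=\sigma^2(\phi_{n-1}^{(1)})^2$. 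This is the only place the hypothesis enters, and — crucially — it uses only the variance $\sigma^2=\E[N(N-1)]$, never a third moment of $\theta$ (which need not exist). Hence $\phi_n^{(2)}\le\phi_{n-1}^{(2)}-\tfrac12\phi_{n-1}^{(3)}+\sigma^2(\phi_{n-1}^{(1)})^2$; feeding in $(\phi_{n-1}^{(1)})^2=O(n^{-4})$ and a lower bound for $\phi_{n-1}^{(3)}$ (from $\mathcal{C}_{n-1}\ge n^{-1}$ on $\{h\ge n-1\}$ together with a matching lower bound for $\phi_{n-1}^{(2)}$) should close the induction and give $\phi_n^{(2)}\le C\,q_n(n+1)^{-2}$.

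The main obstacle is making this induction actually close. The ``gain'' $-\tfrac12\phi_{n-1}^{(3)}$ must beat both the $\sigma^2(\phi_{n-1}^{(1)})^2$ term and the discrepancy between $q_{n-1}n^{-2}$ and $q_n(n+1)^{-2}$, and there is a genuine tension between the (large) constants forced by the base cases $n\le n_0$, where all the $\phi^{(p)}_k$ are of order one, and the (small) constants wanted for the contraction. Resolving it requires carrying matching lower bounds $\phi_n^{(1)}\gtrsim q_n/n$ and $\phi_n^{(2)}\gtrsim q_n/n^2$ through the simultaneous induction, treating small $n$ by hand, and calibrating the power-law ansatz using the sharp asymptotics $q_{n-1}^{-1}-q_n^{-1}\to\sigma^2/2$ underlying \eqref{survivalpro}. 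I expect this bookkeeping, rather than any single inequality, to be the delicate point, the structural constraint being that only two moments of $\theta$ are available, which is precisely why one must expand $(\sum_jc_j)^2$ in terms of $\E[N(N-1)]$ and never use a crude bound such as $S\le N$.
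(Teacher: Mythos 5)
Your approach is genuinely different from the paper's, and the paper's route is much shorter. The paper applies the Nash--Williams inequality to the cutsets between consecutive generations $1,\ldots,j$ with $j=\lfloor n/2\rfloor$ to get $\mathcal{C}_n(\t^{*n}) \leq \#\t^{*n}_j/j$ (using that $\ell\mapsto \#\t^{*n}_\ell$ is nondecreasing on $\{0,\ldots,n\}$), and then computes $\E[(\#\t^{*n}_j)^2]$ exactly from the branching property: conditionally on $\#\t^{(0)}_j=k$ the count of vertices of generation $j$ having a descendant at generation $n$ is binomial $\mathcal{B}(k,q_{n-j})$. Everything closes via \eqref{survivalpro}. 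No recursion, no induction, no multi-moment bookkeeping.

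Your plan, by contrast, has a genuine quantitative gap, and the obstacle you flag as ``delicate bookkeeping'' is in fact insurmountable with the inequality you propose. Your bound $x^2/(1+x)^2\le x^2-\tfrac12 x^3$ captures a cubic coefficient $\tfrac12$, whereas in the relevant regime $x\asymp n^{-1}$ the true expansion is $x^2/(1+x)^2 = x^2 - 2x^3 + O(x^4)$ with coefficient $2$. This factor-of-$4$ loss is fatal because the exact recursion sits at criticality. Indeed, with $\phi_n^{(p)}:=\E[\mathcal{C}_n(\t^{(0)})^p]$, $n\,\mathcal{C}_n(\t^{*n})\to\mathcal{C}$ and $q_n\sim 2/(\sigma^2 n)$ give $\phi_n^{(p)}\sim 2\E[\mathcal{C}^p]/(\sigma^2 n^{p+1})$, so the increment one must beat is $\phi_n^{(2)}-\phi_{n-1}^{(2)}\approx -6\E[\mathcal{C}^2]/(\sigma^2 n^4)$, while $-2\phi_{n-1}^{(3)}+\sigma^2(\phi_{n-1}^{(1)})^2\approx (4\E[\mathcal{C}]^2-4\E[\mathcal{C}^3])/(\sigma^2 n^4)$; these two expressions agree exactly by the moment identities obtained from \eqref{eq:f} with $g(x)=x$ and $g(x)=x^2$, namely $\E[\mathcal{C}^2]=2\E[\mathcal{C}]$ and $2\E[\mathcal{C}^3]=3\E[\mathcal{C}^2]+2\E[\mathcal{C}]^2$. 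Replacing $-2\phi_{n-1}^{(3)}$ by your $-\tfrac12\phi_{n-1}^{(3)}$ turns the asymptotic right-hand side of your recursion into
$$\frac{4\E[\mathcal{C}]^2-\E[\mathcal{C}^3]}{\sigma^2 n^4}=\frac{3\E[\mathcal{C}](\E[\mathcal{C}]-1)}{\sigma^2 n^4},$$
which is \emph{strictly positive} since $\mathcal{C}>1$ a.s. So your recursion does not even certify that $\phi_n^{(2)}$ is decreasing, and no calibration of constants, simultaneous lower bounds, or treatment of small $n$ can repair a recursion whose leading term has the wrong sign. Upgrading to the sharper inequality $x^2/(1+x)^2\le x^2-2x^3+3x^4$ (valid on $[0,1]$) restores the correct leading coefficient, but only puts the induction back at exact criticality, where closing a strict inequality would require controlling lower-order corrections with no slack to spare --- precisely the delicacy the Nash--Williams argument avoids.
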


\begin{pf}
Obviously, we can assume that $n\geq2$, and we set $j=\lfloor
n/2\rfloor\geq1$. An immediate
application of the Nash--Williams inequality (\cite{LP10}, Chapter~2) gives
\[
\mathcal{C}_n \bigl(\t^{*n} \bigr) \leq\frac{ \# \t^{*n}_j}{j}
\]
(just consider the cutsets obtained by looking for every integer $\ell
\in\{1,\ldots,j\}$
at the collection of edges of $\t^{*n}$ between generation $\ell-1$ and
generation $\ell$).
Then
\begin{eqnarray*}
\E \bigl[ \bigl(\# \t^{*n}_j \bigr)^2
\bigr]&=& \E \bigl[ \bigl(\# \bigl\{v\in\t^{(0)}_j: h \bigl(
\t^{(0)}[v] \bigr)\geq n-j \bigr\} \bigr)^2\mid h \bigl(
\t^{(0)} \bigr)\geq n \bigr]
\\
&=& q_n^{-1} \E \bigl[ \bigl(\# \bigl\{v\in
\t^{(0)}_j: h \bigl(\t^{(0)}[v] \bigr)\geq n-j
\bigr\} \bigr)^2 \bigr].
\end{eqnarray*}
As we already observed in the proof of Lemma~\ref{lem:estimate-reduced-tree},
the conditional distribution
of $\#\{v\in\t^{(0)}_{j}: h(\t^{(0)}[v])\geq n-j\}$ knowing that
$\#\t^{(0)}_{j}=k$ is the binomial distribution $\mathcal{B}(k,q_{n-j})$.
It follows that
\begin{eqnarray*}
&&\E \bigl[ \bigl(\# \bigl\{v\in\t^{(0)}_j: h \bigl(
\t^{(0)}[v] \bigr)\geq n-j \bigr\} \bigr)^2 \bigr]
\\
&&\qquad=q_{n-j}^2 \E \bigl[ \bigl(\#\t^{(0)}_j
\bigr)^2 \bigr] + \bigl(q_{n-j}-q_{n-j}^2
\bigr) \E \bigl[\#\t ^{(0)}_j \bigr]
\\
&&\qquad= q_{n-j}^2 \sigma^2 j +
q_{n-j}.
\end{eqnarray*}
We conclude that
\[
\E \bigl[\mathcal{C}_n \bigl(\t^{*n} \bigr)^2
\bigr] \leq \bigl(j^2q_n \bigr)^{-1}
\bigl(q_{n-j}^2 \sigma ^2 j + q_{n-j}
\bigr),
\]
and the statement of the lemma follows from (\ref{survivalpro}).
\end{pf}

\subsubsection{Proof of Theorem~\texorpdfstring{\protect\ref{thm:maindiscrete}}{1}}

We will prove that
\begin{equation}
\label{eq:goal} \mathbb{E} \otimes E \bigl[ \bigl\llvert \log\mu_{n}(
\Sigma_{n}) + \beta \log n \bigr\rrvert \bigr] = o(\log n)\qquad
\mbox{as }n \to\infty.
\end{equation}
Theorem~\ref{thm:maindiscrete} follows, since~\eqref{eq:goal} and the
Markov inequality give, for any $\delta>0$,
\[
\mathbb{P} \otimes P \bigl( \bigl\llvert \log\mu_{n}(
\Sigma_{n}) + \beta\log n \bigr\rrvert \geq\delta\log n \bigr)
\mathop{ \longrightarrow}\limits_{n\to\infty} 0,
\]
and, therefore,
\[
\mathbb{E} \bigl[ P \bigl( \mu_{n}( \Sigma_{n}) \leq
n^{ - \beta- \delta
}\mbox{ or } \mu_{n}( \Sigma_{n})
\geq n^{ - \beta+ \delta} \bigr) \bigr] \mathop{\longrightarrow}\limits_{n\to\infty}
0. %
\]
Since by definition $\mu_n$ is the distribution of $\Sigma_n$ under
$P$, the last convergence is equivalent to the first
assertion of Theorem~\ref{thm:maindiscrete}.

Fix $\xi>0$ and let $ \varepsilon>0$ and $n_{0} \geq0$ be such that
the conclusion of
Corollary~\ref{cor:comparaison} holds for every $n\geq n_0$. Without loss of
generality, we may and will assume that $\ve=1/N$, for some
integer $N\geq4$, which is fixed throughout the proof.
We also fix a constant $\alpha>0$, such that $\alpha\log N <1/2$.

Let $n> N$ be sufficiently large so that $N^{\lfloor\alpha\log
n\rfloor}\geq n_0$.
We then let $\ell\geq1$ be the unique integer
such that
\[
N^\ell< n \leq N^{\ell+1}.
\]
Notice that
\begin{equation}
\label{trivial1} \frac{\log n}{\log N}-1\leq\ell\leq\frac{\log n}{\log N}.
\end{equation}
Our starting point is the equality
\begin{eqnarray}
\label{decomp-harmo} \log\mu_n(\Sigma_n)&=& \log
\frac{\mu_n(\Sigma_n)}{\mu_n(B(\Sigma
_n,N))}
\nonumber\\[-8pt]\\[-8pt]\nonumber
&&{} +\sum_{j=2}^\ell\log
\frac{\mu_n(B(\Sigma_n,N^{j-1}))}{\mu_n(B(\Sigma_n,N^j))} + \log\mu_n \bigl(B \bigl(\Sigma_n,
N^\ell \bigr) \bigr).
\nonumber
\end{eqnarray}
To simplify notation, we set
\begin{eqnarray*}
A_{1}^n &:=& \log\frac{\mu_n(\Sigma_n)}{\mu_n(B(\Sigma_n,N))} +\beta \log N,
\\
A^n_j&:=& \log\frac{\mu_n(B(\Sigma_n,N^{j-1}))}{\mu_n(B(\Sigma_n,N^j))} + \beta\log N \qquad
\mbox{for every }j\in\{2,\ldots,\ell\},
\\
A_{\ell
+1}^n &:=& \log\mu_n \bigl(B \bigl(
\Sigma_n, N^\ell \bigr) \bigr) + \beta\log
\bigl(n/N^\ell \bigr).
\end{eqnarray*}
From~\eqref{decomp-harmo}, we see that
\begin{equation}
\label{pfdiscrete1}
\qquad\mathbb{E} \otimes E \bigl[ \bigl\llvert \log\mu_{n}(
\Sigma_{n}) + \beta\log n \bigr\rrvert \bigr] = \mathbb{E} \otimes E
\Biggl[ \Biggl\llvert \sum_{j=1}^{\ell+1}
A_{j}^n \Biggr\rrvert \Biggr]\leq \sum
_{i=1}^{\ell+1} \mathbb{E} \otimes E \bigl[ \bigl\llvert
A^n_j \bigr\rrvert \bigr].
\end{equation}
We will now bound the different terms in the sum of the right-hand side.

\begin{pf*}{First step: A priori bounds}
We verify that, for $j\in\{
1,\ldots, \ell+1\}$, we have
\begin{equation}
\label{aprioribound} \E\otimes E \bigl[ \bigl\llvert A^n_j
\bigr\rrvert \bigr] \leq(C\sqrt{K}+ \beta) \log N,
\end{equation}
where $C$ is the constant in Lemma~\ref{lem:estimate-reduced-tree}, and
$K$ is the
constant in Lemma~\ref{moment-conductance}. Suppose first that $2\leq j \leq
\ell$.
Applying the second assertion of Lemma~\ref{conditioning-subtree} (with
$p=n-N^j$ and $q=N^{j-1}$) to the tree
$\t^{*n}$, we obtain that, for every
$z\in\t^{*n}_{n-N^j}$, the conditional distribution of $A^n_j$ under $P$,
knowing that $\langle\Sigma_n\rangle_{n-N^j}=z$, is the same
as the distribution of
\[
\log\mu_{N^j}^{(\t^{*n}[z])} \bigl(B \bigl(\Sigma_{N^j}^{(\t^{*n}[z])},N^{j-1}
\bigr) \bigr) + \beta\log N.
\]
Recalling that $\mu_{N^j}^{(\t^{*n}[z])}$ is the distribution of $\Sigma
_{N^j}^{(\t^{*n}[z])}$ under $P$,
we get
\begin{eqnarray}
\label{aprioritech1}
&& E \bigl[ \bigl\llvert A^n_j \bigr\rrvert
\mid\langle\Sigma_n\rangle_{n-N^j}=z \bigr]\nonumber
\\
&&\qquad   \leq E \bigl[
\bigl\llvert \log\mu_{N^j}^{(\t^{*n}[z])} \bigl(B \bigl(\Sigma
_{N^j}^{( \t^{*n}[z])},N^{j-1} \bigr) \bigr) \bigr\rrvert
\bigr] + \beta\log N
\\
&&\qquad  = G_j \bigl(\t^{*n}[z] \bigr) + \beta\log N,\nonumber
\end{eqnarray}
where for any tree $\tau\in\mathscr{T}_{N^j}$,
\begin{eqnarray*}
G_j(\tau)&=&\int\mu^{(\tau)}_{N^j}(\D y) \bigl
\llvert \log\mu^{(\tau
)}_{N^j} \bigl(B_\tau \bigl(y,
N^{j-1} \bigr) \bigr) \bigr\rrvert
\\
&=& \sum_{z\in\tau_{N^j-N^{j-1}}} \mu^{(\tau)}_{N^j}
\bigl(\widetilde\tau[z] \bigr) \bigl\llvert \log\mu^{(\tau
)}_{N^j}
\bigl(\widetilde\tau[z] \bigr) \bigr\rrvert.
\end{eqnarray*}
In the latter form, $G_j(\tau)$ is just the entropy of the probability
measure that assigns mass $\mu^{(\tau)}_{N^j}(\widetilde\tau[z])$
to every point $z\in\tau_{N^j-N^{j-1}}$. By a standard bound for the
entropy of probability measures on
finite sets, we have $G_j(\tau)\leq\log\# \tau_{N^j-N^{j-1}}$ for any
tree $\tau\in\mathscr{T}_{N^j}$.
Recalling~\eqref{aprioritech1}, we get
\begin{eqnarray*}
\E \otimes E \bigl[ \bigl\llvert A^n_j \bigr\rrvert
\bigr] &\leq&\E\otimes E \bigl[\log\#\t _{N^j-N^{j-1}}^{*n} \bigl[
\langle \Sigma_n\rangle_{n-N^j} \bigr] \bigr]+ \beta\log N
\\
&\leq& \bigl(N^j +1 \bigr) \E \bigl[\mathcal{C}_{N^j}
\bigl( \t^{*N^j} \bigr) \log\#\t ^{*N^j}_{N^j-N^{j-1}} \bigr] +
\beta \log N
\\
&\leq& \bigl(N^j+1 \bigr) \E \bigl[ \bigl(\mathcal{C}_{N^j}
\bigl(\t^{*N^j} \bigr) \bigr)^2 \bigr]^{1/2} \E
\bigl[ \bigl(\log\#\t^{*N^j}_{N^j-N^{j-1}} \bigr)^2
\bigr]^{1/2} + \beta\log N
\\
&\leq&\sqrt{K} \E \bigl[ \bigl(\log\#\t^{*N^j}_{N^j-N^{j-1}}
\bigr)^2 \bigr]^{1/2} + \beta\log N,
\end{eqnarray*}
using successively Lemma~\ref{lem:tree-selected}, the Cauchy--Schwarz
inequality and Lemma~\ref{moment-conductance}.
Finally, Lemma~\ref{lem:estimate-reduced-tree} gives
\[
\E \bigl[ \bigl(\log\#\t^{*N^j}_{N^j-N^{j-1}} \bigr)^2
\bigr]^{1/2}\leq C \log N,
\]
and this completes the proof of~\eqref{aprioribound} when $2\leq j \leq
\ell$.

The cases $j=1$ and $j=\ell+1$ are treated on a similar manner. For
$j=\ell+1$, we observe that
the same entropy bound gives
\[
E \bigl[ \bigl\llvert \log\mu_n \bigl(B \bigl(\Sigma_n,N^\ell
\bigr) \bigr) \bigr\rrvert \bigr] =\sum_{y\in\t^{*n}_{n-N^\ell}}
\mu_n \bigl(\widetilde\t^{*n}[y] \bigr) \bigl\llvert \log\mu
_n \bigl(\widetilde\t^{*n}[y] \bigr) \bigr\rrvert \leq\log
\# \t^{*n}_{n-N^\ell}.
\]
It follows that
\[
\E\otimes E \bigl[ \bigl\llvert \log\mu_n \bigl(B \bigl(
\Sigma_n,N^\ell \bigr) \bigr) \bigr\rrvert \bigr]\leq\E
\bigl[ \log\#\t ^{*n}_{n-N^\ell} \bigr] \leq C \log N,
\]
by Lemma~\ref{lem:estimate-reduced-tree} and using the fact that $N^\ell
< n\leq N^{\ell+1}$.

Finally, for the case $j=1$, we use
exactly the same argument as in the case $2\leq j\leq\ell$, to get
\[
E \biggl[ \biggl\llvert \log\frac{\mu_n(\Sigma_n)}{\mu_n(B(\Sigma_n,N))} \biggr\rrvert \biggr] \leq E
\bigl[\log\#\t^{*n}_{n} \bigl[\langle\Sigma_n
\rangle_{n-N} \bigr] \bigr],
\]
and we obtain similarly, using Lemmas~\ref{lem:tree-selected}, \ref
{moment-conductance} and \ref{lem:estimate-reduced-tree},
\begin{eqnarray*}
\E\otimes E \bigl[\log\#\t^{*n}_{n} \bigl[\langle
\Sigma_n\rangle_{n-N} \bigr] \bigr] &\leq&(N+1) \E \bigl[
\mathcal{C}_N \bigl(\t^{*N} \bigr) \log\#
\t^{*N}_N \bigr]
\\
&\leq&\sqrt{K} \E \bigl[ \bigl(\log\#\t^{*N}_N
\bigr)^2 \bigr]^{1/2}
\\
&\leq& C\sqrt{K}\log N.
\end{eqnarray*}
This completes the proof of~\eqref{aprioribound}.\vadjust{\goodbreak}\noqed
\end{pf*}

\begin{pf*}{Second step: Refined bounds}
We will get a better bound than
\eqref{aprioribound}
for certain values of $j$. Precisely we prove that, if $\lfloor\alpha
\log n\rfloor\leq j \leq\ell$, we have
\begin{equation}
\label{refinedbound} \E\otimes E \bigl[ \bigl\llvert A^n_j
\bigr\rrvert \bigr] \leq\sqrt{\xi K} \log N.
\end{equation}
Let us fix $j\in\{\lfloor\alpha\log n\rfloor,\ldots,\ell\}$.
Recall that we have then $N^j\geq n_0$.
From~\eqref{aprioritech1}, we have
\begin{equation}
\label{estimtech1} E \bigl[ \bigl\llvert A^n_j \bigr\rrvert
\bigr] = E \bigl[F_j \bigl(\t^{*n} \bigl[\langle
\Sigma_n\rangle_{n-N^j} \bigr] \bigr) \bigr],
\end{equation}
where, if $\tau\in\mathscr{T}_{N^j}$,
\[
F_j(\tau)= \bigl\llvert \beta\log N - G_j(\tau) \bigr
\rrvert = \biggl\llvert \int\mu^{(\tau)}_{N^j}(\D y) \bigl(\log
\mu^{(\tau)}_{N^j} \bigl(B_\tau \bigl(y,
N^{j-1} \bigr) \bigr) + \beta\log N \bigr) \biggr\rrvert.
\]
Using Lemma~\ref{lem:tree-selected} as in the first step, we have
\[
\E\otimes E \bigl[ \bigl\llvert A^n_j \bigr\rrvert
\bigr]= \mathbb{E} \otimes E \bigl[F_j \bigl(\t^{*n} \bigl[
\langle \Sigma_n\rangle_{n-N^j} \bigr] \bigr) \bigr] \leq
\bigl(N^j+1 \bigr) \mathbb{E} \bigl[ \mathcal{C}_{N^j}
\bigl( \t^{*N^j} \bigr) F_{j} \bigl(\t^{*N^j} \bigr)
\bigr]. %
\]
We then apply the Cauchy--Schwarz inequality together with the bound of
Lemma~\ref{moment-conductance} to get
\begin{eqnarray*}
\mathbb{E} \otimes E \bigl[ \bigl\llvert A_{j}^n \bigr
\rrvert \bigr] &\leq&\sqrt{K} \mathbb{E} \bigl[F_{j} \bigl(
\t^{*N^j} \bigr)^2 \bigr]^{1/2}
\\
&=& \sqrt{K} \E \biggl[ \biggl(\int\mu_{N^j}(\D y) \bigl\llvert \log
\mu _{N^j} \bigl(B \bigl(y, N^{j-1} \bigr) \bigr) + \beta\log N
\bigr\rrvert \biggr)^2 \biggr]^{1/2}
\\
&\leq&\sqrt{K} \E \biggl[\int\mu_{N^j}(\D y) \bigl\llvert \log\mu
_{N^j} \bigl(B \bigl(y, N^{j-1} \bigr) \bigr) + \beta\log N
\bigr\rrvert ^2 \biggr]^{1/2}
\\
&=& \sqrt{K} \E\otimes E \bigl[ \bigl\llvert \log\mu_{N^j} \bigl(B
\bigl( \Sigma_{N^j}, N^{j-1} \bigr) \bigr) + \beta\log N \bigr
\rrvert ^2 \bigr] ^{1/2}
\\
&=& \sqrt{K} \cdot\mathbb{E} \otimes E \bigl[ \bigl\llvert \log \mu
_{N^j}^{1/N} \bigl( \langle\Sigma_{N^j}
\rangle_{N^j-N^{j-1}} \bigr) + \beta\log N \bigr\rrvert ^2
\bigr]^{1/2},
\end{eqnarray*}
where the last equality follows from the definition of the measures $\mu
^\ve_n$ at the beginning of
Section~\ref{sec:conv-harmonic}. Now recall that $1/N=\ve$ and note
that $N^j-N^{j-1}=N^j - \ve N^{j}$. Since
we have $N^j\geq n_0$, we can apply the bound of Corollary~\ref{cor:comparaison}
and we get that the right-hand side
of the preceding display is bounded above by
$ \sqrt{\xi K} \log N$, which completes the proof of~\eqref{refinedbound}.

By combining~\eqref{aprioribound} and~\eqref{refinedbound}, and using
\eqref{pfdiscrete1}, we arrive at the bound
\begin{eqnarray*}
\E\otimes E \bigl[ \bigl\llvert \log\mu_n(\Sigma_n) +
\beta\log n \bigr\rrvert \bigr] &\leq&\lfloor\alpha\log n\rfloor(C\sqrt{K} +\beta)
\log N + \ell \sqrt {\xi K} \log N
\\
&\leq& \bigl(\alpha(C\sqrt{K} +\beta)\log N + \sqrt{\xi K} \bigr) \log n,
\end{eqnarray*}
which holds for every sufficiently large $n$. Now note that $\xi>0$ can
be chosen arbitrarily small.
The choice of $\xi$ determines the choice of $N$, but afterward we can
also choose $\alpha$ arbitrarily small
given this choice. We thus see that our claim~\eqref{eq:goal} follows
from the last bound, and this completes the
proof of Theorem~\ref{thm:maindiscrete}.
\end{pf*}

\subsection{Proof of Corollary \texorpdfstring{\protect\ref{cor:planetree}}{2}}
\label{sec:absolutecontinuity}
In what follows, we always \textit{implicitly restrict our attention} to
integers $N\geq1$ such that
$\mathbb{P}(\#\t^{(0)}=N+1)>0$. For such values of $N$, $\mathbf{T}(N)$
is distributed as
$\t^{(0)}$ conditioned on the event $\{\#\t^{(0)}=N+1\}$.
We write $(C^{(N)}_t)_{0\leq t\leq2N}$ for the contour function of the tree
$\mathbf{T}(N)$ (see, e.g., \cite{probasur} or \cite{Pit06}, Figure~6.2,
where the
contour function is called the Harris walk of the tree). By
a famous theorem of Aldous \cite{Al3}, we have the convergence in distribution
\begin{equation}
\label{conv-contour} \biggl(\frac{\sigma}{2\sqrt{N}} C^{(N)}_{2Nt}, 0
\leq t\leq1 \biggr) \mathop{\la}_{N\to\infty}^{(d)} (
\mathbf{e}_t,0\leq t\leq1),
\end{equation}
where $(\mathbf{e}_t,0\leq t\leq1)$ stands for a Brownian excursion
with duration $1$.
Since $h(\mathbf{T}(N))$ is just the maximum of the contour function,
it follows that
\begin{equation}
\label{asymp-maxi} \frac{1}{\sqrt{N}} h \bigl(\mathbf{T}(N) \bigr) \mathop{
\la}_{N\to\infty}^{(d)} \frac{2}{\sigma} \max_{0\leq t\leq1}
\mathbf{e}_t.
\end{equation}
Consequently, for every $\eta>0$, we can choose a constant $A>0$ such
that for all sufficiently large $N$, the probability
$P(h(\mathbf{T}(N))> A\sqrt{N})$ is bounded above by~$\eta$. Thanks to
this remark, it is enough to
prove that the convergence of Corollary~\ref{cor:planetree} holds when
$n$ and $N$ tend to infinity in
such a way that $n\leq B\sqrt{N}$, for some fixed constant $B$. For
future reference, we note that~\eqref{asymp-maxi} implies that, for every
sufficiently large $N$ and every nonnegative integer $n$ such that
$n\leq B\sqrt{N}$,
\begin{equation}
\label{eq:reasonableballs} \mathbb{P} \bigl(h \bigl(\mathbf{T}(N) \bigr)\geq n \bigr) \geq
c,
\end{equation}
for some constant $c>0$.

If $\tau\in\mathscr{T}$ is a tree, we write $\tau_{\leq n}$ for the
tree that consists of all
vertices of $\tau$ at generation
less than or equal to $n$.

%
\begin{lemma}
\label{techni-unitree}
Let $\ve>0$. We can find $\delta\in(0,\frac{1}{2})$ such
that, for every sufficiently large $N$, and every nonnegative integer
$n$ with $n\leq B\sqrt{N}$, we have
\[
P \bigl( \# \mathbf{T}(N)_{\leq n} \leq(1-\delta)N \mid h \bigl(\mathbf
{T}(N) \bigr)\geq n \bigr) \geq1-\ve.
\]
\end{lemma}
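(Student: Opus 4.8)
The plan is to express $\#\mathbf{T}(N)_{\le n}$ through the contour function $C^{(N)}$ of $\mathbf{T}(N)$ and then pass to the Brownian excursion limit via Aldous' invariance principle \eqref{conv-contour}. Since the contour traverses each edge of $\mathbf{T}(N)$ exactly twice, once along an up-step and once along a down-step between two consecutive heights, an edge joining generations $j-1$ and $j$ accounts for exactly one index $i$ with $(C^{(N)}_i,C^{(N)}_{i+1})=(j-1,j)$ and one with $(C^{(N)}_i,C^{(N)}_{i+1})=(j,j-1)$. Summing over $1\le j\le n$ gives the exact identity
\[
2\big(\#\mathbf{T}(N)_{\le n}-1\big)=\#\big\{0\le i<2N:\ \max(C^{(N)}_i,C^{(N)}_{i+1})\le n\big\},
\]
whose right-hand side differs from $\#\{0\le i\le 2N:C^{(N)}_i\le n\}$ by at most $\#\mathbf{T}(N)_{n+1}+1$. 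Writing $W_N:=\sup_m\#\mathbf{T}(N)_m$ for the width of the tree, standard estimates (the rescaled width $W_N/\sqrt N$ is tight, being governed by the local time of $\mathbf{e}$) give $W_N=o(N)$ in $P$-probability when $n\le B\sqrt N$, so that, with $a_N:=\sigma n/(2\sqrt N)$,
\[
\tfrac1N\#\mathbf{T}(N)_{\le n}=\int_0^1\mathbf{1}\Big\{\tfrac{\sigma}{2\sqrt N}C^{(N)}_{2Nt}\le a_N\Big\}\,dt+o_P(1).
\]

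Next I would use that the functional $g\mapsto\big(\int_0^1\mathbf{1}\{g(t)\le a\}\,dt,\ \max_{[0,1]}g\big)$ is continuous at every $g\in C([0,1])$ with $\mathrm{Leb}\{t:g(t)=a\}=0$, and that this is preserved when the threshold varies, $a_N\to a$ (dominated convergence). The Brownian excursion $\mathbf{e}$ has jointly continuous local times, hence a.s.\ $\mathrm{Leb}\{t:\mathbf{e}_t=a\}=0$ for every $a$; consequently $\ell(a):=\mathrm{Leb}\{t\in[0,1]:\mathbf{e}_t\le a\}$ is a.s.\ continuous, and, since $\mathbf{e}$ exceeds any level below its maximum on a set of positive Lebesgue measure, one has $\ell(a)<1$ for $a<\max\mathbf{e}$ while $\{\ell(a)=1\}=\{\max\mathbf{e}\le a\}$. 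Combining this with \eqref{conv-contour} and the previous display, whenever $a_N\to a$,
\[
\Big(\tfrac1N\#\mathbf{T}(N)_{\le n},\ \tfrac{\sigma}{2\sqrt N}h(\mathbf{T}(N))\Big)\ \text{converges in distribution to}\ \big(\ell(a),\ \max\mathbf{e}\big).
\]

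Finally I would argue by contradiction. If the lemma failed for some $\varepsilon>0$, there would exist $\delta_k\downarrow 0$, integers $N_k\to\infty$ and $n_k\le B\sqrt{N_k}$ with $P(\#\mathbf{T}(N_k)_{\le n_k}>(1-\delta_k)N_k\mid h(\mathbf{T}(N_k))\ge n_k)>\varepsilon$. Passing to a subsequence along which $\sigma n_k/(2\sqrt{N_k})\to a\in[0,\sigma B/2]$, a Portmanteau argument — bounding $\{>1-\delta_k\}$ by the closed set $\{\ge 1-\eta\}$ for $k$ large, similarly replacing the moving threshold $a_{N_k}$ by $a-\eta$, and letting $\eta\downarrow 0$ at the end — together with the convergence above gives
\[
\limsup_{k\to\infty}P\big(\#\mathbf{T}(N_k)_{\le n_k}>(1-\delta_k)N_k,\ h(\mathbf{T}(N_k))\ge n_k\big)\le P\big(\ell(a)=1,\ \max\mathbf{e}\ge a\big)=P(\max\mathbf{e}=a)=0,
\]
using $\{\ell(a)=1\}=\{\max\mathbf{e}\le a\}$ and the absence of atoms of the law of $\max\mathbf{e}$. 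Since $P(h(\mathbf{T}(N_k))\ge n_k)\ge c>0$ by \eqref{eq:reasonableballs}, the conditional probabilities tend to $0$, contradicting the choice of $n_k$.

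The real content is the soft identity $\{\ell(a)=1\}=\{\max\mathbf{e}\le a\}$, which forces $P(\ell(a)=1,\max\mathbf{e}\ge a)=0$; this is clean. The genuinely delicate bookkeeping is handling the two moving quantities simultaneously — the threshold $a_N\to a$ inside the occupation functional and the cut-off $1-\delta_k\to1$ — and verifying that the contour-function error term $\#\mathbf{T}(N)_{n+1}$ (equivalently the width $W_N$) is $o(N)$ in probability uniformly over $n\le B\sqrt N$; both are routine but must be done with care.
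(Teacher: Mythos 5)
Your proof is correct and follows the same high-level route as the paper (contour-function identity for $\#\mathbf{T}(N)_{\le n}$, Aldous' invariance principle \eqref{conv-contour}, then a compactness/contradiction argument on a subsequence with $\sigma n_k/(2\sqrt{N_k})\to a$). The bookkeeping differs in two respects, both of which make your version somewhat more self-contained. First, the paper uses the exact identity $N+1-\#\mathbf{T}(N)_{\le n}=\tfrac12\int_0^{2N}\mathbf{1}_{\{C^{(N)}_t>n\}}\,dt$, whereas you use a discrete edge-count and control the discrepancy by the width $\#\mathbf{T}(N)_{n+1}=o_P(N)$; both are fine. Second, and more substantively, the paper splits into the regimes $n\le\eta\sqrt N$ (handled separately) and $\eta\sqrt N\le n\le B\sqrt N$, and for the latter invokes a \emph{uniform-in-$a$} bound on the excursion occupation time \eqref{unitree1} with a single $\delta$, the proof of which is left to the reader; its contradiction argument then runs with $\delta$ fixed. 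You instead keep $a$ ranging over the full interval $[0,\sigma B/2]$ (so no preliminary split) and let $\delta_k\downarrow 0$ in the contradiction, reducing everything to the clean zero-probability statement $P(\ell(a)=1,\max\mathbf{e}\ge a)=P(\max\mathbf{e}=a)=0$. In effect you supply, in elementary form, the ingredient the paper omits: your monotonicity/Portmanteau step with the auxiliary $\eta$ is exactly what makes the bound uniform over the compact set of levels $a$, and it simultaneously covers the degenerate endpoint $a=0$. Either treatment is acceptable; yours has the advantage of not leaving a "standard but omitted" uniform estimate to the reader, at the cost of a slightly heavier diagonal argument with the two moving parameters $\delta_k$ and $a_{N_k}$.
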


\begin{pf} As a simple consequence of~\eqref{conv-contour}, we can find
$\eta>0$
sufficiently small so that, for every sufficiently large $N$ and for
every integer $n$ with $0\leq n\leq\eta\sqrt{N}$,
\[
\mathbb{P} \biggl( \bigl\{h \bigl(\mathbf{T}(N) \bigr)\geq n \bigr\}\cap \biggl
\{\# \mathbf{T}(N)_{\leq n} < \frac{N}{2} \biggr\} \biggr) > 1-\ve.
\]
So we may concentrate on values of $n$ such that
$\eta\sqrt{N} \leq n\leq B\sqrt{N}$.

We then observe that there exists $\delta\in(0,\frac{1}{2})$
such that, for every $a\in[\frac{1}{2}\sigma\eta,\frac{1}{2}\sigma B]$,
\begin{equation}
\label{unitree1} \mathbb{P} \biggl(\int_0^1 \D
t \mathbf{1}_{\{\mathbf{e}_t\geq a\}}\leq \delta \Big| \sup_{0\leq t\leq1}
\mathbf{e}_t \geq a \biggr) <\ve.
\end{equation}
This bound follows from standard properties of linear
Brownian motion. We omit the details.

We now claim that the result of the lemma holds with the preceding value
of $\delta$. To verify the claim, observe that
from the properties of the contour function,
\[
N+1 -\#\mathbf{T}(N)_{\leq n} = \frac{1}{2} \int
_0^{2N} \D t \mathbf{1}_{\{C^{(N)}_t > n\}}.
\]
It readily follows that
\begin{eqnarray*}
&&\P \bigl( \# \mathbf{T}(N)_{\leq n} >(1-\delta)N \mid h \bigl(\mathbf
{T}(N) \bigr)\geq n \bigr)
\\
&&\qquad = \P \biggl(\frac{1}{2} \int_0^{2N}
\D t \mathbf{1}_{\{C^{(N)}_t > n\}}< \delta N+1 \Big| \sup_{0\leq t\leq
2N}
C^{(N)}_t \geq n \biggr)
\\
&&\qquad = \P \biggl(\int_0^{1}\D t
\mathbf{1}_{\{(\sigma/2\sqrt{N}) C^{(N)}_{2Nt}>(\sigma/2\sqrt{N}) n\}
}<\delta+ \frac{1}{N} \Big| \sup
_{0\leq t\leq1} \frac{\sigma}{2\sqrt
{N}}C^{(N)}_{2N t}
\geq\frac{n}{\sqrt{2N}} \biggr).
\end{eqnarray*}
If the conclusion of the lemma does not hold, we can find
a sequence $N_k$ converging to $+\infty$, and, for every $k$,
an integer $n_k$ with $\eta\sqrt{N_k} \leq n_k\leq B\sqrt{N_k}$, such that
the probability in the last display, evaluated with $N=N_k$ and $n=n_k$
is bounded below by $\ve$. But then, by extracting a convergent
subsequence from the sequence $(n_k/\sqrt{N_k})$ and using the
convergence~\eqref{conv-contour},
we get a contradiction with~\eqref{unitree1}. This contradiction completes
the proof.
\end{pf}

As previously, we let $ \mathsf{T}^{(n)}$ stand for a Galton--Watson tree
with offspring distribution $\theta$, conditioned on nonextinction at
generation $n$.
Corollary \ref{cor:planetree} is a simple consequence of Theorem \ref
{thm:maindiscrete} and the following comparison lemma applied, for every
fixed $\delta>0$ and $\ve>0$, with
\[
A_{n}= \bigl\{ {\tau} \in\mathscr{T}_n:
\mu_n^{(\tau)} \bigl( \bigl\{v\in\tau_{n}:
n^{-\beta-\delta}\leq\mu_n^{( \tau)}(v) \leq n^{-\beta+\delta}
\bigr\} \bigr)\leq 1- \varepsilon \bigr\}.
\]

%
\begin{lemma} \label{lem:comptree}
For every $n\geq0$, let $A_n$ be a subset of $\mathscr{T}_n$.
Assume that $ \mathbb{P}( \t^{(n)}_{\leq n} \in A_{n}) \to0$ as $n \to
\infty$. Then we have
\[
\mathbb{P} \bigl( \mathbf{T}(N)_{\leq n}\in A_{n} \mid h
\bigl( \mathbf{T}(N) \bigr) \geq n \bigr)
\mathop{\mathop{\longrightarrow}_{n,N\to\infty}}_
{n \leq B\sqrt {N} } 0. %
\]
\end{lemma}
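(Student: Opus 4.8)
The strategy is to compare the law of the reduced tree $\mathbf{T}(N)$ near the root — more precisely, its truncation $\mathbf{T}(N)_{\leq n}$ at generation $n$ — with the law of $\mathsf{T}^{(n)}_{\leq n}$, by exhibiting the latter as essentially a ``piece'' of the former. The key point is that a Galton--Watson tree conditioned to have $N+1$ vertices, once we know it survives up to generation $n$ (with $n\le B\sqrt N$), looks near the root like a Galton--Watson tree conditioned to survive up to generation $n$, provided we have enough ``mass'' left below level $n$ to build the rest of the tree. Lemma~\ref{techni-unitree} provides exactly this: with probability at least $1-\ve$, the portion of $\mathbf{T}(N)$ above generation $n$ still contains at least $\delta N$ vertices.

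First I would set up the absolute-continuity comparison. Recall that, restricted to $N$ with $\mathbb{P}(\#\t^{(0)}=N+1)>0$, the tree $\mathbf{T}(N)$ is $\t^{(0)}$ conditioned on $\{\#\t^{(0)}=N+1\}$; similarly $\t^{(n)}$ is $\t^{(0)}$ conditioned on $\{h(\t^{(0)})\ge n\}$. Fix a tree $\tau\in\mathscr{T}_n$ (a possible value of a truncation at level $n$ of something that reaches generation $n$), and let $k=k(\tau)$ be the number of vertices of $\tau$ at generation exactly $n$. By the branching property of Galton--Watson trees, conditionally on $\{\t^{(0)}_{\le n}=\tau\}$ the $k$ subtrees hanging from the generation-$n$ vertices of $\tau$ are i.i.d.\ copies of $\t^{(0)}$. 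Hence
\begin{align*}
\mathbb{P}(\mathbf{T}(N)_{\le n}=\tau\mid h(\mathbf{T}(N))\ge n)
&=\frac{\mathbb{P}(\t^{(0)}_{\le n}=\tau)\;\mathbb{P}(\#\{\text{$k$ i.i.d.\ GW trees}\}=N+1-\#\tau_{\le n-1}\cdot(\ldots))}{\mathbb{P}(\#\t^{(0)}=N+1,\ h(\t^{(0)})\ge n)/\mathbb{P}(h(\t^{(0)})\ge n)}\,,
\end{align*}
and, writing $s_N^{(m)}$ for the probability that $m$ i.i.d.\ copies of $\t^{(0)}$ have total number of vertices equal to $M$ (a quantity depending only on $m$ and $M$, not on $\tau$), this ratio equals $\mathbb{P}(\t^{(n)}_{\le n}=\tau)$ times a Radon--Nikodym factor of the shape $c_N\, s_N^{(k(\tau))}(N-n_\tau)$, where $n_\tau:=\#\tau_{\le n-1}$ is the number of vertices of $\tau$ strictly below level $n$ and $c_N$ is a normalising constant. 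So the law of $\mathbf{T}(N)_{\le n}$ given $\{h\ge n\}$ is absolutely continuous with respect to that of $\t^{(n)}_{\le n}$, with a density that is a function of $(k(\tau),n_\tau)$ only.

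Next I would control this density. By the local limit theorem for sums of i.i.d.\ integer random variables in the domain of attraction of a stable law (here the tree sizes are heavy-tailed with index $1/2$ by \eqref{conv-contour}), $s_N^{(m)}(M)$ is, uniformly over the relevant ranges of $m$ and $M$, comparable to $N^{-3/2}$ times a bounded factor, as long as $m=O(\sqrt N)$ and $M$ stays a positive fraction of $N$ — the regime guaranteed, with probability close to $1$, by $k(\tau)\le \#\mathbf{T}(N)_{\le n}\le$ (something $O(\sqrt N)$ via \eqref{asymp-maxi}) and by Lemma~\ref{techni-unitree} which forces $n_\tau\le \#\mathbf{T}(N)_{\le n}\le(1-\delta)N$. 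Concretely: choose $\delta$ from Lemma~\ref{techni-unitree} for the given $\ve$; on the event $E_N:=\{\#\mathbf{T}(N)_{\le n}\le(1-\delta)N\}\cap\{h(\mathbf{T}(N))\le A\sqrt N\}$, which has conditional probability at least $1-2\ve$ given $\{h\ge n\}$ (using also \eqref{asymp-maxi} and \eqref{eq:reasonableballs}), the density is bounded above by a constant $C(\delta,A)$ not depending on $n,N$. Therefore, for any $A_n\subset\mathscr{T}_n$,
$$\mathbb{P}(\mathbf{T}(N)_{\le n}\in A_n\mid h(\mathbf{T}(N))\ge n)\le 2\ve+C(\delta,A)\,\mathbb{P}(\t^{(n)}_{\le n}\in A_n).$$
Letting $n,N\to\infty$ with $n\le B\sqrt N$, the second term goes to $0$ by hypothesis, so the $\limsup$ of the left-hand side is at most $2\ve$; since $\ve>0$ was arbitrary, the conclusion follows.

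\textbf{Main obstacle.} The delicate step is the uniform lower/upper bound on $s_N^{(m)}(M)$ — i.e.\ a local limit theorem that is uniform in the number of summands $m$ (growing like $\sqrt N$) and in the target $M$ (a macroscopic fraction of $N$), for a heavy-tailed (index $1/2$) offspring-size distribution. One must be careful that $\mathbb{P}(\#\t^{(0)}=N+1)$ itself decays like $N^{-3/2}$ (Kesten--Otter / the same local limit theorem with $m=1$), so the normalising constant $c_N$ is of the right order and the ratio of the two local-limit quantities is genuinely $O(1)$ on $E_N$; it is precisely here that the restriction $n\le B\sqrt N$ (hence $k(\tau),n_\tau$ in the correct scale) and the mass-retention estimate of Lemma~\ref{techni-unitree} are used. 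All of this is classical (it is, in effect, the standard argument showing that conditioned Galton--Watson trees look locally like size-biased / height-conditioned ones), so I would invoke the relevant local limit theorem rather than reprove it, and present the density computation and the bound on $E_N$ in detail.
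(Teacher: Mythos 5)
Your overall strategy matches the paper's: compare the law of $\mathbf{T}(N)_{\le n}$ given $\{h\ge n\}$ with that of $\t^{(n)}_{\le n}$ by an absolute-continuity computation, bound the resulting density on a good event whose conditional probability is close to $1$, and then push through. The density, as you say, is a function of $(k(\tau),n_\tau)$ only; and the bounds come from the $N^{-3/2}$ lower bound on $\mathbb{P}(\#\t^{(0)}=N+1)$ (Kemperman plus a local limit theorem), the upper bound $c_2\,p\,\ell^{-3/2}$ on the forest-size point probability, the factor $c_0 n$ from \eqref{survivalpro}, and Lemma~\ref{techni-unitree} for the mass-retention estimate. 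All of this is in the paper.

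However, there is a genuine gap in how you control $k(\tau)=\#\tau_n$, the number of generation-$n$ vertices. This quantity enters the density \emph{linearly} (through the Kemperman factor $p/\ell$ in $F(p,\ell)\le c_2\,p\,\ell^{-3/2}$), so you must show that $k(\tau)$ is at most of order $\sqrt N$, with high conditional probability, in order to beat the normalising constant $N^{-3/2}$. Your proposed justification does not do this: the trivial inequality $k(\tau)\le\#\mathbf{T}(N)_{\le n}$ gives only $(1-\delta)N$ on your event, and the height bound $h(\mathbf{T}(N))\le A\sqrt N$ from \eqref{asymp-maxi} says nothing about the width of the tree at a fixed level $n$ (and in fact $\#\mathbf{T}(N)_{\le n}$ is typically of order $N$, not $\sqrt N$, when $n\asymp\sqrt N$, since most of the $N+1$ vertices sit at such heights). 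The paper closes this hole with a separate ingredient you omitted: Janson's first-moment bound $\E[\#\mathbf{T}(N)_n]\le Kn$ (from \cite{Jan}), combined with Markov's inequality and \eqref{eq:reasonableballs}, gives $\mathbb{P}\big(\#\mathbf{T}(N)_n>\delta^{-1}n\,\big|\,h(\mathbf{T}(N))\ge n\big)<\ve/2$ for $\delta$ small. Only then is the density uniformly bounded (by a constant depending on $\delta$) on the intersection of the two good events, and only then does the argument go through. So you should replace the appeal to \eqref{asymp-maxi} by this moment bound on the level width; the rest of your plan is sound.
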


\begin{pf} Throughout the proof, we consider positive integers $n$ and
$N$ such that
$n\leq B \sqrt{N}$. Let $\tau\in\mathscr{T}_n$ and set $m=\#\tau-1$
($m$ is the number of edges of $\tau$) and $p=\# \tau_n$. From~\eqref
{survivalpro}, we see that there exists a constant $c_0>0$ such that,
for every $n$,
\begin{equation}
\label{abs1} \mathbb{P} \bigl(\t^{(n)}_{\leq n}=\tau \bigr)
\geq c_0 n \mathbb{P} \bigl(\t ^{(0)}_{\leq n}=\tau
\bigr).
\end{equation}

We then evaluate
\[
\mathbb{P} \bigl( \mathbf{T}(N)_{\leq n} =\tau \bigr)=
\frac{\mathbb{P}(\{\t
^{(0)}_{\leq n}=\tau\}\cap\{\#\t^{(0)}=N+1\})}{
\mathbb{P}(\#\t^{(0)}=N+1)}.
\]
Let $\widetilde\theta$ be the probability measure on $\Z$ defined by
$\widetilde\theta(k)=\theta(k+1)$ for every $k\geq-1$, and
let $Z$ be a random walk on $\Z$ with jump distribution $\widetilde
\theta$ started from $0$. A~standard result
(see, e.g., \cite{probasur}, Section~1) states that $\#\t^{(0)}$ has
the same distribution as the hitting time
of $-1$ by $Z$, and by Kemperman's formula (see, e.g., Pitman \cite{Pit06}, page~122), we get that $\mathbb{P}(\#\t^{(0)}=N+1)=(N+1)^{-1}
\mathbb{P}(Z_{N+1}=-1)$. From a classical local limit theorem, we
obtain the existence of a constant $c_1>0$ such that
\begin{equation}
\label{abs2} \mathbb{P} \bigl(\#\t^{(0)}=N+1 \bigr) \geq
c_1 N^{-3/2}
\end{equation}
[recall that we consider only values of $N$ such that $\mathbb{P}(\#\t
^{(0)}=N+1)>0$].
Then, using the branching property of Galton--Watson trees, we have, if
$m\leq N$,
\begin{equation}
\label{abs20} \qquad\mathbb{P} \bigl( \bigl\{\t^{(0)}_{\leq n}=\tau
\bigr\}\cap \bigl\{\#\t^{(0)}=N+1 \bigr\} \bigr)= \mathbb{P} \bigl(
\t^{(0)}_{\leq n}=\tau \bigr)\times F(p,N-m+p),
\end{equation}
where, for every integer $\ell\geq p$, $F(p,\ell)$ is the probability
that a forest of $p$ independent
Galton--Watson trees with offspring distribution $\theta$ has exactly
$\ell$ vertices. By the same
arguments as in the derivation of~\eqref{abs2},
\begin{equation}
\label{abs3} F(p,\ell)=\frac{p}{\ell} \mathbb{P}(Z_\ell=-p)
\leq c_2 p \ell^{-3/2},
\end{equation}
with some constant $c_2$.

Next, let $\delta\in(0,1)$ and suppose that $m\leq(1-\delta)N$ and
$p\leq\delta^{-1}n$, so that in particular $N-m+p\geq\delta N$. Under
these conditions~\eqref{abs2},~\eqref{abs20}
and~\eqref{abs3} give
\begin{equation}
\label{abs4} \mathbb{P} \bigl( \mathbf{T}(N)_{\leq n} =\tau \bigr)\leq
c_1^{-1}c_2 \delta ^{-5/2} n
\mathbb{P} \bigl(\t^{(0)}_{\leq n}=\tau \bigr).
\end{equation}

Let $G'_{N,n,\delta}$ be the set of all trees $\tau\in\mathscr{T}_n$
such that $\#\tau\leq(1- \delta) N$, let $G''_{n,\delta}$ be the set
of all trees $\tau\in\mathscr{T}_n$
such\vspace*{1pt} that $\#\tau_n\leq\delta^{-1}n$, and set $G_{N,n,\delta}
=G'_{N,n,\delta} \cap G''_{n,\delta}$.
Comparing~\eqref{abs1} and~\eqref{abs4}, we obtain that the density of
the law of $ \mathbf{T}(N)_{\leq n}$ with respect to that of $\t
^{(n)}_{\leq n}$ is bounded above,
on the set $G_{N,n,\delta}$, by a positive constant $ C_{\delta}$
independent of $n$ and $N$ (but depending on $\delta$). If $\ve>0$ is
given, we can use Lemma \ref{techni-unitree} to find $\delta>0$ such
that for every
sufficiently large $N$ and every integer $n$ with $1\leq n \leq B\sqrt
{N}$ we have
\[
\mathbb{P} \bigl( \mathbf{T}(N)_{\leq n}\in G'_{N,n,\delta}
\mid h \bigl( \mathbf{T}(N) \bigr) \geq n \bigr) \geq1-\frac{\varepsilon}{2}.
\]
On the other hand, Theorem 1.13 in Janson \cite{Jan} gives the
existence of a constant $K$ independent of $N$ such that, for
every integer $n\geq1$, $\E[\#\mathbf{T}(N)_n] \leq K n$. Choosing
$\delta$ smaller if necessary, and using~\eqref{eq:reasonableballs},
we see that we have also,
for every integer $n$ with $1\leq n \leq B\sqrt{N}$,
\[
\mathbb{P} \bigl( \mathbf{T}(N)_{\leq n}\in G''_{n,\delta}
\mid h \bigl( \mathbf{T}(N) \bigr) \geq n \bigr) \geq1-\frac{\varepsilon}{2}.
\]

Finally, if $A_{n}$ is a subset of $\mathscr{T}_n$, with $1\leq n \leq
B\sqrt{N}$, we have
\begin{eqnarray*}
&&\mathbb{P} \bigl( \mathbf{T}(N)_{\leq n} \in A_{n} \mid h
\bigl( \mathbf {T}(N) \bigr) \geq n \bigr)
\\
&&\qquad \leq\mathbb{P} \bigl( \mathbf{T}(N)_{\leq n} \notin
G_{N,n,\delta} \mid h \bigl( \mathbf{T}(N) \bigr) \geq n \bigr)
\\
&&\quad\qquad{}+ \mathbb{P} \bigl( \mathbf{T}(N)_{\leq n} \in
A_{n} \cap G_{N,n,\delta} \mid h \bigl( \mathbf{T}(N) \bigr) \geq
n \bigr)
\\
&&\qquad\leq \varepsilon+ \frac{C_{\delta}}{ \mathbb{P}( h( \mathbf
{T}(N)) \geq n)} \mathbb{P} \bigl(
\t^{(n)}_{\leq n}\in A_{n} \bigr).
\end{eqnarray*}
Letting $n,N \to\infty$ with the constraint $n \leq B\sqrt{N}$, and
using the assumption of the lemma together with~\eqref
{eq:reasonableballs}, we see that the last display eventually becomes
less than~$2 \varepsilon$. This proves the lemma.
\end{pf}

\section{Complements}
\label{sec:comple}
\renewcommand{\t}{\mathcal{T}}

\subsection{A different approach to the continuous results}
In this section, we briefly outline another approach to
Theorem~\ref{thm:main-harmonic}, which is based on a different shift
transformation
on the space $\T^*$. Informally, if $(\t,\mathbf{v})\in\T^*$, we let
$S(\t,\mathbf{v})$ be obtained by shifting $(\t,\mathbf{v})$ at the
first node
of $\t$. More precisely, if $\t$ corresponds to the collection
$(z_v)_{v\in\v}$,
and $\mathbf{v}=(v_1,v_2,\ldots)$, we set
\[
S(\t,\mathbf{v})= (\t_{(v_1)}, \widetilde{\mathbf{v}}),
\]
where $\widetilde{\mathbf{v}}=(v_2,v_3,\ldots)$ and, for $i=1$ or
$i=2$, $\t_{(i)}$ is the tree corresponding to the collection
$(z_{iv}-z_\varnothing)_{v\in\v}$, in agreement with the notation of
Section~\ref{sec:proofoftheoremmain-harmonic}.

%
\begin{proposition}
\label{invariantbis}
For every $r\geq1$, set
\[
\kappa(r) = \int\!\!\!\int\gamma(\D s) \gamma(\D t) \frac{r s}{r+s+t-1}.
\]
The finite measure $\kappa(\cc(\t))\cdot\Theta^*(\D\t \,\D\mathbf{v})$
is invariant under $S$.
\end{proposition}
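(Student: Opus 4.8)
\proof
The plan is to test the claimed invariance against an arbitrary bounded measurable function $F$ on $\T^*$ and to reduce the whole statement to a single deterministic identity satisfied by the function $\kappa$. Three ingredients will be used: that $\Theta^*(d\t\,d\mathbf{v})=\Theta(d\t)\,\nu_\t(d\mathbf{v})$; that under $\Theta$ the root segment length $z_\varnothing$ is exponential with mean $1$, the two subtrees $\t_{(1)},\t_{(2)}$ above the first branching point are independent copies of the Yule tree, independent of $z_\varnothing$, and $\cc(\t)=G\big(1-e^{-z_\varnothing},\cc(\t_{(1)}),\cc(\t_{(2)})\big)$; and that, by an easy variant of \cref{flow-property} at the first branching point (used implicitly in \cref{sec:proofoftheoremmain-harmonic}), under $\nu_\t$ the ray enters $\t_{(i)}$ with probability $\cc(\t_{(i)})/(\cc(\t_{(1)})+\cc(\t_{(2)}))$, and conditionally on this event the shifted ray has law $\nu_{\t_{(i)}}$.

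First I would substitute $S(\t,\mathbf{v})=(\t_{(v_1)},\widetilde{\mathbf{v}})$ and combine these facts; after using the symmetry between $\t_{(1)}$ and $\t_{(2)}$ one obtains
$$\int_{\T^*} F(S(\t,\mathbf{v}))\,\kappa(\cc(\t))\,\Theta^*(d\t\,d\mathbf{v})
=\int_{\T}\Theta(d\t')\Big(\int \nu_{\t'}(d\mathbf{w})\,F(\t',\mathbf{w})\Big)\Big(2\,\mathbf{E}\big[\kappa(G(U,\cc(\t'),\cc))\,\tfrac{\cc(\t')}{\cc(\t')+\cc}\big]\Big),$$
where inside the expectation $U$ is uniform on $[0,1]$, $\cc$ has law $\gamma$ (the conductance of the sibling subtree), these are independent and independent of $\t'$, and $\t'$ has law $\Theta$. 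Comparing with $\int_{\T^*}F(\t,\mathbf{v})\,\kappa(\cc(\t))\,\Theta^*(d\t\,d\mathbf{v})$ and letting $F$ vary, invariance of $\kappa(\cc(\t))\,\Theta^*$ under $S$ becomes equivalent to the fixed-point equation
$$\kappa(c)=2\,\mathbf{E}\big[\kappa(G(U,c,\cc))\,\tfrac{c}{c+\cc}\big]\qquad\text{for }\gamma\text{-a.e. }c,$$
hence, by continuity of $\kappa$, for every $c\ge 1$. (The same computation shows that $\rho(\cc(\t))\,\Theta^*$ is $S$-invariant iff $\rho$ solves this equation.)

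It then remains to check this equation for $\kappa$. I would first carry out the integral over $U$: writing $G(U,c,r)=(c+r)/(1+U(c+r-1))$ and expanding $\kappa$ as a double $\gamma$-integral with dummies $s,t$, each $U$-integral has the form $\int_0^1(P+UQ)^{-1}\,dU=Q^{-1}\log\frac{P+Q}{P}$ with $P=c+r+s+t-1$ and $Q=(c+r-1)(s+t-1)$, and the crucial point is the telescoping $P+Q=(c+r)(s+t)$. This turns the right-hand side of the fixed-point equation into
$$2\iiint \gamma(dr)\gamma(ds)\gamma(dt)\,\frac{c\,s}{(c+r-1)(s+t-1)}\,\log\frac{(c+r)(s+t)}{c+r+s+t-1},$$
and the task is to show that this equals $\kappa(c)=\iint\gamma(ds)\gamma(dt)\frac{cs}{c+s+t-1}$. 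For that I would symmetrize in $s\leftrightarrow t$, use the elementary identity $\frac{1}{a-1}\log\frac{ab}{a+b-1}=\frac{\log a}{a-1}-\int_0^1(b+\xi(a-1))^{-1}d\xi$ (applied with $a=c+r$, $b=s+t$) to eliminate the logarithm, and then collapse one of the three conductance integrations by means of \eqref{eq:f}, the integrated form of the recursive distributional equation for $\gamma$. This last step — recognizing the triple integral as $\kappa(c)$ using only the characterizing properties of $\gamma$ established in \cref{sec:conductance} — is the one genuine computation and the main obstacle; everything preceding it is routine bookkeeping with the flow property and the branching property of the Yule tree.
\endproof
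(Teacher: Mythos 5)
Your reduction to the fixed-point equation
$$\kappa(c)=2\,\E\Big[\kappa\big(G(U,c,\cc_0)\big)\,\tfrac{c}{c+\cc_0}\Big]$$
(with $\cc_0\sim\gamma$ independent of $U$) is exactly the paper's identity \eqref{invarbistech3}, obtained the same way: write $\Theta^*=\Theta\otimes\nu_\t$, use the branching structure of $\Theta$ at the first node, the fact that $\nu_\t$ enters $\t_{(i)}$ with probability $\cc(\t_{(i)})/(\cc(\t_{(1)})+\cc(\t_{(2)}))$, and symmetrize; this part is correct. Your computation of the $U$-integral, with the telescoping $P+Q=(c+r)(s+t)$, is also right, so the right-hand side does become $2\iiint\gamma(dr)\gamma(ds)\gamma(dt)\,\frac{cs}{(c+r-1)(s+t-1)}\log\frac{(c+r)(s+t)}{c+r+s+t-1}$. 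The genuine gap is the step you flag as ``the main obstacle'': the proposed route through \eqref{eq:f} does not go through. After symmetrizing in $s\leftrightarrow t$, the function of $x=s+t$ you would need to feed into \eqref{eq:f} has the form $g(x)=\frac{x}{x-1}(\cdots)$, which blows up like $(x-1)^{-1}$ at $x=1$ with $g'(x)\sim -(x-1)^{-2}$; the hypothesis of \eqref{eq:f} ($g$ continuously differentiable on $[1,\infty)$) fails, and so does the derivation, since $\int_1^\infty g'(t)\,t(t-1)f(t)\,dt$ diverges at $t=1$. A concrete sign check confirms this: for $g_{c,r}(x)=\frac{x}{x-1}\log\frac{c+r+x-1}{x}$ one has $g_{c,r}(x)+x(x-1)g_{c,r}'(x)=-\frac{x(c+r-1)}{c+r+x-1}<0$, yet $\E[g_{c,r}(\cc_1+\cc_2)]>0$, so the conclusion of \eqref{eq:f} is false for this $g$.

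The paper sidesteps this entirely by \emph{not} integrating out $U$. After expanding $\kappa$ inside the expectation in \eqref{invarbistech3}, one works with the joint expectation over $(U,\cc_0,\cc_1,\cc_2)$ and manipulates the single fraction
$$\frac{r}{r+\cc_0}\cdot\frac{\big(U+\tfrac{1-U}{r+\cc_0}\big)^{-1}\cc_1}{\big(U+\tfrac{1-U}{r+\cc_0}\big)^{-1}+\cc_1+\cc_2-1}$$
algebraically (plus a $\cc_1\leftrightarrow\cc_2$ symmetrization) until it takes the form
$$\frac{r\,\wt\cc}{r+\cc_0+\wt\cc-1},\qquad \wt\cc:=\Big(U+\frac{1-U}{\cc_1+\cc_2}\Big)^{-1}.$$
By \eqref{eq:rde}, $\wt\cc$ has law $\gamma$ and is independent of $\cc_0$, so the expectation is $\kappa(r)$ by definition. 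No logarithms ever appear, and the recursive distributional equation is used directly in its probabilistic form rather than through the integrated identity \eqref{eq:f}; this is the move your sketch is missing.
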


\begin{pf} Let $F$ be a bounded measurable function on $\T^*$. We have
to prove that
\begin{equation}
\label{invarbistech1} \int F\circ S(\t,\mathbf{v}) \kappa \bigl(\cc(\t) \bigr)
\Theta^*(\D\t \,\D\mathbf{v})= \int F(\t,\mathbf{v}) \kappa \bigl(\cc(\t) \bigr)
\Theta^*(\D\t \,\D\mathbf{v}).
\end{equation}
Recall that $\Theta^*(\D\t \,\D\mathbf{v})= \Theta(\D\t)\nu_\t(\D\mathbf
{v})$ by construction. If
we fix $\t\in\T$, the distribution of the pair $(v_1,\widetilde{\mathbf
{v}})$ under $\nu_\t$
is given by
\[
\int\nu_\t(\D\mathbf{v}) \mathbf{1}_{\{v_1=i\}} g(\widetilde{
\mathbf{v}}) = \frac{\cc(\t_{(i)})}{\cc(\t_{(1)}) + \cc(\t_{(2)})}\int\nu_{\t
_{(i)}}(\D\mathbf{u}) g(
\mathbf{u}),
\]
where $i\in\{1,2\}$ and $g$ is any bounded measurable function
on $\{1,2\}^\N$. It follows that the left-hand side of~\eqref
{invarbistech1} may be
written as
\begin{equation}
\label{invarbistech2} \sum_{i=1}^2 \int F(
\t_{(i)}, \mathbf{u}) \kappa \bigl(\cc(\t) \bigr) \frac{\cc(\t
_{(i)})}{\cc(\t_{(1)}) + \cc(\t_{(2)})}
\Theta(\D\t) \nu_{\t_{(i)}}(\D\mathbf{u}).
\end{equation}
We then observe that under $\Theta(\D\t)$ the subtrees $\t_{(1)}$ and
$\t_{(2)}$ are independent and
distributed according to $\Theta$, and moreover we have
\[
\cc(\t)= \biggl( U + \frac{1-U}{\cc(\t_{(1)}) + \cc(\t_{(2)})} \biggr)^{-1},
\]
where $U$ is uniformly distributed over $[0,1]$ and independent of $(\t
_{(1)},\t_{(2)})$.
Using these observations, and a simple symmetry argument, we get that
the quantity
\eqref{invarbistech2} is also equal to
\begin{eqnarray*}
&&2 \int_0^1 \D x\int \Theta(\D\t) \Theta
\bigl(\D\t' \bigr) \nu_{\t}(\D\mathbf {u}) F(\t,
\mathbf{u})
\\
&&\quad{}\times\frac{\cc(\t)}{\cc(\t) + \cc(\t')} \kappa \biggl( \biggl( x +
\frac{1-x}{\cc(\t) + \cc(\t')} \biggr)^{-1} \biggr)
\\
&&\qquad= \int\Theta^*(\D\t \,\D\mathbf{u}) F(\t,\mathbf{u})
\\
&&\qquad\quad{}\times \biggl( 2 \int_0^1 \D x
\int\Theta \bigl(\D\t' \bigr) \frac{\cc
(\t)}{\cc(\t) + \cc(\t')} \kappa \biggl(
\biggl( x + \frac{1-x}{\cc(\t) +
\cc(\t')} \biggr)^{-1} \biggr) \biggr).
\end{eqnarray*}
Hence, the proof of~\eqref{invarbistech1} reduces to checking that, for
every $r\geq1$,
\begin{equation}
\label{invarbistech3} \kappa(r)=2 \int_0^1 \D x\int
\Theta \bigl(\D\t' \bigr) \frac{r}{r+ \cc(\t')} \kappa \biggl(
\biggl( x + \frac{1-x}{r + \cc(\t')} \biggr)^{-1} \biggr).
\end{equation}
To verify~\eqref{invarbistech3}, let $\cc_0,\cc_1,\cc_2$ be independent
and distributed
according to $\gamma$, and let $U$ be uniformly distributed over $[0,1]$
and independent of $(\cc_0,\cc_1,\cc_2)$ under the probability measure
$\P$. Note that by definition, for every $x\geq1$,
\[
\kappa(x)= \E \biggl[ \frac{x\cc_1}{x+\cc_1+ \cc_2 -1} \biggr].
\]
It follows that the right-hand side of~\eqref{invarbistech3} can be
written as
\begin{eqnarray*}
&&2 \E \biggl[ \frac{r}{r+\cc_0} \frac{\cc_1 (U+\vafrac{1-U}{r+\cc
_0} )^{-1}}{
\cc_1+\cc_2+  (U+\vafrac{1-U}{r+\cc_0} )^{-1} -1} \biggr]
\\
&&\qquad=2r \E \biggl[ \frac{\cc_1}{(\cc_1+\cc_2-1)(U(\cc_0+r) + 1- U)+\cc
_0 +r} \biggr]
\\
&&\qquad=r \E \biggl[ \frac{\cc_1+\cc_2}{(\cc_1+\cc_2-1)(U(\cc_0+r) + 1-
U)+\cc_0 +r} \biggr]
\\
&&\qquad= r \E \biggl[ \frac{\cc_1+\cc_2}{(\cc_1+\cc_2)(U(\cc_0+r-1) +
1) + (\cc_0 + r-1)(1-U)} \biggr]
\\
&&\qquad= r \E \biggl[\frac{1}{ (\cc_0+r-1) (U + \vafrac{1-U}{\cc_1 +
\cc_2} ) + 1} \biggr]
\\
&&\qquad= r \E \biggl[ \frac{\widetilde\cc}{r+\cc_0 + \widetilde\cc
-1} \biggr],
\end{eqnarray*}
where $\widetilde\cc= (U + \frac{1-U}{\cc_1 + \cc_2})^{-1}$. By~\eqref
{eq:rde}, $\widetilde\cc$ is distributed according to $\gamma$.
Since $\widetilde\cc$ is also independent of $\cc_0$, we immediately
see that the right-hand side of
the last display is equal to $\kappa(r)$, which completes the proof of
\eqref{invarbistech3}
and of the proposition.
\end{pf}

One can verify that the shift $S$ is ergodic with respect to the
invariant probability measure obtained by
normalizing $\kappa(\cc(\t))\cdot\Theta^*(\D\t \,\D\mathbf{v})$ (we omit
the proof). One then applies
the ergodic theorem to the two functionals defined as follows. First,
we let
$Z_n(\t,\mathbf{v})$ denote the height of the $n$th branching point on
the geodesic ray~$\mathbf{v}$.
One immediately verifies that, for every $n\geq1$,
\[
Z_n= \sum_{i=0}^{n-1}
Z_1\circ S^i. %
\]
If $A= \int\kappa(\cc(\t)) \Theta^*(\D\t \,\D\mathbf{v})$, it follows that
\begin{equation}
\label{altertech1} \frac{1}{n} Z_n \mathop{\longrightarrow}_{n\to\infty}^{\Theta^*\,{\rm a.s.}}
A^{-1} \int Z_1(\t,\mathbf{v}) \kappa \bigl(\cc(\t) \bigr)
\Theta^*(\D\t \,\D\mathbf{v}).
\end{equation}
Note that the limit can also be written as
\[
A^{-1} \E \biggl[ \bigl\llvert \log(1-U) \bigr\rrvert \kappa \biggl(
\biggl(U + \frac{1-U}{\cc_1 +
\cc_2} \biggr)^{-1} \biggr) \biggr]
\]
with the notation of the preceding proof. Second, if $\mathbf
{x}_{n,\mathbf{v}}$ stands for
the $(n+1)$st branching point on the geodesic ray $\mathbf{v}$ [with the notation
of Section~\ref{sec:yuletree}, $\mathbf{x}_{n,\mathbf{v}}=((v_1,\ldots
,v_n),Z_{n+1}(\t,\mathbf{v}))$
if $\mathbf{v}=(v_1,v_2,\ldots)$], we set for every $n\geq1$,
\[
H_n(\t,\mathbf{v}) = \log\nu_\t \bigl( \bigl\{
\mathbf{u} \in\{1,2\}^\N: \mathbf {x}_{n,\mathbf{v}}\prec\mathbf{u}
\bigr\} \bigr).
\]
It is then also easy to verify that
\[
H_n=\sum_{i=0}^{n-1}
H_1\circ S^i
\]
and we have thus
\begin{equation}
\label{altertech2} \frac{1}{n} H_n \mathop{\longrightarrow}_{n\to\infty}^{\Theta^*\,{\rm a.s.}}
A^{-1} \int H_1(\t,\mathbf{v}) \kappa \bigl(\cc(\t) \bigr)
\Theta^*(\D\t \,\D\mathbf{v}).
\end{equation}
The limit can be written as
\[
2 A^{-1} \E \biggl[ \frac{\cc_1}{\cc_1+\cc_2} \log \biggl(
\frac{\cc
_1}{\cc_1+\cc_2} \biggr) \kappa \biggl( \biggl(U + \frac{1-U}{\cc_1 + \cc
_2}
\biggr)^{-1} \biggr) \biggr].
\]
By combining~\eqref{altertech1} and~\eqref{altertech2}, we now obtain
that the convergence~\eqref{equivalent-asymp}
holds with limit
\[
-\beta= \frac{2 \E [ \afrac{\cc_1}{\cc_1+\cc_2} \log ( \afrac{\cc
_1}{\cc_1+\cc_2} ) \kappa ( (U + \vafrac{1-U}{\cc_1 + \cc
_2} )^{-1} ) ]}{
\E [ \llvert  \log(1-U)\rrvert   \kappa ( (U + \vafrac{1-U}{\cc_1 + \cc_2}
)^{-1} ) ]}.
\]
We leave it as an exercise for the reader to check that this is
consistent with the other formulas for $\beta$
in Proposition~\ref{prop:valuebeta}.

\subsection{Supercritical Galton--Watson trees}

One may compare our results about
Brownian motion on the Yule tree to the recent paper of A\"\i d\'ekon
\cite{Aid11}, which deals
with biased random walk on supercritical Galton--Watson trees.
To this end, consider the supercritical offspring distribution $\theta
^{(n)}$ given by
$ \theta^{(n)}(1) = 1 - \frac{1}{n}$ and $ \theta^{(n)}(2)= \frac{1}{n}$.
If $ \t^{(n)}$ is the (infinite) Galton--Watson tree with offspring
distribution $\theta^{(n)}$, then
$\t^{(n)}$, viewed as a metric space for the graph distance rescaled
by the fact $n^{-1}$,
converges in distribution in an appropriate sense (e.g., for the local
Gromov--Hausdorff
topology) to the Yule tree $\Gamma$.

Consider then the biased random walk $(Z^{(n)}_{k})_{k \geq0}$ on $\t
^{(n)}$ with bias parameter $\lambda^{(n)} = 1- \frac{1}{n}$ (see,
e.g., \cite{LPP96} or \cite{Aid11} for a definition of this process).
Since the ``mean drift'' of $Z^{(n)}$ away from the root is $\frac
{1}{2n} + o(n^{-1})$, it should be
clear that the rescaled process $( Z^{(n)}_{ \lfloor n^2 t\rfloor
})_{t\geq0}$ is asymptotically close to Brownian motion with drift
$1/2$ on the Yule tree, in a sense that can easily be made precise.

An explicit form of an invariant measure for the ``environment seen
from the particle'' has been derived
by A\"\i d\'ekon \cite{Aid11}, Theorem 4.1, for biased random walk on a
supercritical Galton--Watson tree (see also \cite{GMPV12} for a related
result in a different setting).
In the unbiased case such an explicit formula already appeared in the
work of Lyons, Pemantle and
Peres \cite{LPP95}, but in the subsequent work of the same authors \cite
{LPP96} dealing with the
biased case, only the existence of the invariance measure was derived
by general arguments.
It is tempting to use A\"\i d\'ekon's formula and the connection between
the $\lambda^{(n)}$-biased random walk on $ \t^{(n)}$ and Brownian
motion with drift
$1/2$ on the Yule tree to recover our formulas for invariant
measures in Propositions
\ref{invariant-meas} and \ref{invariantbis}. Note, however, that the
continuous analog
of A\"\i d\'ekon's formula would be an invariant measure for the
environment seen from
Brownian motion on the Yule tree at a {fixed} time, whereas we have
obtained invariant measures for the environment at the {last} visit of
a fixed height
(Proposition~\ref{invariant-meas}) or the {last} visit of a node of the $n$th generation
(Proposition~\ref{invariantbis}). Still the reader should note the similarity
between the limiting
distribution in \cite{Aid11}, Theorem 4.1, and the formula for the
invariant measure
in Proposition~\ref{invariantbis}. Indeed, we were able to guess the formula for
$\kappa$
in Proposition~\ref{invariantbis} from a (nonrigorous) passage to the limit from the
corresponding formula in \cite{Aid11}.

\begin{appendix}\label{appen}
\section*{Appendix}

In this appendix, we sketch a proof of Proposition \ref
{spine-decomposition}, which is based on the relation
between the continuous reduced tree $\Delta$ of Section~\ref{sec:treedelta} and the Brownian excursion
conditioned to hit level $1$. This relation was described after
Proposition~\ref{conv-reduced-tree} (see Figure~\ref{redu-excursion})
and we retain the notation introduced after this proposition. In
particular, $(\mathbf{e}_t)_{0\leq t\leq\zeta}$
is a Brownian excursion
conditioned to hit level $1$ defined under the probability measure $\P
$, and $\Delta$ is the associated continuous reduced tree.

We fix $\ve\in(0,1)$ and let $N_\ve\geq1$ be the number of excursions
of $\mathbf{e}$ from
$1-\ve$ to~$1$. We let $(R^\ve_1,S^\ve_1),(R^\ve_2,S^\ve_2),\ldots,
(R^\ve_{N_\ve},S^\ve_{N_\ve})$
be the time intervals corresponding to these excursions listed in
chronological order. For convenience, we also set $R^\ve_i=S^\ve
_i=\infty$
if $i>N_\ve$. The key ingredient of our proof is the following lemma.
We write $(B_t)_{t\geq0}$
for a linear Brownian motion that starts from $x$ under the probability
measure $P_x$, and
$T_0=\inf\{t\geq0:B_t=0\}$. We also let $n_\ve$ be the law of a
Brownian excursion above level $1-\ve$
conditioned to hit level $1$. Agreeing that the excursion stays
constant after returning to $1-\ve$, we can view $n_\ve$
as a probability measure on the space
$C(\R_+,\R_+)$ of all continuous functions from $\R_+$ into $\R_+$.

%
\begin{lemma}
\label{appen-lemma}
Let $F,G,H$ be three nonnegative measurable functions on
$C(\R_+,\R_+)$. Then
\begin{eqnarray*}
&&\E \Biggl[\sum_{i=1}^{N_\ve} F \bigl((
\mathbf{e}_{(R^\ve_i-t)^+})_{t\geq0} \bigr) G \bigl((\mathbf
{e}_{(R^\ve_i+t)\wedge S^\ve_i})_{t\geq0} \bigr) H \bigl((\mathbf{e}_{(S^\ve_i+t)\wedge\zeta})_{t\geq0}
\bigr) \Biggr]
\\
&&\qquad =\frac{1}{\ve} E_{1-\ve} \bigl[F \bigl((B_{t\wedge T_0})_{t\geq0}
\bigr) \bigr] n_\ve(G) E_{1-\ve} \bigl[H
\bigl((B_{t\wedge T_0})_{t\geq0} \bigr) \bigr].
\end{eqnarray*}
\end{lemma}

The proof of this lemma is straightforward. First note that, for every
$i\geq1$, the
law of $(\mathbf{e}_{(R^\ve_i+t)\wedge S^\ve_i})_{t\geq0}$ under $\P
(\cdot\mid N_\ve\geq i)$ is $n_\ve$. Then,\vspace*{1pt} since $S^\ve_i$ is a
stopping time for every integer $i\geq1$, we get by applying the
strong Markov property at time $S^\ve_i$,
\begin{eqnarray*}
&&\E \Biggl[\sum_{i=1}^{N_\ve} F \bigl((
\mathbf{e}_{(R^\ve_i-t)^+})_{t\geq0} \bigr) G \bigl((\mathbf
{e}_{(R^\ve_i+t)\wedge S^\ve_i})_{t\geq0} \bigr) H \bigl((\mathbf{e}_{(S^\ve_i+t)\wedge\zeta})_{t\geq0}
\bigr) \Biggr]
\\
&&\qquad= \sum_{i=1}^{\infty} \E \bigl[
\mathbf{1}_{\{S^\ve_i<\infty\}} F \bigl((\mathbf{e}_{(R^\ve
_i-t)^+})_{t\geq0}
\bigr) G \bigl((\mathbf{e}_{(R^\ve_i+t)\wedge S^\ve
_i})_{t\geq0} \bigr) \bigr]
\\
&&\qquad\quad{}\times E_{1-\ve} \bigl[H \bigl((B_{t\wedge T_0})_{t\geq0}
\bigr) \bigr].
\end{eqnarray*}
On the other hand, using the fact that the law of $(\mathbf
{e}_t)_{0\leq t\leq\zeta}$ is invariant under time reversal
[$(\mathbf{e}_t)_{0\leq t\leq\zeta}$ and $(\mathbf{e}_{\zeta
-t})_{0\leq t\leq\zeta}$ have the same law], we also
obtain that the sum in the second line of the last display is equal to
\begin{eqnarray*}
&&\E \Biggl[\sum_{i=1}^{N_\ve} G \bigl((
\mathbf{e}_{(S^\ve_i-t)\vee R^\ve_i})_{t\geq0} \bigr) F \bigl((\mathbf{e}_{(S^\ve_i+t)\wedge\zeta})_{t\geq0}
\bigr) \Biggr]
\\
&&\qquad= \E \Biggl[\sum_{i=1}^{N_\ve} G
\bigl((\mathbf{e}_{(S^\ve_i-t)\vee R^\ve_i})_{t\geq0} \bigr) \Biggr] \times
E_{1-\ve} \bigl[F \bigl((B_{t\wedge T_0})_{t\geq0} \bigr) \bigr]
\\
&&\qquad= \E[N_\ve] n_\ve(G) E_{1-\ve} \bigl[F
\bigl((B_{t\wedge
T_0})_{t\geq0} \bigr) \bigr]
\end{eqnarray*}
giving the desired result since $\E[N_\ve]=\frac{1}{\ve}$.

Let us informally explain why Proposition \ref{spine-decomposition} (or
the equivalent statement in terms
of the tree $\Delta$) follows from the lemma. To make the connection
with Proposition \ref{spine-decomposition},
we take $\ve=e^{-r}$, so that the factor $\frac{1}{\ve}$ becomes the
multiplicative factor
$e^r$ in the formula of Proposition \ref{spine-decomposition}. We first
recall that every vertex $v$ of $\Delta$ at height $1-\ve$
corresponds to one excursion of $\mathbf{e}$ above height $1-\ve$ that
hits level $1$, and we observe that the tree
of descendants of $v$ will be coded by this excursion in the same way
as $\Delta$ is coded by $\mathbf{e}$. Hence,
this tree of descendants is distributed as a scaled copy of $\Delta$
(and the scaling factor will disappear when we
do the logarithmic scale transformation to return to the Yule tree).
Then we need to consider the subtrees branching off the ancestral line
of $v$, and we can first look at those
subtrees branching on the right of the ancestral line. Supposing that
$v$ corresponds to the
excursion during the time interval $(R^\ve_i,S^\ve_i)$, the latter
subtrees exactly correspond to all excursions of
the process $(\mathbf{e}_{(S^\ve_i+t)\wedge\zeta})_{t\geq0}$ above its
past minimum process that hit level $1$, and the
level at which a subtree branches is the starting level of the
corresponding excursion. The formula
of Lemma \ref{appen-lemma} then leads us to consider the excursions of
$(B_{t\wedge T_0})_{t\geq0}$ above its
past minimum process, under the measure $P_{1-\ve}$. If $f_1,\ldots
,f_N$ stand for these excursions, and
if $h_i$ denotes the starting level of the excursion $f_i$, It\^o's
excursion theory shows that
the point measure $\sum_{i=1}^N \delta_{(h_i,f_i)}$ is Poisson with intensity
\[
\mathbf{1}_{[0,1-\ve]}(h) \frac{\mathrm{d} h}{1-h} n_{1-h}(
\mathrm{d}f).
\]
Recalling that $\ve=e^{-r}$, the image of the measure $\mathbf
{1}_{[0,1-\ve]}(h) \frac{\mathrm{d} h}{1-h}$ under the
logarithmic scale transformation $h=1-e^{-s}$ is the measure $\mathbf
{1}_{[0,r]}(s) \,\mathrm{d}s$.
This explains the form of the intensity of the Poisson measure $\n$ in
the statement of Proposition~\ref{spine-decomposition},
noting that the factor $2$ comes from the fact that we also need to
consider the subtrees that
branch on the left of the ancestral line [these are treated in a
similar manner, considering now the excursions of
$(\mathbf{e}_{(R^\ve_i-t)^+})_{t\geq0}$ above its past minimum process
that hit $1$].

Although we avoided introducing the notation that would be needed to
make the previous arguments
precise, the reader will easily turn these arguments into a rigorous proof
of Proposition \ref{spine-decomposition} based on Lemma \ref{appen-lemma}.
\end{appendix}

\section*{Acknowledgment}
We would like to thank Thordur Jonsson for suggesting the study
of the harmonic measure on critical Galton--Watson trees during Spring 2012.


%

\printaddresses

\begin{thebibliography}{33}
\bibitem{Aid11}
%
\begin{barticle}[mr]
\bauthor{\bsnm{A{\"{\i}}d{\'e}kon},~\bfnm{Elie}\binits{E.}}
(\byear{2014}).
\btitle{Speed of the biased random walk on a {G}alton--{W}atson tree}.
\bjournal{Probab. Theory Related Fields}
\bvolume{159}
\bpages{597--617}.
\bid{doi={10.1007/s00440-013-0515-y}, issn={0178-8051}, mr={3230003}}
\end{barticle}
%

\bptok{imsref}%
\endbibitem

\bibitem{Ald91}
%
\begin{barticle}[mr]
\bauthor{\bsnm{Aldous},~\bfnm{David}\binits{D.}}
(\byear{1991}).
\btitle{The continuum random tree. {I}}.
\bjournal{Ann. Probab.}
\bvolume{19}
\bpages{1--28}.
\bid{issn={0091-1798}, mr={1085326}}
\end{barticle}
%

\bptok{imsref}%
\endbibitem

\bibitem{Al3}
%
\begin{barticle}[mr]
\bauthor{\bsnm{Aldous},~\bfnm{David}\binits{D.}}
(\byear{1993}).
\btitle{The continuum random tree. {III}}.
\bjournal{Ann. Probab.}
\bvolume{21}
\bpages{248--289}.
\bid{issn={0091-1798}, mr={1207226}}
\end{barticle}
%

\bptok{imsref}%
\endbibitem

\bibitem{AN}
%
\begin{bbook}[mr]
\bauthor{\bsnm{Athreya},~\bfnm{Krishna~B.}\binits{K.~B.}} \AND
\bauthor{\bsnm{Ney},~\bfnm{Peter~E.}\binits{P.~E.}}
(\byear{1972}).
\btitle{Branching Processes}.
\bpublisher{Springer},
\blocation{New York-Heidelberg}.
\bid{mr={0373040}}
\end{bbook}
%

\bptok{imsref}%
\endbibitem

\bibitem{AEW}
%
\begin{barticle}[mr]
\bauthor{\bsnm{Athreya},~\bfnm{Siva}\binits{S.}},
\bauthor{\bsnm{Eckhoff},~\bfnm{Michael}\binits{M.}} \AND
\bauthor{\bsnm{Winter},~\bfnm{Anita}\binits{A.}}
(\byear{2013}).
\btitle{Brownian motion on {$\Bbb{R}$}-trees}.
\bjournal{Trans. Amer. Math. Soc.}
\bvolume{365}
\bpages{3115--3150}.
\bid{doi={10.1090/S0002-9947-2012-05752-7}, issn={0002-9947}, mr={3034461}}
\end{barticle}
%

\bptok{imsref}%
\endbibitem

\bibitem{Bl}
%
\begin{bbook}[mr]
\bauthor{\bsnm{Blumenthal},~\bfnm{Robert~M.}\binits{R.~M.}}
(\byear{1992}).
\btitle{Excursions of {M}arkov Processes}.
\bpublisher{Birkh\"auser},
\blocation{Boston, MA}.
\bid{doi={10.1007/978-1-4684-9412-9}, mr={1138461}}
\end{bbook}
%

\bptok{imsref}%
\endbibitem

\bibitem{BS02}
%
\begin{bbook}[mr]
\bauthor{\bsnm{Borodin},~\bfnm{Andrei~N.}\binits{A.~N.}} \AND
\bauthor{\bsnm{Salminen},~\bfnm{Paavo}\binits{P.}}
(\byear{2002}).
\btitle{Handbook of {B}rownian Motion---Facts and Formulae},
\bedition{2nd} ed.
\bpublisher{Birkh\"auser},
\blocation{Basel}.
\bid{doi={10.1007/978-3-0348-8163-0}, mr={1912205}}
\end{bbook}
%

\bptok{imsref}%
\endbibitem

\bibitem{Bou87}
%
\begin{barticle}[mr]
\bauthor{\bsnm{Bourgain},~\bfnm{J.}\binits{J.}}
(\byear{1987}).
\btitle{On the {H}ausdorff dimension of harmonic measure in higher dimension}.
\bjournal{Invent. Math.}
\bvolume{87}
\bpages{477--483}.
\bid{doi={10.1007/BF01389238}, issn={0020-9910}, mr={0874032}}
\end{barticle}
%

\bptok{imsref}%
\endbibitem

\bibitem{CRW}
%
\begin{barticle}[mr]
\bauthor{\bsnm{Chauvin},~\bfnm{Brigitte}\binits{B.}},
\bauthor{\bsnm{Rouault},~\bfnm{Alain}\binits{A.}} \AND
\bauthor{\bsnm{Wakolbinger},~\bfnm{Anton}\binits{A.}}
(\byear{1991}).
\btitle{Growing conditioned trees}.
\bjournal{Stochastic Process. Appl.}
\bvolume{39}
\bpages{117--130}.
\bid{doi={10.1016/0304-4149(91)90036-C}, issn={0304-4149}, mr={1135089}}
\end{barticle}
%

\bptok{imsref}%
\endbibitem

\bibitem{Cro08}
%
\begin{barticle}[mr]
\bauthor{\bsnm{Croydon},~\bfnm{David~A.}\binits{D.~A.}}
(\byear{2008}).
\btitle{Volume growth and heat kernel estimates for the continuum
random tree}.
\bjournal{Probab. Theory Related Fields}
\bvolume{140}
\bpages{207--238}.
\bid{doi={10.1007/s00440-007-0063-4}, issn={0178-8051}, mr={2357676}}
\end{barticle}
%

\bptok{imsref}%
\endbibitem

\bibitem{DLG}
%
\begin{barticle}[mr]
\bauthor{\bsnm{Duquesne},~\bfnm{Thomas}\binits{T.}} \AND
\bauthor{\bsnm{Le Gall},~\bfnm{Jean-Fran{\c{c}}ois}\binits{J.-F.}}
(\byear{2002}).
\btitle{Random trees, L\'evy processes and spatial branching processes}.
\bjournal{Ast\'erisque}
\bvolume{281}
\bpages{vi+147}.
\bid{issn={0303-1179}, mr={1954248}}
\bptnote{check pages}%
\end{barticle}
%

\bptok{imsref}%
\endbibitem

\bibitem{DLG06}
%
\begin{barticle}[mr]
\bauthor{\bsnm{Duquesne},~\bfnm{Thomas}\binits{T.}} \AND
\bauthor{\bsnm{Le Gall},~\bfnm{Jean-Fran{\c{c}}ois}\binits{J.-F.}}
(\byear{2006}).
\btitle{The {H}ausdorff measure of stable trees}.
\bjournal{ALEA Lat. Am. J. Probab. Math. Stat.}
\bvolume{1}
\bpages{393--415}.
\bid{issn={1980-0436}, mr={2291942}}
\end{barticle}
%

\bptok{imsref}%
\endbibitem

\bibitem{EK01}
%
\begin{barticle}[mr]
\bauthor{\bsnm{Enriquez},~\bfnm{Nathana{\"e}l}\binits{N.}} \AND
\bauthor{\bsnm{Kifer},~\bfnm{Yuri}\binits{Y.}}
(\byear{2001}).
\btitle{Markov chains on graphs and {B}rownian motion}.
\bjournal{J.~Theoret. Probab.}
\bvolume{14}
\bpages{495--510}.
\bid{doi={10.1023/A:1011119932045}, issn={0894-9840}, mr={1838739}}
\end{barticle}
%

\bptok{imsref}%
\endbibitem

\bibitem{FSS}
%
\begin{barticle}[mr]
\bauthor{\bsnm{Fleischmann},~\bfnm{Klaus}\binits{K.}} \AND
\bauthor{\bsnm{Siegmund-Schultze},~\bfnm{Rainer}\binits{R.}}
(\byear{1977}).
\btitle{The structure of reduced critical {G}alton--{W}atson processes}.
\bjournal{Math. Nachr.}
\bvolume{79}
\bpages{233--241}.
\bid{issn={0025-584X}, mr={0461689}}
\end{barticle}
%

\bptok{imsref}%
\endbibitem

\bibitem{FS00}
%
\begin{barticle}[mr]
\bauthor{\bsnm{Freidlin},~\bfnm{Mark}\binits{M.}} \AND
\bauthor{\bsnm{Sheu},~\bfnm{Shuenn-Jyi}\binits{S.-J.}}
(\byear{2000}).
\btitle{Diffusion processes on graphs: Stochastic differential
equations, large deviation principle}.
\bjournal{Probab. Theory Related Fields}
\bvolume{116}
\bpages{181--220}.
\bid{doi={10.1007/PL00008726}, issn={0178-8051}, mr={1743769}}
\end{barticle}
%

\bptok{imsref}%
\endbibitem

\bibitem{GMPV12}
%
\begin{barticle}[mr]
\bauthor{\bsnm{Gantert},~\bfnm{Nina}\binits{N.}},
\bauthor{\bsnm{M{\"u}ller},~\bfnm{Sebastian}\binits{S.}},
\bauthor{\bsnm{Popov},~\bfnm{Serguei}\binits{S.}} \AND
\bauthor{\bsnm{Vachkovskaia},~\bfnm{Marina}\binits{M.}}
(\byear{2012}).
\btitle{Random walks on {G}alton--{W}atson trees with random conductances}.
\bjournal{Stochastic Process. Appl.}
\bvolume{122}
\bpages{1652--1671}.
\bid{doi={10.1016/j.spa.2012.01.004}, issn={0304-4149}, mr={2914767}}
\end{barticle}
%

\bptok{imsref}%
\endbibitem

\bibitem{Jan}
%
\begin{barticle}[mr]
\bauthor{\bsnm{Janson},~\bfnm{Svante}\binits{S.}}
(\byear{2006}).
\btitle{Random cutting and records in deterministic and random trees}.
\bjournal{Random Structures Algorithms}
\bvolume{29}
\bpages{139--179}.
\bid{doi={10.1002/rsa.20086}, issn={1042-9832}, mr={2245498}}
\end{barticle}
%

\bptok{imsref}%
\endbibitem

\bibitem{Jon12}
%
\begin{bmisc}[auto:parserefs-M02]
\bauthor{\bsnm{Jonsson},~\bfnm{T.}\binits{T.}}
(\byear{2012}).
\bhowpublished{Private communication}.
\end{bmisc}
%

\bptok{imsref}%
\endbibitem

\bibitem{Kal}
%
\begin{bbook}[mr]
\bauthor{\bsnm{Kallenberg},~\bfnm{Olav}\binits{O.}}
(\byear{2002}).
\btitle{Foundations of Modern Probability},
\bedition{2nd} ed.
\bpublisher{Springer},
\blocation{New York}.
\bid{doi={10.1007/978-1-4757-4015-8}, mr={1876169}}
\bptnote{check year}%
\end{bbook}
%

\bptok{imsref}%
\endbibitem

\bibitem{Kre95}
%
\begin{barticle}[mr]
\bauthor{\bsnm{Krebs},~\bfnm{W.~B.}\binits{W.~B.}}
(\byear{1995}).
\btitle{Brownian motion on the continuum tree}.
\bjournal{Probab. Theory Related Fields}
\bvolume{101}
\bpages{421--433}.
\bid{doi={10.1007/BF01200505}, issn={0178-8051}, mr={1324094}}
\end{barticle}
%

\bptok{imsref}%
\endbibitem

\bibitem{Law93}
%
\begin{barticle}[mr]
\bauthor{\bsnm{Lawler},~\bfnm{Gregory~F.}\binits{G.~F.}}
(\byear{1993}).
\btitle{A discrete analogue of a theorem of {M}akarov}.
\bjournal{Combin. Probab. Comput.}
\bvolume{2}
\bpages{181--199}.
\bid{doi={10.1017/S0963548300000584}, issn={0963-5483}, mr={1249129}}
\end{barticle}
%

\bptok{imsref}%
\endbibitem

\bibitem{LG89}
%
\begin{bincollection}[mr]
\bauthor{\bsnm{Le Gall},~\bfnm{Jean-Fran{\c{c}}ois}\binits{J.-F.}}
(\byear{1989}).
\btitle{Marches al\'eatoires, mouvement brownien et processus de branchement}.
In \bbooktitle{S\'eminaire de {P}robabilit\'es, {XXIII}}.
\bseries{Lecture Notes in Math.}
\bvolume{1372}
\bpages{258--274}.
\bpublisher{Springer},
\blocation{Berlin}.
\bid{doi={10.1007/BFb0083978}, mr={1022916}}
\end{bincollection}
%

\bptok{imsref}%
\endbibitem

\bibitem{probasur}
%
\begin{barticle}[mr]
\bauthor{\bsnm{Le Gall},~\bfnm{Jean-Fran{\c{c}}ois}\binits{J.-F.}}
(\byear{2005}).
\btitle{Random trees and applications}.
\bjournal{Probab. Surv.}
\bvolume{2}
\bpages{245--311}.
\bid{doi={10.1214/154957805100000140}, issn={1549-5787}, mr={2203728}}
\end{barticle}
%

\bptok{imsref}%
\endbibitem

\bibitem{Lin}
%
\begin{barticle}[mr]
\bauthor{\bsnm{Lin},~\bfnm{Shen}\binits{S.}}
(\byear{2014}).
\btitle{The harmonic measure of balls in critical {G}alton--{W}atson
trees with infinite variance offspring distribution}.
\bjournal{Electron. J. Probab.}
\bvolume{19}
\bpages{1--35}.
\bid{doi={10.1214/EJP.v19-3498}, issn={1083-6489}, mr={3272331}}
\end{barticle}
%

\bptok{imsref}%
\endbibitem

\bibitem{Lin2}
%
\begin{bmisc}[auto:parserefs-M02]
\bauthor{\bsnm{Lin},~\bfnm{S.}\binits{S.}}
(\byear{2015}).
\bhowpublished{Typical behavior of the harmonic measure in critical
Galton-Watson trees.
Preprint. Available at \arxivurl{arXiv:1502.05584}.}
\end{bmisc}
%

\bptok{imsref}%
\endbibitem

\bibitem{L00}
%
\begin{barticle}[mr]
\bauthor{\bsnm{Lyons},~\bfnm{Russell}\binits{R.}}
(\byear{2000}).
\btitle{Singularity of some random continued fractions}.
\bjournal{J. Theoret. Probab.}
\bvolume{13}
\bpages{535--545}.
\bid{doi={10.1023/A:1007837306508}, issn={0894-9840}, mr={1778585}}
\end{barticle}
%

\bptok{imsref}%
\endbibitem

\bibitem{LPP95}
%
\begin{barticle}[mr]
\bauthor{\bsnm{Lyons},~\bfnm{Russell}\binits{R.}},
\bauthor{\bsnm{Pemantle},~\bfnm{Robin}\binits{R.}} \AND
\bauthor{\bsnm{Peres},~\bfnm{Yuval}\binits{Y.}}
(\byear{1995}).
\btitle{Ergodic theory on {G}alton--{W}atson trees: Speed of random
walk and dimension of harmonic measure}.
\bjournal{Ergodic Theory Dynam. Systems}
\bvolume{15}
\bpages{593--619}.
\bid{doi={10.1017/S0143385700008543}, issn={0143-3857}, mr={1336708}}
\end{barticle}
%

\bptok{imsref}%
\endbibitem

\bibitem{LPP96}
%
\begin{barticle}[mr]
\bauthor{\bsnm{Lyons},~\bfnm{Russell}\binits{R.}},
\bauthor{\bsnm{Pemantle},~\bfnm{Robin}\binits{R.}} \AND
\bauthor{\bsnm{Peres},~\bfnm{Yuval}\binits{Y.}}
(\byear{1996}).
\btitle{Biased random walks on {G}alton--{W}atson trees}.
\bjournal{Probab. Theory Related Fields}
\bvolume{106}
\bpages{249--264}.
\bid{doi={10.1007/s004400050064}, issn={0178-8051}, mr={1410689}}
\end{barticle}
%

\bptok{imsref}%
\endbibitem

\bibitem{LP10}
%
\begin{bbook}[auto:parserefs-M02]
\bauthor{\bsnm{Lyons},~\bfnm{R.}\binits{R.}} \AND
\bauthor{\bsnm{Peres},~\bfnm{Y.}\binits{Y.}}
(\byear{2015}).
\btitle{Probability on Trees and Networks}.
\bnote{Preprint. Available at \surl{http://mypage.iu.edu/\textasciitilde
rdlyons/}.}
\end{bbook}
%

\bptok{imsref}%
\endbibitem

\bibitem{Mak95}
%
\begin{barticle}[mr]
\bauthor{\bsnm{Makarov},~\bfnm{N.~G.}\binits{N.~G.}}
(\byear{1985}).
\btitle{On the distortion of boundary sets under conformal mappings}.
\bjournal{Proc. Lond. Math. Soc. (3)}
\bvolume{51}
\bpages{369--384}.
\bid{doi={10.1112/plms/s3-51.2.369}, issn={0024-6115}, mr={0794117}}
\end{barticle}
%

\bptok{imsref}%
\endbibitem

\bibitem{Pit06}
%
\begin{bbook}[mr]
\bauthor{\bsnm{Pitman},~\bfnm{J.}\binits{J.}}
(\byear{2006}).
\btitle{Combinatorial Stochastic Processes}.
\bseries{Lecture Notes in Math.}
\bvolume{1875}.
\bpublisher{Springer},
\blocation{Berlin}.
\bid{mr={2245368}}
\end{bbook}
%

\bptok{imsref}%
\endbibitem

\bibitem{RY}
%
\begin{bbook}[mr]
\bauthor{\bsnm{Revuz},~\bfnm{Daniel}\binits{D.}} \AND
\bauthor{\bsnm{Yor},~\bfnm{Marc}\binits{M.}}
(\byear{1991}).
\btitle{Continuous Martingales and {B}rownian Motion}.
\bpublisher{Springer},
\blocation{Berlin}.
\bid{doi={10.1007/978-3-662-21726-9}, mr={1083357}}
\end{bbook}
%

\bptok{imsref}%
\endbibitem

\bibitem{Zu75}
%
\begin{barticle}[mr]
\bauthor{\bsnm{Zubkov},~\bfnm{A.~M.}\binits{A.~M.}}
(\byear{1975}).
\btitle{Limit distributions of the distance to the nearest common ancestor}.
\bjournal{Teor. Verojatnost. i Primenen.}
\bvolume{20}
\bpages{614--623}.
\bid{issn={0040-361X}, mr={0397915}}
\end{barticle}
%

\bptok{imsref}%
\endbibitem
\end{thebibliography}
\end{document}